\newcommand{\be}{\begin{equation}}
\newcommand{\ee}{\end{equation}}
\newcommand{\ba}{\begin{array}}
\newcommand{\ea}{\end{array}}
\newcommand{\dsp}{\displaystyle}
\newcommand{\N}{\mathbb{N}}
\newcommand{\Z}{\mathbb{Z}}
\newcommand{\R}{\mathbb{R}}
\newcommand{\Exp}{\mathbb{E}}
\newcommand{\Prob}{\mathbb{P}}
\newcommand{\speed}{\sigma}
\newcommand{\indic}{{\bf 1}}
\newtheorem{lemma}{Lemma}[section]
\newtheorem{example}{Example}[section]
\newtheorem{corollary}{Corollary}[section]
\newtheorem{definition}{Definition}[section]
\newtheorem{remark}{Remark}[section]
\newcommand{\thmref}[1]{Theorem \ref{thm:#1}}
 \theoremstyle{remark}
\theoremstyle{plain}
 \theoremstyle{plain}
\newtheorem{proposition}{Proposition}[section]
\newtheorem{assumption}{Assumption}[section]
\newtheorem{theorem}{Theorem}[section]
  \providecommand{\lemmaname}{Lemma}
  \providecommand{\remarkname}{Remark}
\providecommand{\theoremname}{Theorem}
\begin{document}

\title{Hydrodynamics and relaxation limit for multilane exclusion process  
and related hyperbolic systems}

\author{G. Amir$^a$, C. Bahadoran$^b$, O. Busani$^c$, E. Saada$^d$}
\maketitle
$$ \ba{l}
^a\,\mbox{\small Department of Mathematics, Bar Ilan University,} \\
\quad \mbox{\small 5290002 Ramat Gan, Israel. E-mail: gideon.amir@biu.ac.il}\\
^b\,\mbox{\small Laboratoire de Math\'ematiques Blaise Pascal, 
Universit\'e Clermont Auvergne,} \\
\quad \mbox{\small 63177 Aubi\`ere, France. E-mail: christophe.bahadoran@uca.fr}\\
^c\, 
\mbox{\small University of Edinburgh, 5321, James Clerk Maxwell Building,} \\
\quad \mbox{\small
Peter Guthrie Tait Road, Edinburgh, United Kingdom.}\\
\quad\mbox{\small E-mail:	obusani@ed.ac.uk}\\
^d\, \mbox{\small 
CNRS, UMR 8145, MAP5, Universit\'e Paris Cit\'e, } \\
\quad \mbox{\small Campus Saint-Germain-des-Pr\'es, 75270 Paris cedex 06, France.}\\
\quad\mbox{\small 
 E-mail: Ellen.Saada@mi.parisdescartes.fr}\\
\ea
$$

\begin{abstract}
We investigate the hydrodynamic behavior and local equilibrium of 
the multilane exclusion process, whose invariant measures were studied in
our previous paper \cite{mlt1a}. The dynamics on each lane follows 
a hyperbolic time scaling, whereas the interlane dynamics has an 
arbitrary 
time scaling. We prove the following: \textit{(i)} the hydrodynamic 
behavior of the global density  (i.e. summed over all lanes) is governed 
by a scalar conservation law; \textit{(ii)} the latter, as well as 
the limit of individual lanes, is the relaxation limit of a weakly 
coupled hyperbolic  system of balance laws that approximates 
the particle system.   
For the hydrodynamic limit, 
to highlight  new phenomena arising   in our model, 
a precise computation of the flux function, with the transitions 
between different possible shapes (and a physical interpretation thereof), 
is given for the two-lane model.  
\end{abstract}
\noindent
 {\it MSC 2010 subject classification}: 60K35, 82C22.\\ 
{\it Keywords and phrases}: Multilane exclusion process, hydrodynamic limit, hyperbolic systems of balance laws,
relaxation limit, flux function for the two-lane model. 
\section{Introduction}\label{sec:intro}
 The one-dimensional totally asymmetric simple exclusion process (abbreviated
as TASEP) is a popular simplified microscopic model of traffic-flow on a one 
lane highway (\cite{css}). Its hydrodynamic limit under hyperbolic time scaling was first established in \cite{Rost} under $1-0$ step initial condition, and obtained under general initial conditions in
\cite{Rezakhanlou91}. It is given by a scalar conservation 
law known in traffic-flow modeling as the {\em car traffic equation} or 
{\em Lighthill-Witham model}:
\be\label{hdl_tasep}
\partial_t\rho(t,x)+\partial_x[f(\rho(t,x))]=0,
\ee
where $\rho(t,x)\in[0;1]$ is the density of cars at time $t\geq 0$ 
and spatial location $x\in\R$.
The function $f$, given here by
\be\label{current_density}
f(\rho)=\rho(1-\rho),
\ee
is called the {\em flux function}, or {\em current-density relation} 
in traffic-flow modeling, and yields the local flux as a closed 
function of the sole local density. Equation \eqref{hdl_tasep} is meant 
in the sense of {\em entropy conditions}, that select the unique 
physical solution (\cite{serre})  among many possible ones.
The strict concavity of \eqref{current_density} implies spontaneous 
creation of increasing shocks, which may be viewed as a simplified 
mechanism for the formation of traffic jams, that is regions with 
a sharp transition from low density to high density.\\ \\
Multilane asymmetric exclusion processes are natural generalizations 
of TASEP  where particles perform an asymmetric exclusion process 
on each lane, with lane-dependent parameters, and additionally change 
lanes according to a certain transverse jump kernel. For instance, 
this may be relevant to incorporate the effect of overtaking. 
Alternatively, a lane can be viewed as a species of particles 
and such models as multi-species exclusion processes. Particles 
on different lanes may have different speeds (and move in different 
directions), in which case the lane (or species) can be interpreted 
as a kinetic parameter. \\ \\
From a mathematical standpoint, the asymmetric multilane exclusion 
is an interesting intermediate model between one and two dimensional 
asymmetric exclusion process, especially when it comes to studying 
the structure of invariant measures, that is well understood in the 
former case (\cite{Liggett1976, Bramson2002, BMM}) but still widely 
open in the latter (\cite{ligd}). Recently, a fairly complete 
characterization of invariant measures was obtained 
for a wide class of multilane asymmetric models (\cite{mlt1a}) 
as well as for their symmetric counterpart (\cite{rvw}).\\ \\
Related models have been studied in the physics or mathematics literature, 
whether in a multilane or multi-species perspective, like for instance 
two-lane cellular automata for traffic-flow (\cite{bkjs}) or 
two-species interacting exclusions with spin flip (\cite{bfn}). \\ \\
We are concerned here with the hydrodynamic behaviour under hyperbolic 
scaling of the class of multilane asymmetric exclusion processes considered 
in \cite{mlt1a}. 
In the case of {\em symmetric} multilane exclusion, under diffusive time scaling 
on each lane and no time rescaling for transverse jumps, the hydrodynamic limit 
was addressed in \cite{fghnr} and found to be given by a system of linear 
diffusions with linear balance terms.
In our setting, the problem was recently studied heuristically 
through mean-field approximation and Monte-Carlo simulations (\cite{cura,cekt}).   
 Rather than viewing the model as a unique exclusion process, we privilege the
point of view of decoupling the dynamics on each lane (longitudinal dynamics) 
and the interlane (or transverse) dynamics. We shall therefore decompose the 
generator of the process as
\be\label{decomp_gen_intro}
 {\mathcal L}^N  = N L_h+\theta(N)L_v = N\left(
L_h+\frac{\theta(N)}{N}L_v
\right)
\ee
where $N\to+\infty$ is the space scaling parameter, $\theta(N)\to+\infty$, 
$L_h$ and $L_v$ respectively denote generators of the longitudinal and 
transverse jumps. Therefore the longitudinal and transverse time scales are 
decoupled, although this includes the usual situation $\theta(N)=N$ where 
$ {\mathcal L}^N=N\,L$ for the fixed generator $L=L_h+L_v$ speeded 
up in time. In particular, \eqref{decomp_gen_intro} includes the regime 
$\theta(N)\ll N$ of {\em weak coupling} between lanes. This regime is natural 
to model overtaking, whose rate is expected to be negligible with 
respect to longitudinal motion.  It will bring out  a new type of 
scaling condition  requiring ad hoc analysis. \\ \\
For the study of hydrodynamics, we make a reversibility assumption on the 
transverse dynamics. This implies existence of a family of product invariant 
measures which are homogeneous on each lane but with transverse inhomogeneity.\\ \\
Our first main result (Theorem \ref{thm:local_equilibrium}) 
is the hydrodynamic limit and conservation of local equilibrium in the weak sense.
We prove that the hydrodynamic limit of the {\em global} density field 
(i.e. summed over all lanes) is given by entropy solutions of a scalar 
conservation law of the form \eqref{hdl_tasep}. 
We also show that
when the number of lanes $n\to +\infty$, under suitable rescaling, 
it is possible to obtain a singular type of limit that is not governed 
by a scalar conservation law  (Theorem \ref{thm:approximating_G}). \\ \\
Besides the global density, we derive the density fields of each lane;  however,
 these densities cannot be described by a set of 
evolution equations, but are functions of the total density, as we now explain. \\ \\
The limiting flux function and lane densities can be understood as 
{\em relaxation limits.} To this end, we approximate the 
microscopic model with a system of conservation laws with stiff 
relaxation term, whose solution $\rho^\varepsilon$ depends on $\varepsilon$; namely,
\be\label{relax_system_intro}
\partial_t\rho^\varepsilon_i(t,x)
+\partial_x f_i\left[\rho^\varepsilon_i(t,x)\right]
=\varepsilon^{-1}c_i[\rho^\varepsilon(t,x)],\quad i=0,\ldots,n-1
\ee
where $\varepsilon=\theta(N)^{-1}\to 0$.
Here $n$ is the number of lanes and $\rho=(\rho_0,\ldots,\rho_{n-1})$, 
where $\rho_i$ denotes the density field on lane $i$,
\be\label{fluxes_intro}
f_i(\rho_i):=\gamma_i \rho_i(1-\rho_i)  
\ee
is the flux on lane $i$, where $\gamma_i$ is the mean drift of a particle, and
\be\label{trans_intro}
c_i(\rho)=\sum_{j=0}^{n-1}\left[
q(j,i)\rho_j(1-\rho_i)-q(i,j)\rho_i(1-\rho_j)
\right]
\ee
where $q(i,j)$ denotes the transverse jump rate from lane $i$ to lane $j$.  In fact, \eqref{relax_system_intro} is obtained by replacing the longitudinal 
dynamics by the hydrodynamic limit of single-lane ASEP and the transverse 
dynamics by their mean-field approximation. 
The system \eqref{relax_system_intro} is known as a hyperbolic relaxation system 
with {\em weak coupling}, see \cite{chen,hanat,nat},
i.e. independent scalar conservation laws coupled only through their source 
terms.\\ \\
Our second object of study is the relaxation limit $\varepsilon\to 0$ 
for \eqref{relax_system_intro}.
Relaxation limits were obtained  in some particular cases of 
weak coupling such as the Jin-Xin model and a more general class 
of discrete kinetic systems (\cite{nat}).
The relaxation limit, like the hydrodynamic limit, involves 
a local equilibrium closure of an initially non-closed conservation law. 
Indeed, by summing the equations in \eqref{relax_system_intro} 
we obtain a conservation law 
\be\label{not_closed}
\partial_t R^\varepsilon+\partial_x
\sum_{i=0}^{n-1}f_i\left[\rho^\varepsilon_i\right]=0
\ee
for the total density 
\be\label{total_intro}
R^\varepsilon(t,x):=\sum_{i=0}^{n-1}\rho_i^\varepsilon(t,x).
\ee
The flux function in \eqref{not_closed} is a function of $\rho$ but 
{\em not a closed} function of $R^\varepsilon$. In order to close 
 equation \eqref{not_closed}, one must prove that in the limit 
 $\varepsilon\to 0$, $(\rho_0,\ldots,\rho_{n-1})$ lies in an 
 {\em equilibrium manifold} $\mathcal F$ parametrized by the total
  density, say $R$, 
so that the flux 
depends only on $R$. This manifold is given by
\be\label{def_f_intro_0}
\mathcal F:=\left\{
\rho\in[0;1]^n:\,c_i(\rho)=0,\quad\forall i=0,\ldots,n-1
\right\}.
\ee
Under reversibility assumptions on $q(i,j)$, we can show 
(in Proposition \ref{lemma_phi_super}) 
that this manifold is given by
\be\label{def_f_intro}
\mathcal F:=\left\{
\rho\in[0;1]^n:\,\forall i,j\in\{0,\ldots,n-1\},\,
q(i,j)\rho_i(1-\rho_j)=q(j,i)\rho_j(1-\rho_i)
\right\}
\ee
and can indeed be parametrized by the total density, i.e., the function
\be\label{param_intro}
\psi:\rho\in[0;1]^n\mapsto\sum_{i=0}^{n-1}\rho_i\in[0;n]
\ee
is a bijection.
Besides, $\mathcal F$ coincides with the set of vectors $\rho$ for which 
there exists a product invariant measure for the multilane exclusion process 
with mean density $\rho_i$ on lane $i=0,\ldots,n-1$
(Theorem \ref{thm:characterization_lemma_gen}). Remark that if the 
lane number $i$ is viewed as a kinetic parameter through the mean particle 
velocity $\gamma_i$ in \eqref{fluxes_intro}, we can understand the function 
$i\mapsto \psi^{-1}(R)_i$ as the velocity distribution when the total 
density is $R$, that is, the analogue in this context of a Maxwellian.\\ \\
Our second main result (Theorem \ref{th:relax_limit})  states that,  
as $\varepsilon\to 0$, the 
global density \eqref{total_intro}
converges locally in $L^1$ to a limiting field $R(t,x)$ that is, the entropy 
solution to a conservation law of the form \eqref{hdl_tasep}, and that 
the lane densities vector is given by the ``Maxwellian'' $\rho=\psi^{-1}(R)$. 
The flux function arising both in the relaxation and hydrodynamic limit 
is the projection $f:=F\circ\psi^{-1}$ of the total flux function 
\be\label{total_flux_intro}
F(\rho):=\sum_{i=0}^{n-1}f_i(\rho_i)
\ee
on the equilibrium manifold $\mathcal F$. 
The relaxation limits obtained in this setting are indeed similar to those 
obtained in Theorem \ref{thm:local_equilibrium} for the particle system.\\ \\
For the hydrodynamic limit of Theorem \ref{thm:local_equilibrium}, 
to highlight  new phenomena arising   in our model, 
a precise computation of the flux function, with the transitions 
between different possible shapes (and a physical interpretation thereof), 
is given for the two-lane model, 
These include, as in the KLS or AS models (\cite{KLS, PS, AS}), fluxes 
that are bell-shaped or with two maxima and one minimum;  but additionally, 
when the two drifts have opposite signs, fluxes with one positive 
maximum and one negative minimum.
However, this analysis reveals (see Theorem \ref{thm:phase_trans}, 
and Appendix \ref{app:graph} for graphical illustrations) an unexpected 
non-monotone transition of the flux shape with respect to the strength 
of interlane coupling. 
When one of the lanes is symmetric and the other one asymmetric, 
we can obtain a singular non-differentiable type of flux similar 
to the one obtained recently in \cite{ESZ} for the facilitated ASEP. 
Altogether, the different possible shapes and transitions are found 
to reflect the interplay and competition between lanes. \\ \\
 Theorem \ref{thm:local_equilibrium} is valid in the weak coupling regime 
$\theta(N)\ll N$, as well as $\theta(N)\sim N$ and $\theta(N)\gg N$. 
In the former case, the transverse dynamics slows down the local 
equilibrium mechanism, and a minimal growth condition is required on 
$\theta(N)$, cf. \eqref{assumption_relax}. This condition is related 
to the diameter of the transverse random walk graph, and disappears when 
this diameter is $1$, i.e., for two lanes or more generally mean-field walks.  
Hence the interest of an analysis going beyond the two-lane model. \\ \\
We note that  hydrodynamic and relaxation results 
of the same nature were established in \cite{TK1,TK2} for the particle 
and relaxation system when the left-hand side of \eqref{relax_system_intro} 
is replaced by linear transport equations, which corresponds to 
interaction-free longitudinal particle dynamics (that is, particles  
are independent as long as they stay on the same lane). The relaxation 
terms considered there have a different structure, and the proof in 
this non-interacting situation was based on the moment method; this is not 
feasible in our setting, which requires the full hydrodynamic limit machinery, 
though we believe our approach would also be valid in the linear case.\\ \\
We end up with an outline of the methods of proof. \\
For the hydrodynamic limit, we  rely on  the general scheme 
developed in \cite{Bahadoran2002} for hyperbolic type limits, that reduces 
the hydrodynamic limit for the Cauchy problem to the case of step initial 
functions (the so-called {\em Riemann problem}), for which we have an 
explicit variational representation that we reproduce at microscopic level. 
An essential property for this reduction is the so-called 
 {\em macroscopic stability} property, for which a  significant 
refinement of \cite[Lemma 3.1]{BMM} is required when $\theta(N)\ll N$.  
This  accounts for 
condition \eqref{assumption_relax}, which arises from short-time 
analysis of transverse couplings, see Lemma \ref{lemma_coalescence}.  \\
Condition \eqref{assumption_relax} is also involved in the derivation  
of Riemann hydrodynamics, as well as the derivation of individual lane 
profiles from the global one. Indeed for these purposes, we need  
one and two-block estimates that can be obtained here by revisiting an argument 
from \cite{Rezakhanlou91}. The latter was based on a so-called 
{\em interface property} for single-lane ASEP (\cite{Liggett1976}),
whereby the number of sign changes between two coupled systems cannot 
increase. However, this property fails for multilane models. Here, 
thanks to condition \eqref{assumption_relax}, we can show 
that the interface property remains true up to macroscopically 
negligible errors, which we call the {\em quasi-interface} property.  \\
For the relaxation limit, we write entropy conditions for the system 
with a suitable family of dissipative entropies with respect to the 
relaxation terms in \eqref{relax_system_intro}. These are combinations 
of Kru\v{z}kov entropies (\cite{kru,serre}) relative to equilibrium states. 
The entropy dissipation enables us to prove relaxation to the equilibrium 
manifold, and then close the entropy conditions.
The special reversible and monotone structure 
of the relaxation terms plays here an important role for these entropies 
to be dissipative. \\ \\
The paper is organized as follows. 
In Section \ref{sec:model}, we introduce the model and assumptions, 
then the equilibrum manifold, and we
describe the relevant invariant measures for the hydrodynamic limit. 
In Section \ref{subsec:hydro_gen}, we state our main results 
for the hydrodynamic limit and relaxation limit. In Section 
\ref{subsec:examples}, general properties of the flux functions are stated 
as well as various examples and the more precise treatment of the two-lane model.
The corresponding proofs and graphical illustrations for the 
two-lane model are given respectively in Section \ref{sec_phases} and 
in Appendix \ref{app:graph},  where phase transitions are discussed.   
In Section \ref{sec:proof_inv}, we establish the structure of the 
equilibrium manifold under reversibility assumptions and general properties 
of the flux.  The results stated in Section \ref{subsec:hydro_gen} 
are then established in Sections \ref{sec:proof_hydro} 
(for the hydrodynamic limit) and \ref{sec:proof_relax} (for the relaxation limit), 
and the singular limit is proved in Section \ref{sec:proof_limit}.
\section{Multilane exclusion processes and their invariant measures}\label{sec:model}
\subsection{ Multilane exclusion process}\label{subsec:setup}
 \textbf{State space.}  Let 
$V=\Z\times W$, where 
\be\label{def_W}
W:=\{0,\ldots,n-1\}.
\ee
We call $V$ a {\em ladder}.
An element $x$ of $V$ will be generically written 
in the form $x=(x(0),x(1))$, with $x(0)\in\Z$ and $x(1)\in W$. 
The set of particle configurations on $V$ is denoted by
$\mathcal X:=\{0;1\}^V$,
that is, a compact polish space with respect to product topology. 
For $\eta\in\mathcal X$ and $x\in V$, $\eta(x)$ denotes the 
number of particles at $x$.
In traffic-flow modeling, 
we may think of  $V$ as a highway, where
for $i\in W$,
\begin{equation}\label{eq:general lane}
\mathbb{L}_{i}:=
\left\{
 x\in V:\,x(0)\in\mathbb{Z},\,x(1)=i
\right\} 
\end{equation}
denotes the $i$'th lane. Then
$x\in V$ is interpreted as spatial location $x(0)$ on lane   $x(1)$. 
For  $i\in W$, we denote by $\eta^i$ the particle configuration on $\Z$  defined by
\be\label{config_lane}\eta^{i}\left(z\right)=\eta\left(z,i\right),\ee
that is, the configuration on lane $i$.
 Another interpretation is that $i\in W$ represents a particle species;
 then $\eta(z,i)=\eta^i(z)$ is the number of particles of species $i$ at site $z$.
We also denote the total number of particles at $z\in\Z$ by
\be\label{config_total}
\overline\eta(z)=\sum_{i\in W}\eta^i(z).
\ee
 {\bf Dynamics.}
For $\eta\in\mathcal X$ and $x,y\in V$, denote by $\eta^{x,y}$ the new configuration 
after a particle has jumped, if possible at all, from $x$ to $y$: that is, 
\[
\eta^{x,y}\left(w\right)=\left\{ \begin{array}{cc}
\eta\left(w\right) & w\neq x,y\\
\eta(x)-1 & w=x\\
\eta(y)+1 & w=y
\end{array}\right.,
\] 
\begin{definition}\label{def:kernel}
We call  {\em kernel} on a nonempty countable set $S$ 
a mapping $\pi:S\times S\to[0;+\infty)$ such that
\be\label{cond_lig}
\sup_{x\in S}
\sum_{y\in S}[\pi(x,y)+\pi(y,x)]
<+\infty.
\ee
\end{definition}
Let $q(.,.)$ be a kernel on $W$, and for each $i\in W$, let $q_i(.,.)$ be a translation invariant
 kernel on $\Z$. By translation invariant  we mean that there exists a summable function 
 $Q_i:\Z\to[0;+\infty)$ such that 
\be\label{kernel_trans}
\forall u,v\in\Z,\quad q_i(u,v)=Q_i(v-u).
\ee
We define kernels $p_h(.,.)$ and $p_v(.,.)$ on $V$ by
\begin{eqnarray}
\label{kernel_h}
p_h(x,y) & := & \dsp\sum_{i\in W} q_i[x(0),y(0)]{\bf 1}_{\{x(1)=y(1)=i\}}\\
\label{kernel_v}
p_v(x,y) & := & q[x(1),y(1)]{\bf 1}_{\{x(0)=y(0)\}}.
\end{eqnarray}
where $h$ stands for ``horizontal'' and $v$ for ``vertical''  
(in accordance with the interpretation of the model
given later on):
For $\theta\geq 0$, we define the kernel $p^\theta$ on $V$ by
\be\label{restrict_kernel}
p^\theta(x,y):=p_h(x,y)+\theta p_v(x,y).
\ee
The ladder process with kernel $p^\theta(.,.)$ is the Markov process  
$\left(\eta_{t}\right)_{t\geq0}$ on $\mathcal{X}$ 
(\cite{liggett2012interacting}) with generator
\begin{eqnarray}
\nonumber
L^\theta f\left(\eta\right)
& := & \sum_{x,y\in V}p^\theta\left(x,y\right)\eta\left(x\right)\left(1-\eta\left(y\right)\right)
\left(f\left(\eta^{x,y}\right)-f\left(\eta\right)\right)\\
\label{eq:generator of the Exclusion}
& = & (L_h+\theta L_v)  f\left(\eta\right)
\end{eqnarray}
where $L_h$ and $L_v$ are defined respectively by replacing 
$p^\theta(.,.)$ by $p_h(.,.)$ and $p_v(.,.)$. In 
\eqref{eq:generator of the Exclusion},  $f$ is a local function on $V$, 
i.e., a function depending on finitely many coordinates.
Thus $L^\theta$ is the generator of a simple exclusion process (SEP) 
on $V$ with jump kernel $p^\theta$, where $L_h$ corresponds 
to jumps along a lane and $L_v$ corresponds to interlane jumps.
The latter occur with a time scaling parameter $\theta$. \\ \\
 The ladder process can be constructed 
through the so-called
\textit{Harris graphical representation} (\cite{Harris72}). 
Suppose $\left(\Omega,\mathfrak{F},\mathbb{P}\right)$
is a probability space that supports a family  
$\mathcal{N}=\left\{ \mathcal{N}_{\left(x,y\right)}:\left(x,y\right)\in V\right\}$
(called a Harris system) of independent Poisson processes 
$\mathcal{N}_{(x,y)}$ with respective
intensities $p^\theta\left(x,y\right)$.  For a given $\omega \in \Omega$,
we let the  process evolve  according to the following rule:
if there is a particle at site $x\in V$ at time $t^{-}$ where 
 $t\in\mathcal{N}_{\left(x,y\right)}$, 
it shall attempt to jump to site $y$. The attempt is suppressed if
at time $t^{-}$ site $y$ is occupied.  
\begin{remark}\label{rk:indep}
For $\theta=0$, i.e. $L^\theta=L_h$, the process generated 
by $L^\theta$ is a collection of independent SEP's on different lanes 
with jump kernel $q_i(.,.)$ on lane $i$, whereas for $\theta>0$, 
$\theta$ encodes the strength of interlane coupling.
\end{remark}
\subsection{Assumptions}\label{subsec:multi} 
In the sequel, a family of real numbers $\rho_i$ indexed by $i\in W$ 
will be denoted by $(\rho_i)_{i=0,\ldots,n-1}$, or 
$(\rho_0,\ldots,\rho_{n-1})$. \\ \\
Recall Definition \ref{def:kernel}.
For $x,y\in S$ such that $x\neq y$, and $\ell\in\N$, 
 we write $x\stackrel{\ell}{\rightarrow}_\pi y$ if there exists  
 a path $(x=x_0,\ldots,x_{\ell}=y)$ of length $\ell$ such that 
 $\pi(x_k,x_{k+1})>0$ for $k=0,\ldots,\ell-1$. We write $x\rightarrow_\pi y$ 
 if there exists $\ell\in\N$ such that
$x\stackrel{\ell}{\rightarrow}_\pi y$. We omit mention of $\pi$
whenever there is no ambiguity on the kernel. 
We say $x$ and $y$ are $\pi$-connected if  $x\rightarrow_\pi y$
or $y\rightarrow_\pi x$. 
We set the following definition. 
\begin{definition}\label{def_irred}
A kernel $\pi$ will be called {\em weakly irreducible} if, 
for every $x,y\in S$ such that $x\neq y$, $x\rightarrow_\pi y$ 
or $y\rightarrow_\pi x$.
\end{definition}
We refer to the above property as {\em weak} irreducibility 
as opposed to irreducibility for which one requires 
$x\rightarrow_\pi y$ {\em and} $y\rightarrow_\pi x$. In the context 
of interacting particle systems, this is a more natural assumption, 
since it includes for instance totally asymmetric jumps which do not 
satisfy full irreducibility (see examples 
in Section \ref{subsec:examples}).  \\ \\ 
We make the following assumptions 
on the transition kernel $p^\theta$ 
(see \eqref{kernel_h}--\eqref{restrict_kernel})
 of the ladder process. 
\begin{assumption}\label{assumption_ker}\mbox{}\\ \\
 (i)  For every $i=0,\ldots,n-1$,  there exists $d_i\geq 0$ 
 and $l_i\geq 0$ such that $d_i+l_i>0$ and 
\be\label{nearest}
Q_i(z)=d_i{\bf 1}_{\{z=1\}}+l_i{\bf 1}_{\{z=-1\}}.
\ee
(ii)  The kernel $q(.,.)$ is weakly irreducible.
\end{assumption}
\noindent On lane $i\in W$, $\gamma_i$ denotes the mean drift, that is,
\be\label{def_drift}
\gamma_i:=\sum_{z\in\Z}zQ_i(z)=d_i-l_i.
\ee 
 Note that the drift may have different signs 
(including $0$) on different lanes.
For instance, we may have a TASEP on each lane, 
but going left to right on some lanes and right to left on others.
If one thinks of the multilane TASEP as a highway connecting 
two cities A and B, then 
cars can travel  from city A to city B on some lanes and
from city B to city A on others.
\begin{assumption}\label{assumption_rev}
For every irreducibility class $\mathcal C$ of $q(.,.)$, 
there exists a nonzero reversible measure 
$\lambda_.=(\lambda_i)_{i\in\mathcal C}$ for the chain 
restricted to $\mathcal C$. That is, for all  $i,j\in\mathcal C$,
\be\label{reversible_class}
\lambda_i q(i,j)=\lambda_j q(j,i).
\ee
\end{assumption}
\begin{remark}\label{remark_rev} Regarding Assumption \ref{assumption_rev}: \\ \\
1. If $\mathcal C$ is a singleton, the condition is void. In this case, 
any measure on $\mathcal C$
(that is any constant value $\lambda_i$ assigned to the unique element $i$ 
of $\mathcal C$) is reversible. \\ \\
2. Since the restriction of the kernel to $\mathcal C$ is irreducible,  
the reversible measure $\lambda_.$ is positive on $\mathcal C$ and  
unique up to a multiplicative factor. 
\end{remark}
 Several examples of models satisfying the above assumptions will be given in 
 Section \ref{subsec:examples}. 
\subsection{Invariant measures}\label{subsec:inv-meas}
In the sequel, we denote by $(\tau_k)_{k\in\mathbb{Z}}$ 
the group of space shifts on $\Z$, defined as follows. 
The shift operator $\tau_k$ acts on a particle configuration
$\eta\in\mathcal X$ through
\begin{equation}\label{shift_config}
(\tau_k\eta)(z,w):=\eta(z+k,w),\quad\forall (z,w)\in\mathbb{Z}\times W.
\end{equation}
We write $\tau$ instead of $\tau_{1}$. We denote by
$\mathcal{S}$ the set of all probability measures on $\mathcal{X}$ that
are invariant under $\tau$, and by $\mathcal I$ the set of probability 
measures on $\mathcal I$ that are invariant for the generator 
\eqref{eq:generator of the Exclusion}. We are interested here in the 
(compact convex)  set $\mathcal I\cap\mathcal S$. By Choquet-Deny theorem, 
every one of its elements is a mixture of its extremal elements, the set 
of which is denoted by $(\mathcal I\cap\mathcal S)_e$. \\ \\
It is well known  that product Bernoulli measures are invariant 
for translation-invariant exclusion processes on $\Z$ 
(\cite{liggett2012interacting}). 
It follows from Remark \ref{rk:indep} that the family of  measures
 $\nu^{\rho_0,\ldots,\rho_{n-1}}$ defined for 
 $(\rho_0,\ldots,\rho_{n-1})\in[0,1]^n$ by
\be\label{def_inv_gen}
\nu^{\rho_0,\ldots,\rho_{n-1}}\left\{
\eta(z,i)=1
\right\} =\rho_i,\quad (z,i)\in \Z\times W
\ee
 is invariant for the  generator \eqref{eq:generator of the Exclusion} 
 when $\theta=0$. When $\theta>0$, the vertical kernel is present, and 
 we expect the coupling of lanes to select a family of invariant measures 
 where $(\rho_0,\ldots,\rho_{n-1})$ is restricted to a one-dimensional 
 relaxation  (or equilibrium)  manifold defined by the relations 
\be\label{detailed_q}
\rho_i(1-\rho_j)q(i,j)=\rho_j(1-\rho_i)q(j,i)
\ee  
for every $(i,j)\in W^2$ such that $i\neq j$. 
We denote this manifold by
\be\label{def_set_f}
\mathcal F:=\{
(\rho_0,\ldots,\rho_{n-1})\in[0,1]^n\mbox{ satisfying relation }\eqref{detailed_q}
\}.
\ee
 The following proposition shows that  we can parametrize the set 
$\mathcal F$  by a global density parameter $\rho\in[0,n]$. 
See Section \ref{sec:proof_inv} for its proof. 
\begin{proposition}\label{lemma_phi_super}
\mbox{}\\ \\
(i) The mapping $\psi:\mathcal F\to[0,n]$ defined by
\be\label{def_psi_super}
\psi(\rho_0,\ldots,\rho_{n-1}):=\sum_{i=0}^{n-1}\rho_i
\ee
is a bijection.\\ \\
(ii) For every $i\in W$,  the mapping 
\be\label{inverse_mapping}
\widetilde{\rho}_i:\rho\mapsto\widetilde{\rho}_i(\rho)
:=\left(\psi^{-1}(\rho)\right)_i\ee
 is continuous and nondecreasing. 
\end{proposition}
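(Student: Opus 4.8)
The plan is to reduce the description of $\mathcal F$ to a single scalar parameter per irreducibility class of $q(.,.)$, and then to use the fact that $q$ is only \emph{weakly} irreducible (Assumption \ref{assumption_ker}) to prove that at most one class can be partially occupied, which will rigidify $\mathcal F$ into a one-parameter ``staircase''. First I would reformulate \eqref{detailed_q} inside a fixed irreducibility class $\mathcal C$, using the reversible measure $\lambda_.$ provided by Assumption \ref{assumption_rev}. For $i,j\in\mathcal C$ with $q(i,j)>0$ (hence $q(j,i)>0$, since $\lambda_i,\lambda_j>0$), substituting $q(j,i)=\lambda_i q(i,j)/\lambda_j$ into \eqref{detailed_q} and dividing by $q(i,j)$ turns the relation into $\lambda_j\rho_i(1-\rho_j)=\lambda_i\rho_j(1-\rho_i)$, that is $\rho_i/[\lambda_i(1-\rho_i)]=\rho_j/[\lambda_j(1-\rho_j)]$ whenever $\rho_i,\rho_j<1$. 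Since $\mathcal C$ is irreducible this forces the ``fugacity'' $z_i:=\rho_i/(1-\rho_i)$ to equal $\lambda_i s_{\mathcal C}$ for one common level $s_{\mathcal C}\in[0,+\infty]$; the boundary values are consistent because the same relation forces $\rho_i=1$ (resp. $\rho_i=0$) to propagate across all of $\mathcal C$. Thus on $\mathcal F$ each class $\mathcal C$ is described by the single parameter $s_{\mathcal C}$ through $\rho_i=\lambda_i s_{\mathcal C}/(1+\lambda_i s_{\mathcal C})$, a continuous and strictly increasing function of $s_{\mathcal C}$ on each lane $i\in\mathcal C$.

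Next I would exploit weak irreducibility to organize the classes. It makes the reachability relation a total order on the finite set of classes, and it forces every cross-class edge to be one-directional: if $q(i,j)>0$ with $i\in\mathcal C_a\neq\mathcal C_b\ni j$, then $q(j,i)=0$ (otherwise $i,j$ would lie in a common class) and $\mathcal C_b$ is reachable from $\mathcal C_a$. For such an edge, \eqref{detailed_q} collapses to $\rho_i(1-\rho_j)=0$, i.e. $s_{\mathcal C_a}=0$ or $s_{\mathcal C_b}=+\infty$. The heart of the proof, and the step I expect to be the main obstacle, is to upgrade these pairwise constraints into a global rigidity statement. Calling a class \emph{full} if $s_{\mathcal C}=+\infty$, \emph{empty} if $s_{\mathcal C}=0$, and \emph{partial} otherwise, the edge constraints propagate along directed paths: from a partial class every reachable class must be full, and every class that can reach it must be empty (a one-edge check, then induction along a path). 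Since weak irreducibility guarantees a directed path between any two distinct classes, the existence of two partial classes would produce a contradiction. Hence at most one class is partial, and in all cases $\mathcal F$ is exactly the staircase: there is a threshold in the total order so that every class strictly downstream is full, every class strictly upstream is empty, and a single intermediate class carries a free level $s\in[0,+\infty]$.

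This staircase description yields both assertions. Sweeping the threshold monotonically from the most downstream class toward the most upstream one, and letting $s$ run over $[0,+\infty]$ inside the current class, produces a continuous path in $\mathcal F$ from $(0,\ldots,0)$ to $(1,\ldots,1)$ along which $\psi$ is continuous and strictly increasing from $0$ to $n$. Surjectivity of $\psi$ then follows from the intermediate value theorem, and injectivity from the rigidity above: prescribing the value of the total density $r=\psi(\rho)$ pins down the threshold class and the level $s$ uniquely. This proves (i). For (ii), viewing $r\in[0,n]$ as parameter, each $\widetilde{\rho}_i(r)$ is identically $0$ while the class of $i$ lies strictly upstream of the threshold, then increases continuously from $0$ to $1$ as the threshold sweeps through that class via $\rho_i=\lambda_i s/(1+\lambda_i s)$, and finally stays at $1$; so $\widetilde{\rho}_i$ is nondecreasing, and its continuity at the class-transition values is checked from the boundary limits $s\to 0^+$ and $s\to+\infty$, together with continuity of $r\mapsto(\text{threshold},s)$ as the inverse of a continuous strictly increasing map.
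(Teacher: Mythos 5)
Your proposal is correct and follows essentially the same route as the paper: the within-class reduction to a single fugacity level via the reversible measure is the paper's Lemma~\ref{lemma_irred}, the propagation of full/empty along one-directional cross-class edges and the resulting ``at most one partial class'' staircase is Lemma~\ref{prop_f} (built on the linear order of Lemma~\ref{lemma_class}), and the sweep/IVT argument for bijectivity plus the piecewise monotone description of $\widetilde{\rho}_i$ matches the paper's proof of (i) and Lemma~\ref{lemma:complete}. The only place you are slightly breezier than the paper is the non-uniqueness of the pair (threshold class, level $s$) at the transition values $\rho=N_\alpha$, which the paper resolves by an explicit case check, but your rigidity argument yields the same configuration there so nothing is lost.
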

It follows from Proposition  \ref{lemma_phi_super} that if we set
\be\label{def_inv_rho}
\nu_\rho:=\nu^{\widetilde{\rho}_0(\rho),\ldots,\widetilde{\rho}_{n-1}(\rho)}
\ee
for every $\rho\in[0,n]$, then
\be\label{repar}
\left\{
\nu^{\rho_0,\ldots,\rho_{n-1}}:\,(\rho_0,\ldots,\rho_{n-1})\in\mathcal F
\right\}
=\{\nu_\rho,\,\rho\in[0,n]\}.
\ee
The following result is an extension of \cite[Theorem 2.1]{mlt1a} 
and can be  established similarly.  Therefore we omit its proof. 
\begin{theorem}
\label{thm:characterization_lemma_gen}
\be\label{extension_charac_gen}
(\mathcal I\cap\mathcal S)_e=\{\nu^{\rho_0,\ldots,\rho_{n-1}},\,
(\rho_0,\ldots,\rho_{n-1})\in\mathcal F\}=\{\nu_\rho,\,\rho\in[0,n]\}.
\ee
\end{theorem}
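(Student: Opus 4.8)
My plan is to establish the first equality in \eqref{extension_charac_gen}, the second being immediate from Proposition \ref{lemma_phi_super}: since $\psi$ in \eqref{def_psi_super} is a bijection from $\mathcal F$ onto $[0,n]$, the reparametrisation \eqref{repar} identifies the two families. As the statement extends \cite[Theorem 2.1]{mlt1a}, I would follow that proof, splitting the first equality into the inclusion $\supseteq$ (each listed measure lies in $(\mathcal I\cap\mathcal S)_e$) and the harder inclusion $\subseteq$ (there are no others), pointing out where Assumptions \ref{assumption_ker} and \ref{assumption_rev} are used.

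For $\supseteq$ I would check translation invariance, invariance and extremality of $\nu^{\rho_0,\ldots,\rho_{n-1}}$ for $(\rho_0,\ldots,\rho_{n-1})\in\mathcal F$. Translation invariance is clear, as \eqref{def_inv_gen} is a product over the i.i.d.\ columns $(\eta(z,i))_{i\in W}$, $z\in\Z$. For invariance I would split the generator \eqref{eq:generator of the Exclusion}: the lane marginal is a Bernoulli product on $\Z$, invariant for the single-lane translation-invariant exclusion with kernel $Q_i$ (Remark \ref{rk:indep}), and since $L_h$ acts lanewise on independent lanes this gives $L_h^{*}\nu^{\rho_0,\ldots,\rho_{n-1}}=0$; the vertical part decouples over sites by \eqref{kernel_v}, and at a fixed site a direct computation identifies single-transition detailed balance for the column marginal $\bigotimes_{i}\mathrm{Ber}(\rho_i)$ with relation \eqref{detailed_q}, so $(\rho_0,\ldots,\rho_{n-1})\in\mathcal F$ makes this column measure reversible and $L_v^{*}\nu^{\rho_0,\ldots,\rho_{n-1}}=0$. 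Finally, being a product over i.i.d.\ columns, $\nu^{\rho_0,\ldots,\rho_{n-1}}$ is a Bernoulli shift, hence ergodic for $\tau$ and thus extremal in $\mathcal S$, a fortiori in $\mathcal I\cap\mathcal S$.

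For $\subseteq$, take $\mu\in(\mathcal I\cap\mathcal S)_e$ and set $\rho:=\sum_{i}\mu\{\eta(0,i)=1\}\in[0,n]$, its total density. Since $\nu_\rho$ is an invariant measure with the same total density (by the easy inclusion together with \eqref{def_inv_rho}), I would couple $\mu$ with $\nu_\rho$ through the basic Harris coupling of two ladder processes, producing a stationary translation-invariant coupling $\bar\mu$ with marginals $\mu$ and $\nu_\rho$. Attractiveness keeps the number of discrepancies non-increasing, and the crucial step, following \cite{Liggett1976} as adapted in \cite{mlt1a}, is to show that such a coupling between two measures of equal density is concentrated on the diagonal $\{\eta=\xi\}$. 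This gives $\mu=\nu_\rho$, which by \eqref{repar} equals $\nu^{\widetilde\rho_0(\rho),\ldots,\widetilde\rho_{n-1}(\rho)}$ with $(\widetilde\rho_0(\rho),\ldots,\widetilde\rho_{n-1}(\rho))\in\mathcal F$; hence $\mu$ has the announced form.

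The step I expect to be the main obstacle is proving that this stationary coupling sits on the diagonal. For a single lane the argument rests on the interface property of \cite{Liggett1976}, whereby the number of sign changes between the two coupled configurations cannot increase; in the multilane setting vertical jumps move discrepancies between lanes and this property need not hold as stated, while under weak irreducibility (Assumption \ref{assumption_ker}(ii)) some transverse links are one-directional, so discrepancies must be followed along $q$-paths rather than lane by lane. Here Assumption \ref{assumption_rev} is decisive: reversibility of $q$ on each irreducibility class excludes persistent circulating transverse currents and lets the discrepancy count be controlled in an averaged sense, which is what allows the diagonal-concentration argument of \cite{mlt1a} to go through. Since these ingredients are available there verbatim, the proof proceeds mutatis mutandis, which is why the details are omitted.
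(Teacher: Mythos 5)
Your proposal follows the same route as the paper, which itself omits the proof of Theorem \ref{thm:characterization_lemma_gen} and simply notes that it is an extension of \cite[Theorem 2.1]{mlt1a} established by the same argument: your two inclusions (product measures over $\mathcal F$ are translation invariant, invariant via the lanewise Bernoulli computation for $L_h$ and column detailed balance \eqref{detailed_q} for $L_v$, and extremal by ergodicity; conversely, a stationary translation-invariant coupling with $\nu_\rho$ at equal total density concentrates on the diagonal) reproduce exactly the structure of that reference. The only caveat is that the decisive technical step — ruling out coexistence of opposite discrepancies in the stationary coupling despite the failure of the single-lane interface property — is deferred to \cite{mlt1a} rather than carried out, but the paper does precisely the same.
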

\begin{remark}\label{remark_inv_meas}
In \cite[Theorems 2.2 and 2.3]{mlt1a}, we studied more generally 
the structure of the set $\mathcal I_e$ of extremal invariant measures 
for two-lane exclusion processes and explained in 
\cite[Appendix A]{mlt1a} how these results could be partially extended 
to multilane processes. Although the class of multilane processes 
mentioned in \cite{mlt1a} is less general than the one considered 
in this paper, the same approach could also apply here. See also item 3. 
of Remark \ref{remark_der} about the role played by 
Proposition \ref{prop:flux_multi} below in this extension. 
However the characterization of $\mathcal I_e$ is not 
required for hydrodynamic limit, 
that is the main purpose of this paper.
\end{remark}
 \section{Hydrodynamics, convergence and relaxation}\label{subsec:hydro_gen}
Before stating our results in Subsections \ref{subsec:results} 
and \ref{subsec:relax},  we introduce the necessary definitions
 in Subsection \ref{subsec:def}. 
\subsection{Definitions}\label{subsec:def}
{\em Empirical measures and density profiles.} 
Let $\mathcal{M}$ be the set of measures on $\mathbb{R}$ equipped
with the vague topology,  and $N\in\N^*:=\{1,2,\cdots\}$ represent 
the scaling parameter, that is the inverse of the macroscopic distance 
between two consecutive sites on a lane. For a particle configuration 
 $\xi\in\{0,\ldots,\ell\}^\Z$,
we define its associated empirical measure at scale $N$ by
\be\label{def_empirical}
\alpha^{N}\left(\xi,dx\right):
= N^{-1}\sum_{z\in \Z}\xi\left(
z\right
)\delta_{\frac{z}{N}}\left(dx\right).
\ee
In particular, if  $\eta$ is a configuration for the multilane 
 asymmetric simple exclusion process (ASEP), 
recall definitions \eqref{config_lane} and \eqref{config_total}. 
Then for $i\in W$,
$\alpha^N(\overline{\eta},dx)$, $\alpha^N(\eta^i,dx)$  
respectively represent the global empirical measure 
and the empirical measures on lane $i$.\\ \\
 Let  $u(.)$ be a $[0,n]$-valued Borel function on $\R$.  
 We say a sequence  $\left(\xi^{N}\right)_{N\in\mathbb{N}^*}$  
 of random $\mathcal X$-valued configurations 
 has \emph{density profile} $u\left(\cdot\right)$
if  $\alpha^{N}\left(\xi^{N},dx\right)$  converges to $u\left(x\right)dx$
in probability as $N\rightarrow\infty$;  that is,  for every
$\epsilon>0$ and every continuous function $\phi:\mathbb{R}\rightarrow\mathbb{R}$
with compact support, 
\begin{equation}
\lim_{N\rightarrow\infty}\mathbb{P}\left(\left|N^{-1}
\sum_{z\in\mathbb{Z}}
\phi\left(\frac{z}{N}\right){\overline\xi}^{N}\left(z\right)
-\int\phi(y)u(y)dy\right|>\epsilon\right)=0.
\label{eq:hydrodef}
\end{equation}
 {\em Local Gibbs states.} 
 In the following,  $\mathcal B_\rho$ denotes the Bernoulli 
distribution with parameter $\rho\in[0,1]$.
Let $(\xi^N)_{N\in\N^*}$ be a sequence of random configurations 
on $\mathcal X$.
Recall the mappings  $\widetilde{\rho}_i$ defined in  
Proposition \ref{lemma_phi_super}.  We say  $(\xi^N)_{N\in\N^*}$ 
is a  {local Gibbs state} (l.g.s.)
with global profile $u(.)$, or with lane profiles  $u^0(.),\ldots,u^{n-1}(.)$,  where
\be\label{profile_lane}
u^i(.):=\widetilde{\rho}_i[u(.)],\quad i\in W ,
\ee
if the law $\mu^N$ of $\xi^N$ writes
\be\label{local_gibbs}
\mu^N(d\xi):=\bigotimes_{x\in\Z,\,i\in W}\mathcal B_{u^{N,i}_x}[d\xi(x,i)]
\ee
where
\be\label{gibbs_lane}
u^{N,i}_x=\widetilde{\rho}_i[u^N_x],\quad i\in W ,\,x\in\Z
\ee
and $(u^{N}_x)_{N\in\N^*,\,x\in\Z}$ is a $[0,n]$-valued family 
such that, for every $a<b$ in $\R$,
\be\label{local_gibbs_1}
\lim_{N\to+\infty}
\int_a^b\left|u^{N}_{\lfloor Nx\rfloor}-u(x)\right|dx=0.
\ee
 In the special case of so-called {\em Riemann} profiles,  that is 
\be\label{first_riemann}
u(x)  =\alpha1_{\left\{x\leq 0\right\} }
+\beta1_{\left\{x>0\right\} }=:R_{\alpha,\beta}(x)
\ee
where  $\alpha,\beta\in[0,n]$,   a natural choice is 
\be\label{gibbs_riemann}
u^N_x:=\alpha{\bf 1}_{\{x\leq 0\}}+\beta{\bf 1}_{\{x> 0\}},\quad x\in\Z. 
\ee
The measure $\mu^N$ in \eqref{local_gibbs} (which no longer depends on $N$) 
is then denoted by $\mu_{\alpha,\beta}$.\\ \\
{\em Local equilibrium.} Let $f$ be a local function of $\mathcal X$. 
For $(\rho_0,\ldots,\rho_{n-1})\in[0,1]^n$ and $\rho\in[0,n]$, 
we define 
\begin{eqnarray}\label{partial_average}
\langle f\rangle(\rho_0,\ldots,\rho_{n-1}) & := & \int_{\mathcal X}f(\eta)d\nu^{\rho_0,\ldots,\rho_{n-1}}(\eta),\\
\label{eq_average}
\overline{f}(\rho) & := & \int_{\mathcal X}f(\eta)d\nu_\rho(\eta)
=\langle f\rangle(\widetilde{\rho}_0(\rho),\ldots,\widetilde{\rho}_{n-1}(\rho)).
\end{eqnarray}
Remark that $\overline{f}$ is continuous, and that it is increasing 
if $f$ is nondecreasing. 
We say $(\xi^N)_{N\in\N^*}$  satisfies the {\em weak local equilibrium}
 property with profile $u(.)$, or with lane profiles  $u^0(.),\ldots,u^{n-1}(.)$ 
 if for every $\varphi\in C^0_K(\R)$, and every local function $f$ 
 on $\mathcal X$, the following limit holds in probability:
\begin{eqnarray} 
N^{-1}\sum_{x\in\Z}\varphi\left(\frac{x}{N}\right)
\tau_x f(\xi^N) & 
\stackrel{N\to+\infty}{\longrightarrow} 
&  \int \varphi(x)\langle f\rangle(u^0(x),\ldots,u^{n-1}(x))dx
\nonumber\\
& = &  \int \varphi(x)\overline{f}[u(x)]dx.
\label{wle}
\end{eqnarray}
We  say $(\xi^N)_{N\in\N^*}$ 
satisfies the {\em strong local equilibrium} property with profile $u(.)$, 
or lane profiles  $u^0(.),\ldots,u^{n-1}(.)$  
if, for every point of continuity $x\in\R$ of $u(.)$,  
\be\label{sle}
\lim_{N\to+\infty}
\Exp\left\{\tau_{\lfloor Nx\rfloor}f(\xi^N)\right\}
=\langle f\rangle (u^0(x),\ldots,u^{n-1}(x))
=\overline{f}[u(x)].
\ee
By the law of large numbers, a local Gibbs state $(\xi^N)_{N\in\N^*}$ satisfies 
the  weak local equilibrium property with the same profiles. 
The weak local equilibrium property implies 
that $(\xi^{N,i})_{N\in\N^*}$ has density profile $u^i(.)$, hence that 
$(\overline{\xi}^N)_{N\in\N^*}$ has density profile $u(.)$, 
as can be seen by choosing
\be\label{observable} f(\eta)=\eta^i(0), \quad 
\langle f\rangle(\rho^0,\ldots,\rho^{n-1})=\rho^i,\quad
\quad \overline{f}(\rho)=\widetilde{\rho}_i(\rho).
\ee 
 \subsection{Hydrodynamic limit and convergence results}\label{subsec:results}
 Let $(\theta(N))_{N\in\N^*}$ be a positive integer-valued sequence such that 
\be\label{assumption_relax_weak}
\lim_{N\to+\infty}
\theta(N)=+\infty.
\ee
 When $n\geq 3$, depending on $n$ and $q(.,.)$, we need a stronger 
growth assumption on $\theta(N)$, still allowing $\theta(N)\ll N$ 
with some restriction. Namely, let 
\begin{eqnarray}\label{connection}
n^*=n^*(n,q(.,.)) & := & \max\{d_q(i,j):\,(i,j)\in W,\,i<j\},\quad\mbox{where}\\
\label{distance_q}
d_q(i,j) & := & \inf\{\ell\in\N:\,i\stackrel{\ell}{\rightarrow}_q j
\mbox{ or }j\stackrel{\ell}{\rightarrow}_q i\}. 
\end{eqnarray}
In words, $n^*$ is  the maximal  value over all pairs $i\neq j\in W$ 
of the minimum length of a path connecting $i$ to $j$  by $q(.,.)$ or 
its reverse $\check{q}(i,j):=q(j,i)$. This is also the diameter of $W$ 
for the unoriented graph distance induced by the kernel $q(.,.)$. Let
\be\label{number_steps}
m^*:=m^*(n,q(.,.))=\left\lfloor \frac{n^*}{2}\right\rfloor\left(
n^*-1-\left\lfloor \frac{n^*}{2}\right\rfloor
\right)+n^*.
\ee
Then we make the assumption that
\be\label{assumption_relax}
\lim_{N\to+\infty}
 \frac{\theta(N)}{
N^{1-\frac{1}{m^*}}
}
=+\infty.
\ee
\begin{remark}\label{rk_relax}
When $n=2$, or more generally when 
\be
\label{onestep}
q^*:=\min\{
q(i,j)+q(j,i):\,(i,j)\in\{0,\cdots,n-1\}^2,\,i<j
\}>0
\ee
we have $n^*=1$, hence $m^*=1$ (see Example \ref{example_coalescence}), 
hence \eqref{assumption_relax} reduces
to \eqref{assumption_relax_weak}. 
\end{remark}
We consider a sequence of processes $\eta^N_.=(\eta_t^N)_{t\geq 0}$ such that, 
for each $N\in\N^*$, $\eta^N_.$ has generator
 $L^{\theta(N)/N}$ (cf. \eqref{eq:generator of the Exclusion}). 
 We are interested in the speeded up process under hyperbolic scaling, 
 i.e., $(\eta^N_{Nt})_{t\geq 0}$, which has generator 
\be\label{gen_relax}
\mathcal L^N:=NL^{\theta(N)/N}=NL_h+\theta(N)L_v.
\ee 
 To describe the hydrodynamic behaviour of these processes, we need to define  
 the so-called macroscopic flux function $G$ that appears 
in the hydrodynamic limit. As usual in this context, the {\em macroscopic} 
flux function is expressed as an equilibrium average of a 
{\em microscopic} flux function, that is a function of the microscopic 
configuration $\eta\in\mathcal X$. For our model, the latter writes
\be\label{def_microflux_ladder}
j:=\sum_{k\in W}j_k
\ee
where, for $k\in W$  (recall \eqref{nearest}),
\begin{eqnarray}\label{jh}
 j_k(\eta):=d_k\eta^k(0)[1-\eta^k(1)]-l_k\eta^k(1)[1-\eta^k(0)] 
\end{eqnarray}
represents the microscopic current on lane $k\in W $.  Note that, for every $x\in\Z$,
\be\label{micro_gradient}
\mathcal L^N[\overline{\eta}(x)]
=NL_h[\overline{\eta}(x)]=N\left(\tau_{x-1}j(\eta)-\tau_x j(\eta)\right).
\ee
Indeed, since the vertical  component  $L_v$ of the generator contains 
only vertical jumps,  it leaves $\overline{\eta}(z)$ unchanged, and 
thus does not contribute to \eqref{micro_gradient}. 
The macroscopic flux is then given by,  for $\rho\in[0,n]$, 
\be\label{def_macroflux_ladder}
G\left(\rho\right)=\int j(\eta)\nu_{\rho}\left(d\eta\right).
\ee
 Using  \eqref{def_inv_rho} and \eqref{repar},  this yields
\be\label{def_flux_super}
G(\rho):=\sum_{i=0}^{n-1}\gamma_i G_0\left[
\widetilde{\rho}_i(\rho)
\right]
\ee
where   the mean drift
$\gamma_i$ was defined in \eqref{def_drift},  and
$G_0$ is the flux function of the single-lane TASEP, given by
\begin{equation}\label{flux_tasep}
G_{0}\left(\alpha\right)=\alpha\left(1-\alpha\right)\quad  \forall \alpha\in[0,1]. 
%
\end{equation} 
 Properties and examples of the flux function defined 
 by \eqref{def_macroflux_ladder}--\eqref{flux_tasep} 
 are studied in Section \ref{subsec:examples}. 
We can now state  hydrodynamic limit and local equilibrium 
results for the multilane ASEP. We consider the conservation law
\begin{align}
 \partial_{t}u + \partial_{x}G\left(u\right)& = 0 \nonumber \\
u\left(x,0\right) & =u_{0}\left(x\right).\label{eq:hydrodynamic equation}
\end{align}
with $G$ given by \eqref{def_flux_super}. An important particular case 
is the {\em Riemann} initial conditions, 
that is (cf. \eqref{first_riemann})
\begin{equation}\label{eq:init-riemann}
u_0(x) =R_{\alpha,\beta}(x),\quad x\in\R.
\end{equation}
\begin{theorem}
\label{thm:local_equilibrium} Let $u_0(.)$ be a $[0,n]$-valued 
Borel function on $\R$, and  
$\left(\eta_{t}^{N}\right)_{t\geq 0,N\in\mathbb{N}^*}$  
be a sequence of processes with generator 
 $L^{\theta(N)/N}$. 
  Then the following statements hold under assumption \eqref{assumption_relax}: \\ \\
(i)  Assume $(\overline{\eta}_{0}^{N})_{N\in\N^*}$ has density  profile  
$u_{0}\left(\cdot\right)$.
Then  $(\overline{\eta}^{N}_{Nt})_{N\in\N^*}$  has 
density profile $u(.,t)$, the entropy solution at time $t$ to 
the conservation law \eqref{eq:hydrodynamic equation}.\\ \\
(ii) Assume $(\eta^N_0)_{N\in\N^*}$ is a local Gibbs state with  
lane profiles $u_0^0(.),\dots,u_0^{n-1}(.)$. 
Then $(\eta^N_{Nt})_{N\in\N^*}$ satisfies weak local equilibrium with lane profiles 
  $u^0(.,t),\ldots,u^{n-1}(.,t)$   defined from $u(.,t)$ as 
  in \eqref{profile_lane}.\\ \\
(iii) Assume 
$\eta^N_0\sim \mu_{\alpha,\beta}$
for some $0\leq\alpha,\beta\leq1$. 
Then 
 $(\eta^{N}_{Nt})_{N\in\N^*}$  satisfies  strong local equilibrium with lane profiles 
  $u^0(.),\ldots,u^{n-1}(.)$  defined by \eqref{profile_lane}. 
 \end{theorem}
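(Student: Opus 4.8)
The plan is to follow the constructive variational scheme of \cite{Bahadoran2002}, which reduces the hydrodynamic limit for general Cauchy data to the Riemann problem \eqref{eq:init-riemann}. Two pillars are needed: a \emph{macroscopic stability} estimate allowing one to compare the process started from ordered initial data and to localize around piecewise-constant profiles, and \emph{Riemann hydrodynamics}, namely the limit for step initial data together with a microscopic counterpart of the variational (Lax--Oleinik) formula for the entropy solution of \eqref{eq:hydrodynamic equation}. Granting these, part (i) follows by approximating $u_0$ by step functions, transporting each Riemann solution forward, and using macroscopic stability to control the approximation error in the vague topology; parts (ii) and (iii) are then obtained by upgrading the density statement to local equilibrium via the one-block estimate, after which the lane profiles are forced to be the ``Maxwellian'' $\widetilde{\rho}_i[u(\cdot,t)]$ by Proposition \ref{lemma_phi_super}.

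First I would set up the basic coupling of two copies of the process driven by a common Harris system and establish macroscopic stability. For single-lane ASEP this rests on the \emph{interface property} of \cite{Liggett1976}, whereby the number of sign changes of the discrepancy field between two coupled configurations cannot increase; for the multilane model this fails, since a vertical jump can create new sign changes. The key new ingredient is a \emph{quasi-interface} property: under \eqref{assumption_relax}, vertical jumps create discrepancies slowly enough that the total number of extra sign changes produced up to the hydrodynamic time scale is $o(N)$, hence macroscopically invisible. To make this quantitative I would carry out a short-time analysis of how a localized cluster of discrepancies is resolved by the transverse dynamics, which is the content of Lemma \ref{lemma_coalescence}: to align or coalesce discrepancies spread across lanes one must traverse transverse paths of length up to the graph diameter $n^*$, and the combinatorial count of elementary steps required is exactly $m^*$; balancing the available transverse time $\theta(N)$ against the spatial scale $N$ then yields precisely the threshold $N^{1-1/m^*}$ in \eqref{assumption_relax}. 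This refines \cite[Lemma 3.1]{BMM}, where the regime $\theta(N)\sim N$ makes the correction automatic.

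Next I would prove Riemann hydrodynamics. Since $\overline{\eta}$ is conserved and evolves by a discrete gradient, \eqref{micro_gradient}, the global empirical measure obeys a microscopic balance law with flux $\tau_x j(\eta)$; the remaining task is a local-equilibrium closure showing that the empirical average of this current over a mesoscopic box may be replaced by $G$ evaluated at the empirical global density. This is where the one-block and two-block estimates enter, and here I would adapt the argument of \cite{Rezakhanlou91}, substituting the quasi-interface property for its use of the genuine interface property. Combining the resulting microscopic variational formula with attractiveness identifies the limit of the global density for step data with the entropy solution of the scalar law \eqref{eq:hydrodynamic equation}, with flux $G$ given by \eqref{def_flux_super}. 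A delicate point is that the one-block estimate must certify not merely \emph{some} product-Bernoulli equilibrium but the specific measure $\nu_\rho$ lying on the manifold $\mathcal F$; this is where the reversibility assumption and Theorem \ref{thm:characterization_lemma_gen} are used, pinning the lane densities to $\widetilde{\rho}_i(\rho)$ and hence producing the lane profiles \eqref{profile_lane} required for part (ii).

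The hardest step will be the quasi-interface property and the accompanying short-time coalescence estimate underpinning \eqref{assumption_relax}. Everything downstream---attractiveness, the reduction to the Riemann problem, and the block estimates---is a careful but fairly standard adaptation of the hyperbolic hydrodynamics toolbox once the comparison principle holds up to $o(N)$ errors; by contrast, quantifying how fast the transverse dynamics repairs the sign-change structure requires genuinely model-specific combinatorics, and it is exactly what forces the growth condition on $\theta(N)$ (trivially met when $n^*=1$, i.e. for two lanes or mean-field transverse walks, cf. Remark \ref{rk_relax}). Finally, for part (iii) I would exploit the self-similarity of the Riemann entropy solution and the explicit variational formula to strengthen the integrated convergence of part (ii) to the pointwise convergence of expectations \eqref{sle} at every continuity point, using monotonicity in $\alpha,\beta$ together with macroscopic stability to sandwich the occupation variables.
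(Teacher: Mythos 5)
Your proposal follows essentially the same route as the paper: reduction of the Cauchy problem to the Riemann problem via the scheme of \cite{Bahadoran2002}, macroscopic stability obtained from a short-time coalescence analysis of the transverse dynamics (which is exactly what produces the exponent $m^*$ and the threshold $N^{1-1/m^*}$ in \eqref{assumption_relax}), a quasi-interface property substituting for Liggett's interface property in the one- and two-block estimates, and identification of the lane profiles through $\widetilde{\rho}_i$. Two points in your write-up deserve care. First, in the basic coupling no discrepancies are ever created, by vertical jumps or otherwise; the failure of the interface property in the multilane setting comes from opposite discrepancies \emph{crossing} each other on different lanes without coalescing, and it is the number of such crossings (not of newly created discrepancies or sign changes) that the coalescence estimate of Lemma \ref{lemma_coalescence} must control. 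Your proposed remedy is nonetheless the right one. Second, you cannot invoke Theorem \ref{thm:characterization_lemma_gen} to pin the Ces\`aro limits of the law to mixtures of the $\nu_\rho$: for $\theta(N)\neq N$ the generator $\mathcal L^N=NL_h+\theta(N)L_v$ is not a fixed generator speeded up in time, so these limits need not be invariant measures of any fixed process (cf. Remark \ref{remark_av}); the identification must instead go through the one-block estimate \eqref{eq:relax_oneblock}, which directly replaces block averages of local functions by their $\nu_\rho$-expectations. Since you also list the one-block estimate as the closing tool, this is a matter of which lemma carries the weight rather than a missing idea.
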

The hydrodynamic profiles $u^i(.,t)$ of individual lanes in statements 
 \textit{(ii)--(iii)} of Theorem \ref{thm:local_equilibrium} 
 do not obey an autonomous equation.  We next show that they can be 
 interpreted as {\em relaxation limits} for a hyperbolic system of 
 scalar conservation laws with so-called {\em weak coupling}. 
\subsection{Relaxation limit}\label{subsec:relax}
When the lanes are not coupled by a vertical kernel,
 by Remark \ref{rk:indep}, the hydrodynamic limit of the system is 
 given by a {\em system} 
of {\em uncoupled} conservation laws:
\be\label{system}
\left\{
\ba{lll}
\partial_t\rho_0+\partial_x[\gamma_0\rho_0(1-\rho_0)] & = & 0\\ 
 & \vdots  &  \\
\partial_t\rho_{n-1}+\partial_x[\gamma_{n-1}\rho_{n-1}(1-\rho_{n-1})]
& = & 0
\ea
\right.
\ee
where the entropy solution is picked for each of these equations.  
Indeed the $i$-th equation in \eqref{system} was shown 
(\cite{Rezakhanlou91}) to be the hydrodynamic limit for a
 simple exclusion process on $\Z$ with mean drift $\gamma_i$.  \\ \\
We can view the vertical dynamics as adding creation/annihilation 
terms on each lane. This can be formally understood  as a relaxation 
system obtained from
\eqref{system} by adding fast balance terms on the right-hand side, 
whose equilibrium
manifold is exactly $\mathcal F$ defined by 
the relations \eqref{detailed_q}.  \\ \\
Indeed, using the generator \eqref{gen_relax} defined 
from \eqref{eq:generator of the Exclusion}--\eqref{nearest}, 
 we see that 
the time evolutions of the expected densities on each lane 
are given by,  for $i\in W$, 
\be\label{coupled_system_1}
\ba{lll}
\dsp\frac{d}{dt}\Exp[\eta^i_{Nt}(x)]
 & = &
\dsp N\Exp\left[
d_i\eta^i_{Nt}(x-1)(1-\eta^i_{Nt}(x))-l_i\eta^i_{Nt}(x)(1-\eta^i_{Nt}(x-1))
\right]\\ 
&-&
N\Exp
\left[
d_i\eta^i_{Nt}(x)(1-\eta^i_{Nt}(x+1))-l_i\eta^i_{Nt}(x+1)(1-\eta^i_{Nt}(x))
\right]\\ 
& + & \dsp 
\theta(N)\Exp
\sum_{j\in W}\left[
q(j,i)\eta^j_{Nt}(x)(1-\eta^i_{Nt}(x))-q(i,j)\eta^i_{Nt}(x)(1-\eta^j_{Nt}(x))
\right].
\ea
\ee
In \eqref{coupled_system_1}, the first two lines are 
produced by the horizontal part of the dynamics, that is ASEP on each lane, and 
the third line by the vertical part.\\
Following the usual heuristic in hydrodynamic limits, if we believe {\em a priori} 
that the measures \eqref{def_inv_gen} describe local equilibrium states, 
we replace the above expectations with expectations computed under these
measures, and approximate the lattice gradients with  
continuous space gradients. This yields formally the following 
relaxation system for $i=0,\ldots,n-1$:
\be\label{relax_system}
\ba{lll}
\partial_t\rho_i+\gamma_i\partial_x[\rho_i(1-\rho_i)] 
& = & \theta(N)\sum_{j\in W}[q(j,i)\rho_j(1-\rho_i)-q(i,j)\rho_i(1-\rho_j)]\\ 
\ea
\ee
where (forgetting about the particle system)  $\theta(N)$  
now plays the role of the relaxation parameter. 
It is important to note that the interlane jumps do not produce a lattice gradient 
(it should be viewed as a creation rather than a transport term), 
hence the time scaling of order  $\theta(N)$ 
does not disappear from the right-hand side of \eqref{relax_system}.
Adding the equations in \eqref{relax_system} shows that 
\be\label{conserved_quantity}
{\rho}:=\sum_{i\in W}\rho_i
\ee
is a conserved quantity, which satisfies the  conservation law
\be\label{non_closed}
\partial_t{\rho}+\partial_x\sum_{i\in W}\gamma_i\rho_i(1-\rho_i)=0.
\ee
Note that for finite $N$, \eqref{non_closed} is not a closed equation, since the flux 
\be\label{nonclosed_flux}
\sum_{i\in W}\gamma_i\rho_i(1-\rho_i)=\sum_{i\in W}\gamma_i G_0(\rho_i)\ee
in \eqref{non_closed} cannot be expressed as a  function
of ${\rho}$. However, it is expected that in the limit $N\to+\infty$, 
 $(\rho_0,\ldots,\rho_{n-1})$  converges to the so-called 
 {\em equilibrium manifold} defined by 
declaring that each right-hand side should vanish. This is justified heuristically by
the fact that otherwise the left-hand side would become infinite in the limit.
On this manifold, that will turn out to be exactly $\mathcal F$ defined 
by the relations \eqref{detailed_q},  the densities  $\rho_0,\ldots,\rho_{n-1}$ 
are (see Lemma \ref{lemma_phi_super})  functions $\rho_i=\widetilde{\rho}_i(\rho)$ 
of the total density. Thus the flux \eqref{nonclosed_flux} is exactly 
equal to \eqref{def_flux_super}.\\ \\
In the next results, we investigate 
the relaxation limit for \eqref{relax_system}  
and show that the limiting global density is governed 
by the same conservation law as the hydrodynamic limit 
in Theorem \ref{thm:local_equilibrium}. Moreover, 
we prove that in the limit the $\rho_i$'s coincide 
with the individual lane densities in Theorem \ref{thm:local_equilibrium}.  \\ \\
 We begin by recalling from \cite{hanat} the definition and properties of 
 entropy solutions for a general hyperbolic system with weak coupling, namely 
\be\label{general_system}
\partial_t\rho_i+\partial_x[f_i(\rho_i)] = c_i(\rho_0,\ldots,\rho_{n-1})
\ee
where the flux functions $f_i:[0,1]\to\R$, and the reaction terms 
$c_i:[0,1]^n\to\R$, $i=0,\ldots,n-1$, are functions of class $C^1$. 
We call {\em entropy}  a convex function  $h\in C^1([0,1],\R)$.
The $i$-th {\em entropy flux} associated to $h$ (defined 
modulo a constant) is $g_i(\rho)=\int h'(\rho)f'_i(\rho)d\rho$. 
We then say $(h,g_i)$ is an {\em entropy-flux pair} for the $i$-th equation.
\begin{definition}\label{def:relax_entropy}
Let be given  $\rho^0_i\in L^\infty([0,+\infty)\times\R;[0,1])$ 
for $i=0,\ldots,n-1$.
The family of functions
$\rho^i(.,.)\in L^\infty([0,+\infty)\times\R;[0,1])$, 
where $i=0,\ldots, n-1$, is an {\rm entropy solution to} \eqref{general_system} 
if and only if the following conditions are satisfied for every 
$i=0,\ldots,n-1$: \\ \\
1. (Entropy conditions) For every entropy $h:[0,1]\to\R$ 
with  associated entropy flux $g_i$, 
\be\label{entropy_cond}
\partial_t h[\rho_i(t,x)]+\partial_x g_i[\rho_i(t,x)]
\leq h'[\rho_i(t,x)]c_i[\rho_0(t,x),\ldots,\rho_{n-1}(t,x)]
\ee
in the sense of distributions on $[0,+\infty)\times\R$. \\ \\
2. (Initial conditions) For every $a<b$ in $\R$,
\be\label{initial_cond}
\lim_{t\to 0}\int_a^b\left|\rho_i(t,x)-\rho_i^0(x)\right|=0.
\ee
\end{definition}
\begin{remark}\label{remark_kru}
As in the case of a single scalar conservation law,  
it is equivalent to impose the entropy conditions  \eqref{entropy_cond} 
for $C^1$ entropies $h$, or to impose it for one of the following families of 
{\em Kru\v{z}kov} entropy pairs (\cite{kru, serre}):
\begin{eqnarray}\label{kruzkov}
h_c(\rho)&=&|\rho-c|,\quad g_{i,c}(\rho)
={\rm sgn}(\rho-c)[f_i(\rho)-f_i(c)],\quad c\in[0,1],\\
\label{kruzkov_plus}
h_{c+}(\rho)&=&(\rho-c)^+,\quad g_{i,c+}(\rho)
={\bf 1}_{\{\rho>c\}}[f_i(\rho)-f_i(c)],\quad c\in[0,1],\\
\label{kruzkov_min}
h_{c-}(\rho)&=&(\rho-c)^-,\quad g_{i,c-}(\rho)
=-{\bf 1}_{\{\rho<c\}}[f_i(\rho)-f_i(c)],\quad c\in[0,1].
\end{eqnarray}
Indeed, each of these families can be obtained by regularization of 
smooth convex entropies, and conversely generates by mixture all 
convex entropies (up to linear functions). 
\end{remark}
\begin{remark}
\label{remark_scalar}
Specializing \eqref{entropy_cond}--\eqref{initial_cond} 
to the case $n=1$, $c_0=0$, $f_0=f$, $\rho_0(.,.)=\rho(.,.)$, 
$\rho^0_0(.)=\rho^0(.)$, we recover the usual definition 
of entropy solutions to the scalar conservation law
\be\label{relax_law}
\partial_t \rho(t,x)+\partial_x[f(\rho(t,x))]=0.
\ee
\end{remark}
 We now state the relaxation and equilibrium limit for 
 \eqref{relax_system} (in fact, a slightly more general version 
 thereof based on \eqref{general_system}).
 Recall Assumptions \ref{assumption_ker} and \ref{assumption_rev}. 
\begin{theorem} 
\label{th:relax_limit}
Let $\rho_\varepsilon(.,.)=[\rho_{0,\varepsilon}(.,.),
\ldots,\rho_{n-1,\varepsilon}(.,.)]$ denote the entropy solution to 
\be\label{general_relax_system}
\partial_t\rho_i+\partial_x[f_i(\rho_i)] 
= \varepsilon^{-1}c_i(\rho_0,\ldots,\rho_{n-1}),\quad  i=0,\ldots,n-1
\ee
with initial data 
$\rho^0(.)=[\rho^0_0(.),\ldots,\rho^0_{n-1}(.)]$, where $c_i$ 
%
is of the form
\be\label{form_relax_1}
c_i(\rho)=\sum_{j=0}^{n-1}[c_{ji}(\rho)-c_{ij}(\rho)]
\ee
with
\be\label{form_relax_2}
c_{ij}(\rho)=q(i,j)\rho_i(1-\rho_j)
\ee
where we assume that  the kernel $q(.,.)$ on $\{0,\ldots,n-1\}$ 
satisfies assumption \ref{assumption_ker}, (ii) 
and assumption \ref{assumption_rev}. 
Define the {\rm relaxation flux function} $f:[0,1]\to\R$ by
\be\label{relax_flux}
f(\rho):=\sum_{i=0}^{n-1}f_i[\widetilde{\rho}_i(\rho)]
\ee
where $\widetilde{\rho}_i(\rho)$ is defined in Proposition \ref{lemma_phi_super}.
Then, as $\varepsilon\to 0$:\\ \\
(i) (Relaxation limit) The global density field
\be\label{global_density}
R_\varepsilon(t,x):=\sum_{i=0}^{n-1}\rho_{i,\varepsilon}(t,x)
\ee
converges in $L^1_{\rm loc}((0,+\infty)\times\R)$ to $\rho(.,.)$ 
defined as the entropy solution to \eqref{relax_law}
with initial data
\be\label{initial_relax}
R^0(x):=\sum_{i=0}^{n-1}\rho^0_i(x).
\ee
(ii) (Equilibrium limit). For each $i=0,\ldots,n-1$, 
$\rho_{i,\varepsilon}(.,.)$ converges in $L^1_{\rm loc}((0,+\infty)\times\R$
to $\rho_i(.,.)$ defined by
\be\label{lane_density}
\rho_i(t,x)=\widetilde{\rho}_i[\rho(t,x)].
\ee  
\end{theorem}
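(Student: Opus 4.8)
The strategy is to prove relaxation to the equilibrium manifold $\mathcal F$ via a well-chosen family of dissipative entropies, and then close the conservation law for the global density. The key structural fact I would exploit is the reversibility assumption on $q(.,.)$ (Assumption \ref{assumption_rev}), which forces the reaction terms $c_i$ to have a gradient-like, monotone structure with respect to a suitable potential, making certain Kru\v{z}kov-type entropies dissipate under the stiff relaxation. First I would establish the a priori estimates: since each $\rho_{i,\varepsilon}$ solves a scalar balance law with bounded source, a $BV$ or compactness estimate (uniform in $\varepsilon$) must be obtained, presumably via the entropy dissipation itself, so that $(R_\varepsilon)_\varepsilon$ is precompact in $L^1_{\rm loc}$ and admits a limit $R$.

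**Relaxation to $\mathcal F$.** The heart of the argument is to show that, in the limit $\varepsilon\to 0$, the vector $(\rho_{0,\varepsilon},\ldots,\rho_{n-1,\varepsilon})$ collapses onto the equilibrium manifold $\mathcal F$ of \eqref{def_set_f}. I would test the entropy inequalities \eqref{entropy_cond} against Kru\v{z}kov entropies $h_c$ relative to equilibrium reference states $c=\widetilde\rho_i(r)$ for fixed $r\in[0,n]$, and sum over $i$ with weights adapted to the reversible measure $\lambda_.$. Because $c_i$ vanishes precisely on $\mathcal F$, and because of the reversible/monotone structure highlighted in the introduction, the total entropy flux contribution from the source terms controls (with a definite sign) a nonnegative quantity measuring the distance of $(\rho_{i,\varepsilon})_i$ to $\mathcal F$. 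Integrating the resulting inequality in $t,x$ against a test function and using boundedness of the entropies forces $\varepsilon^{-1}\int\!\!\int \sum_i |c_i[\rho_\varepsilon]|\,dx\,dt$ to stay bounded, whence $c_i[\rho_\varepsilon]\to 0$ in $L^1_{\rm loc}$. Combined with the bijectivity and continuity of $\psi$ and $\widetilde\rho_i$ from Proposition \ref{lemma_phi_super}, this yields $\rho_{i,\varepsilon}-\widetilde\rho_i[R_\varepsilon]\to 0$, establishing \eqref{lane_density} once $R_\varepsilon\to R$ is known.

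**Closing the limit equation.** With the lane densities identified as $\widetilde\rho_i[R]$ in the limit, the summed equation \eqref{non_closed} becomes the closed scalar conservation law \eqref{relax_law} with flux $f=\sum_i f_i\circ\widetilde\rho_i$ as in \eqref{relax_flux}. To see that $R$ is the \emph{entropy} solution (not merely a weak one), I would pass to the limit in a convex entropy inequality for the scalar conservation law satisfied by $R_\varepsilon$: summing the $i$-th entropy inequalities \eqref{entropy_cond} with a common convex $h$ applied to each $\rho_{i,\varepsilon}$ produces an inequality whose source terms $\sum_i h'[\rho_{i,\varepsilon}]c_i[\rho_\varepsilon]$ must be shown to be nonpositive in the limit. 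Here the dissipativity of the chosen entropies relative to $\mathcal F$ is again decisive: on the equilibrium manifold the $\widetilde\rho_i$ are ordered consistently (they are nondecreasing in $R$ by Proposition \ref{lemma_phi_super}(ii)), so a single scalar Kru\v{z}kov entropy $|R-k|$ with constant $k$ lifts to a dissipative family $\{h_{\widetilde\rho_i(k)}\}_i$ for the system, and the reversible structure guarantees the correct sign. The initial data \eqref{initial_relax}--\eqref{lane_density} follow from \eqref{initial_cond} and continuity of $\psi^{-1}$.

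**Main obstacle.** The delicate point is the joint use of the entropy dissipation to obtain \emph{both} compactness and the correct sign of the limiting entropy production. Unlike the scalar case, the coupling means the entropy inequalities for different lanes interact through $c_i$, and one must verify that the specific weighting by $\lambda_.$ (reversibility) makes $\sum_i \lambda_i^{-1}h'[\rho_i]c_i(\rho)\le 0$ for the relevant entropies — this is exactly where Assumption \ref{assumption_rev} and the detailed-balance form \eqref{detailed_q} of $\mathcal F$ enter, and constructing the entropy pair for which this sign holds uniformly is the technically demanding step. A secondary difficulty is ensuring the compactness estimate is genuinely uniform in $\varepsilon$ despite the stiff $\varepsilon^{-1}$ source; I expect this to be handled by extracting the $BV$ bound from the entropy dissipation balance rather than from classical $BV$ propagation, which would otherwise degenerate as $\varepsilon\to 0$.
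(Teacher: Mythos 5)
Your overall architecture is indeed the paper's: Kru\v{z}kov entropies taken relative to equilibrium states, dissipativity coming from reversibility, relaxation to $\mathcal F$ via vanishing entropy dissipation, closure of the summed conservation law, and the observation that the scalar Kru\v{z}kov entropy $(R-k)^+$ lifts to the family $\{(\rho_i-\widetilde{\rho}_i(k))^+\}_i$. However, the dissipativity inequality you propose to verify, $\sum_i\lambda_i^{-1}h'[\rho_i]c_i(\rho)\le 0$, is false as stated: the weighting by $\lambda_.^{-1}$ is where your plan breaks. Take $n=2$ with $q(0,1)=p>q=q(1,0)$ (so $\lambda_0=1$, $\lambda_1=p/q$), a reference state $r\in\mathcal F$ close to $(0,0)$, and $\rho$ with $\rho_0$ slightly above $r_0$ and $\rho_1$ near $1$: then both indicators equal $1$, $c_0(\rho)=q\rho_1(1-\rho_0)-p\rho_0(1-\rho_1)\approx q>0$, $c_1=-c_0$, and $\lambda_0^{-1}c_0+\lambda_1^{-1}c_1=(1-q/p)\,c_0(\rho)>0$. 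The correct mechanism uses \emph{unit} weights: for $r\in\mathcal F$ one symmetrizes $\sum_i\indic_{\{\rho_i>r_i\}}c_i(\rho)=\tfrac12\sum_{i,j}\bigl[\indic_{\{\rho_i>r_i\}}-\indic_{\{\rho_j>r_j\}}\bigr]\bigl[c_{ji}(\rho)-c_{ij}(\rho)\bigr]$, and each pairwise term is nonpositive by comparison with the detailed-balance identity $q(j,i)r_j(1-r_i)=q(i,j)r_i(1-r_j)$; reversibility enters through the choice of reference state, not through weights on the sum. One then integrates over the one-parameter family $r=\psi^{-1}(k)$, $k\in[0,n]$, to obtain a continuous functional that is $\le 0$ everywhere and vanishes exactly on $\mathcal F$, and a Young-measure argument converts the vanishing of its space-time integral into the relaxation statement \eqref{eq:cor:relax}.

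The second gap is compactness. You dismiss ``classical BV propagation, which would otherwise degenerate as $\varepsilon\to 0$'' and hope to extract a bound ``from the entropy dissipation balance''; but entropy dissipation alone does not yield strong $L^1_{\rm loc}$ precompactness here (a compensated-compactness or kinetic route would require genuine nonlinearity of $f$, which fails in the degenerate cases where $f$ is affine or identically zero on subintervals, cf. Remark \ref{rk:incases}). The point you are missing is that BV propagation does \emph{not} degenerate for this particular source: the relaxation terms \eqref{form_relax_1}--\eqref{form_relax_2} are quasi-monotone in the sense of \eqref{quasi_monotone} and satisfy $\sum_i c_i\equiv 0$, so the $L^1$-stability estimate for the weakly coupled system is uniform in $\varepsilon$; applying it to spatial translates gives $\varepsilon$-uniform variation bounds for BV initial data (Corollary \ref{cor:entropy_sol}), and general $L^\infty$ data are handled afterwards by approximation in $L^1_{\rm loc}$ using the same uniform stability. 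Finally, recovering the initial condition \eqref{initial_relax} is not just ``continuity of $\psi^{-1}$'': since the convergence holds only in $L^1_{\rm loc}((0,+\infty)\times\R)$, one must control the trace at $t=0$ uniformly in $\varepsilon$, which is done by integrating the summed equation in time and using $\sum_i c_i=0$ together with boundedness of the fluxes.
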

\section{Examples and  a singular limit}\label{subsec:examples}
 In order to state examples and some properties, 
 it is convenient to reformulate Definition \ref{def_irred} 
 of weak irreducibility in an equivalent way. 
 The following Lemma is proved in Section \ref{sec:proof_inv}. 
\begin{lemma}\label{lemma_class}
Let $\pi(.,.)$ be the transition kernel for 
a Markov jump process on a finite set $S$. Then
the following statements are equivalent:\\ \\
(1)  The kernel $\pi(.,.)$ is weakly irreducible.\\ \\
(2) There is a unique labeling $(\Gamma_\alpha)_{\alpha=0,\ldots,m-1}$ 
of the irreducibility classes of $\pi(.,.)$
that satisfies the following property: for every $\alpha\in\{0,\ldots,m-2\}$, 
there exists 
$i\in\Gamma_\alpha$ and $j\in\Gamma_{\alpha+1}$ such that $\pi(i,j)>0$. \\ \\
Then, $\Gamma_{m-1}$ is the unique recurrent class.
\end{lemma}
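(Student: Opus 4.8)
The plan is to pass to the \emph{condensation} of the directed graph attached to $\pi$. I put an edge $x\to y$ whenever $\pi(x,y)>0$; the irreducibility classes of $\pi$ are then exactly the strongly connected components, and for two classes I write $\Gamma\preceq\Gamma'$ whenever some $i\in\Gamma$ satisfies $i\rightarrow_\pi j$ for some $j\in\Gamma'$ (equivalently, since each class is internally connected, whenever \emph{every} element of $\Gamma$ reaches every element of $\Gamma'$). A first routine observation is that $\preceq$ is a partial order on the finite set of classes: reflexivity and transitivity are immediate, and antisymmetry holds because $\Gamma\preceq\Gamma'$ and $\Gamma'\preceq\Gamma$ with $\Gamma\neq\Gamma'$ would merge the two into a single strongly connected component.

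The first substantive step is to show that (1) is equivalent to $\preceq$ being a \emph{total} order. Two distinct elements $x,y$ lying in distinct classes $\Gamma,\Gamma'$ satisfy $x\rightarrow_\pi y$ or $y\rightarrow_\pi x$ precisely when $\Gamma$ and $\Gamma'$ are $\preceq$-comparable, while elements of a common class are always mutually reachable; hence weak irreducibility is exactly the assertion that any two classes are comparable. Since a finite totally ordered set has a unique increasing enumeration, I label the classes $\Gamma_0\prec\Gamma_1\prec\cdots\prec\Gamma_{m-1}$, so that $\Gamma_\alpha\preceq\Gamma_\beta\iff\alpha\leq\beta$; this is the candidate labeling of (2), and its uniqueness as an increasing enumeration follows from totality and antisymmetry.

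It remains to check that this enumeration satisfies the \emph{direct-edge} property of (2), and conversely that any labeling with that property coincides with it. The one point needing a genuine (if short) argument is that the property demands an actual transition $\pi(i,j)>0$ between consecutive classes, whereas $\Gamma_\alpha\prec\Gamma_{\alpha+1}$ a priori yields only a \emph{path}; here the chain structure is essential. Along any edge the index of the current class is nondecreasing, and strictly increases when the path leaves a class; since a path realizing $\Gamma_\alpha\preceq\Gamma_{\alpha+1}$ ends in $\Gamma_{\alpha+1}$, the class entered immediately after the first departure from $\Gamma_\alpha$ has index both $>\alpha$ and $\leq\alpha+1$, hence equal to $\alpha+1$, producing the required $i'\in\Gamma_\alpha$, $j'\in\Gamma_{\alpha+1}$ with $\pi(i',j')>0$. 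Conversely, if a labeling has the direct-edge property then $\Gamma_\alpha\preceq\Gamma_{\alpha+1}$ for all $\alpha$, so $\Gamma_\alpha\preceq\Gamma_\beta$ for $\alpha\leq\beta$ by transitivity; this makes every pair of classes comparable, which proves (1), and it forces the labeling to be the increasing enumeration, yielding both directions of the equivalence together with uniqueness.

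For the final assertion I use that in a finite chain a class is recurrent iff it is closed, i.e. no transition leaves it. The top class $\Gamma_{m-1}$ is closed: a transition to a distinct class $\Gamma_\beta$ would give $\Gamma_{m-1}\preceq\Gamma_\beta$ with $\beta<m-1$, contradicting antisymmetry against $\Gamma_\beta\preceq\Gamma_{m-1}$; hence $\Gamma_{m-1}$ is recurrent. Every $\Gamma_\alpha$ with $\alpha<m-1$ carries, along the path realizing $\Gamma_\alpha\preceq\Gamma_{m-1}$, a transition leaving it, so it is not closed and is therefore transient. Thus $\Gamma_{m-1}$ is the unique recurrent class. I expect the only real obstacle to be the direct-edge step above; everything else is the standard order theory of the condensation.
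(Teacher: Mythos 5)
Your proof is correct and follows the same route as the paper's: order the irreducibility classes by the reachability relation on the condensation, observe that weak irreducibility makes this a total order, and take its unique increasing enumeration. In fact your write-up is more complete than the paper's, which merely asserts that $\preceq$ is a total order and stops at the enumeration, without addressing the direct-edge refinement (that consecutive classes are joined by an actual transition $\pi(i,j)>0$ rather than just a path of classes), the converse implication $(2)\Rightarrow(1)$, the uniqueness of the labeling, or the final claim that $\Gamma_{m-1}$ is the unique recurrent class --- all of which you handle correctly, your first-departure argument (indices are nondecreasing along edges and bounded above by the terminal index $\alpha+1$) being exactly the right observation for the one nontrivial step.
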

Denote by $(\mathcal C_\alpha)_{\alpha=0,\ldots,m-1}$, the labeling
of the irreducibility classes of $q(.,.)$ induced by condition \textit{(ii)} 
of Assumption \ref{assumption_ker} and Lemma \ref{lemma_class}. 
We arbitrarily select a nonzero reversible measure on $\mathcal C_\alpha$ 
and denote it by $\lambda^\alpha_.$.
For $\alpha=0,\ldots,m-1$,
let us define
\be\label{lane_numbers}
n_\alpha:=\left|\mathcal C_\alpha\right|,\quad
N_\alpha:= 
\sum_{\beta=\alpha+1}^{m-1} n_\beta
\ee
to be, respectively, the number of lanes in class  $\mathcal C_\alpha$, 
and the number of lanes  ahead of this class. 
In particular, $N_{m-1}=0$; by extension we set $N_{-1}=n$.
Lane $i\in \{0,\ldots,n-1\}$ belongs to class  $\mathcal C_\alpha$ 
for $\alpha\in\{0,\ldots,m-1\}$ if and only if 
 $i\in\left\{
n-N_{\alpha-1}+1,\ldots,n-N_\alpha
\right\}$.  \\ \\
We next illustrate condition \textit{(ii)} of 
Assumption \ref{assumption_ker}  and Assumption \ref{assumption_rev} 
for the ``vertical'' (that is, interlane) jump kernel.   
\begin{example}\label{ex:twolane}
Consider the two-lane SEP, that is $W=\{0,1\}$ 
 and  transition probability
 \be\label{eq:intensity_c-2lane}  
 q(0;1)=p,\quad q(1;0)=q,
 \ee 
 with $p,q\geq 0$ and $p+q>0$. \\ \\
(i) If $pq>0$, then $m=1$, $\mathcal C_0=\{0,1\}$, $n_0=2$, $N_0=0$, $N_{-1}=2$.
A reversible measure is given by $\lambda^\alpha_0=1$, 
$\lambda^\alpha_1=\frac{p}{q}$.\\ \\
(ii) If $q=0<p$, then $m=2$,  $\mathcal C_0=\{0\}$,  
$\mathcal C_1=\{1\}$,  $n_0=n_1=1$, $N_1=0$, $N_0=1$, $N_{-1}=2$.
For $\alpha\in\{0,1\}$ and  the unique element $i$ of $\mathcal C_\alpha$, 
any value  $\lambda^\alpha_i>0$ yields a nonzero  reversible measure 
on $\mathcal C_\alpha$.
\end{example}
For three lanes, we have the following possibilities 
(up to a permutation of lanes).
\begin{example}\label{ex:three}
Assume $W=\{0,1,2\}$, $q(0,1)=p>0$, $q(1,0)=q\geq 0$, $q(1,2)=r>0$, 
$q(2,1)=s\geq 0$, $q(0,2)=t\geq 0$, $q(2,0)=u\geq 0$. \\ \\
(i) If $q=u=s=0$, then $m=3$, $\mathcal C_\alpha=\{\alpha\}$ for $\alpha\in\{0,1,2\}$,
$n_0=n_1=n_2=1$, $N_2=0$, $N_1=1$,  $N_0=2$, $N_{-1}=3$.  Any constant 
$\lambda^\alpha_.>0$ yields a nonzero  reversible measure on $\mathcal C_\alpha$.\\ \\
(ii) If $u=s=0<q$, then $m=2$, $\mathcal C_0=\{0,1\}$, 
$\mathcal C_1=\{2\}$, $n_0=2$, $n_1=1$, $N_1=0$, $N_0=1$, $N_{-1}=3$.
Assumption \ref{assumption_rev} holds for instance with
$\lambda^0_0=1$, $\lambda^0_1=\frac{p}{q}$ and $\lambda^1_2=1$.\\ \\
(iii) If $q>0$ and $s>0$, then $m=1$, $\mathcal C_\alpha=\{0,1,2\}$, $N_0=0$, 
$N_{-1}=3$.  Assumption \ref{assumption_rev}  holds if and only if
\be\label{cond_rev}
 t=u=0,\quad\mbox{or}\quad u\neq 0\quad 
 \mbox{ and}\quad\frac{t}{u}=\frac{p}{q}\frac{r}{s}.
\ee
In this case, a reversible measure is given by
\be\label{rev_meas_3}
\lambda^0_0=1,\quad
\lambda^0_1=\frac{p}{q},\quad
\lambda^0_2=\frac{p}{q}\frac{r}{s}=\frac{t}{u}.
\ee
\end{example}
\begin{example}\label{ex:nn}
We can generalize Example \ref{ex:twolane} to $n$ lanes, 
with $W=\{0,\ldots,n-1\}$, and the kernel $q(.,.)$ given by 
\begin{equation}
q\left(i,j\right)=\left\{ \begin{array}{ccc}
 0 & \mbox{if} & \vert i-j\vert\neq 1\\
 p &  \mbox{if} &  j=i+1<n \\
 q  & \mbox{if} &  j=i-1>0
\end{array}\right.\label{eq:intensity_c}
\end{equation}
with $p,q\geq 0$ and $p+q>0$.
In other words,  considering $\mathbb{L}_0$ as the top lane 
and $\mathbb{L}_{n-1}$ as the bottom lane, $p$ is the rate 
at which particles go down and  $q$ the rate of going up. 
Without loss of generality, let us consider $p>0$ and $q\geq 0$. 
Then there are two cases: \\ \\
{\em Case 1.} If $q>0$, then $m=0$ and $\mathcal C_0=W$. 
A reversible measure on $\mathcal C_0$ is
\be\label{rev_nlane}
\lambda^0_i=\left(\frac{p}{q}\right)^i,\quad i\in\mathcal C_0.
\ee
{\em Case 2.} If $q=0$, then $m=n$ and $\mathcal C_i=\{i\}$ for $i\in W$. 
A reversible measure on $\mathcal C_i$ is any positive constant value 
associated with $i$.
\end{example}
 In the following example  the kernel $q(.,.)$ 
is irreducible and  Assumption \ref{assumption_rev} 
is satisfied.  In the traffic interpretation, it could model 
a highway interchange. 
\begin{example}\label{example_tree}
Let $W$ be the set of vertices of a tree. If $i,j\in W$, 
we write 
$i\sim j$ if and only if 
$i$ and $j$ are connected by an 
edge of the tree. 
 We assume that the kernel $q(.,.)$ in \eqref{restrict_kernel} satisfies
\be\label{assume_q}
q(i,j)>0\mbox{ if and only if } i\sim j.
\ee 
\end{example}
\begin{proof}[Proof of  Assumption \ref{assumption_rev}]
For $i,j\in W$  and $i\sim j$, we define
\be\label{def_rij_neighbor}
r(i,j):=\frac{q(i,j)}{q(j,i)},\quad i,j\in W.
\ee
We extend this definition to an arbitrary $(i,j)\in W^2$ by setting 
\be\label{def_rij}
r(i,j)=\prod_{k=0}^{l-1}r(i_k,i_{k+1})
\ee
where $(i=i_0,\ldots,i_l=j)$ is the unique path  from $i$ to $j$ on the tree.  
Fix an arbitrary vertex $i^*\in W$ (for convenience the reader may think of 
$i^*$ as the root of the tree, but in fact we do not need the tree to be rooted).  
For every $\lambda_{i^*}\geq 0$, define a family $(\lambda_i)_{i\in W}$   
by the relation 
\be\label{rev_tree}
\lambda_i= \lambda_{i^*}r(i^*,i)
\ee
where  $(i_0=i^*,\ldots,i_l=i)$  is the unique path connecting  $i^*$  to $i$. 
Then $\lambda_.$ satisfies \eqref{reversible_class} for all $i,j\in W$.
Conversely, any $\lambda_.$  satisfying \eqref{reversible_class} for all 
 $i,j\in\{0,\ldots,n-1\}$  is of the form 
\eqref{rev_tree}.
\end{proof}
  In relation to \eqref{detailed_q}--\eqref{def_set_f},
we  now define
a mapping $\phi_r$ from $[0,1]$ to $[0,1]$ by
\be\label{def_phi}
\phi_r(\rho):=\frac{r\rho}{1-\rho+r\rho},\quad\forall\rho\in[0,1].
\ee
Note that $\phi_r$ is an increasing continuous bijection, and
\be\label{properties_phi}
\phi_{r'}\circ\phi_r=\phi_{r{r'}},\quad\phi_{1/r}=\phi_{r}^{-1}.
\ee
Defining again $r(i,j)$ by \eqref{def_rij_neighbor} whenever $q(j,i)>0$,
the relation \eqref{detailed_q} becomes 
\be\label{detailed_eq}
\rho_j=\phi_{r(i,j)}(\rho_i).
\ee
\textit{By \eqref{properties_phi}  and \eqref{def_rij}, 
\be\label{down_the_tree}
\phi_{r(i,j)}=\phi_{r(i_{m-1},i_m)}\circ\cdots\circ
\phi_{r(i_0,i_1)}.
\ee
Then, we have 
\be\label{def_subset_gen}
\mathcal F:=\left\{
(\rho_i)_{i\in W}\in[0,1]^W:\, \forall i\in W,\,\rho_i=\phi_{r(i^*,i)}(\rho_{i^*})
\right\}.
\ee
Equation \eqref{def_subset_gen} gives an explicit construction of 
$\mathcal F$ as indexed by the density at  some ``reference'' vertex $i^*$.
Properties \eqref{properties_phi} and \eqref{down_the_tree} 
imply that the set $\mathcal F$ given by \eqref{def_subset_gen} 
does not depend on the choice of $i^*$.} 
\subsection{The shape of the flux function and a phase transition}\label{subsec:shape}
The following proposition,  proved in Section \ref{sec:proof_inv},  
states some properties of the flux function. 
 Recall the notation introduced in \eqref{lane_numbers}. 
\begin{proposition}\label{prop:flux_multi}
\mbox{}\\ \\
(i) For each $\alpha\in\{0,\ldots,m-1\}$, the restriction of $G$ 
to  $[N_{\alpha},N_{\alpha-1}]$ 
is continuously differentiable and depends only on the restriction 
of $q(.,.)$ to $\mathcal C^\alpha\times\mathcal C^\alpha$.\\ \\
(ii)  The flux function 
$G$ has the following one-sided derivatives 
at $N_\alpha$:   for $-1\leq \alpha<m-1$, resp. $-1<\alpha\leq m-1$, it holds that 
\be\label{one_sided}
G'(N_\alpha-)= -\sum_{i\in\mathcal C_{\alpha+1}}\gamma_{i} f^{\alpha+1}_i,
\quad \mbox{resp.}\quad
G'(N_\alpha+)=\sum_{i\in\mathcal C_\alpha} \gamma_{i}e^{\alpha}_i
\ee
 where 
\be\label{def_ealpha}
e_i^\alpha:=
\frac{\lambda^\alpha_i}{\lambda^\alpha_j},
\quad
f_i^\alpha:=
\frac{\lambda^\alpha_j}{\lambda^\alpha_i}.
\ee
 (iii) Let $\alpha\in\{0,\ldots,m-1\}$. If there exists 
 $i\in\mathcal C_\alpha$ such that
\be\label{cond_diff}
\sum_{j\in\mathcal C_\alpha:\,\lambda^\alpha_j=\lambda^\alpha_i}\gamma_j\neq 0,
\ee
then the derivative of $G$ vanishes finitely many times 
on $[N_\alpha,N_{\alpha-1}]$. Otherwise, 
$G$ is identically $0$ on $[N_\alpha,N_{\alpha-1}]$.
\end{proposition}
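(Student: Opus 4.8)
The plan is to exploit the explicit formula \eqref{def_flux_super}, namely $G(\rho)=\sum_{i=0}^{n-1}\gamma_i G_0[\widetilde\rho_i(\rho)]$ with $G_0(\alpha)=\alpha(1-\alpha)$, and to understand each $\widetilde\rho_i$ via the reversibility structure \eqref{detailed_eq}, which ties together all lanes within a single irreducibility class $\mathcal C_\alpha$ through the increasing bijections $\phi_{r(i,j)}$. The key structural fact I would use is that on the interval $[N_\alpha,N_{\alpha-1}]$ the total density is being distributed entirely among the lanes of class $\mathcal C_\alpha$: by the labeling of Lemma \ref{lemma_class}, lanes in higher-indexed classes saturate to $1$ and lanes in lower-indexed classes remain at $0$ over this range, so that within this interval $\widetilde\rho_i(\rho)$ is constant ($0$ or $1$) for $i\notin\mathcal C_\alpha$ and varies smoothly for $i\in\mathcal C_\alpha$. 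This localization is exactly what yields claim (i): on $[N_\alpha,N_{\alpha-1}]$ only the lanes of $\mathcal C_\alpha$ contribute a nonconstant term, and by \eqref{detailed_eq} the restriction of $\widetilde\rho_i$ to this interval is determined solely by the ratios $r(i,j)=\lambda^\alpha_i/\lambda^\alpha_j$ for $i,j\in\mathcal C_\alpha$, hence only by the restriction of $q(.,.)$ to $\mathcal C_\alpha\times\mathcal C_\alpha$. Continuous differentiability follows because each $\phi_r$ is a smooth increasing bijection of $[0,1]$, so the parametrization $\rho\mapsto(\widetilde\rho_i(\rho))_{i\in\mathcal C_\alpha}$ is smooth on the open interval and $G_0$ is polynomial.

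For part (ii), I would compute the one-sided derivatives at the breakpoints $N_\alpha$ by differentiating \eqref{def_flux_super}. Using the chain rule, $G'(\rho)=\sum_i \gamma_i G_0'[\widetilde\rho_i(\rho)]\,\widetilde\rho_i'(\rho)$ with $G_0'(\alpha)=1-2\alpha$. The crucial computation is the behavior of the $\widetilde\rho_i'$ at the endpoints. As $\rho\to N_\alpha+$, the lanes of $\mathcal C_\alpha$ are all near density $0$, and differentiating the constraint $\sum_{i\in\mathcal C_\alpha}\widetilde\rho_i(\rho)=\rho-N_\alpha$ together with the relations $\widetilde\rho_i=\phi_{r(i,j)}(\widetilde\rho_j)$ gives the relative rates $\widetilde\rho_i'(N_\alpha+)=e_i^\alpha/\sum_{k\in\mathcal C_\alpha}e_k^\alpha$, where $\phi_{r}'(0)=r$ forces the proportionality constant to be the ratio $\lambda^\alpha_i/\lambda^\alpha_j$. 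Since $G_0'(0)=1$, substituting yields $G'(N_\alpha+)=\sum_{i\in\mathcal C_\alpha}\gamma_i e_i^\alpha$ after a normalization check; the analogous computation at density $1$ (where $\phi_r'(1)=1/r$ and $G_0'(1)=-1$) gives the left derivative $G'(N_\alpha-)=-\sum_{i\in\mathcal C_{\alpha+1}}\gamma_i f_i^{\alpha+1}$. The care needed here is to match the normalization of the reversible measure consistently so that the $e_i^\alpha,f_i^\alpha$ of \eqref{def_ealpha} come out exactly as stated; this is the step I expect to be the main obstacle, since the definitions of $e_i^\alpha$ and $f_i^\alpha$ hide a choice of reference index $j$ and one must verify these expressions are the correct scale-invariant combinations.

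For part (iii), I would analyze the zeros of $G'$ on the open interval $(N_\alpha,N_{\alpha-1})$ by rewriting $G$ as a function of a single lane density through the $\phi$-relations. Parametrizing by, say, the density $s=\widetilde\rho_{i^*}$ on a reference lane $i^*\in\mathcal C_\alpha$, each $\widetilde\rho_i=\phi_{r(i^*,i)}(s)$ is a rational function of $s$, and $G$ becomes a rational function of $s$; hence $G'$ is rational and either vanishes at finitely many points or is identically zero. To identify the degenerate case, I would group lanes by the value of $\lambda^\alpha_i$: lanes sharing a common value of $\lambda^\alpha$ carry the \emph{same} density $\widetilde\rho_i$ (since $r(i,j)=1$ when $\lambda^\alpha_i=\lambda^\alpha_j$), so their contributions to $G$ collapse into a single term weighted by $\sum_{j:\lambda^\alpha_j=\lambda^\alpha_i}\gamma_j$. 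If all such grouped sums vanish, then $G\equiv 0$ on the interval; otherwise the rational function $G'$ is not identically zero and so has finitely many zeros. The condition \eqref{cond_diff} is precisely the nondegeneracy of at least one such group sum, which closes the dichotomy.
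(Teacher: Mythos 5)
Your proposal is correct and follows essentially the same route as the paper: localize $G$ on $[N_\alpha,N_{\alpha-1}]$ to the lanes of $\mathcal C_\alpha$ (the other lanes sit at density $0$ or $1$ and contribute $G_0(0)=G_0(1)=0$), compute the one-sided limits of the $\widetilde{\rho}_i'$ at the endpoints via the chain rule, and reduce (iii) to the zeros of a one-variable rational function after grouping lanes by the value of $\lambda^\alpha_i$; the paper parametrizes the equilibrium manifold by $c\mapsto\rho^{\alpha,c}_i=c\lambda^\alpha_i/(1+c\lambda^\alpha_i)$ where you use a reference-lane density and the maps $\phi_{r(i,j)}$, which is the same computation in different coordinates. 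Two harmless convention slips: with the paper's labeling it is the \emph{lower}-indexed classes that saturate at density $1$ on $[N_\alpha,N_{\alpha-1}]$, and reversibility gives $r(i,j)=q(i,j)/q(j,i)=\lambda^\alpha_j/\lambda^\alpha_i$ rather than its reciprocal; neither affects the structure of the argument, and your worry about the normalization hidden in \eqref{def_ealpha} is well placed, the intended quantities being $\lambda^\alpha_i/\sum_{j\in\mathcal C_\alpha}\lambda^\alpha_j$ and $(\lambda^\alpha_i)^{-1}/\sum_{j\in\mathcal C_\alpha}(\lambda^\alpha_j)^{-1}$.
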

\begin{remark}\label{remark_der}
\mbox{}\\ \\
1. It follows from \textit{(ii)} of  Proposition  \ref{prop:flux_multi} 
that if lanes in classes $\alpha$ and $\alpha+1$ have either all 
nonnegative or all nonpositive drifts (with the two classes on the 
same side), and at least one lane in at least one  of these classes has nonzero drift,
then $G$ is not differentiable at $N_\alpha$.
This generalizes the 
non-differentiability of the flux in Example \ref{ex:periodic} below.\\ \\
2. By condition \eqref{reversible_class}, level sets of 
$\lambda^\alpha_.$ are singletons or maximal subsets of $\mathcal C_\alpha$
on which $q(.,.)$ is symmetric. Condition \eqref{cond_diff} can be rephrased 
saying that the total drift on at least one such subset
is nonzero. \\ \\
3. Proposition \ref{prop:flux_multi}, \textit{(iii)} implies that 
for a given integer $k\in\{1,\ldots,n\}$,  the equation 
 $G(\rho+k)-G(\rho)=0$  (with unknown $\rho\in[0,n]$) 
 has finitely many solutions. This implies that there are only 
 finitely many possible shocks of integer amplitude for 
 the hydrodynamic equation. This property plays an important role 
 to extend the arguments and results of \cite[Theorem 2.2]{mlt1a}, 
 and prove that up to horiontal translations, there are only 
 finitely many measures in $\mathcal I_e\setminus(\mathcal I\cap\mathcal S)_e$.
\end{remark}
We illustrate Theorem \ref{thm:local_equilibrium} and 
Proposition \ref{prop:flux_multi}  by computing flux functions  for our examples.   
 We begin with the two-lane model of Example \ref{ex:twolane} 
 that we  develop later in this section. 
 The following computation is taken from \cite[Section 5.2]{mlt1a}, 
 where $r:=q/p\in[0;+\infty]$ (if $q=0$ the expressions below must 
 be taken in the sense of their limit as $r\to+\infty)$.  
\begin{equation}
G(\rho)=  (\gamma_0+\gamma_1)\frac{\rho}{2} 
\left(1-\frac{\rho}{2}\right) 
+  (\gamma_0-\gamma_1)(1-\rho) \varphi(\rho)
-  (\gamma_0+\gamma_1)\varphi(\rho)^2  \label{eq:Gviaphi_0}
\end{equation}
with
\begin{eqnarray}
\varphi(\rho)
&=& \frac{1}{2}\left(\frac{r+1}{r-1}\right)
\left(1-\sqrt{\psi(\rho)}\right)\mbox{ if }r\neq 1\nonumber\\
&  =  &  0  \mbox{ if } r=1\label{eq:phi}\\
\label{eq:psi}
\psi(\rho)&=& 1+ \left(\frac{r-1}{r+1}\right)^2 \rho(\rho-2).
\end{eqnarray}
In particular, $G$ depends on $p,q$ only through $r$. 
To emphasize dependence of $G$ on the parameters of the model, 
whenever necessary, we write $G=G_{\gamma_0,\gamma_1,r}$.
The following properties, taken from 
\cite[Proposition 4.8 and Section 5.2]{mlt1a}, are useful 
to reduce the number and range of parameters of the model. 
\begin{proposition}\label{prop:mlt1}
The function $G_{\gamma_0,\gamma_1,r}$ satisfies the following:
\begin{enumerate}
\item Symmetry:
\be\label{sym}
G_{\gamma_0,\gamma_1,r}(2-\rho)=G_{\gamma_1,\gamma_0,r}(\rho)
=G_{\gamma_0,\gamma_1,r^{-1}}(\rho).
\ee
\item Homogeneity: if $\gamma_0+\gamma_1\neq 0$,
\be\label{hom}
G_{\gamma_0,\gamma_1,r}(\rho)=(\gamma_0+\gamma_1)
G_{\frac{\gamma_0}{\gamma_0+\gamma_1},\frac{\gamma_1}{\gamma_0+\gamma_1},r}(\rho).
\ee
\item \label{sign_prop} Sign: if $\gamma_0\gamma_1<0$, 
$G$ vanishes on $(0;2)$ if and only if
\be\label{cond_vanish}
\min\left(
q\gamma_0+p\gamma_1;p\gamma_0+q\gamma_1
\right)<0.
\ee
In this case, the zero is unique,
$G$ has a single minimum and a single maximum,  of opposite signs, on $(0;2)$. 
\end{enumerate}
\end{proposition}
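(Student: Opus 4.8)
The plan is to establish Proposition~\ref{prop:mlt1} by direct analysis of the explicit formula \eqref{eq:Gviaphi_0}--\eqref{eq:psi}, treating the three properties in turn since each is essentially an algebraic verification once the structure of $\varphi$ and $\psi$ is understood. The key preliminary observation is that $\psi(\rho)$ in \eqref{eq:psi} is invariant under $\rho\mapsto 2-\rho$ (since $\rho(\rho-2)=(2-\rho)(-\rho)$ is symmetric about $\rho=1$) and under $r\mapsto r^{-1}$ (since $\left(\frac{r-1}{r+1}\right)^2$ is invariant, as numerator and denominator each change sign). These two invariances of $\psi$ will drive the whole symmetry statement.

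For the symmetry property \eqref{sym}, I would first record how $\varphi$ transforms. Under $r\mapsto r^{-1}$, the prefactor $\frac{r+1}{r-1}$ changes sign, so $\varphi_{r^{-1}}(\rho)=-\varphi_r(\rho)$ while $\sqrt{\psi}$ is unchanged. Under $\rho\mapsto 2-\rho$, $\psi$ is unchanged so $\varphi(2-\rho)=\varphi(\rho)$. Substituting these into \eqref{eq:Gviaphi_0}: for the first equality I replace $\rho$ by $2-\rho$, note $\frac{2-\rho}{2}\left(1-\frac{2-\rho}{2}\right)=\frac{\rho}{2}\left(1-\frac{\rho}{2}\right)$ and $(1-(2-\rho))=-(1-\rho)$, and check that swapping $\gamma_0\leftrightarrow\gamma_1$ flips the sign of the $(\gamma_0-\gamma_1)$ coefficient while preserving $(\gamma_0+\gamma_1)$, which exactly compensates the sign flip on the middle term. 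For the second equality I use $\varphi_{r^{-1}}=-\varphi_r$, which flips the middle term's sign and leaves the quadratic term (even in $\varphi$) unchanged; this matches the $\gamma_1,\gamma_0$ swap as well, giving the chain of equalities.

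Property \eqref{hom} is immediate: when $\gamma_0+\gamma_1\neq 0$, factoring $(\gamma_0+\gamma_1)$ out of \eqref{eq:Gviaphi_0} and observing that $\varphi$ itself does not depend on $\gamma_0,\gamma_1$ shows the three terms scale linearly, and the normalized drifts sum to $1$, so $G_{\gamma_0,\gamma_1,r}=(\gamma_0+\gamma_1)G_{\gamma_0/(\gamma_0+\gamma_1),\,\gamma_1/(\gamma_0+\gamma_1),\,r}$. \textbf{The main obstacle is the sign property~\ref{sign_prop}.} Here I would use the one-sided derivative formulas from Proposition~\ref{prop:flux_multi}(ii) together with $G(0)=G(2)=0$ (the endpoints are the empty and full configurations, where the current vanishes). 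When $\gamma_0\gamma_1<0$, the relevant boundary derivatives $G'(0+)$ and $G'(2-)$ are, up to positive reversibility weights, expressions proportional to $p\gamma_0+q\gamma_1$ and $q\gamma_0+p\gamma_1$; the condition \eqref{cond_vanish} is precisely that at least one of these has the ``wrong'' sign forcing $G$ to leave $0$ on the correct side. To pin down that $G$ then has exactly one interior zero with a single max and single min of opposite signs, I would appeal to Proposition~\ref{prop:flux_multi}(iii), which guarantees $G'$ vanishes only finitely often (condition \eqref{cond_diff} holds since $\gamma_0\gamma_1<0$ makes the level-set drift sums nonzero), and combine this with the analyticity of $G$ on $(0,2)$ coming from the explicit $\sqrt{\psi}$ form to count sign changes; the symmetry \eqref{sym} reduces the casework by letting me assume a normalization on the signs of $\gamma_0,\gamma_1$ and on $r\gtrless 1$. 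The delicate point will be ruling out the degenerate possibility that $G$ is tangent to $0$ rather than crossing, which is where the strict inequality in \eqref{cond_vanish} and the finiteness of critical points must be used jointly.
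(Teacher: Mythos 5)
First, note that the paper does not prove this proposition at all: it is imported verbatim from \cite[Proposition 4.8 and Section 5.2]{mlt1a}, so there is no in-paper proof to compare against. Judged on its own merits, your treatment of items 1 and 2 is complete and correct: the invariances $\psi(2-\rho)=\psi(\rho)$, $\psi_{r^{-1}}=\psi_r$, hence $\varphi(2-\rho)=\varphi(\rho)$ and $\varphi_{r^{-1}}=-\varphi_r$ (the latter two are exactly \eqref{eq:phi2-ro}), do carry through \eqref{eq:Gviaphi_0} as you describe, and homogeneity is immediate since $\varphi$ is $\gamma$-independent and each term of \eqref{eq:Gviaphi_0} is linear in $(\gamma_0,\gamma_1)$. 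Your identification of the boundary derivatives is also essentially right: a direct computation from \eqref{eq:Gviaphi_0} gives $G'(0+)=(q\gamma_0+p\gamma_1)/(p+q)$ and, by the symmetry you proved, $G'(2-)=-(p\gamma_0+q\gamma_1)/(p+q)$, so the condition \eqref{cond_vanish} is precisely that $G$ is negative in a one-sided neighbourhood of at least one endpoint; since (for $\gamma_0\gamma_1<0$) the two quantities in \eqref{cond_vanish} sum to $(p+q)(\gamma_0+\gamma_1)$ and hence cannot both be negative unless $\gamma_0+\gamma_1<0$, the ``if'' direction of existence of an interior zero does follow from the intermediate value theorem as you suggest.

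The genuine gap is in the remainder of item 3. Proposition \ref{prop:flux_multi}\textit{(iii)} only gives that $G'$ has \emph{finitely many} zeros; together with smoothness of $G$ on $(0,2)$ this does not bound the number of critical points by two, so it cannot by itself yield ``a single maximum and a single minimum'', nor can it rule out, in the ``only if'' direction, a configuration where $G>0$ near both endpoints yet dips to or below zero in the interior (which would require two maxima and one minimum). Excluding these scenarios requires controlling the number of inflexion points of $G$ in the regime $\gamma_0\gamma_1<0$ --- e.g.\ showing $G''$ changes sign at most once, so that $G'$ has at most two zeros and Rolle's theorem pins down the structure. That control is exactly the content of the second- and third-derivative analysis in Section \ref{sec_phases} (cf.\ the unimodality of $G''$ coming from the single sign change of $G^{(3)}$ at $\widetilde\rho_0$ in \eqref{eq:tilderho0}), and it is not something you can wave at via ``analyticity and counting sign changes''. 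Be careful also that the proof of Theorem \ref{thm:phase_trans}, case \textit{2, (v)}, explicitly invokes item \ref{sign_prop} of this proposition for the sign statement, so if you lean on that theorem you must check you are only using its inflexion-point count (which is proved independently) and not its sign conclusions, on pain of circularity.
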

\noindent In the following special cases, 
the function $G$  
has a simple explicit expression.
\begin{example}\label{ex:periodic}
Assume $p=0<q$. Then 
\be\label{flux_degenerate}
G(\rho)=\left\{
\ba{lll}
\dsp \gamma_0\rho(1-\rho) & \mbox{if} & \rho\in[0,1]\\ 
\dsp \gamma_1(\rho-1)(2-\rho) & \mbox{if} & \rho\in(1,2].
\ea
\right.
\ee
In particular, when $\gamma_0=\gamma_1$,  the flux is a function of period $1$ 
 whose restriction to 
$[0,1]$ is the TASEP flux. 
Note that  there is a point of non-differentiability ($\rho=1$), 
a property  not seen in usual  
single-lane models with product invariant measures. 
 Moreover, if $\gamma_0\gamma_1=0$, 
the flux function 
is strictly concave on $[0;1]$ or $[1;2]$ and identically zero 
on its complement: similar unusual behaviour has been established 
recently for the asymmetric facilitated exclusion process (\cite{ESZ}). 
\end{example}
\begin{example}\label{ex:sym}
Assume $p=q>0$. Then 
\be\label{flux_sym}
G(\rho)=\frac{\gamma_0+\gamma_1}{4}\rho(2-\rho).
\ee
Here, unless $\gamma_0+\gamma_1=0$, the flux has 
the same shape as the single-lane TASEP flux 
(from which it is obtained by a scale change). 
It is in particular strictly concave.
\end{example}
\begin{remark}\label{rk:incases}
\noindent In cases $\gamma_0=\gamma_1=0$ of Example \ref{ex:periodic}, 
and $\gamma_0+\gamma_1=0$ of Example \ref{ex:sym}, the flux function 
is identically $0$, and the entropy solution to 
\eqref{eq:hydrodynamic equation} does not evolve in time. 
In these situations, the hyperbolic time scale is irrelevant, 
and one expects a nontrivial diffusive hydrodynamic limit 
to arise under diffusive time scaling.
\end{remark}
 Following are examples beyond the two-lane model. 
\begin{example}
\label{flux_ex:three}
Consider case (ii) in Example \ref{ex:three}. Then
\be\label{flux_ex_three}
G(\rho)=\left\{
\ba{lll}
\gamma_2 \rho(1-\rho) & \mbox{if} & \rho\in[0,1]\\
G_{\gamma_0,\gamma_1,q/p}(\rho-1) & \mbox{if} & \rho\in[1,3]
\ea
\right.
\ee
 where the second line refers to the two-lane flux. 
For $\alpha=0$, unless $p=q$ and $\gamma_1=-\gamma_0$, 
condition \eqref{cond_diff} holds for the flux function 
on $[N_\alpha,N_{\alpha-1}]=[1,3]$.
\end{example}
\begin{example}\label{ex:periodic_multi}
Consider Example \ref{ex:nn} with $p=1$, $q=0$ and a TASEP 
from left to right on each lane, that is $0=l_i<d_i$.
In this case we have
\be\label{rhotilde_multi}
\widetilde{\rho}_i(\rho)=(\rho-i){\bf 1}_{[i,i+1]}(\rho)+{\bf 1}_{\{i +1<\rho\}}
\ee
so that we obtain the flux function
\be\label{flux_hills}
G^n(\rho)=\sum_{i=0}^{n-1}d_i(\rho -i)(i +1 -\rho){\bf 1}_{(i,i+1)}(\rho)
\ee
that consists of a succession of hills of height $d_i$ separated by valleys. 
\end{example}
For the model  of Example \ref{ex:nn},  in view of \eqref{rhotilde_multi},
part \textit{(ii)} of Theorem \ref{thm:local_equilibrium} says that the density 
on lane $i=\lfloor u(x,t)\rfloor$ is $u(x,t)-i$, lanes with lower index 
are full and lanes with higher index are empty. Thus all the current 
is carried out by the lane of index $\lfloor u(x,t)\rfloor$. \\ \\
 We now come back to the two-lane model. 
The following theorem yields precise information on the shape 
of the flux function for the two-lane model \eqref{eq:intensity_c-2lane} 
and shows in particular that  phase transitions occur 
 according to the parameters of the model. 
We prove that the flux function $G$ 
may have either zero, one or two inflexion points, and possibly 
change sign at critical values that can be expressed explicitly. 
\begin{theorem}
\label{thm:phase_trans}
We assume that
 $d\in[1/2;+\infty)$ and $r\in[1;+\infty)$. 
Then the number of inflexion points of $G=G_{d,1-d,r}$ 
is given by the following phase diagram. \\ \\
Let 
\be\label{critical_d}
\frac{1}{2}<\widetilde{d}_1:=\frac{1}{2}+\frac{1}{6}\sqrt{3+2\sqrt{3}}<
\widetilde{d}_0:=\frac{1}{2}+\frac{\sqrt{3}}{4}
<\overline{d}_1:=\frac{1}{2}+\frac{1}{4}\sqrt{2\sqrt{3}}.
\ee
Then there exist functions  of $d$ 
\be\label{critical_r}
\overline{r}_1:[ 1/2;\overline{d}_1]\to[1;+\infty),\,
r_3:[\widetilde{d}_1;+\infty)\to[1;+\infty),\,
r_4:[\widetilde{d}_1;1)\to[1;+\infty)
\ee
with the following properties:\\ \\
1, (i): 
$r_4$ is increasing  in $d$, 
$\overline{r}_1$ and $r_3$ are decreasing  in $d$, 
 and all these three functions are continuous; besides,
\be\label{lim_r4}
\lim_{d\to 1-}r_4(d)=+\infty.
\ee
1, (ii): It holds that
\begin{eqnarray*}
d=\widetilde{d}_0 & \Rightarrow & 
\overline{r}_1(d)=r_3(d) \\
d\neq\widetilde{d}_0 & \Rightarrow & \overline{r}_1(d)<r_3(d).
\end{eqnarray*}
2, (i): For $d\in[1/2;\widetilde{d}_1]$:
if $r\leq \overline{r}_1(d)$, $G$ is strictly concave; 
if $r>\overline{r}_1(d)$, $G$ has two inflexion points;\\ \\
2, (ii): For $d\in[\widetilde{d}_1;\widetilde{d}_0]$:
if $r\leq \overline{r}_1(d)$, $G$ is strictly concave; 
if $\overline{r}_1(d)<r< r_3(d)$, or $r>r_4(d)$, $G$ has  two inflexion points; 
if $r_3(d)\leq r\leq r_4(d)$, $G$ has a single inflexion point;\\ \\
2, (iii): For $d\in[\widetilde{d}_0;1)$:
if $r\leq r_3(d)$, $G$ is strictly concave; 
if $r_3(d)< r< r_4(d)$, $G$ has a single inflexion point; 
if $r>r_4(d)$, $G$ has  two inflexion points. \\ \\
2, (iv): For $d=1$: 
if $r\leq r_3(1)=1+\sqrt{2}$, $G$ is strictly concave; 
if $r>r_3(d)$, $G$ has a single inflexion point. \\ \\
2, (v): For $d\in(1;+\infty)$:
if $r\leq r_3(d)$, $G$ is positive on $(0;2)$ and strictly concave; 
if $r_3(d)<d\leq \frac{d}{d-1}$, $G$ is positive on $(0;2)$ and 
has a single inflexion point; 
if $r>\frac{d}{d-1}$, $G$  has 
a single inflexion point, a single zero on $(0;2)$, is positive 
with a single local maximum on the left of this zero, 
negative with a single local minimum on the right.
\end{theorem}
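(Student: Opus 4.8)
The plan is to analyze the inflexion points of $G=G_{d,1-d,r}$ by studying the sign of its second derivative $G''(\rho)$ on $(0,2)$. Since $G$ is given explicitly through \eqref{eq:Gviaphi_0}--\eqref{eq:psi} in terms of the function $\varphi(\rho)$ (itself defined via the square root $\sqrt{\psi(\rho)}$), the inflexion points are the zeros of $G''$. First I would compute $G''$ explicitly; because $\psi(\rho)=1+\left(\frac{r-1}{r+1}\right)^2\rho(\rho-2)$ is a quadratic and $\varphi$ involves $\sqrt{\psi}$, the derivatives of $\varphi$ are rational expressions in $\rho$ divided by powers of $\sqrt{\psi(\rho)}$. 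Clearing denominators, the equation $G''(\rho)=0$ should reduce to a polynomial equation in $\rho$ (after substituting a convenient variable such as $s=\sqrt{\psi(\rho)}$ or a Möbius-type change of variable that symmetrizes around $\rho=1$). The symmetry relation \eqref{sym}, namely $G_{\gamma_0,\gamma_1,r}(2-\rho)=G_{\gamma_1,\gamma_0,r}(\rho)$, suggests centering at $\rho=1$; I would set $w:=\rho-1\in(-1,1)$ so that $\psi$ becomes even in $w$, and express $G''$ as an odd/even decomposition, which should drastically cut down the algebra.

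Next I would reduce to counting real roots of the resulting polynomial $P(w)$ (or $P(s)$) inside the relevant interval, as a function of the two parameters $(d,r)$ with $d\geq 1/2$, $r\geq 1$. The number of inflexion points of $G$ equals the number of sign-changes of $G''$, i.e. the number of simple real roots of $P$ in the interior. The phase boundaries in the statement — the curves $\overline r_1(d)$, $r_3(d)$, $r_4(d)$ and the critical values $\widetilde d_1,\widetilde d_0,\overline d_1$ — are precisely the loci where the discriminant of $P$ vanishes, or where a root crosses an endpoint of the interval, or where two roots collide (double root). So the strategy is: (a) identify $P$; (b) compute its discriminant $\Delta(d,r)$ and the endpoint-crossing conditions; (c) solve $\Delta=0$ for $r$ as a function of $d$, yielding the threshold functions; (d) determine on each region of the $(d,r)$ parameter plane how many roots lie in the interval, by checking the sign of $P$ at the endpoints and continuity/monotonicity arguments. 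The specific algebraic numbers in \eqref{critical_d} (e.g. $\widetilde d_0=\frac12+\frac{\sqrt3}{4}$) are the $d$-values where two of these boundary curves meet or where a boundary becomes tangent to an endpoint, i.e. the resultant of $\Delta$ with an endpoint condition.

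For the monotonicity and continuity claims in part~1 and the limiting behaviour \eqref{lim_r4}, I would use the implicit function theorem on $\Delta(d,r)=0$: continuity and monotonicity of each branch $r=r_k(d)$ follow from the sign of $\partial_r\Delta$ along the curve, and the blow-up $\lim_{d\to1^-}r_4(d)=+\infty$ comes from the degeneration of the polynomial $P$ as $d\to1$ (the coefficient of the relevant top-order term vanishes, so a root escapes to infinity in the $r$-variable). The ordering $\widetilde d_1<\widetilde d_0<\overline d_1$ and the comparison $\overline r_1\lessgtr r_3$ in part~1(ii) would be obtained by evaluating the explicit expressions. For part~2(v), where $d>1$ and $G$ changes sign, I would invoke Proposition~\ref{prop:mlt1}\eqref{sign_prop}: since $\gamma_0\gamma_1=d(1-d)<0$ when $d>1$, the sign condition \eqref{cond_vanish} pins down exactly when $G$ has a zero in $(0,2)$, and the threshold $r=\frac{d}{d-1}$ is exactly where $\min(q\gamma_0+p\gamma_1;p\gamma_0+q\gamma_1)$ changes sign; combining this with the inflexion-point count gives the full picture (single maximum positive, single minimum negative).

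The main obstacle will be step~(b)--(c): controlling the discriminant $\Delta(d,r)$, which is a concrete but genuinely messy two-variable polynomial. The difficulty is not conceptual but combinatorial — I expect a quartic or sextic whose discriminant factors in a nonobvious way, and the art is to find the right change of variable so that the discriminant factors cleanly and the threshold curves $\overline r_1,r_3,r_4$ come out as stated with the clean algebraic numbers \eqref{critical_d}. Without the right normalization (exploiting the symmetry and homogeneity of Proposition~\ref{prop:mlt1} to fix $\gamma_0+\gamma_1=1$ and $r\geq1$ at the outset, as the hypotheses already do), the bookkeeping of which root lies in the interval on each of the many parameter regions becomes unwieldy; the careful region-by-region root count is where the real work lies.
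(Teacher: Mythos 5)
There is a genuine gap, and it sits exactly where you yourself locate ``the real work'': your plan reduces $G''(\rho)=0$ to a polynomial equation by clearing the radical $\sqrt{\psi(\rho)}$ and then classifies roots via a two-parameter discriminant $\Delta(d,r)$. That route is not what makes the theorem provable, and as described it would founder. Since $G''$ contains terms in $\psi^{-1/2}$, $\psi^{-1}$ and $\psi^{-3/2}$, clearing denominators leaves an equation of the form $A(\rho)+B(\rho)\sqrt{\psi(\rho)}=0$; squaring produces a high-degree polynomial in $\rho$ with spurious roots, and its discriminant in $(d,r)$ has no reason to factor into the clean expressions of \eqref{critical_d}--\eqref{critical_r}. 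You have deferred the entire difficulty to a computation that you have not shown is tractable.

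The missing idea is one derivative higher. The paper computes $G^{(3)}$ and finds that it factors as a positive function of $(\rho,r)$ times the \emph{linear} factor $(2d-1)\frac{4r}{(r-1)(r+1)}+(1-\rho)$. Hence $G^{(3)}$ changes sign exactly once, at the explicit point $\widetilde\rho_0=1+(2d-1)\frac{4r}{(r-1)(r+1)}\geq 1$, so $G''$ is unimodal (increasing then decreasing). Together with $G''(0)<0$ (always), the whole classification collapses to evaluating signs at three points: $G''(2)$, whose sign is governed by an explicit cubic $g(r)=r^3(1-d)+r^2(2-3d)+r(3d-1)+d$ whose real roots in $[1,\infty)$ are precisely $r_3(d)$ and $r_4(d)$; $G''(\widetilde\rho_0)$, whose sign change defines $\overline r_1(d)$; and the position of $\widetilde\rho_0$ relative to $2$. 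The critical values $\widetilde d_1,\widetilde d_0,\overline d_1$ then come from elementary comparisons of these one-variable threshold curves, and \eqref{lim_r4} from the degeneration of $g$ as $d\to 1^-$ — not from a resultant of a two-variable discriminant. Your use of the implicit function theorem for continuity/monotonicity of the branches, and of Proposition \ref{prop:mlt1}, item \ref{sign_prop}, for the sign statement in part 2, (v) (with the threshold $r=\frac{d}{d-1}$ coming from \eqref{cond_vanish}), does match the paper; but without the unimodality of $G''$ the root count per region cannot be completed as you propose.
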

Graphical illustrations of the above theorem,  and 
 interpretations thereof, are given in Appendix \ref{app:graph}.
\begin{remark}\label{rk:phases}
\mbox{}\\ \\
(i) By the symmetry and homogeneity properties in Proposition \ref{prop:mlt1}, 
the above theorem yields the number of inflexion points of 
$G_{\gamma_0,\gamma_1,r}$ for all values of $\gamma_0\in\R$, $\gamma_1\in\R$ 
and  $r>0$ such that $(\gamma_0,\gamma_1)\neq (0,0)$. The case 
$\gamma_0+\gamma_1=0$ is attained by the limit 
\be\label{lim_G}
\lim_{d\to+\infty}\frac{1}{d}G_{d,1-d,r}=G_{1,-1,r}.
\ee
(ii) The value of  $\overline{r}_1(d)$ is explicit: 
cf.  \eqref{def:A1}--\eqref{def:bar-r1}.
The values of $r_3(d)$ and $r_4(d)$ could be written explicitly 
as roots of a cubic equation, but we omit the complicated formulas which  
would shed no additional light
on our results.\\ \\
(iii) Cases 2, (iv)--(v) are (as regards inflexion points) a natural extension 
of 2, (iii) and could be included in the latter using the extension 
$r_4(d)=+\infty$ for $d\in[1;+\infty)$.
\end{remark}
 \subsection{Many-lane limit}\label{subsec:many}
 In this subsection,  we investigate a singular behaviour of this model 
as the number of lanes  
 $n\to+\infty$ and the density is properly renormalized.  
We consider  the $n$-lane model 
 of Example \ref{ex:nn}. 
For each $n$, we consider a sequence $(\eta^{n,N})_N$ of such processes.
Since the density range for the total density is $[0,n]$, to have a fixed range 
$[0,1]$, we consider for \eqref{config_total}
the empirical measure renormalized by the number of lanes:
\be\label{empirical_norm}
\widehat{\alpha}^{N}\left(\overline{\eta}^{n,N},dx\right)
:=n^{-1}\alpha^{N}\left(\overline{\eta}^{n,N},dx\right).
\ee
For each $n$, for a given Cauchy datum $u^n_0$, by Theorem  
\ref{thm:local_equilibrium}, the unnormalized empirical measure 
$\alpha^{n,N}_{Nt}$ converges as $N\to+\infty$ to  $u^n(.,t)$,  
the entropy solution at time $t$ of \eqref{eq:hydrodynamic equation}
with a flux function $G^n$ defined on $[0,n]$ by \eqref{flux_hills}. 
The normalized measure \eqref{empirical_norm} converges the normalized profile 
$\widehat{u}^n(.,t):=n^{-1}u^n(.,t)$. The latter is the entropy solution 
to the normalized conservation law
\be\label{norm_burgers}
\partial_t u +\partial_x \widehat{G}^n(u)=0
\ee
where the normalized flux function
\be\label{normalized_flux}
\widehat{G}^n(u):=n^{-1}G^n(nu)
\ee
is now defined on $[0,1]$. Let $F\in C^{1}[0,1]$ be  positive 
on $(0,1)$ with $F\left(0\right)=F\left(1\right)=0$.
In  Example \ref{ex:nn},  we consider
\be\label{slow_multi}
d_i=4nF(i/n).
\ee
The resulting normalized flux 
\be\label{normalized_hills}
\widehat{G}^n(\rho)
=4\sum_{i=0}^{n-1}{\bf 1}_{(i/n,(i+1)/n)}(\rho)F(i/n)(n\rho-i)(i+1-n\rho)
\ee
is a highly oscillating function (as the hills now have width $1/n$) and the height
of the $i$-th local maxima is $F(i/n)$. Thus in the maximal current regime 
$\widehat{G}^n$ approximates $F$.
We are interested in the behavior of $\widehat{u}^n$ as $n\to+\infty$ 
when the normalized initial condition $\widehat{u}^n_0$ is fixed.
In particular,  it is natural to ask if there is  any connection  
with the conservation law
\be\label{burgers_H}
\partial_t u+\partial_x F(u)=0.
\ee
The following theorem yields an answer for Riemann conditions $\widehat{u}_0$.
\begin{theorem}
\label{thm:approximating_G}
Assume $\widehat{u}_0$ is the Riemann condition \eqref{eq:init-riemann}.
Define $\widehat{u}(.,1)$ as follows: if $\alpha>\beta$, $\widehat{u}(.,1)$ 
is the entropy solution at time $1$ of the Riemann problem for \eqref{burgers_H}. 
If $\alpha\leq\beta$, $\widehat{u}(.,1)=\widehat{u}_0(.)$. For $v\in\R$, 
let $\widehat{U}(v,1)$ denote the closed interval 
with bounds $\widehat{u}(v\pm,1)$. 
Then 
\be\label{convergence_interval}
 \lim_{n\to+\infty} d[\widehat{u}^n(v,1);\widehat{U}(v,1)]=0
\ee
locally uniformly in $v$. In particular, $\widehat{u}^n(.,1)$ 
converges to $\widehat{u}(.,1)$ at every point of continuity of the latter.
\end{theorem}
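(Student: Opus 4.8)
The statement is now a purely PDE-level stability result: by Theorem~\ref{thm:local_equilibrium}, for each fixed $n$ the normalized profile $\widehat u^n(\cdot,1)$ is the entropy solution at time $1$ of the scalar conservation law \eqref{norm_burgers} with flux $\widehat G^n$ (given by \eqref{normalized_hills}) and Riemann datum $\widehat u_0=R_{\alpha,\beta}$, so everything reduces to understanding the limit $n\to+\infty$ of these Riemann solutions. The plan is to exploit the classical envelope description of Riemann solutions (see \cite{serre}): since the solution is self-similar and is evaluated at $t=1$, the profile $v\mapsto\widehat u^n(v,1)$ is the generalized monotone inverse of the derivative of an envelope of $\widehat G^n$ on the interval with endpoints $\alpha,\beta$ --- the upper concave envelope, which I denote $\mathcal C\widehat G^n$, when $\alpha>\beta$, and the lower convex envelope $\mathcal E\widehat G^n$ when $\alpha\le\beta$. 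Thus I would first reduce the whole theorem to (a) identifying the limit of the relevant envelope, and (b) a soft argument passing from uniform convergence of envelopes to the set-valued, locally uniform convergence \eqref{convergence_interval}.

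For step (b), the key observation is that the completed graph of $v\mapsto\widehat u^n(v,1)$ is obtained from the completed graph of the monotone derivative $(\mathcal C\widehat G^n)'$ (resp. $(\mathcal E\widehat G^n)'$) by reflection across the diagonal. Uniform convergence of concave (or convex) functions forces local Hausdorff convergence of the completed graphs of their derivatives, and reflection preserves this; Hausdorff convergence of completed graphs of monotone functions is exactly the assertion that $d[\widehat u^n(v,1);\widehat U(v,1)]\to 0$ locally uniformly, with genuine pointwise convergence at continuity points of the limit. I would record this as a short lemma in one-dimensional convex analysis, the interval $\widehat U(v,1)$ precisely absorbing the jumps (shocks) of the limiting profile.

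The heart of the proof is the envelope computation, where the specific structure \eqref{normalized_hills} of $\widehat G^n$ enters. Consider first $\alpha>\beta$. I claim $\mathcal C\widehat G^n\to F^{\mathrm{conc}}$ uniformly on $[\beta,\alpha]$, where $F^{\mathrm{conc}}$ is the concave envelope of $F$; since the entropy solution of \eqref{burgers_H} for decreasing data is governed by exactly $F^{\mathrm{conc}}$, this yields the claim. The upper bound is immediate: on each cell $(i/n,(i+1)/n)$ one has $\widehat G^n\le F(i/n)\le F(\rho)+\omega_F(1/n)$, where $\omega_F$ is the modulus of continuity of $F$, so $\widehat G^n\le F^{\mathrm{conc}}+\omega_F(1/n)$ and the same bound passes to concave envelopes. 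For the lower bound I use that the $i$-th peak, located at $\rho_i=(2i+1)/(2n)$, has exact height $\widehat G^n(\rho_i)=F(i/n)$; these peaks are $1/n$-dense in $[\beta,\alpha]$ and trace the graph of $F$ up to $\omega_F(1/(2n))$. Writing $F^{\mathrm{conc}}(\rho)$ as a convex combination of at most two values $F(a),F(b)$ with barycenter $\rho$ (Carath\'{e}odory in one dimension), replacing $a,b$ by neighbouring peaks and invoking concavity of $\mathcal C\widehat G^n$ gives $\mathcal C\widehat G^n(\rho)\ge F^{\mathrm{conc}}(\rho)-o(1)$ uniformly. Combining the two bounds proves uniform convergence, and then step (b) delivers \eqref{convergence_interval} in this case.

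The case $\alpha\le\beta$ is handled by the opposite degeneration of the convex envelope. Because $\widehat G^n\ge 0$ and vanishes on the $1/n$-dense grid $\tfrac1n\Z$, the lower convex envelope $\mathcal E\widehat G^n$ is identically zero except within distance $1/n$ of the endpoints $\alpha,\beta$: its derivative vanishes on the bulk, while the only $u$-values carrying nonzero characteristic speed lie within $1/n$ of $\alpha$ or of $\beta$. Consequently the reflected profile satisfies $\widehat u^n(v,1)\in[\alpha,\alpha+1/n]$ for $v<0$ and $\widehat u^n(v,1)\in[\beta-1/n,\beta]$ for $v>0$, so it converges to $R_{\alpha,\beta}(v)$ for every $v\neq 0$, while the jump interval $\widehat U(0,1)=[\alpha,\beta]$ absorbs the origin; this is precisely $\widehat u(\cdot,1)=\widehat u_0$. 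The main obstacle I anticipate is step (b) together with making the envelope arguments uniform up to the endpoints of the interval, where the discrete grid and the peaks only approximately reach the boundary values; these boundary effects are $O(1/n)$ and should be controlled by the continuity of $F$, but must be treated with care to obtain the locally uniform --- rather than merely pointwise --- convergence asserted in \eqref{convergence_interval}.
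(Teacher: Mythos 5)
Your proof is correct, and it rests on the same two elementary facts that drive the paper's argument --- the peaks of $\widehat G^n$ at $(i+\tfrac12)/n$ trace the graph of $F$ up to $\omega_F(1/n)$ while the valleys at $i/n$ trace the zero function --- but it routes them through the primal (envelope) side of the variational characterization rather than the dual side. The paper works directly with the Legendre representation of Proposition \ref{lemma:riemann}: it supposes $d[\widehat u^n(v_n,1),\widehat U(v,1)]>\varepsilon$ along a subsequence and derives a contradiction by comparing the value of $v_nu-\widehat G^n(u)$ at the putative optimizer $u_n$ (which stays bounded away from $F^*_{\alpha,\beta}(v)$, resp.\ $(F_0)^*_{\alpha,\beta}(v)$, because every optimizer for $F$, resp.\ for the null flux, lies in $\widehat U(v,1)$) with its value at a nearby peak, resp.\ valley, which converges to the optimum. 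Your version instead proves uniform convergence of the concave (resp.\ convex) envelope of $\widehat G^n$ on the interval with endpoints $\alpha,\beta$ to the concave envelope of $F$ (resp.\ to zero away from an $O(1/n)$ neighbourhood of the endpoints) and then transfers this to the profiles through graph convergence of the monotone derivatives; since the concave envelope is the biconjugate of $\widehat G^n$ on that interval, the two arguments are dual formulations of the same estimate, and your upper/lower envelope bounds are sound. What your route buys is a data-independent intermediate statement (envelope convergence) at the price of an auxiliary convex-analysis lemma; what the paper's route buys is brevity and self-containment. One caveat on step (b): Hausdorff convergence of completed graphs yields, verbatim, the sequential statement \eqref{equivalent_approx} (every limit point of $\widehat u^n(v_n,1)$ along $v_n\to v$ lies in $\widehat U(v,1)$), not the literal bound $d[\widehat u^n(v,1),\widehat U(v,1)]\to 0$ uniformly on compacts --- at a slightly displaced shock the latter distance need not be small. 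The paper takes \eqref{equivalent_approx} as the meaning of the theorem, so you should state and prove your transfer lemma in that sequential form.
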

Since the Riemann problem is expected to characterize the Cauchy problem,
this result suggests the following. If we consider  the $n$-lane  model 
\eqref{eq:intensity_c} 
with $N\to+\infty$ and $n=n(N)\to+\infty$ with
$n/N\to 0$,  starting from a Cauchy datum $\widehat{u}_0$ for the 
normalized measure \eqref{empirical_norm}, the latter should converge 
at time $Nt$ to the solution $\widehat{u}(.,t)$ of a singular evolution problem. 
Solutions of this problem should coincide with entropy solutions of \eqref{burgers_H} 
for nondecreasing initial data, remain fixed for nonincreasing data, and involve 
some interaction between these behaviors for general data.
 \section{Proofs of  Lemma \ref{lemma_class}, 
 Proposition \ref{thm:characterization_lemma_gen} 
 and Proposition \ref{prop:flux_multi} }\label{sec:proof_inv}
\begin{proof}[Proof of Lemma \ref{lemma_class}]
Let 
$(\gamma_\alpha)_{\alpha=0,\ldots,m-1}$ 
denote the irreducibility classes of $\pi(.,.)$ labeled in an arbitrary way. We write 
$\alpha\preceq\beta$ if there is a path from $\gamma_\alpha$ to 
$\gamma_\beta$; that is, if
there exists a path $(\alpha=u_0,\ldots,u_k=\beta)$ such that 
for every $l=0\ldots,k-1$, 
there is a possible transition from class $\gamma_{u_l}$ to class $\gamma_{u_{l+1}}$.
When $\alpha=\beta$, we consider by convention that both $\alpha\preceq\beta$ 
and $\beta\preceq\alpha$ hold (we consider by extension that there is a 
path of length one leading from $\mathcal C_\alpha$ to itself).
We write $\alpha\prec\beta$ if $\alpha\preceq\beta$ and $\alpha\neq\beta$. 
The relation $\preceq$ is a total order on 
$\{0,\ldots,m-1\}$.
It follows that the finite set $\{0,\ldots,m-1\}$  is linearly ordered by 
$\preceq$. That is, there exists a (unique) sequence 
$\left(\alpha_0,\ldots,\alpha_{m-1}\right)$
such that
\be\label{linear_order}
\alpha_0\prec\cdots\prec\alpha_{m-1}=\alpha^*.
\ee 
Then, for $k=0,\ldots,m-1$, we set $\Gamma_k:=\gamma_{\alpha_k}$. 
\end{proof}
The proof of Proposition \ref{lemma_phi_super} 
relies on Lemmas \ref{lemma_irred} and \ref{prop_f} below. 
\begin{lemma}\label{lemma_irred}
\mbox{}\\ \\
Let $\alpha=0,\ldots,m-1$, and 
$\rho_.=(\rho_i)_{i\in\mathcal C_\alpha}$ 
be a $[0,1]$-valued family. Then  $\rho_.$ satisfies
\eqref{detailed_q} for all  $i,j\in\mathcal C_\alpha$,
if and only if, either $\rho_.$ is the constant function with value $1$ 
on $\mathcal C_\alpha$, henceforth denoted by 
 $\rho^{\alpha,\infty}_c$; 
or $\rho_.$ is of the form 
\be\label{def_alphac}
\rho^{\alpha,c}_.:=
\frac{c\lambda^\alpha_.}{1+c\lambda^\alpha_.}
\ee
for $c\geq 0$. 
\end{lemma}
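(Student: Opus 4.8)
The plan is to exploit the reversible measure $\lambda^\alpha_.$ to turn the detailed--balance relations \eqref{detailed_q} into a quantity that is constant along the edges of the irreducibility class $\mathcal C_\alpha$. First I would record a preliminary consequence of reversibility. Since the restriction of $q(.,.)$ to $\mathcal C_\alpha$ is irreducible, its reversible measure $\lambda^\alpha_.$ is strictly positive (item 2 of Remark \ref{remark_rev}); hence relation \eqref{reversible_class} forces $q(i,j)>0$ if and only if $q(j,i)>0$ for all $i,j\in\mathcal C_\alpha$. Thus the support of $q(.,.)$ on $\mathcal C_\alpha$ is an undirected connected graph, and for every edge $(i,j)$ the ratio $r(i,j)=q(i,j)/q(j,i)$ of \eqref{def_rij_neighbor} is well defined, with $r(i,j)=\lambda^\alpha_j/\lambda^\alpha_i$.

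For the nontrivial (forward) implication, assume $\rho_.$ satisfies \eqref{detailed_q} for all $i,j\in\mathcal C_\alpha$. Along each edge $(i,j)$, relation \eqref{detailed_q} is equivalent to $\rho_j=\phi_{r(i,j)}(\rho_i)$ by \eqref{detailed_eq}. The key structural fact is that, for $r>0$, the increasing bijection $\phi_r$ of \eqref{def_phi} fixes both endpoints, $\phi_r(0)=0$ and $\phi_r(1)=1$, and therefore maps each of the three sets $\{0\}$, $(0,1)$, $\{1\}$ into itself. Consequently the edge relation preserves the regime of $\rho$. Fixing a reference vertex $i^*\in\mathcal C_\alpha$ and propagating along paths of the (connected) support graph, the regime of $\rho_{i^*}$ then determines that of every $\rho_i$: if $\rho_{i^*}=0$ then $\rho_.\equiv 0$, which is \eqref{def_alphac} with $c=0$; if $\rho_{i^*}=1$ then $\rho_.\equiv 1$, the constant configuration $\rho^{\alpha,\infty}_.$.

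It remains to treat the interior regime $\rho_{i^*}\in(0,1)$, in which case all $\rho_i\in(0,1)$ by the previous paragraph. I introduce the odds $o_i:=\rho_i/(1-\rho_i)\in(0,+\infty)$. A one-line computation from \eqref{def_phi} gives $\phi_r(\rho)/(1-\phi_r(\rho))=r\,\rho/(1-\rho)$, so along each edge $o_j=r(i,j)\,o_i=(\lambda^\alpha_j/\lambda^\alpha_i)\,o_i$; equivalently the ratio $o_i/\lambda^\alpha_i$ is invariant along every edge. Being locally constant on the connected class, it is globally constant, say equal to $c:=o_{i^*}/\lambda^\alpha_{i^*}\in(0,+\infty)$. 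This yields $o_i=c\lambda^\alpha_i$, i.e. $\rho_i=c\lambda^\alpha_i/(1+c\lambda^\alpha_i)$, which is exactly \eqref{def_alphac}. Path--independence is automatic here, since a locally constant function on a connected graph is constant; equivalently, around any cycle the edge ratios $r$ telescope to $1$ by reversibility.

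The converse is a direct verification: for $\rho_.\equiv 1$ both sides of \eqref{detailed_q} vanish, while for $\rho_.$ of the form \eqref{def_alphac} one has $1-\rho_i=(1+c\lambda^\alpha_i)^{-1}$, so that $\rho_i(1-\rho_j)=c\lambda^\alpha_i/[(1+c\lambda^\alpha_i)(1+c\lambda^\alpha_j)]$ and \eqref{detailed_q} collapses to $\lambda^\alpha_i q(i,j)=\lambda^\alpha_j q(j,i)$, i.e. \eqref{reversible_class}. The only genuinely delicate point of the argument is the boundary analysis $\rho_i\in\{0,1\}$: one must rule out ``mixed'' configurations carrying both an interior value and a boundary value, and this is precisely what the invariance of the three sets $\{0\}$, $(0,1)$, $\{1\}$ under $\phi_r$ delivers.
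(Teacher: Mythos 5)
Your proof is correct and follows essentially the same route as the paper's: isolate the boundary value $\rho\equiv 1$, then observe that the odds $\rho_i/(1-\rho_i)$ form a reversible measure for the irreducible restriction of $q$ to $\mathcal C_\alpha$, hence a multiple $c\lambda^\alpha_.$ (with $c=0$ covering the all-zero case). The only cosmetic difference is that you re-derive the uniqueness of the reversible measure by propagating the locally constant ratio $o_i/\lambda^\alpha_i$ along the connected support graph, whereas the paper invokes that uniqueness directly.
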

\begin{proof}
Assume $\rho_.$ satisfies \eqref{detailed_q} and there exists 
 $i\in\mathcal C_\alpha$  such that $\rho_i=1$. 
For every $j\in\mathcal C_\alpha$, 
by irreducibility, there is a path
$(i_0=i,\ldots,i_{k-1}=j)$ from $i$ to $j$. The relations 
\[
\rho_{i_{k}}\left(
1-\rho_{i_{k+1}}
\right)q\left(i_k,i_{k+1}\right)=
\rho_{i_{k+1}}\left(
1-\rho_{i_k}
\right)q\left(i_{k+1},i_{k}\right)
\]
with  $q\left(i_k,i_{k+1}\right)>0$ and $q\left(i_{k+1},i_{k}\right)>0$ 
imply $\rho_j=1$. Thus $\rho_.$ uniformly equals $1$. Assume now that 
$\rho_.$ never takes value $1$. Then, defining $\lambda_i$ by  
\[
\lambda_i:=\frac{\rho_i}{1-\rho_i},
\]
\eqref{detailed_q} is equivalent to \eqref{reversible_class}. Since $q(.,.)$ 
restricted to $\mathcal C_\alpha$ is irreducible, any reversible
measure on $\mathcal C_\alpha$ is a multiple of  $\lambda^\alpha_.$; 
thus so is $\lambda_.$.
\end{proof}
\begin{lemma}\label{prop_f}
The following are equivalent:\\ \\
(i) The vector $(\rho_0,\ldots,\rho_{n-1})$ satisfies 
\eqref{detailed_q} for every $i=0,\ldots,n-1$;\\ \\
(ii) There exists $\alpha\in\{0,\ldots,m-1\}$ such that, 
for every  $\beta<\alpha$, 
$\rho_.$ is identically $0$ on $\mathcal C_\beta$, and for every
 $\beta>\alpha$,  it is identically $1$ on $\mathcal C_\beta$.
For this $\alpha$, there exists a unique $c\in[0,+\infty]$
such that  $\rho_.=\rho^{\alpha,c}_.$  on $\mathcal C_\alpha$. \\ \\
Besides, if $c\in(0,+\infty)$, $\alpha$ is uniquely determined.
\end{lemma}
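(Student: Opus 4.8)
The plan is to bootstrap from the single-class description of Lemma \ref{lemma_irred} to the global structure by exploiting the one-directional nature of inter-class transitions. First I would note that condition (i) in particular forces \eqref{detailed_q} for every pair $i,j$ lying in a common class $\mathcal C_\alpha$, so Lemma \ref{lemma_irred} applies to each class separately: on $\mathcal C_\alpha$ the restriction of $\rho_.$ is governed by a single parameter $c_\alpha\in[0,+\infty]$, namely $\rho_.=\rho^{\alpha,c_\alpha}_.$ with the conventions that $c_\alpha=0$ gives the constant $0$ and $c_\alpha=+\infty$ gives the constant $1$ of Lemma \ref{lemma_irred}. Since $\lambda^\alpha_.$ is strictly positive, the map $c\mapsto\rho^{\alpha,c}_.$ is injective, so each $c_\alpha$ is unique, and the problem reduces to deciding which sequences $(c_\alpha)_{\alpha=0,\ldots,m-1}$ are admissible.

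Next I would analyze the constraints from pairs in distinct classes. If $i\in\mathcal C_\alpha$, $j\in\mathcal C_\beta$ with $\alpha\neq\beta$ and $q(i,j)>0$, then $i$ reaches $j$ directly while $j$ cannot reach $i$ (else they would share a class), so $q(j,i)=0$, and by the labeling of Lemma \ref{lemma_class} one has $\alpha<\beta$. For such a forward edge, \eqref{detailed_q} collapses to $\rho_i(1-\rho_j)=0$, i.e. $\rho_i=0$ or $\rho_j=1$. The decisive input is that, by Lemma \ref{lemma_class}, between consecutive classes there is always at least one such forward edge $i\to j$ with $i\in\mathcal C_\alpha$, $j\in\mathcal C_{\alpha+1}$. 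Substituting $\rho_i=0$ or $\rho_j=1$ into the single-class formulas and using positivity of the reversible weights, I would deduce the dichotomy
\be\label{plan_dichotomy}
c_\alpha=0\quad\text{or}\quad c_{\alpha+1}=+\infty,\qquad \alpha=0,\ldots,m-2.
\ee

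From \eqref{plan_dichotomy} I would extract the global shape. Its contrapositive reads $c_\alpha>0\Rightarrow c_{\alpha+1}=+\infty$, and since $+\infty>0$ this implication propagates upward, giving $c_\beta=+\infty$ for all $\beta>\alpha$ as soon as some $c_\alpha>0$; symmetrically $c_\beta<+\infty$ forces $c_\gamma=0$ for all $\gamma<\beta$. Hence at most one class can carry a parameter in $(0,+\infty)$, the classes strictly below it are identically $0$, and those strictly above are identically $1$. Taking $\alpha$ to be this class (or, when no strictly intermediate class exists, the threshold class separating the $0$-block from the $1$-block) yields exactly the description in (ii), with $c=c_\alpha\in[0,+\infty]$ unique for that $\alpha$. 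When $c\in(0,+\infty)$ the intermediate class is genuinely forced, so $\alpha$ is unique; when $c\in\{0,+\infty\}$ the boundary class may sometimes be assigned to either block, which is the source of the possible non-uniqueness of $\alpha$.

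Finally I would verify the converse (ii)$\Rightarrow$(i) directly: the within-class relations hold by Lemma \ref{lemma_irred}, and the only nontrivial inter-class relations come from forward edges $i\to j$, for which the block structure gives $\rho_i=0$ when the source sits strictly below the intermediate class and $\rho_j=1$ otherwise, so $\rho_i(1-\rho_j)=0$ in every case. I expect the main obstacle to be the bookkeeping of the upward propagation in the forward direction together with the careful separation of the case $c\in(0,+\infty)$ (forcing a unique $\alpha$) from the boundary cases $c\in\{0,+\infty\}$; the delicate point is getting the conventions right so that $c=0$, $c=+\infty$, and the constant-$1$ solution of Lemma \ref{lemma_irred} all fit consistently into the single parametrization $\rho^{\alpha,c}_.$.
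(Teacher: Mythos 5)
Your proof is correct and follows essentially the same route as the paper's: Lemma \ref{lemma_irred} applied within each irreducibility class, the observation that a forward edge between distinct classes reduces \eqref{detailed_q} to $\rho_i(1-\rho_j)=0$, and propagation of the constants $0$ and $1$ through the linearly ordered classes. The only organizational difference is that you propagate via the dichotomy $c_\alpha=0$ or $c_{\alpha+1}=+\infty$ on consecutive classes (using the consecutive forward edge guaranteed by Lemma \ref{lemma_class}), whereas the paper propagates the values $0$ and $1$ along explicit paths issued from the class on which $\rho_.$ is non-constant; both are sound, and your version handles the all-constant case slightly more explicitly.
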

\begin{proof}
Let $\alpha\in\{0,\ldots,m-1\}$ and assume $\rho_.$ is neither of the 
constants $0$ or $1$ on $\mathcal C_\alpha$. By Lemma \ref{lemma_irred},
we have $\rho_i\in(0,1)$  for all $i\in\mathcal C_\alpha$.
Assume $\alpha<\beta$, $i\in\mathcal C_\alpha$ 
and $j\in\mathcal C_\beta$. Then there is a path
$(i_0=i,\ldots,i_{k-1}=j)$ such that $q\left(i_l,i_{l+1}\right)>0$ 
for every $l=0,\ldots,k-1$, and $q(i_{l+1},i_l)=0$ 
whenever $i_l$ and $i_{l+1}$ do not belong to the same class. 
\[
\rho_{i_l}\left(1-\rho_{i_{l+1}}\right)q\left(i_{l},i_{l+1}\right)=
\rho_{i_{l+1}}\left(1-\rho_{i_{l}}\right)q\left(i_{l+1},i_{l}\right).
\]
It follows that $\rho_l=1$ for $l\geq L$, where
\[
L=\min\{l=0,\ldots,k\}:\,i_l\not\in \mathcal C_\alpha\}.
\]
Thus $\rho_.$ is identically $1$ on $\mathcal C_\beta$. 
A similar argument shows that 
it is identically $0$ on all classes 
$\mathcal C_\beta$ with $\beta<\alpha$. 
The relation  $\rho_.=\rho^{\alpha,c}$  on $\mathcal C_\alpha$ follows 
from Lemma \ref{lemma_irred}.
If $\rho_.$ is neither of the constants $0$ or $1$ on $\mathcal C_\alpha$, 
$\alpha$ is uniquely determined by the fact that $\rho_.$ 
is identically $0$ or $1$ on all other classes.
\end{proof}
\begin{proof}[Proof of Proposition  \ref{lemma_phi_super}]
\mbox{}\\ \\
{\em Proof of (i).}
Let $\rho\in[0,n]$ and assume $\rho\in[N_{\alpha^*},N_{\alpha^*-1}]$ 
for some $\alpha\in\{0,\ldots,m-1\}$. Note that $\rho^{\alpha^*,c}_i$ 
defined by \eqref{def_alphac} increases from $0$ to $1$ as $c$ 
increases from $0$ to $+\infty$.
Thus there is a unique $c\in[0,+\infty]$ such that
\be\label{complete_density}
\sum_{i\in\mathcal C_{\alpha^*}}\rho^{\alpha^*,c}_i+
N_{\alpha^*}=\rho.
\ee
We then define $(\rho_0,\ldots,\rho_{n-1})$ as follows: 
\be\label{construction}
\rho_i=\left\{
\ba{lll}
1 & \mbox{if} &  i\in\mathcal C_\beta\mbox{ for }\beta<\alpha^*\\
0 & \mbox{if} &  i\in\mathcal C_\beta\mbox{ for }\alpha^*<\beta\\
 \rho^{\alpha^*,c}_i & \mbox{if} &  i\in\mathcal C_{\alpha^*}.
\ea
\right.
\ee
By Lemma \ref{prop_f},
we have $(\rho_0,\ldots,\rho_{n-1})\in\mathcal F$
(defined in \eqref{def_set_f}), 
and \eqref{complete_density} implies
\be\label{total_density}
\sum_{i=0}^{n-1}\rho_i=\rho.
\ee
We now prove uniqueness of $(\rho_0,\ldots,\rho_{n-1})\in\mathcal F$ 
satisfying \eqref{total_density}. 
The value $\alpha$ in Lemma \ref{prop_f} is such that 
\be\label{sandwich_alpha}\rho\in[N_\alpha,N_{\alpha-1}].\ee
\textit{(a)} If $\rho\not\in\{N_{\alpha^*-1},N_{\alpha^*}\}$, 
then $\alpha=\alpha^*$ is uniquely determined. 
Thus the lemma implies that $(\rho_0,\ldots,\rho_{n-1})$ 
is given by \eqref{construction}. \\ \\
\textit{(b)} If $\rho=N_{\alpha^*}$, by \eqref{sandwich_alpha}, 
we must have $\alpha=\alpha^*$ or $\alpha=\alpha^*+1$.
In the former case, by Lemma \ref{prop_f}, we recover \eqref{construction}.
 In the latter case, the lemma gives
$\rho_i=0$ if $i\in\mathcal C_\beta$ for  $\beta<\alpha^*+1$ 
and $\rho_i=1$ if $i\in\mathcal C_\beta$ for $\alpha^*+1<\beta$. 
Thus
\be\label{decomp_total}
\rho=N_{\alpha^*}=\sum_{i\in\mathcal C_{\alpha^*+1}}\rho_i+N_{\alpha^*+1}.
\ee
Hence, the sum in \eqref{decomp_total} must be equal 
to $n_{\alpha^*+1}$, so $\rho_i=1$ for every $i\in\mathcal C_{\alpha^*+1}$. 
This gives the same $(\rho_0,\ldots,\rho_{n-1})$ as in \eqref{construction}.\\ \\
\textit{(c)} If $\rho=N_{\alpha^*-1}$, a similar argument than in \textit{(b)} 
shows that we recover \eqref{construction}.
\end{proof}
 We now turn to the proof of Proposition \ref{prop:flux_multi}. 
 We shall need the following lemma, which completes the proof of
 Proposition \ref{lemma_phi_super}.
\begin{lemma}\label{lemma:complete}
For $i=0,\ldots,n-1$, the function $\widetilde{\rho}_i(.)$ 
defined in Proposition \ref{lemma_phi_super} is increasing 
and continuously differentiable on $[N_{\alpha},N_{\alpha-1}]$, 
identically $0$ on $[0,N_{\alpha}]$, and identically
$1$ on $[N_{\alpha-1},n]$, where  $\alpha\in\{0,\ldots,m-1\}$ 
 is  such that $i$ belongs to  $\mathcal C_\alpha$. 
 Its restriction to $[N_{\alpha},N_{\alpha-1}]$ depends 
only on the restriction of $q(.,.)$ to 
$\mathcal C_\alpha\times\mathcal C_\alpha$.
\end{lemma}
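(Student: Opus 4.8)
The plan is to read off $\widetilde{\rho}_i$ from the explicit inversion of $\psi$ carried out in the proof of Proposition \ref{lemma_phi_super}, and then to reduce everything to the study of the single scalar parameter $c$ governing the partially filled class. Fix $i\in\mathcal C_\alpha$. By Lemma \ref{prop_f}, if $\rho\in[N_{\alpha^*},N_{\alpha^*-1}]$ then lane $i$ carries density $0$ when $\alpha^*>\alpha$, density $1$ when $\alpha^*<\alpha$, and the intermediate value $\rho^{\alpha,c}_i=c\lambda^\alpha_i/(1+c\lambda^\alpha_i)$ of \eqref{def_alphac} when $\alpha^*=\alpha$. Since $\alpha^*>\alpha$ forces $\rho\le N_{\alpha^*-1}\le N_\alpha$ and $\alpha^*<\alpha$ forces $\rho\ge N_{\alpha^*}\ge N_{\alpha-1}$, this already yields $\widetilde{\rho}_i\equiv 0$ on $[0,N_\alpha]$ and $\widetilde{\rho}_i\equiv 1$ on $[N_{\alpha-1},n]$. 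It remains to analyse the middle interval, where $\alpha^*=\alpha$ and $\widetilde{\rho}_i(\rho)=\rho^{\alpha,c(\rho)}_i$.

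First I would introduce the total-mass function $S(c):=\sum_{j\in\mathcal C_\alpha}\rho^{\alpha,c}_j$, so that $c(\rho)$ is defined by $S(c(\rho))=\rho-N_\alpha$, cf. \eqref{complete_density}. Because the reversible measure $\lambda^\alpha_.$ is strictly positive on $\mathcal C_\alpha$ (Remark \ref{remark_rev}), each summand $\rho^{\alpha,c}_j$ is a strictly increasing smooth function of $c$, so $S$ is a smooth strictly increasing bijection of $[0,+\infty)$ onto $[0,n_\alpha)$ with $S(c)\to n_\alpha=N_{\alpha-1}-N_\alpha$ as $c\to+\infty$, and with $S'(c)=\sum_{j}\lambda^\alpha_j/(1+c\lambda^\alpha_j)^2>0$ throughout. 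Hence $c(\cdot)=S^{-1}(\,\cdot-N_\alpha)$ is a strictly increasing homeomorphism of $[N_\alpha,N_{\alpha-1}]$ onto $[0,+\infty]$, and $\widetilde{\rho}_i$ is the composition of $c(\cdot)$ with the strictly increasing map $c\mapsto\rho^{\alpha,c}_i$; monotonicity follows at once.

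For the $C^1$ claim I would apply the inverse function theorem on the open interval $(N_\alpha,N_{\alpha-1})$, where $S'>0$ makes $c(\cdot)$, hence $\widetilde{\rho}_i$, smooth. The one point needing care, which I expect to be the main obstacle, is regularity up to the endpoints, since $c$ ranges over the non-compact $[0,+\infty]$. At the left endpoint $c=0$ this is harmless: $S'(0)=\sum_j\lambda^\alpha_j>0$, so $\widetilde{\rho}_i'(N_\alpha+)=\partial_c\rho^{\alpha,0}_i/S'(0)=\lambda^\alpha_i/\sum_j\lambda^\alpha_j$, finite and positive. At the right endpoint I would reparametrize by $\varepsilon:=1/c$, so that $\rho^{\alpha,c}_i=\lambda^\alpha_i/(\varepsilon+\lambda^\alpha_i)$ and $S=\sum_j\lambda^\alpha_j/(\varepsilon+\lambda^\alpha_j)$ become smooth in $\varepsilon$ near $0$, with $dS/d\varepsilon|_{0}=-\sum_j 1/\lambda^\alpha_j\neq 0$; the inverse function theorem in the variable $\varepsilon$ then shows that $\varepsilon$, hence $\widetilde{\rho}_i$, is a smooth function of $\rho$ up to $\rho=N_{\alpha-1}$, with $\widetilde{\rho}_i'(N_{\alpha-1}-)=(1/\lambda^\alpha_i)/\sum_j(1/\lambda^\alpha_j)$. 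Matching these finite one-sided derivatives with the interior one gives continuous differentiability on the closed interval $[N_\alpha,N_{\alpha-1}]$.

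Finally, the dependence claim is immediate from the formulas above: on $[N_\alpha,N_{\alpha-1}]$ the expressions for $S$, for $c(\rho)$ and for $\widetilde{\rho}_i=\rho^{\alpha,c(\rho)}_i$ involve only the weights $(\lambda^\alpha_j)_{j\in\mathcal C_\alpha}$, which are determined by the restriction of $q(.,.)$ to $\mathcal C_\alpha\times\mathcal C_\alpha$ up to a common multiplicative constant; and that constant is irrelevant, since replacing $\lambda^\alpha_.$ by $s\lambda^\alpha_.$ and $c$ by $c/s$ leaves every $\rho^{\alpha,c}_i$ unchanged. (As a consistency check, the one-sided values $\widetilde{\rho}_i'(N_\alpha+)$ and $\widetilde{\rho}_i'(N_{\alpha-1}-)$ computed above are exactly those entering the one-sided flux derivatives of Proposition \ref{prop:flux_multi}, via $G=\sum_i\gamma_i G_0\circ\widetilde{\rho}_i$ and $G_0'(0)=G_0'(1-)=\pm1$.)
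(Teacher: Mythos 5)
Your proof is correct and follows essentially the same route as the paper's: read $\widetilde{\rho}_i$ off the explicit construction from Lemma \ref{prop_f}, invert the strictly increasing total-mass function of $c$, and compute the one-sided derivatives at $N_\alpha$ and $N_{\alpha-1}$ (your limits $\lambda^\alpha_i/\sum_j\lambda^\alpha_j$ and $(1/\lambda^\alpha_i)/\sum_j(1/\lambda^\alpha_j)$ are exactly the quantities the paper denotes $e^\alpha_i$, $f^\alpha_i$). Your two additions — the $\varepsilon=1/c$ reparametrization to get $C^1$ regularity up to the right endpoint, and the observation that rescaling $\lambda^\alpha_.\mapsto s\lambda^\alpha_.$, $c\mapsto c/s$ removes the normalization ambiguity in the reversible measure — make explicit two points the paper passes over quickly, and are both sound.
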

\begin{proof}[Proof of lemma \ref{lemma:complete}]
The constant values $0$ and $1$ outside $[N_{\alpha},N_{\alpha-1}]$ 
follow from the first two lines of \eqref{construction}. 
The regularity on $[N_{\alpha},N_{\alpha-1}]$ follows from 
the third line. Indeed,  $c\mapsto\rho^{\alpha,c}_i$ lies 
in $C^1([0,+\infty))$, and so does
\be\label{total_alpha}
c\mapsto\rho^\alpha(c):=\sum_{i\in\mathcal C_\alpha}\rho^{\alpha,c}_i+N_\alpha.
\ee
 Notice that $\rho^{\alpha,.}_i$ is an increasing function such that 
\be\label{notice_limits}
\rho^{\alpha,0}_i=0,\quad
\lim_{c\to+\infty}\rho^{\alpha,c}_i=1.
\ee
Hence $\rho^{\alpha}$ is an increasing function such that
\begin{eqnarray}
\label{limits_rhoalpha_1}
\rho^\alpha(0)=N_\alpha & , & \lim_{c\to+\infty}\rho^\alpha(c)=N_{\alpha-1}\\
\label{limits_rhoalpha_2}
\lim_{\rho\to N_{\alpha-1}^-}
(\rho^\alpha)^{-1}\left(\rho-N_\alpha\right)=+\infty & , & 
\lim_{\rho\to N_{\alpha}^+}(\rho^\alpha)^{-1}\left(\rho-N_\alpha\right)=0.
\end{eqnarray}
The derivative of $\rho^\alpha$ does not vanish.
Hence its inverse lies in $C^1([0,n_\alpha))$ and can be extended 
into an element of $C^0([0,n_\alpha])$. By \eqref{complete_density} and 
\eqref{construction}, the restriction of $\widetilde{\rho}_i$ to 
$[N_\alpha,N_{\alpha-1}]$ is given by
\be\label{givenby}
\widetilde{\rho}_i(\rho)=\rho^{\alpha,c}_i,\quad\mbox{with}\quad c
=(\rho^\alpha)^{-1}\left(\rho-N_\alpha\right)
\ee
and thus lies in 
$C^1([N_\alpha,N_{\alpha-1}))\cap C^0([N_\alpha,N_{\alpha-1}])$. 
 It follows that
\be\label{using_givenby}
\widetilde{\rho}_i'(\rho)=\frac{
\frac{d}{dc}\rho^{\alpha,c}_i
}
{
\frac{d}{dc}\rho^\alpha(c)
}
\ee
with $c$ given by \eqref{givenby}. 
From this,  a direct computation shows that 
\be\label{limit_derivative}
\lim_{\rho\to N_{\alpha-1}-}\widetilde{\rho}_i'(\rho)
=f^\alpha_i>0, \quad\lim_{\rho\to N_{\alpha}+}\widetilde{\rho}_i'(\rho)
=e^{\alpha+1}_i>0
\ee
where $e^\alpha_i$ and $f^{\alpha}_i$ are defined by \eqref{def_ealpha}. 
Finally, by the last line of 
\eqref{construction}, the restriction of $\widetilde{\rho}_i$ to 
$[N_\alpha,N_{\alpha-1}]$ depends only on $\lambda^\alpha_.$, 
which depends only on the restriction of $q(.,.)$ to 
$\mathcal C_\alpha\times\mathcal C_\alpha$.
\end{proof}
\begin{proof}[Proof of Proposition \ref{prop:flux_multi}]
\textit{(i)} By \eqref{def_flux_super} and   Lemma \ref{lemma:complete},  
$G$ is continuously differentiable on $[N_\alpha,N_{\alpha-1}]$. 
Besides, for $\rho$ in this interval,
only indices $i\in\mathcal C_\alpha$ may contribute to 
\eqref{def_flux_super},  because 
$\widetilde{\rho}_i(\rho)\in\{0,1\}$ for other indices.

\textit{(ii)} Since $\widetilde{\rho}_i$  increases from $0$ to $1$ 
on $[N_{\alpha},N_{\alpha-1}]$, \eqref{one_sided} follows 
 from \eqref{def_flux_super}--\eqref{flux_tasep} and \eqref{limit_derivative}.

\textit{(iii)}
For $\rho=\rho^\alpha(c)$  defined in \eqref{total_alpha}, we have
\be\label{flux_c}
G(\rho)=\sum_{i\in\mathcal C_\alpha}
\gamma_i\frac{c\lambda^\alpha_i}{(1+c\lambda^\alpha_i)^2}=:\widetilde{G}^\alpha(c)
\ee
and $G'$ vanishes at $\rho=\rho^\alpha(c)\in [N_{\alpha},N_{\alpha-1}]$ 
if and only if $(\widetilde{G}^\alpha)'$ vanishes at $c$.
The latter derivative is a rational fraction, hence it has finitely many zeroes.
\end{proof}
\section{Proof of Theorem \ref{thm:phase_trans}}
\label{sec_phases}
For the following computations we assume without loss of generality that 
$d_0+d_1=1$ (which amounts to a time change), and that $p+q=1$ (which allows 
all the possible
values for the flux $G$, since $G$ depends on $p/q$), with $0<p<1/2$.
 We rely on the expression for $G$ given in 
 \eqref{eq:Gviaphi_0}--\eqref{eq:psi}, 
where we set $\gamma_0=d$ and $\gamma_1=1-d$.  
We assume that  $d\geq 1/2$. 
Note that $\psi(\rho)\le 1$. 
The following values and equalities are independent of $d$:
\begin{eqnarray}\label{eq:G0-2}
G(0)&=&G(2)=0\\\label{eq:G1}
G(1)&=& \frac{1}{4}-\varphi(1)^2 \\\label{eq:phi2-ro}
\psi(2-\rho)&=& \psi(\rho), \quad\hbox{hence}\quad \varphi(2-\rho)= \varphi(\rho).
\end{eqnarray}
We then compute
\begin{eqnarray*}\label{eq:psi-deriv}
\psi'(\rho)&=& \left(\frac{r-1}{r+1}\right)^2 2(\rho-1)\\\label{eq:phi-deriv}
\varphi'(\rho)&=&-\frac{1}{2}\left(\frac{r-1}{r+1}\right)
\frac{(\rho-1)}{\sqrt{\psi(\rho)}}\\\label{eq:phi-deriv2}
\varphi''(\rho)&=& -2r
\left(\frac{(r-1)}{(r+1)^3}\right)\psi(\rho)^{-3/2}\\\label{eq:phi-deriv3}
\varphi^{(3)}(\rho)&=& 
6r(\rho-1)\left(\frac{(r-1)^3}{(r+1)^5}\right)\psi(\rho)^{-5/2}\\\label{eq:G-deriv}
G'(\rho)&=& \frac{1}{2}(1-\rho) 
+(2d-1)\left[- \varphi(\rho)+(1-\rho)\varphi'(\rho)\right]
-2\varphi(\rho)\varphi'(\rho)\\\label{eq:G-deriv2}
G''(\rho)&=& -\frac{1}{2} +(2d-1) 
\left[- 2\varphi'(\rho)+(1-\rho)\varphi''(\rho)\right]
-2\varphi'(\rho)^2-2\varphi(\rho)\varphi''(\rho)\\\nonumber
G^{(3)}(\rho)&=& (2d-1) \left[- 3\varphi''(\rho)+(1-\rho)\varphi^{(3)}(\rho)\right]
-6\varphi'(\rho)\varphi''(\rho)-2\varphi(\rho)\varphi^{(3)}(\rho)\\\label{eq:G-deriv3}
&=& 6r \frac{(r-1)^2}{(r+1)^4}\psi(\rho)^{-5/2}
\left[(2d-1)\frac{4r}{(r-1)(r+1)}+(1-\rho)\right].
\end{eqnarray*}
We have that $G^{(3)}(\rho)$ changes sign for the value
\begin{equation}\label{eq:tilderho0}
\widetilde\rho_0= \widetilde\rho_0(r,d)=1+ (2d-1)\frac{4r}{(r-1)(r+1)} \geq 1.
\end{equation}
Therefore $G''$ is increasing before $\widetilde\rho_0$ 
and decreasing after. We compute
\begin{eqnarray}\label{eq:G2-0}
G''(0)&=& -\frac{r^2+1}{(r+1)^2}-(2d-1)\frac{(r-1)}{(r+1)}
\left[1+\frac{2r}{(r+1)^2}\right]<0\\
\label{eq:G2-1}
G''(1)&=& -1+ \frac{r+1}{4\sqrt r}
\quad\hbox{(this value is independent of }d)\\\label{eq:psi-tilderho0}
\psi(\widetilde\rho_0)&=&\frac{4r}{(r+1)^2}
\left[(2d-1)^2\frac{4r}{(r+1)^2}+1\right]\\\label{eq:G2-tilderho0}
G''(\widetilde\rho_0)&=& -1+ \frac{1}{\sqrt{\psi(\widetilde\rho_0)}}
\left[-\frac{1}{2}+\frac{(r+1)^2}{4r}\psi(\widetilde\rho_0)\right]\\\label{eq:G2-2}
G''(2)&=& -\frac{r^2+1}{(r+1)^2}
+(2d-1)\frac{(r-1)}{(r+1)}\left[1+\frac{2r}{(r+1)^2}\right].
\end{eqnarray}
Moreover we have that
\begin{eqnarray} \label{ineq:G2-2}
G''(2)<0 &\Leftrightarrow& g(r):= r^3(1-d) + r^2(2-3d)
 + r(3d-1) +d > 0\\  \label{ineq:2-with-rho_0_0}
\widetilde\rho_0<2 &\Leftrightarrow&  (2d-1)4r<r^2-1 \\ \label{ineq:2-with-rho_0_1}
                    &\Leftrightarrow& r>\widetilde r_1 
                    = \widetilde r_1(d): = 2(2d-1)+\sqrt{4(2d-1)^2+1}.  
\end{eqnarray}
Note that $d\mapsto \widetilde r_1(d)$ is an increasing function.
Since $G''(0)>0$, to determine the number of inflexion points of $G$ 
depending on $r$ for given $d$, it is enough to determine the sign of 
$G''(2)$ (i.e. of $-g(r)$), and that of $G''(\widetilde{\rho}_0(r))$.\\ \\
{\em Sign of $g(r)$ and $G''(2)$}.\\ \\
 {\em First case:} $d=1$. Then, 
\be\label{case_d=1}
g(r)=-r^2+2r+1,\quad g'(r)=-2r+2<0
\ee
thus $g$ is decreasing, with $g(r)=0$ for 
 $\widetilde r_0=1+\sqrt 2<\widetilde{r}_1(1)$
(the other root,  $1-\sqrt 2$, is less than 1). Thus for $r<\widetilde r_0$,  
$G''(2)<0$ and for $r>\widetilde r_0$, 
$G''(2)>0$.  \\ \\
 {\em Second case:}  $1/2\leq d<1$. 
We have
\begin{eqnarray}\label{more-for-g'_0}
g'(r)&=& 3r^2(1-d) + 2r(2-3d) + (3d-1)\\
\label{more-for-g_0}
g(1)=2;\quad && g'(1)=6(1-d)>0. 
\end{eqnarray}
The sign of $g'(r)$ depends on that of
\be\label{delta_gprime}
\delta=\delta(d):=18d^2-24d+7
\ee
which has a unique root in $[1/2;+\infty)$, namely 
\be
\label{def:d1}
d_1:=\frac{4+\sqrt{2}}{6} >\frac{5}{6}.
\ee
 Note that $d_1>5/6$.
For $d<d_1$, $\delta(d)\leq 0$, thus $g'(r)$ has constant sign;  
by \eqref{more-for-g_0}, $g'(r)<0$, $g$ is decreasing negative 
and $G''(2)>0$. For $d\geq d_1$, 
there are two values $r_1(d),r_2(d)$ 
such that $g'(r_1)=g'(r_2)=0$:
\begin{eqnarray}\label{eq:r1}
r_1=\frac{3d-2-\sqrt{\delta}}{3(1-d)}
&=&r_0-\frac{\sqrt{\delta}}{3(1-d)}\\\label{eq:r2} r_2
=\frac{3d-2+\sqrt{\delta}}{3(1-d)}
&=&r_0+\frac{\sqrt{\delta}}{3(1-d)}, \quad\mbox{where}\\
\label{more-for-G2-2_0}
r_0 & = & \frac{3d-2}{3(1-d)}.
\end{eqnarray}
Then $g$ is increasing between 1 and $r_1$, 
then decreasing between $r_1$ and $r_2$,
and finally increasing after $r_2$. 
Therefore if $g(r_2)>0$, then $G''(2)< 0$. We have 
\begin{eqnarray}
\label{for-gr2-1}
g(r_2)>0 &\Leftrightarrow & \phi(d)<0, \mbox{ where} \\
\label{for-gr2-0}
\phi(d) & = & \delta\sqrt\delta-3(1-d)+(3d-2)\delta.
\end{eqnarray}
Since $d>\frac{5}{6}$, the function $\phi$ is increasing. We have $\phi(d_1)<0$,
and $\phi(\frac{14}{15})>0$, thus $\phi$ changes sign for some value $\widetilde d_1$ 
between $d_1$ and $\frac{14}{15}$ such that $\phi(\widetilde d_1)=0$, given by:
\begin{equation}\label{eq:tilde-d1}
\widetilde d_1:=\frac{1}{2}+\frac{1}{6}\sqrt{3+2\sqrt{3}}.
\end{equation}
Gathering the above lines, we have that 
\begin{eqnarray}\label{ineq:G''2-first}
\hbox{if}\quad d\leq  \widetilde d_1,
\, &&\hbox{then}\quad G''(2)< 0 .
\end{eqnarray}
 For $d\geq \widetilde d_1$, there are two values 
 $r_3(d)<r_2(d)<r_4(d)$ such that 
$g(r_3)=g(r_4)=0$,  $g$
is positive outside $(r_3(d),r_4(d))$ hence $G''(2)<0$, 
and negative in $(r_3(d),r_4(d))$ hence $G''(2)>0$. 
\mbox{}\\ \\
{Third case:}  $d<1/2$. 
 Then $r_2(d)<0$ and $0<r_1(d)<1$.
 Then $g$ is nondecreasing on $[1;+\infty)$ and $\lim_{r\to+\infty}g(r)=-\infty$. 
 Thus $g$ has a single zero on $[1;+\infty)$, that we still denote by $r_3(d)$.  
Then for $r<r_3(d)$, $G''(2)>0$ and for $r>r_3(d)$, $G''(2)<0$. \\ \\
\noindent 
{\em Sign of $G''(\widetilde\rho_0(r))$}. 
Let
\begin{eqnarray}\label{def:bar-d1}
\overline d_1 &=& \frac{2+\sqrt{2\sqrt 3}}{4}\quad\hbox{and}\\ \label{def:A1} 
A_1 = A_1(d)&=&\frac{1}{2(2d-1)^2}\left(-1+\frac{1}{2\sqrt{d(1-d)}}\right)\\ \label{ineq:sign-A1}
A_1(\overline d_1)=1 \quad&\hbox{and}&\quad 
A_1 <1 \Leftrightarrow d< \overline d_1\\\label{def:bar-r1}
\hbox{if}\quad d\leq \overline d_1,&&\hbox{let}\quad
\overline r_1 = \overline r_1(d) = \frac{(1+\sqrt{1-A_1})^2}{A_1}
\end{eqnarray}
\begin{eqnarray} \label{ineq:G2-rho_0-0}
\hbox{if}\quad d= \overline d_1,\, &&\hbox{then}\quad 
 G''(\widetilde\rho_0)\geq 0 \\\label{ineq:G2-rho_0-1}
\hbox{if}\quad d> \overline d_1,\, &&\hbox{then}\quad G''(\widetilde\rho_0)\geq 0\\ 
\label{ineq:G2-rho_0-2}
\hbox{if}\quad \frac{1}{2}<d\leq \overline d_1\,
\quad\hbox{and}\quad r\geq \overline r_1, \,
&& \hbox{then}\quad G''(\widetilde\rho_0)\geq 0\\
\label{ineq:G2-rho_0-3}
\hbox{if}\quad \frac{1}{2}<d\leq \overline d_1
\quad\hbox{and}\quad r\leq \overline r_1, \,
&& \hbox{then}\quad G''(\widetilde\rho_0)\leq 0.
\end{eqnarray}
Moreover we have that
\begin{eqnarray}\label{eq:tilde-d0}
\overline r_1 >\widetilde r_1&\Leftrightarrow& 
d<\widetilde d_0:= \frac{1}{2}+\frac{\sqrt 3}{4}.
\end{eqnarray}
Hence 
\be\label{values:d1-dbar1}
\frac{5}{6}<
\widetilde{d}_1
<
\widetilde{d}_0
<\frac{14}{15}
<\overline d_1 <\frac{39}{40}.
\ee
{\em Proof of 1, (i)}.
Properties of $\widetilde{r}_1(d)$ and $\overline{r}_1(d)$ follow 
from their expressions \eqref{ineq:2-with-rho_0_1}, resp. 
\eqref{def:A1}--\eqref{def:bar-r1}. Monotonicity properties of 
$r_3(d)$ and $r_4(d)$ hold because for fixed $r>1$, $g$ is a decreasing 
function of $d$.  By implicit function Theorem, $r_3(d)$ and $r_4(d)$ 
are continuously differentiable on $(\widetilde{d}_1;1)$, 
and due to joint $(r,d)$ continuity of $g$, they extend continuously 
to $\widetilde{d}_1$. When $d\to 1^-$, \eqref{lim_r4} holds because 
for $r>1$, $g(r)$ tends to $-\infty$ as $d\to 1-$, whereas 
$r_3(d)\to\widetilde{r}_0=1+\sqrt{2}$, the unique zero of $g(r)$
found above when $d=1$. The unique zero $r_3(d)$ of $g(r)$ for 
$d>1$ tends to $\widetilde{r}_0$ as $d\to 1+$, and on $(1;+\infty)$, 
$r_3(d)$ is continuously differentiable by the implicit function theorem. \\ \\
{\em Proof of 1, (ii)}. For the purpose of 2., we prove 
the following larger set of properties:
\begin{eqnarray}
d=\widetilde{d}_0 & \Rightarrow & 
\widetilde{r}_1(d)=
\overline{r}_1(d)=r_3(d) 
\label{one_2_1}\\ 
d<\widetilde{d}_0 & \Rightarrow & \widetilde{r}_1(d)
<\overline{r}_1(d)<r_3(d)\label{one_2_2}\\
d>\widetilde{d}_0 & \Rightarrow & r_3(d)<\widetilde{r}_1(d)<r_4(d).\label{one_2_3}
\end{eqnarray} \\
Proof of \eqref{one_2_1}: For $d=\widetilde{d}_0$, 
and $r=\widetilde{r}_1(d)=\overline{r}_1(d)$, 
by definition of these quantities, we have 
$\widetilde{\rho}_0=2$ and $G''(\widetilde{\rho}_0)=0$, 
hence $G''(2)=0$, implying $r\in\{r_3(d);r_4(d)\}$; 
that $r=r_3(d)$, i.e. \eqref{one_2_2}, follows from \eqref{one_2_3}, 
proven right below. \\ \\
Proof of \eqref{one_2_2}: Assume $\overline{r}_1(d)>r_3(d)$; 
then for $r_3(d)<r<\inf(\overline{r}_1(d),r_4(d))$, we have
$G''(2)<0$ and $G''(\widetilde{\rho}_0)\geq 0$, hence a 
contradiction. Monotonicities proved in 1.\textit{(i)}, and 
\eqref{one_2_1}, imply $\widetilde{r}_1(d)<\overline{r}_1(d)$. \\ \\
Proof of \eqref{one_2_3}: that $r_3(d)<\widetilde{r}_1(d)$ 
follows from \eqref{one_2_1} and monotonicities.  
Since $r_3(d)$ and $\widetilde{r}_1(d)$ are continuous, 
assuming $\widetilde{r}_1(d)\geq r_4(d)$ for some $d$ implies 
$\widetilde{r}_1(d')=r_4(d')$ for some $d'$. Arguing as for 
\eqref{one_2_1}, this implies $\overline{r}_1(d')=r_4(d')$, 
which contradicts \eqref{one_2_1}--\eqref{one_2_2}.  \\ \\
{\em Proof of 2}. The following holds regardless of the value of $d$. 
First, if $r\leq \overline{r}_1(d)$,  since $G''(\widetilde{\rho}_0)\leq 0$ 
and $\widetilde{\rho}_0$ is a global strict maximizer of $G''$ on 
$[0;+\infty)$, $G''\leq 0$ and vanishes at most once, hence $G$ 
is strictly concave. 
Next, if $r_3(d)<r<r_4(d)$, $G''(\widetilde{\rho}_0)>G''(2)>0$. 
Thus $G$ has a single inflexion point (note that this does not 
depend on the position of $\widetilde{\rho}_0$ with respect to $2$).
Finally, if $r>r_4(d)$, $G''(2)<0<G''(\widetilde{\rho})$. 
Since $r_4(d)>\widetilde{r}_1(d)$, $\widetilde{\rho}_0<2$, 
hence $G$ has two inflexion points. \\ \\
For other positions of $r$, the conclusion  depends on $d$:\\ \\
Proof of \textit{(i)}: 
If $r>\overline{r}_1(d)$,
by \eqref{one_2_2},  $\overline{r}_1(d)>\widetilde{r}_1(d)$, 
thus $G''(\widetilde{\rho}_0)>0$ and 
$\widetilde{\rho}_0<2$. Since $d<\widetilde{d}_1$, we also have 
$G''(2)<0$. Recalling $G''(0)<0$ and variations of $G''$, 
we conclude that $G''$ vanishes once on either side of $\widetilde{\rho}_0$.\\ \\
Proof of \textit{(ii)}: 
If $\overline{r}_1(d)<r<r_3(d)$, then $G''(\widetilde{\rho}_0)>0$, 
$G''(2)<0$, and by \eqref{one_2_3}, $r>\widetilde{r}_1(d)$, 
thus $\widetilde{\rho}<2$, implying two points of inflexion. 
\\ \\
Proof of \textit{(iii)--(v)}: If $r<r_3(d)$, then $G''(2)<0$. 
Since $r_3(d)<\widetilde{r}_1(d)$, we also have $\widetilde{\rho}_0>2$, 
thus $G''<0$ on $[0;2]$ and $G$
is strictly concave. \\ \\
The sign of $G$ in \textit{(v)} follows from item \ref{sign_prop} of 
Proposition \ref{prop:mlt1},
the value $\frac{d}{d-1}$ arising from \eqref{cond_vanish} 
with $\gamma_0=d$, $\gamma_1=1-d$ and $q>p$. Note that $r_3(d)<\frac{d}{d-1}$, 
because if $G$ is strictly concave with $G(0)=G(2)=0$, it cannot vanish 
on $(0;2)$. \\ \\
As regards local extrema of $G$ in \textit{(v)}, denoting the location of 
the zero by $\rho_0$, since $G(0)=G(\rho_0)=G(2)=0$, there is at least 
one extremum on either side of $\rho_0$, and there cannot be more because 
there is a single inflexion point; the extremum on the left of $\rho_0$ 
is a maximum because $G''(0)<0$. \\ \\
{\em Complement.} We can locate inflexion points with respect to $1$. 
Recall  that $\widetilde{\rho}_0>1$, cf. \eqref{eq:tilderho0}. From the 
above discussion,  
whenever there are two inflexion points, they lie on either side of  
$\widetilde{\rho}_0$, hence the larger one lies to the right of $1$; 
whereas a single inflexion point lies to the left of 
$\min(\widetilde{\rho}_0;2)$. 
Further, by \eqref{eq:G2-1}, 
\be
\label{ineq:G2-1}
G''(1)>0 \Leftrightarrow  r> (2+\sqrt 3)^2
=:{\overline{\overline r}}_1.
\ee
Note that
\be\label{comp_rtilde1-rbarbar1}\widetilde r_1<{\overline{\overline r}}_1.\ee 
Let 
\[
\widetilde{d}_0<\overline{\overline{d}}_1
:=\frac{1}{2}\left(1+\frac{14\sqrt{3}}{27}\right)<\overline{d}_1.
\]
Using \eqref{eq:G2-2}, we have that for $r={\overline{\overline r}}_1$, 
$G''(2)>0$ if and only if $d>\overline{\overline{d}}_1$, and $G''(2)<0$ 
if and only if $d<\overline{\overline{d}}_1$. Since $r_3(d)$ is decreasing 
and $r_4(d)$ increasing,
this implies 
\begin{eqnarray}
\label{claim_barbar_1}
d < \overline{\overline{d}}_1 & \Rightarrow 
& {\overline{\overline r}}_1>r_4(d)\\ 
d > \overline{\overline{d}}_1 & \Rightarrow  
& r_3(d)<{\overline{\overline r}}_1<r_4(d) .
\end{eqnarray}
We can deduce that: \\ \\
1) If $d<\overline{\overline{d}}_1$: 
\textit{a)} whenever $G$ has two inflexion points, if $r>{\overline{\overline r}}_1$,
the smaller inflexion point lies to the left of $1$; if 
$r>{\overline{\overline r}}_1$, it lies to the right of $1$.
\textit{b)} Whenever $G$ has a single inflexion point, it lies 
between $1$ and $\min(\widetilde{\rho}_0;2)$. \\ \\
2) If $d>\overline{\overline{d}}_1$: 
\textit{a)} whenever $G$ has two inflexion points, 
the smaller one lies to the left of $1$.
\textit{b)} Whenever $G$ has a single inflexion point, if 
$r>{\overline{\overline r}}_1$, if lies to the left of $1$; 
if $r<{\overline{\overline r}}_1$, it lies between $1$ and 
$\min(\widetilde{\rho}_0;2)$.
 \section{Proof of hydrodynamics: \thmref{local_equilibrium}}\label{sec:proof_hydro}
 We follow the approach of 
\cite{Bahadoran2002}. The principle of this approach is 
to reduce the hydrodynamic limit for the Cauchy problem 
to that for the Riemann problem 
\eqref{first_riemann},
\eqref{eq:hydrodynamic equation}--\eqref{eq:init-riemann} 
thanks to an approximation scheme. \\ \\
To derive Cauchy hydrodynamics from Riemann hydrodynamics, we need two properties.
The first one is the  {\em finite propagation} property 
(\cite[Lemma 3.1, Lemma 3.2]{Bahadoran2002}), whose proof
carries over here with minor modifications, and the similar property  
for the hydrodynamic equation.  In our context, the statements read as follows:
\begin{proposition}\label{prop:prop} 
There exist  constants $\speed,C>0$ such that the following holds:\\ \\
(i)
Assume  $(\eta^N_t)_{t\geq 0}$ and $(\xi^N_t)_{t\geq 0}$ are two 
coupled multilane SEP's with common generator \eqref{gen_relax} such that
$\eta^N_0(z,i)=\xi^N_0(z,i)$  for all $z\in[a,b]$ and 
 $i\in W$ (cf. \eqref{def_W}),  where $a,b\in\Z$ and $a<b$. 
Then outside probability  $e^{-CNt}$, $\eta^N_0(z,i)=\xi^N_0(z,i)$ 
for all $z\in[a+\speed Nt,b-\speed Nt]$  and  $i\in W$. \\ \\
(ii) 
Let $u_0$, $v_0$ be initial data for \eqref{eq:hydrodynamic equation}.
Then the associated entropy solutions $u(,t)$ and $v(.,t)$ at time 
$t<(b-a)/(2\speed)$ satisfy
\be\label{contraction_entropy}
\int_{a+\speed t}^{b-\speed t}[u(x,t)-v(x,t)]^\pm dx
\leq \int_{a}^{b}[u_0(x)-v_0(x)]^\pm dx .
\ee
In particular, if $u_0(.)$ and $v_0(.)$ coincide a.e. on $[a,b]$, 
then $u(.,t)$ and $v(.,t)$ coincide a.e. on $[a+\speed t,b-\speed t]$.
\end{proposition}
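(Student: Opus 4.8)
The plan is to prove the two items separately, fixing at the outset a single pair of constants $\speed,C>0$ that serves both: for the microscopic statement \textit{(i)} the constant $\speed$ will play the role of a maximal front speed, while for the macroscopic statement \textit{(ii)} it will be a Lipschitz constant of $G$; in the end I take the larger of the two values (and adjust $C$ accordingly).

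For \textit{(i)}, I would run the two processes under the basic coupling, that is, on a common Harris system (cf. \cite{Harris72}) with the intensities $p^{\theta(N)/N}$ of \eqref{gen_relax} speeded up by $N$. Writing $D_t:=\{(z,i)\in V:\,\eta^N_t(z,i)\neq\xi^N_t(z,i)\}$ for the set of discrepancies, the structural fact underlying finite propagation for exclusion under basic coupling is that a discrepancy can appear at $(z,i)$ at a jump time only if there was already a discrepancy at a site jumping onto, or off of, $(z,i)$. By Assumption \ref{assumption_ker}\,(i) the horizontal moves generated by $L_h$ are nearest neighbour, so horizontally a discrepancy spreads only to $(z\pm 1,i)$, whereas the vertical moves generated by $L_v$ keep the horizontal coordinate $z$ fixed. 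Consequently, projecting $D_t$ onto its set $H_t\subset\Z$ of occupied horizontal coordinates, the (possibly very large) vertical rate $\theta(N)$ is irrelevant: $H_t$ can grow only by adjoining a neighbour $z\pm 1$ of some $z\in H_t$, which requires a horizontal clock across the corresponding edge to ring. Since the lane number $n$ is finite, the total horizontal clock rate across any single coordinate cut is at most $N\sum_{i\in W}(d_i+l_i)=O(N)$. This is exactly the setting of \cite[Lemma 3.1]{Bahadoran2002}, the only modification being the observation that $L_v$ does not move $H_t$. I would therefore dominate the left and right fronts of $H_t\setminus[a,b]$ by nearest-neighbour walks clocked at rate $O(N)$ and apply the exponential Chebyshev (Poisson large-deviation) bound of \cite[Lemma 3.1, Lemma 3.2]{Bahadoran2002}: for $\speed$ large enough, the probability that such a front advances by more than $\speed Nt$ sites in macroscopic time $t$ is bounded by a Poisson deviation of mean of order $Nt$, hence by $e^{-CNt}$.

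For \textit{(ii)}, by Proposition \ref{prop:flux_multi} the flux $G$ is continuous on $[0,n]$ and continuously differentiable on each $[N_\alpha,N_{\alpha-1}]$ with bounded one-sided derivatives at the junction points $N_\alpha$; hence $G$ is globally Lipschitz, and I would set $\speed:=\sup_{[0,n]}|G'|$ (using the one-sided values at the corners), enlarging the constant from \textit{(i)} if necessary. The desired inequality \eqref{contraction_entropy} is the finite-speed, one-sided $L^1$ contraction of Kru\v{z}kov theory (\cite{kru,serre}). Concretely, applying the doubling-of-variables method to the two entropy solutions $u$ and $v$ of \eqref{eq:hydrodynamic equation}, which share the same flux $G$, yields the Kato-type inequalities
\be\label{kato_ineq}
\partial_t (u-v)^{\pm}+\partial_x\!\left[\indic_{\{\pm(u-v)>0\}}\big(G(u)-G(v)\big)\right]\leq 0
\ee
in the sense of distributions on $(0,+\infty)\times\R$. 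Integrating \eqref{kato_ineq} over the trapezoid $\{(x,s):0\le s\le t,\ a+\speed s\le x\le b-\speed s\}$ and using $|G(u)-G(v)|\le\speed\,|u-v|$ to check that the two lateral boundary terms are non-negative, hence may be dropped, produces \eqref{contraction_entropy}. The final assertion is then immediate: if $u_0=v_0$ a.e. on $[a,b]$, both right-hand sides vanish, so $(u-v)^+=(u-v)^-=0$ a.e. on $[a+\speed t,b-\speed t]$, i.e. $u=v$ there.

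The main obstacle, and the only point requiring genuine care, lies in \textit{(i)}: since $\theta(N)$ may be of order $\gg N$, one must make sure that the fast interlane mixing cannot accelerate horizontal propagation. The resolution is the structural observation above — $L_v$ relocates discrepancies only within a fixed horizontal fibre $\{z\}\times W$ — which decouples the horizontal front from $\theta(N)$ entirely and lets the single-lane estimate of \cite{Bahadoran2002} run unchanged. By contrast, \textit{(ii)} is standard once one notes that $G$ is merely Lipschitz (not $C^1$) at the corners $N_\alpha$, Kru\v{z}kov's theory requiring no more than a continuous flux.
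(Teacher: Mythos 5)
Your proposal is correct and follows essentially the same route as the paper, which simply invokes \cite[Lemma 3.1, Lemma 3.2]{Bahadoran2002} and states that the proof ``carries over here with minor modifications''; the modification you single out --- that under basic coupling the vertical generator $L_v$ only relocates discrepancies within a fixed fibre $\{z\}\times W$, so the horizontal front is governed by the nearest-neighbour horizontal clocks of total rate $O(N)$ and is insensitive to $\theta(N)$ --- is exactly the point that needs checking, and your Poisson large-deviation bound and Kru\v{z}kov/Kato trapezoid argument for part \textit{(ii)} are the standard arguments intended.
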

The second property,  proved in Subsection \ref{proof:cauchy}, 
is \emph{macroscopic stability}  
(\cite[Definition 3.1]{Bahadoran2002}). It states that if 
two coupled configurations
are initially macroscopically close, they remain so at later times. 
 This will be a consequence of the following property, which is a 
substantial multilane refinement of  
\cite[Lemma 3.1]{BMM}. The refinement involves condition 
\eqref{assumption_relax} to control the possibly slower transverse dynamics. 
\begin{proposition}\label{prop:macrostab} 
Let $\eta_0^N,\xi_0^N\in\mathcal{X}$ be such that
\be\label{eq:macroscopic stability-4}
 \eta ^N_0(z,i)=\xi^N_0(z,i)=0,\quad\forall (z,i)\in[-aN;aN]\times W
 \ee
	for some constant $a>0$. 
	Define, for $t\geq 0$, the function  
	$\phi_t^{ N}:\mathbb{Z}\rightarrow \Z\cap
	[-n,n]$ 
	by
	\begin{align}
	\phi_t^{ N}(z)=\sum_{i\in W}\left[\eta^N_t(z,i)-\xi^N_t(z,i)\right]
	\end{align}
  where $(\eta^N_t)_{t\geq 0}$ and $(\xi^N_t)_{t\geq 0}$ denote 
  the processes  with generator $L^{\theta(N)/N}$ 
   starting  respectively from $\eta^N_0$ and $\xi^N_0$. 
	 Then,   under condition \eqref{assumption_relax},  
 for every  $\gamma>0$ 	and $t>0$, 
	\begin{equation}
	\lim_{N\to+\infty}\mathbb{P}
	\left(\sup_{z\in \Z,\, t\geq 0}
	N^{-1}\left|\sum_{u\in\Z:\,u\geq z}\phi^N_{t}(u)\right|
	>\sup_{z\in \Z}N^{-1}\left|\sum_{u\in \Z:\,u\geq z}\phi^N_0(u)\right|
	+\gamma\right)=0.\label{eq:macroscopic stability-1}
	\end{equation}
	\end{proposition}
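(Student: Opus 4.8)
The plan is to run both processes $(\eta^N_t)$ and $(\xi^N_t)$ under the basic (Harris) coupling on $V=\Z\times W$, using for both the same Poisson clocks on the horizontal and vertical edges, and to track the signed discrepancy field $\phi_t(x):=\eta^N_t(x)-\xi^N_t(x)\in\{-1,0,1\}$, $x\in V$. The object to control is $M^N_t:=\sup_{z\in\Z}N^{-1}|G^N_t(z)|$ with $G^N_t(z):=\sum_{u\geq z}\phi^N_t(u)=\sum_{u\geq z}\sum_{i\in W}\phi_t(u,i)$, the total signed discrepancy lying weakly to the right of the cut at $z$. The observation that drives the whole argument is that a vertical jump (in either process) moves a discrepancy within a single column $\{u\}\times W$ and therefore leaves $G^N_t(z)$ unchanged for every $z$; hence $t\mapsto G^N_t(z)$, and thus $M^N_t$, evolves only through horizontal jumps, each of which modifies a single $\phi(\cdot,i)$ on the two sides of the crossed cut and so changes $G^N_t(z)$ by at most one unit, for the unique cut it crosses. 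This already reduces the problem to controlling the upward variation of a pure-jump functional fed only by horizontal crossings, while the vertical dynamics acts purely by reshaping the lane configurations.

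First I would isolate the $M$-increasing horizontal events. If $z^*$ realizes a positive supremum of $G^N_t$, then $G^N_t$ can increase at $z^*$ only when a positive discrepancy is carried across the cut by a rightward horizontal jump on some lane $i$, which requires $\phi_t(z^*-1,i)=+1$ and $\xi^N_t(z^*-1,i)=0$. In the single-lane case the identity $G^N_t(z^*-1)=G^N_t(z^*)+\phi_t(z^*-1,i)$ forces $\phi_t(z^*-1,i)\leq 0$ at a maximizing cut — this is exactly Liggett's interface property and it yields monotonicity of $M^N_t$, as in \cite[Lemma 3.1]{BMM}. In the multilane case the obstruction disappears: one may have $\phi_t(z^*-1,i)=+1$ provided the remaining lanes carry a compensating nonpositive discrepancy in the same column, i.e.\ the column is of \emph{mixed sign}. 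Since each jump alters a single $G^N_t(\cdot)$ by one unit, $M^N_t$ can increase only when the crossed cut is (up to one unit) a maximizer, so every $M$-increasing jump is a horizontal crossing out of a mixed-sign column, and its instantaneous rate is bounded by $N$ times the number of mixed-sign columns adjacent to a current maximizing cut (the symmetric statement handling negative suprema being identical).

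The heart of the proof is then the \emph{quasi-interface} property: mixed-sign columns are, in a space-time averaged sense, macroscopically negligible. Here the vertical dynamics enters decisively, for within a column a positive and a negative discrepancy on two $q$-connected lanes are annihilated by a suitable vertical jump, and a chain of such jumps coalesces opposite discrepancies across the whole transverse graph $W$. The plan is to quantify this via a short-time analysis of the vertical coalescence, as in Lemma \ref{lemma_coalescence}: starting from a mixed-sign column, the probability that it survives a short window without being neutralized is small once the number of vertical jumps available suffices to traverse the graph, the relevant combinatorial count of such jumps being precisely $m^*$ of \eqref{number_steps} (built from the diameter $n^*$). Balancing the horizontal disturbance rate $N$ against a vertical coalescence that proceeds at rate $\theta(N)$ and requires up to $m^*$ steps is exactly what produces condition \eqref{assumption_relax}: when $\theta(N)\gg N^{1-1/m^*}$, the expected space-time measure of mixed-sign columns visited along the maximizing cut is $o(N)$, and so the compensator of the $M$-increasing jumps is $o(N)$ uniformly in time. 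Finite propagation (Proposition \ref{prop:prop}) is used here to localize the analysis to a space-time region whose size is controlled by $N$, so that the total number of relevant crossings is finite and the compensator estimate is uniform.

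To conclude I would write the upward part of the variation of $M^N_t$ as a pure-jump increasing process whose compensator is the integrated bad-crossing rate, and invoke the previous step to bound this compensator by $o(N)$ with probability tending to one, uniformly over $t\geq 0$. A Doob/maximal inequality applied to the resulting compensated martingale then yields $\Prob\big(\sup_{t\geq 0}M^N_t>M^N_0+\gamma\big)\to 0$, which is \eqref{eq:macroscopic stability-1}. The hard part, and the genuinely new ingredient relative to \cite[Lemma 3.1]{BMM}, will be the quantitative coalescence estimate of the third paragraph: controlling, uniformly over the arbitrary configuration inside a mixed-sign column, the time the vertical dynamics needs to annihilate opposite discrepancies across a graph of diameter $n^*$, and matching the resulting exponent to $m^*$ so that \eqref{assumption_relax} is exactly the threshold at which the bad contribution vanishes in the limit.
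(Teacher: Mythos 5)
You have assembled the right ingredients---the basic coupling, the observation that vertical jumps leave $\sum_{u\geq z}\phi_t(u)$ invariant, the identification of mixed-sign columns as the multilane obstruction to Liggett's interface property, the role of Lemma \ref{lemma_coalescence} and of the exponent $m^*$, and the recognition that \eqref{assumption_relax} is exactly the threshold. But the assembly via a compensator bound on the upward variation of the running maximum does not close. Your compensator is of the form $\int_0^\infty CN\,K_s\,ds$, where $K_s$ counts mixed-sign columns adjacent to maximizing cuts, so you would need $\int_0^\infty K_s\,ds=o(1)$; your own accounting is already inconsistent here, since a ``space-time measure of mixed-sign columns'' of order $o(N)$ would give a compensator of order $o(N^2)$, which is useless. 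Worse, the integral is not $o(1)$: a pair of opposite discrepancies that meets in a column stays there for a time of order $1/N$ and survives coalescence with probability $1-O((\theta(N)/N)^{m^*})$, hence meets of order $(N/\theta(N))^{m^*}$ times before dying; a single pair thus contributes about $(N/\theta(N))^{m^*}/N=o(1)$ to the integral, but there are up to $\Theta(N)$ discrepancies, and nothing in your argument prevents order $N$ of these pairs from sitting at maximizing cuts, giving $\int K_s\,ds$ of order $o(N)$ rather than $o(1)$. More fundamentally, you are bounding the \emph{total upward variation} of the maximum, which can genuinely be of order $N$ even when its net change is small, so the quantity you are estimating is the wrong one.

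The paper's proof avoids both problems by a different bookkeeping. It labels discrepancies so as to preserve their spatial order, observes that the supremum at time $t$ is attained at the position of some blue label $v$, and writes the \emph{net} change of the partial sum there as the exact identity $\Delta_t^v=R_t^v-L_t^v+B_t^v$ of \eqref{decomp_delta}, where $L^v_t,R^v_t$ are the crossing counts of the single label $v$ by red particles and $B^v_t\le n-1$. Only the crossings of one (random) label matter, not the accumulated up-moves of the maximum. Lemma \ref{lemma:crossing_1} then bounds the crossing count of a fixed label by $\gamma N$ outside an event of probability at most $\exp\{-C\theta(N)^{m^*}/N^{m^*-1}\}$, via a renewal sequence of stopping times in which each encounter of $v$ with an opposite discrepancy in its column offers a coalescence chance of order $(\theta(N)/N)^{m^*}$ and produces at most $n-1$ crossings; Corollary \ref{lemma:crossing_2} then replaces the union bound over the $O(N)$ labels by a reduction to $O(1/\varepsilon)$ representative labels, which is what yields \eqref{assumption_relax} rather than the weaker condition \eqref{assumption_relax_worse} with an extra logarithm. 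To repair your scheme you would have to replace the compensator of the upward variation by a per-label crossing count---at which point you have rederived the paper's argument.
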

Following \cite[Theorem 3.2]{Bahadoran2002}, 
in order to prove statement \textit{(i)} of \thmref{local_equilibrium}, 
it is enough to prove it for Riemann initial profiles 
\eqref{first_riemann}, \eqref{eq:init-riemann}, and to verify that 
our model satisfies Propositions \ref{prop:prop}--\ref{prop:macrostab}. 
\begin{remark}\label{rk:kl}
Since local equilibrium implies hydrodynamics
(see \cite[Proposition 0.4]{kipnis2013scaling}), 
statement \textit{(i)} of \thmref{local_equilibrium} for Riemann profiles 
 is implied by statement (iii)  of \thmref{local_equilibrium}. 
Thus  the latter will be the actual purpose of Subsection \ref{proof:riemann}.
\end{remark}
For the Riemann problem, the hydrodynamic limit was first addressed in
\cite{Andjel1987} in the case  of a strictly concave flux function, 
for which the entropy solution is very explicit. For more general flux functions, 
the Riemann solution is not so explicit, but the hydrodynamic limit was addressed 
in \cite{Bahadoran2002} thanks to a variational representation. \\
In the present case, the microscopic derivation of this variational formula 
cannot be carried out as in \cite{Bahadoran2002} because the vertical part 
of the generator has a different scaling than the horizontal part. 
The microscopic derivation  here is carried out in 
Subsection \ref{proof:riemann} 
and  relies on a two-block estimate which we derive in 
Subsection \ref{subsec:prop_relax} from an {\em approximate interface property}. 
This property is a suitable generalization of the known {\em exact} 
interface property (\cite{Liggett1976}) for the single-lane asymmetric 
exclusion process (which no longer holds exactly for multilane exclusion). 
The two-block estimate will also allow us in Subsection \ref{subsec:proof_loc_eq} 
to derive the weak local equilibrium property for the Cauchy problem. 
The Riemann strong local equilibrium in statement \textit{(iii)} 
of \thmref{local_equilibrium} shall be derived in Subsection 
\ref{proof:riemann} together with Riemann hydrodynamics, see Remark 
\ref{rk:kl}.\\ \\  
To prove Propositions \ref{prop:prop} and \ref{prop:macrostab}  
and conclude the proof of \thmref{local_equilibrium}, we need 
coupling tools recalled hereafter. \\ \\
\noindent{\bf Coupling and discrepancies.}  
The Harris construction allows to couple the evolutions from
different initial configurations through \textit{basic coupling}, that is, by
using the same Poisson processes for them. 
There is a natural partial order on $\mathcal{X}$, 
namely, for $\eta,\xi\in\mathcal{X}$,
\be\label{eq:orderconf}
\eta\leq\xi\quad\mbox{ if and only if }
\quad\forall x\in V,\, \eta\left(x\right)\leq\xi\left(x\right).
\ee 
Such a coupling shows that
the simple exclusion process is \textit{attractive}, that is, the partial order
\eqref{eq:orderconf} is conserved by the dynamics. In other words, 
\be\label{eq:attra}
\forall \eta_0,\xi_0\in\mathcal{X}, \, \eta_0\le\xi_0 \Rightarrow \forall t\ge 0,\,
\eta_t\le\xi_t \,\mbox{ a.s.}
\ee 
The order \eqref{eq:orderconf} endows an order on the set  $\mathcal{M}_{1}$  
of probability measures
on $\mathcal{X}$ in the following way. A function $f$ on $\mathcal{X}$
is said to be increasing if and only if $\eta\leq\xi$ implies 
$f\left(\eta\right)\leq f\left(\xi\right)$.
For two probability measures $\mu_{0},\mu_{1}$ on $\mathcal{X}$,
we write $\mu_{0}\leq\mu_{1}$ if and only if for every increasing
function $f$ on $\mathcal{X}$ we have 
$\int f(\eta)\mu_{0}\left(d\eta\right)\leq\int f(\eta)\mu_{1}\left(d\eta\right)$.
 We shall write $\mu_1<\mu_2$ if $\mu_1\leq\mu_2$ and $\mu_1\neq\mu_2$. \\ \\
 Thus \eqref{eq:attra} implies, for two probability measures $\mu,\nu$ 
 on $\mathcal X$,
\be\label{attractive}
\mu\leq\nu\Rightarrow \mu S_t\leq\nu S_t
\ee
where $(S_t)_{t\geq 0}$ denotes the semigroup of the process $(\eta_t)_{t\geq 0}$. 
In such a coupling, we say that at $x$ there is an $\eta$ \textit{discrepancy} 
(or blue particle) if $\eta(x)>\xi(x)$, a $\xi$ discrepancy (or red particle) 
if $\eta(x)<\xi(x)$, a coupled particle (or black particle) if $\eta(x)=\xi(x)=1$, 
a hole (or white particle)  if $\eta(x)=\xi(x)=0$. Blue and red are 
called opposite (type) particles 
(or opposite (type) discrepancies). The evolution of the coupled process can be 
formulated as follows. At a time $t\in\mathcal N_{(x,y)}$, a blue, red or 
black particle at $x$ exchanges with a hole at $y$; a black particle at $x$ 
exchanges with a blue or red particle at $y$; if there is a pair of 
opposite (type) particles at $x$ and $y$, they are replaced by a hole at $x$ 
and a black particle at $y$. We call this a \textit{coalescence}. This shows that 
no new discrepancy can ever be created.   
 \subsection{The Riemann problem:  
 Proof of \thmref{local_equilibrium}, \textit{(iii)}}\label{proof:riemann}
 For the Riemann problem \eqref{eq:hydrodynamic equation}--\eqref{eq:init-riemann}, 
the following result can be found in
\cite{Bahadoran2002}.
\begin{proposition}
\label{lemma:riemann} The
entropy weak solution of  \eqref{eq:hydrodynamic equation}--\eqref{eq:init-riemann} 
is the self-similar function given  by  
\be\label{entropy_envelope}
u\left(t,x\pm 0\right)=u\left(1,\frac{x}{t}\pm 0\right)
=\left(G^*_{\alpha,\beta}\right)'\left(\frac{x}{t}\pm 0\right)
\ee
where
\be\label{dual_G}
G^*_{\alpha,\beta}(v):=\left\{
\ba{lll}
\inf_\rho[v\rho-G(\rho)] & \mbox{if} & \alpha\geq\beta\\
\sup_\rho[v\rho-G(\rho)] & \mbox{if} & \alpha\leq\beta.
\ea
\right.
\ee
 If $\alpha\leq\beta$ (resp. $\alpha\geq\beta$), $u(1,v-)$ 
 is the smallest (resp. largest) and $u(1,v+)$ the largest 
 (resp. smallest)
optimizer in \eqref{dual_G}. In particular,  $u(1,.)$ 
is continuous at $v$ if and only if the optimizer is unique, 
and is then equal to this optimizer.  
\end{proposition}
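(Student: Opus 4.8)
The plan is to derive \eqref{entropy_envelope} from Hopf's explicit representation for Hamilton--Jacobi equations, exploiting the piecewise-linear structure of the Riemann datum. Since the statement is classical and is exactly the one recalled in \cite{Bahadoran2002}, the conceptual content lies in identifying $G^*_{\alpha,\beta}$ with the relevant Legendre-type transform rather than in any delicate estimate.

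First I would pass to the primitive. Setting $U(t,x):=\int_0^x u(t,y)\,dy$, the entropy solution $u$ of \eqref{eq:hydrodynamic equation} with datum $R_{\alpha,\beta}$ corresponds to the unique viscosity solution $U$ of the Hamilton--Jacobi equation $\partial_t U+G(\partial_x U)=0$ with initial datum $U_0(x)=\int_0^x R_{\alpha,\beta}=\alpha x\,\mathbf 1_{\{x\le 0\}}+\beta x\,\mathbf 1_{\{x>0\}}$; the correspondence $u=\partial_x U$ between entropy and viscosity solutions is standard (cf.\ \cite{serre}). The datum $U_0$ is continuous and piecewise linear, \emph{convex} when $\alpha\le\beta$ and \emph{concave} when $\alpha\ge\beta$, with the slope jumping from $\alpha$ to $\beta$ at the origin.

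Next, because $U_0$ is convex (resp.\ concave), Hopf's formula applies even though $G$ itself is neither. In the convex case $\alpha\le\beta$ it reads $U(t,x)=\sup_{q}[\,xq-U_0^\#(q)-tG(q)\,]$, where $U_0^\#$ is the Legendre transform of $U_0$. A direct computation gives $U_0^\#(q)=0$ for $q\in[\alpha,\beta]$ and $U_0^\#(q)=+\infty$ otherwise, so the supremum is effectively over $\rho$ between $\alpha$ and $\beta$ and factoring out $t$ yields $U(t,x)=t\,G^*_{\alpha,\beta}(x/t)$ with $G^*_{\alpha,\beta}(v)=\sup_{\rho}[v\rho-G(\rho)]$, the sup taken over $\rho\in[\alpha,\beta]$. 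The case $\alpha\ge\beta$ is symmetric, the concave conjugate of $U_0$ forcing the infimum form of \eqref{dual_G}. Differentiating in $x$ gives the self-similar profile $u(t,x)=\partial_x U(t,x)=(G^*_{\alpha,\beta})'(x/t)$, which is precisely \eqref{entropy_envelope}; self-similarity $u(t,x)=u(1,x/t)$ is manifest since $t\,G^*_{\alpha,\beta}(x/t)$ is positively homogeneous of degree one.

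Finally, the selection of optimizers follows from the envelope theorem combined with monotonicity of $(G^*_{\alpha,\beta})'$. When $\alpha\le\beta$, $G^*_{\alpha,\beta}$ is convex and $v\mapsto(G^*_{\alpha,\beta})'(v)$ is nondecreasing; its subdifferential at $v$ is the set of maximizers of $v\rho-G(\rho)$, so the left derivative $u(1,v-)$ is the smallest and the right derivative $u(1,v+)$ the largest optimizer, with continuity at $v$ equivalent to uniqueness. The concave case $\alpha\ge\beta$ reverses these roles, matching the stated assignment. The main point requiring care is the passage from entropy to viscosity solutions and the applicability of Hopf's formula for the genuinely non-convex flux produced by \eqref{def_flux_super}: convexity or concavity is available only for the \emph{datum} $U_0$, never for $G$, but this is exactly the hypothesis under which Hopf's formula holds. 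Once this is in place, the computation of $U_0^\#$ and the envelope-theorem identification of the one-sided limits are routine.
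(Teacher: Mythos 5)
Your argument is correct, but note that the paper itself gives no proof of this proposition: it is imported verbatim from \cite{Bahadoran2002}, where the Riemann solution is obtained by the classical envelope construction, i.e.\ one builds the self-similar profile from the lower convex (resp.\ upper concave) envelope of $G$ on the interval between $\alpha$ and $\beta$ and verifies the Oleinik-type entropy conditions across each discontinuity directly. Your route through the Hamilton--Jacobi formulation is a genuinely different and more structural derivation: passing to the primitive $U$, observing that the Riemann datum makes $U_0$ convex or concave according to the sign of $\beta-\alpha$, and invoking Hopf's formula (the Bardi--Evans version, valid for merely continuous $H$ and convex Lipschitz data) immediately produces $U(t,x)=t\,G^*_{\alpha,\beta}(x/t)$ and hence \eqref{entropy_envelope}, with the optimizer dichotomy falling out of the identification of the one-sided derivatives of a convex (resp.\ concave) function with the extreme points of its sub- (resp.\ super-) differential, which is the closed convex hull of the maximizer (resp.\ minimizer) set. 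What your approach buys is brevity and a clean explanation of why the inf/sup dichotomy in \eqref{dual_G} is governed by the \emph{datum} rather than by any convexity of $G$; what it costs is reliance on two nontrivial imported facts that you should cite precisely: the equivalence, for non-convex flux, between entropy solutions of the conservation law and $x$-derivatives of viscosity solutions of the associated Hamilton--Jacobi equation (this is not in \cite{serre}; it is due to work of Corrias--Falcone--Natalini and related papers), and Hopf's second formula for non-convex Hamiltonians with convex data. With those references supplied, the proof is complete.
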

The proof  of \thmref{local_equilibrium}, \textit{(iii)} 
partly follows the scheme of \cite{Andjel1987, Bahadoran2002}, 
but significant differences are involved due to the fact that the horizontal 
and vertical parts of the generator have different scalings. In particular, 
we need  a block estimate (Proposition \ref{prop:twoblock} below) 
for which we first introduce relevant notation.  The one block part, see
  \eqref {eq:relax_oneblock} below, will be used here, while the two-block 
  part \eqref{eq:relax} will be necessary for Cauchy hydrodynamics. 
Let $f$ be a local function on $\mathcal X$. 
 For $z\in\Z$, $\xi\in\mathcal X$, $l\in\N$, 
 we define the local block average of $f$ by
\be\label{def_block_average}
M_{z,l}f(\eta):=\frac{1}{2l+1}\sum_{u\in \Z:\,|u-z|\leq l}\tau_u f(\eta).
\ee
By a slight abuse of notation, we write $M_{x,l}\eta^i$, 
resp. $M_{x,l}\overline{\eta}$, to denote $M_{x,l}f(\eta)$ 
for $f$ defined by $f(\eta)=\eta^i(0)$, resp. $f(\eta)=\overline{\eta}(0)$.
\begin{proposition}\label{prop:twoblock}
Let 
$u(.)\in C^0_K(\R)$ such that for all $c\in[0,n]$,  $u(.)-c$ 
changes sign finitely many times.
Assume 
$\eta^N_0$ is a local Gibbs state  with global profile $u(.)$, 
defined by \eqref{gibbs_lane} with
$u^N_x:=u(x/N)$.
Then, for  $i\in W$,  $A<B$  and $s>0$,
\be\label{eq:relax_oneblock}
\lim_{l\to+\infty}\limsup_{N\to+\infty}
\Exp\left\{
N^{-1}\sum_{x\in\Z\cap[ NA,NB]}\Delta_{x,l}(\eta^N_{Ns})
\right\}
=0.
\ee
\be\label{eq:relax}
\lim_{\varepsilon\to 0}\limsup_{N\to+\infty}
\Exp\left\{
N^{-1}\sum_{x\in\Z\cap[ NA,NB]}\Delta_{x,N\varepsilon}(\eta^N_{Ns})
\right\}
=0
\ee
where
\be\label{def_relax}
\Delta_{x,l}(\eta):=\left|
M_{x,l}f(\eta)-\overline{f}(M_{x,l}\overline{\eta})
\right|
\ee
 and, as in \thmref{local_equilibrium}, $(\eta^N_s)_{s\geq 0}$ 
 denotes a process with initial state  $\eta^N_0$ 
 and generator  $L^{\theta(N)/N}$.   
\end{proposition}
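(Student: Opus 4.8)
The plan is to adapt the coupling scheme of \cite{Rezakhanlou91} to the multilane setting, the essential new point being that the \emph{exact} interface property of single-lane ASEP (\cite{Liggett1976}) must be replaced by an approximate version valid under condition \eqref{assumption_relax}. The central object is the number of spatial sign changes of the total-density discrepancy $\phi_t(z)=\sum_{i\in W}[\eta^N_t(z,i)-\xi^N_t(z,i)]$ between two basic-coupled copies $\eta^N,\xi^N$ evolving under $L^{\theta(N)/N}$. For a single lane, basic coupling forces this number to be nonincreasing in time, since opposite discrepancies can only coalesce; in the multilane model a vertical jump can momentarily create a discrepancy in $\overline\eta$, so the property fails exactly. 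The first step, carried out in Subsection \ref{subsec:prop_relax}, is therefore to establish the \emph{quasi-interface} property: the number of sign changes of $\phi_t$ grows only by a macroscopically negligible amount on the hyperbolic time scale. This is exactly where \eqref{assumption_relax} enters, through the exponent $m^*$ of \eqref{number_steps}: it bounds the transverse rate $\theta(N)$ against $N$ tightly enough that, by the short-time coalescence analysis of Lemma \ref{lemma_coalescence}, the spurious sign changes created by vertical jumps are reabsorbed before they can propagate to macroscopic distance.

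Granting the quasi-interface property, I would prove the one-block estimate \eqref{eq:relax_oneblock} first, with $l$ fixed and $N\to+\infty$. The statement is self-referential: it asserts that on each microscopic block the configuration sits, to leading order, in the product equilibrium measure $\nu_\rho$ whose total density $\rho$ matches the observed block density. By attractiveness \eqref{eq:attra}--\eqref{attractive} and the monotonicity of the maps $\widetilde\rho_i$ from Proposition \ref{lemma_phi_super}, one couples $\eta^N_{Ns}$ with a stationary process started from $\nu_{\rho^*}$; the integrated number of discrepancies over the block then controls $|M_{x,l}\overline\eta-\rho^*|$, and is itself controlled by the quasi-interface property. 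Since by Theorem \ref{thm:characterization_lemma_gen} the measures $\nu_\rho$ are precisely the extremal elements of $\mathcal I\cap\mathcal S$, with product marginals $\widetilde\rho_i(\rho)$ on lane $i$, the block average $M_{x,l}f$ under $\nu_{\rho^*}$ converges as $l\to+\infty$ to $\overline f(\rho^*)=\langle f\rangle(\widetilde\rho_0(\rho^*),\ldots,\widetilde\rho_{n-1}(\rho^*))$ by the ergodic theorem, while $M_{x,l}\overline\eta\to\rho^*$. The hypothesis that $u(\cdot)-c$ changes sign finitely often for every $c$ lets one decompose the initial profile into finitely many monotone pieces, on each of which this bracketing by equilibrium states is effective uniformly in $x$.

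The two-block estimate \eqref{eq:relax} is then obtained by interpolating between the microscopic scale $l$ and the mesoscopic scale $N\varepsilon$. Writing $M_{x,N\varepsilon}f$ as a spatial average over $y$ of the microscopic block averages $M_{y,l}f$ and using $\Delta_{x,N\varepsilon}\le|M_{x,N\varepsilon}f-\mathrm{avg}_y M_{y,l}f|+\mathrm{avg}_y|M_{y,l}f-\overline f(M_{y,l}\overline\eta)|+|\mathrm{avg}_y\overline f(M_{y,l}\overline\eta)-\overline f(M_{x,N\varepsilon}\overline\eta)|$, the first term is an elementary averaging error (negligible once $l\ll N\varepsilon$), the second is handled by the one-block estimate \eqref{eq:relax_oneblock}, and the third reduces, by uniform continuity of $\overline f$, to the spatial regularity of the conserved total density, namely that $M_{y,l}\overline\eta$ and $M_{x,N\varepsilon}\overline\eta$ agree after averaging over $x\in[NA,NB]$ as $N\to+\infty$ then $\varepsilon\to0$. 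This last regularity is again a consequence of the quasi-interface property, applied this time to the coupling of $\eta^N$ with its spatial translate: counting sign changes of the translated total-density discrepancy shows that $\overline\eta$, which evolves only through the gradient dynamics \eqref{micro_gradient}, does not fluctuate across mesoscopic scales. Finite propagation (Proposition \ref{prop:prop}) and macroscopic stability (Proposition \ref{prop:macrostab}) localize the argument to the relevant space-time window.

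The main obstacle is unquestionably the quasi-interface property of the first step. For single-lane ASEP the interface property is exact and the block estimates then follow the classical route; here the crux is to quantify how the scaling \eqref{assumption_relax}, tied to the graph diameter through $m^*$, forces the sign changes created by transverse jumps to be reabsorbed by coalescence on a sub-macroscopic time scale, so that their cumulative macroscopic effect vanishes. This requires the careful short-time analysis of transverse couplings of Lemma \ref{lemma_coalescence}, which is the genuinely new ingredient beyond \cite{Rezakhanlou91} and the single reason why a growth condition on $\theta(N)$ is needed at all.
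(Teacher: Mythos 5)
Your overall architecture --- replace the exact interface property by a quasi-interface property controlled through Lemma \ref{lemma_coalescence} under condition \eqref{assumption_relax}, then couple with equilibrium states and run a block argument --- is indeed the paper's, and you have correctly located the genuinely new ingredient. Two points, however, do not survive scrutiny.

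First, your explanation of \emph{why} the interface property fails is wrong, and this propagates into a wrong reading of the role of \eqref{assumption_relax}. A vertical jump conserves $\overline\eta(z)$ for every $z$ (this is precisely why $L_v$ drops out of \eqref{micro_gradient}), so vertical jumps never ``create a discrepancy in $\overline\eta$''. The interface property fails because two opposite discrepancies travelling on \emph{different lanes} can cross each other horizontally without ever coalescing. Condition \eqref{assumption_relax} is a \emph{lower} bound on $\theta(N)$: it guarantees that the transverse dynamics is fast enough that, each time opposite discrepancies share a spatial site, they coalesce with probability bounded below (Lemma \ref{lemma_coalescence}), so that the number of crossings of any given discrepancy is $o(N)$ with high probability (Lemma \ref{lemma:crossing_1}, Corollary \ref{lemma:crossing_2}). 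Moreover, the quantitative statement actually needed is not ``few sign changes of $\phi_t$'' but that each blue or red interval of a separating family is invaded by at most $o(N)$ discrepancies of the opposite colour (Lemma \ref{prop:interface}); a small number of sign changes does not by itself bound how many wrong particles sit inside a given interval, which is what the block averages see.

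Second, your route to \eqref{eq:relax} introduces a new, unproven ingredient. Your third term requires that the averaged difference between $M_{y,l}\overline\eta$ and $M_{x,N\varepsilon}\overline\eta$ vanish, i.e. a two-scale regularity estimate for the conserved density, which is essentially of the same nature as the statement being proved. The proposed fix --- coupling $\eta^N$ with its spatial translate and invoking the quasi-interface property --- does not go through as stated: that machinery requires the initial coupled pair to be admissible in the sense of Definition \ref{def_admissible} (ordered outside $[-aN,aN]$, boundedly many colour alternations), and for the pair $(\eta^N_0,\tau_k\eta^N_0)$ the number of sign changes of $u(\cdot)-u(\cdot+k/N)$ is not controlled by the hypothesis that $u-c$ changes sign finitely often for constants $c$. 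The paper avoids the issue entirely: the coupling against stationary processes $\xi_0\sim\nu_r$ (for \emph{every} $r$, then integrated over $r$ via the functions $\psi^{\delta,\pm}$ and the inequality \eqref{bound_deltapm} --- note that a single coupling against $\nu_{\rho^*}$ for a random, block-dependent $\rho^*$ is not a well-defined construction) is run directly at block length $N\varepsilon$. The quasi-interface bound on wrong discrepancies is $(2l+1)(2M-1)Nh$ summed over the whole window, hence $O(h)$ after normalizing, irrespective of whether the block length is a fixed $l$ or $N\varepsilon$; so \eqref{eq:relax_oneblock} and \eqref{eq:relax} come out of one and the same computation, with no interpolation between scales needed.
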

Proposition \ref{prop:twoblock} is proved in Subsection \ref{subsec:prop_relax}. \\   
	\begin{proof}[Proof of \thmref{local_equilibrium}, (iii)]
	\mbox{}\\ \\
 We assume $\beta\leq\alpha$, the case 
	$\alpha\leq\beta$ being similar.  In the sequel, 
	we denote by $(S_t^N)_{t\geq 0}$ the semigroup generated by 
	  $L^{\theta(N)/N}$, and $(S_{Nt}^N)_{t\geq 0}$ the semigroup generated by 
	   $\mathcal L^N$ defined in \eqref{gen_relax}.  \\ \\
	 {\em Step 1.}   We show that there exists a dense subset $D$ of $\R$ 
	 with the following property:
	for every sequence  $N_\ell\to+\infty$ as $\ell\to+\infty$, there exists 
	a subsequence $N_{\ell_{k}}$ such
	that, for $N=N_{\ell_k}\to+\infty$:
	\begin{equation}\label{eq: local equilibrium-9}
\lim_{N\to+\infty}\int_{0}^{t}\mu_{\alpha,\beta}\tau_{\left[Nvs\right]}
S_{Ns}^N ds=\mu_{v},
	\end{equation} 
	for a   family of measures $\mu_v$, nondecreasing with respect to $v$, of the form
	\be\label{form_mu_v}
	\mu_v= \int_{[0;n]}\nu_\rho\lambda_v(d\rho)
	= \int_{[\alpha,\beta]}\nu_\rho\lambda_v(d\rho) 
	\ee
	where $\lambda_v$ is a probability measure on $[0,n]$ 
	 supported on $[\alpha,\beta]$, and $\nu_\rho$ is the measure 
	 defined by \eqref{def_inv_rho} and \eqref{def_inv_gen}. \\ \\
	 To this end, for $\varepsilon>0$, we define
	\begin{eqnarray}
	\label{def_time_average}
	\mu_{v}(N) & := & 
	\frac{1}{t}\int_{0}^{t}\mu_{\alpha,\beta}\tau_{\left[Nvs\right]}S^N_{Ns}ds, \\
	\label{def_space_average_right}
	\mu_{v,\varepsilon}^+(N) & := & 
	\frac{1}{\lfloor\varepsilon N \rfloor+1}
	\sum_{z\in\Z\cap[0,\varepsilon N ]}\tau_z\mu_v(N),  \\
	\label{def_space_average_left}
	\mu_{v,\varepsilon}^-(N) & := & 
	\frac{1}{\lfloor\varepsilon N \rfloor+1}
	\sum_{z\in\Z\cap[-\varepsilon N,0]}\tau_z\mu_v(N).
	\end{eqnarray}
	Since $\beta\leq\alpha$, $\tau\mu_{\alpha,\beta}\leq\mu$. 
	Thus by attractiveness, we have, for $0<\varepsilon<\varepsilon'$,
	\be\label{sandwich_measures}
	\mu_{v,\varepsilon'}^-(N)\geq \mu_{v,\varepsilon}^-(N)\geq\mu_v(N)
	\geq \mu_{v,\varepsilon}^+(N)\geq \mu_{v,\varepsilon'}^+(N).
	\ee
Since the above measures are supported on the compact set $\mathcal X$, 
by diagonal extraction, we can find a dense subset $D_0$ of $\R$ and 
a subsequence $(N_{\ell_k})_k$ of $(N_\ell)_\ell$ such that, for each $v\in D_0$, 
the following weak limits exist for $N=N_{\ell_k}\to+\infty$: 
\begin{eqnarray}
\label{weak_lim}
\mu_v & := & \lim_{N\to+\infty}\mu_v(N), \\ 
\mu_{v,\varepsilon}^\pm & : = &  \lim_{N\to+\infty}\mu^\pm_{v,\varepsilon}(N), \\ 
\label{weak_lim_plus}
\mu_v^{\pm} & := & \lim_{\varepsilon\to 0}\mu_{v,\varepsilon}^\pm.
\label{weak_lim_min}
\end{eqnarray}
By \eqref{sandwich_measures}, for $u<v<w$ in $D$, we have
\be\label{sandwich_measures_lim}
	\mu_u^+\geq \mu_{v}^-\geq \mu_{v}\geq\mu_{v}^+\geq \mu_{w}^- .
	\ee
From this we can conclude that there exists a dense subset $D\subset D_0$ 
of $\R$ such that $\mu_v=\mu_v^+=\mu_v^-$ for all $v\in D$ (to this end 
integrate \eqref{sandwich_measures_lim} against a dense countable subset 
of non-decreasing local functions, and recall that a non-decreasing function 
from $\R$ to $\R$ has countably many discontinuities). \\ \\
It remains to show that for $v\in D$, the measure $\mu_v$ is 
of the form \eqref{form_mu_v}. To prove this, we 
consider a countable dense subset $\mathcal D$ of local functions 
of $\mathcal X$. Let $f\in\mathcal D$. By
\eqref{def_time_average} and \eqref{def_space_average_right},
\be\label{rewrite_average}
\int_{\mathcal X}f(\eta)d\mu_{v,\varepsilon}^+(N)(\eta)=
\Exp\left\{
\frac{1}{t}\int_0^t \frac{1}{\lfloor N\varepsilon\rfloor +1}
\sum_{z\in\Z:0\leq z\leq\varepsilon N}
f\left(\tau_{z+\lfloor Nvs\rfloor}\eta^N_{Ns}\right)ds
\right\}.
\ee
By \eqref{eq:relax_oneblock} in Proposition \ref{prop:twoblock}
 (for $A=0,B=\varepsilon$),  
we can rewrite the right-hand side of \eqref{rewrite_average} as
\be\label{rewrite_average_2}
\Exp\left\{
\frac{1}{t}\int_0^t \frac{1}{\lfloor N\varepsilon\rfloor +1}
\sum_{z\in\Z:0\leq z\leq\varepsilon N}\overline{f}\left(
M_{z+\lfloor Nvs\rfloor,l}\,\overline{\eta}^N_{Ns}
\right)ds
\right\}+\delta_{N,l,\varepsilon}
\ee
where $\lim_{l\to+\infty}\limsup_{N\to+\infty}\delta_{N,l,\varepsilon}=0$. 
The expectation in \eqref{rewrite_average_2} is of the form
\be\label{rewrite_average_3}
\int_{[0,n]}\overline{f}(\rho)d\lambda_{N,l,\varepsilon}(\rho)
=\int_{\mathcal X}f(\eta)d\mu_{N,l,\varepsilon}(\eta)
\ee
for some probability measure $\lambda_{N,l,\varepsilon}$ on $[0,n]$ 
(not depending on $f$),
where (recall definition \eqref{eq_average} of $\overline{f}$)
\be\label{rewrite_average_3_bis}
\mu_{N,l,\varepsilon}:=\int_{[0,n]}\nu_\rho d\lambda_{N,l,\varepsilon}(\eta).
\ee
It follows from \eqref{weak_lim_plus} that 
\be\label{rewrite_average_4}
\int_{\mathcal X}f(\eta)d\mu_v(\eta)  -\delta_{N,l,\varepsilon} 
=\int_{\mathcal X}f(\eta)
\int_{[0,n]}\nu_\rho d\lambda_{N,l,\varepsilon}(\rho).
\ee
By diagonal extraction we can find a probability measure $\lambda_v$ on $[0,n]$
 and a  subsequence  of $(N_{\ell_k})$  such that,
simultaneously for each $f\in\mathcal D$, $\lambda_{N,l,\varepsilon}$ converges 
weakly to $\lambda_v$  
as $N\to+\infty$ along this subsequence, $l\to +\infty$ and 
$\varepsilon\to 0$.  This yields the first equality in \eqref{form_mu_v}. 
The second one holds because the process is attractive and $\nu_\alpha$, 
$\nu_\beta$ are invariant for $\mathcal L^N$, implying 
$\nu_\alpha\leq\mu_v\leq \nu_\beta$.\\ \\
	 {\em Step 2.} We prove that
	 for $N=N_{\ell_k}\to+\infty$, 
	\begin{equation}\label{eq: local equilibrium-2}
\lim_{N\rightarrow\infty}\mu_{\alpha,\beta}S_{Nt}^N
\left(
\frac{1}{Nt}
\sum_{x\in V:\,\left\lfloor uNt\right\rfloor\leq x\left(0\right)
\leq\left\lfloor vNt\right\rfloor}
\overline{\eta}\left(x\right)
\right)=F\left(v\right)-F\left(u\right),
	\end{equation}
where
	\begin{align}\label{Feq}
	F\left(w\right) & =\int [w\rho-G(\rho)]\lambda_{w}\left(d\rho\right).
	\end{align}	
	Define the function 
	\begin{align*}
	g_N\left(s\right) 
	& =\sum_{\lfloor uNs\rfloor+1\leq x\left(0\right)
	\leq\lfloor vNs\rfloor}\mu_{\alpha,\beta}S^N_{t}
	\left(\overline{\eta}\left(x\right)\right)
	+\left(vNs-\lfloor vNs\rfloor\right)
	\mu_{\alpha,\beta}S^N_{s}\left(\overline{\eta}
	\left(\lfloor vNs\rfloor+1\right)\right)\\
	 & +\left(\lfloor uNs\rfloor+1-uNs\right)
	 \mu_{\alpha,\beta}S^N_{s}\left(\overline{\eta}\left(\lfloor
	 uNs\rfloor\right)\right).
	\end{align*}
This function	$g_N$ is absolutely continuous and therefore 
	$g_N\left(t\right)=\int_{0}^{t}g'_N\left(s\right)ds$.
	For $u,v\in D$ and $s$ such that $sNv,sNu\notin\mathbb{Z}$ we have
	\begin{align*}
	\frac{d}{ds}\left[g_N\left(Ns\right)\right]
	& =Nv\mu_{\alpha,\beta}S^N_{ N s}\left(\overline{\eta}
	\left(\lfloor vNs\rfloor+1\right)\right)
	-Nu\mu_{\alpha,\beta}S^N_{N s}\left(\overline{\eta}
	\left(\lfloor uNs]\right)\right)\\
	 & +\sum_{\lfloor uNs\rfloor+1\leq x(0)\leq\lfloor vNs\rfloor}
	 \mu_{\alpha,\beta}S^N_{ N s}\left(\mathcal L^N
	 \overline{\eta}\left(x\right)\right)\\
	 &+\left(vNs-\lfloor vNs\rfloor\right)
	 \mu_{\alpha,\beta}S^N_{ N s}\left((\mathcal L^N
	 \overline{\eta}\left(\lfloor vNs\rfloor+1\right)\right)\\
	 & +\left(\lfloor uNs\rfloor+1-uNt\right)
	 \mu_{\alpha,\beta}S^N_{ N s}\left(\mathcal L^N
	 \overline{\eta}\left(\lfloor uNs\rfloor\right)\right)\\
	 & =Nv\mu_{\alpha,\beta}S^N_{ Ns}\left(\overline{\eta}
	 \left(\lfloor vNs\rfloor+1\right)\right)
	 -Nu\mu_{\alpha,\beta}S^N_{ N s}\left(\overline{\eta}
	 \left(\lfloor uNs\rfloor\right)\right)\\
	 & +\mu_{\alpha,\beta}S^N_{ Ns}
	 \left(\tau_{\lfloor uNs\rfloor}Nj(\eta)\right) 
	 -\mu_{\alpha,\beta}S^N_{Ns}\left(\tau_{\lfloor vNs\rfloor}Nj(\eta)\right)\\
	 & +\left(vNs-\lfloor vNs\rfloor\right)
	 \mu_{\alpha,\beta}S^N_{Ns}\left(\mathcal L^N
	 \overline{\eta}\left(\lfloor vNs\rfloor+1\right)\right)\\
	 &+\left(\lfloor uNs\rfloor+1-uNs\right)
	 \mu_{\alpha,\beta}S^N_{N s}\left(\mathcal L^N
	 \overline{\eta}\left(\lfloor uNs\rfloor\right)\right)
	\end{align*}
 where we have used \eqref{micro_gradient} in the second equality. 
	Note that by the translation invariance of $\mu_{v}$ (the last two
	terms vanish in the limit) we have 
	\[ 
\frac{ g_{N_{\ell_k}}\left(N_{\ell_k}t\right)}{N_{\ell_{k}}}
=\frac{1}{N_{\ell_{k}}}
\int_{0}^{t}\frac{d}{ds}\left[g_N\left(N_{\ell_k}s\right)\right] ds 
\rightarrow F\left(v\right)-F\left(u\right)\qquad k\rightarrow\infty,
	\] 
	and that the difference between 
	 $\dsp\frac{g_{N_{\ell_{k}}}(N_{\ell_k}t)}{N_{\ell_k}}$ 
	and the l.h.s. of (\ref{eq: local equilibrium-2}) 
	is at most $O\left(N_{\ell_{k}}^{-1}\right)$, 
	and the proof  of \eqref{eq: local equilibrium-2}--\eqref{Feq}  is complete. \\ \\
	{\em Step 3.}  
	Consider the measure  $\lambda_v(d\rho)$   in (\ref{Feq}). 
	As in  \cite[Theorem 2.1]{Bahadoran2002}, 
	using attractiveness and processes starting 
	from $\mu_{\theta,\beta}$  and 
	$\mu_\theta$ for $\theta\in[\alpha,\beta]$, we can show that  
	\[
	\int_{[\alpha,\beta]}[v\rho-G(\rho)]\lambda_v(d\rho)
	=\inf_{\beta\leq\theta\leq\alpha}\left\{ v\theta-G\left(\theta\right)\right\}.
	\]
	By Proposition \ref{lemma:riemann} it follows that 
	\[
	\lambda_v=\delta_{u(v,1)}.
	\]
	 Next, as in \cite{Andjel1987},   one can show 
	 that the convergence of the Cesaro mean
	in fact implies 
\begin{equation}
	\lim_{ N\rightarrow\infty}
	\mu_{\alpha,\beta}\tau_{[N vt]} S^{ N}_{Nt}=\nu_{u\left(v,1\right)}.
	\label{eq: local equilibrium}
\end{equation}
	 This completes the proof of statement  \textit{(iii)} 
	  of \thmref{local_equilibrium}. 
\end{proof}
\begin{remark}\label{remark_av}
When $\theta(N)=N$, that is $\mathcal L^N=N(L_h+L_v)$, 
the dynamics follows a time rescaling of a {\em fixed} generator. 
In this case, the above proof can be simplified by using 
the original argument of \cite{Andjel1987}. The limit 
\eqref{eq: local equilibrium-9}--\eqref{form_mu_v}
can be obtained by observing that  Cesaro limits of 
the process distribution are contained in the subset 
$\mathcal I\cap\mathcal S$ of invariant measures 
of this fixed generator, and using Theorem 
\ref{thm:characterization_lemma_gen}.
	\end{remark}
\subsection{ Local equilibrium:  
proof of \thmref{local_equilibrium}, part \textit{(ii)}}
\label{subsec:proof_loc_eq}
In the sequel, we use the following realization of a  
local Gibbs state $(\xi^N)$ with distribution \eqref{local_gibbs_1}.
Take an i.i.d. sequence $(U_x)_{x\in V}$ of $\mathcal U(0,1)$ 
random variables, and define, for $x=(z,i)\in V$,
\be\label{couple_initial}
\xi^N(z,i)={\bf 1}_{\{U_x\leq  u^{N,i}_z\}}.
\ee
The construction \eqref{couple_initial} has the property that 
if we construct two Gibbs states $(\xi^N)$ and $(\zeta^N)$ 
with respective profiles $u(.)$ and $v(.)$ with the same 
$\mathcal U(0,1)$ family, then
\be\label{couple_gibbs}
\Exp\left[
\left(\xi^N(z,i)-\zeta^N(z,i)\right)^\pm
\right]=\left(
 u^{N,i}_x-v^{N,i}_x
\right)^\pm.
\ee
In particular, $\xi^N(z,i)$ and $\zeta^N(z,i)$ are ordered a.s. 
like $u^N_x$ and $v^N_x$.
\begin{proof}[Proof of \thmref{local_equilibrium}, part (ii)]
\mbox{}\\ \\
{\em Step one.} 
First assume that $u_0(.)$ has compact support and satisfies 
 the assumption of  Proposition \ref{prop:twoblock}. 
The triangle inequality  and Proposition \ref{prop:twoblock} imply, 
for $\varphi\in C^0_K(\R)$, 
\be\label{hydro3}
\lim_{N\to+\infty}\Exp\left\{
\left|
N^{-1}\sum_{x\in\Z}\varphi\left(\frac{x}{N}\right)\tau_x f(\eta^N_{Nt})-
N^{-1}\sum_{x\in\Z}\varphi\left(\frac{x}{N}\right)\overline{f}\left(M_{x,l}\overline{\eta}^N_{Nt}\right)
\right|dx
\right\}
=0.
\ee
 Indeed, in the sum in \eqref{hydro3} we may replace $\tau_x f(\eta^N_{Nt})$ 
 by the spatial average of $\tau_y f(\eta^N_{Nt})$ over $y\in\Z$ 
 such that $|y-x|\leq N\varepsilon$. The replacement error vanishes as 
 $N\to+\infty$ followed  by $\varepsilon\to 0$, because by an exchange 
 of summation this is equivalent to replacing $\varphi(x/N)$ with its 
 spatial average. We can then apply Proposition \ref{prop:twoblock} 
 to replace the spatial average of $\tau_y f(\eta^N_{Nt})$ by 
 $\overline{f}(M_{x,l}\overline{\eta}_{Nt})$. \\ \\
  Next, by part \textit{(i)} of \thmref{local_equilibrium}, 
  for every $A>0$, we have
\be\label{hydro2}
\lim_{\varepsilon\to 0}\limsup_{N\to+\infty}\Exp\left\{
\int_{-A}^A\left|
M_{\lfloor Nx\rfloor,N\varepsilon}\overline{\eta}_{Nt}-u(x,t)
\right|dx
\right\}
=0.
\ee
Since $\overline{f}$ is continuous, $\overline{f}$ allows us 
to replace the second sum in \eqref{hydro3} by the last 
integral in \eqref{wle}.  \\ \\
{\em Step two.} Next assume $u_0(.)$ is a Borel function supported 
in $[a,b]\subset\R$. It can be approximated in $L^1(\R)$ by
a sequence  $(u_{k,0}(.))_{k\in\N}$  of continuous 
functions supported in $[a,b]$ and satisfying the assumption of 
 Proposition \ref{prop:twoblock}. 
For each  $k\in\N$, let  $(\eta^{N}_{k,0})_{N\in\N^*}$ 
denote a local Gibbs state   with profile $u_{k,0}$. 
Since the coupling cannot create discrepancies, 
using \eqref{couple_gibbs}, we have 
\be\label{lonet}
\limsup_{N\to+\infty}\Exp\left\{
N^{-1}\sum_{(x,i)\in V}\left|
\eta^{N}_{k,Nt}(x,i)-\eta^N_{Nt}(x,i)
\right|
\right\}
\leq \sum_{i\in W} \int\left|
u_{k,0}^i(x)-u_0^i(x)
\right|dx.
\ee
Denoting  (for any $\eta,\xi\in\mathcal  X$) 
\begin{eqnarray*}
A^N_{\varphi,f}(\eta) & := & N^{-1}\sum_{x\in\Z}\varphi\left(\frac{x}{N}\right)
\tau_x f(\eta) \\
B^N(\eta;\xi) &:= & N^{-1}\sum_{(x,i)\in V}\left|
\eta(x,i)-\xi(x,i)
\right|. 
\end{eqnarray*}
we have that 
\be\label{control_wle}
\left|
A^N_{\varphi,f}(\eta^{N}_{k,Nt})-A^N_{\varphi,f}(\eta^N_{Nt})
\right|\leq\vert|\varphi\vert|_\infty \vert|f\vert|_\infty B^N(\eta ^N_{k,Nt};\eta^N_{Nt}).
\ee  
For each  $k\in\N$, $\eta^N_{k,Nt}$   satisfies w.l.e.
with profiles $u^i_{k}(.,t):=\widetilde{\rho}_i[u_k(.,t)]$, 
 and by contraction  property \eqref{contraction_entropy} for the 
 hydrodynamic equation  \eqref{eq:hydrodynamic equation}, 
 $u^i_k(.,t)\to u^i(.,t)$  in $L^1_{\rm loc}(\R)$. 
Then \eqref{lonet}--\eqref{control_wle} implies weak 
local equilibrium for $(\eta^{N}_{Nt})$. \\ \\
{\em Step three. } Assume $u_0(.)$ is a Borel function on $\R$. For  
 $k\in\N$, define $u_{k,0}$  by truncating $u_0$ to $0$  outside $[-k,k]$. 
Denote by  $u_k(.,.)$  the corresponding entropy solution. The associated 
l.g.s.  $\eta^N_{k,0}$  can be coupled to $\eta^N_0$ so that 
 $\eta^N_{k,0}(x,i)=\eta^N_0(x,i)$ for all $x\in[-kN,kN]$  
  and $i\in W$. Choosing  $k$  large enough, using w.l.e. for 
  $(\eta^N_{k,Nt})$ and finite propagation property
 (Proposition \ref{prop:prop}) yields the conclusion.
\end{proof}
\subsection{ Macroscopic stability: 
proof of Proposition \ref{prop:macrostab}}\label{proof:cauchy}
Recall the definition of discrepancies after equation \eqref{attractive}. 
For the proof of Proposition \ref{prop:macrostab}, we introduce 
a labeling of discrepancies. 
 We initially label blue particles ($\eta$  discrepancies)  
  using successive integers.
   We denote by $X_t^k\in V$ the position of the blue particle with label 
    $k$ (in short, we shall simply say: the position of blue label $k$) 
    at time $t$.
	This position is defined until the particle possibly coalesces with a red one. 
	The labeling 
	is chosen initially so that for any pair of labels $u,v$, 
	$u<v\Rightarrow X_0^u(0)\leq X_0^v(0)$. We define the motion of labeled 
	particles as follows from the Harris construction so that at time $t>0$, 
	we still have,  for any pair of labels $u,v$,
	\be\label{ordered_labels}
	u<v\Rightarrow X_t^u(0)\leq X_t^v(0).
	\ee
Let now $k$ be a blue label. 
	If $X^k_{t^-}=x=(z,i)$, $t\in{\mathcal N}_{(x,y)}$, 
	and there is a hole  at $y$ at time $t-$, 
	then  label $k$ first  jumps to $y$, so that $X_t^k=y$.  
	This may in some cases break the ordering relation \eqref{ordered_labels}. 
	If so, an exchange of labels is  performed after the jump to prevent 
	this without modifying the particle configurations. Namely:\\ \\
	\textit{(i)} if $y=(z',i)$ with $z'>z$, label $k$ is exchanged with the biggest 
	label $l$ such that  $X^{l}_{t-}(0)=z$,  if  $l\neq k$; so that 
	in the end we have $X_t^k=x$ and $X_t^{l}=y$. \\ \\ 
	\textit{(ii)} If $y=(z',i)$ with $z'<z$, label $k$ is exchanged with the smallest 
	label $l$ such that $X^{l}_{t-}(0)=z$,  if $l\neq k$; 
	so that in the end we have $X_t^k=x$ and $X_t^{l}=y$.  \\ \\
	Similarly If  $X^k_{t^-}=y=(z',j)$,  
	$t\in{\mathcal N}_{(x,y)}$ and there is a black particle at time $t-$ at $x$, 
	the black particle exchanges with label $k$, but as above, the latter 
	exchanges with 
	another label $l$ initially at  $(z',j)$ for some $j\neq i$, 
	 if necessary to maintain \eqref{ordered_labels}. \\ \\
 Finally, if a coalescence occurs following a jump from a site $(z,i)$ to $(z,j)$ 
 with $z\in\Z$ and $i\neq j$, we exchange the label $k$ of the newly coalesced 
 blue particle with a label $l$ chosen randomly (independently of anything else) 
 among labels of all blue particles currently on $\{z\}\times W$, so that after 
 the jump, $l$ will be coalesced everafter, and $k$ still uncoalesced if 
 $l\neq k.$
 It is important to notice that this redistribution of labels does not affect 
 the spatial ordering of blue labels.  \\ \\
	We also label red particles according to similar rules and 
	denote by $Y_t^k$ the position 
	at time $t$ of the red particle with label $k$. \\ \\
 As will appear below, the quantities in \eqref{eq:macroscopic stability-1} are closely related 
 to crossings of blue particles by red particles as we now define.
 \begin{definition}\label{def:crossings_0}
({\em Crossings.})    Let $k$ and $l$ be two labels (of the same or 
different colours) and $Z^k_t$, $Z^l_t$ their positions at time $t$ 
(e.g. $Z^k_t=X^k_t$ if $k$ is blue, or $Z^k_t=Y^k_t$ if it is red). 
We shall say that $l$ has crossed  $k$ to the right between times $s$ and $t$ 
(where $s<t$ or $s=t-$) if  $Z^l_{s}(0)<Z^k_s(0)$
and $Z^l_{t}(0)\geq Z^k_t(0)$; and that  $l$ has crossed $k$ to the left 
if $Z^l_{s}(0)\geq Z^k_s(0)$
and $Z^l_{t}(0)< Z^k_t(0)$. 
\end{definition}
Observe that due to the non-strict inequality in the above definition, 
it is indeed possible for two blue particles to ``cross'' 
in spite of  \eqref{ordered_labels}. \\ \\
 We introduce the following  definitions of relevant crossings: 
 \begin{definition}\label{def:crossings_relevant}
Let $\mathcal V=\mathcal V_0$ denote the set of blue labels initially 
in $[-aN,aN]$, and $\mathcal V_t$  the set of blue labels $v$ that are 
uncoalesced at time $t$.
\begin{eqnarray}
L^v_t & := & {\bf 1}_{\mathcal V_t}(v)
\left\{
\sum_{w\in\mathcal W}{\bf 1}_{\{Y^w_0(0)<X^v_0(0)\}}
{\bf 1}_{\{Y^w_t(0)\geq X^v_t(0)\}}
\right\},\label{def_crossings_0}\\
R^v_t & := & {\bf 1}_{\mathcal V_t}(v)
\left\{
\sum_{w\in\mathcal W}{\bf 1}_{\{Y^w_0(0)\geq X^v_0(0)\}}
{\bf 1}_{\{Y^w_t(0)< X^v_t(0)\}}
\right\}.\label{def_crossings}
\end{eqnarray}
\end{definition}  
That is, $L^v_t$ is the number of red particles having crossed $v$ 
to the right up to time $t$, and $R_t^v$ the number of red particles 
having crossed  $v$  to the left,  on the time interval $(0;t]$. \\ \\
The proof of Proposition \ref{prop:macrostab} follows the ideas of 
\cite[Lemma 3.1]{BMM}. However in our setting, to take into account 
the possibly slower vertical scaling, a significant refinement involving 
condition \eqref{assumption_relax} is necessary, see \eqref{lb} and 
Lemma \ref{lemma_coalescence} below. \\ \\
 Most of the proof is contained in the following two results. 
 Lemma \ref{lemma:crossing_1}
says that it is unlikely for a {\em given} label to be crossed by many 
labels of opposite colour, while Corollary \ref{lemma:crossing_1} says 
it is simultaneously unlikely for {\em all} relevant blue labels.
We point out that {\em these results do not in themselves 
require \eqref{eq:macroscopic stability-4}},
but their application to Proposition \ref{prop:macrostab} does.
However,  
they will be used to prove the quasi-interface property, 
cf. Lemma \ref{prop:interface} below, under a set of assumptions 
that does not in general imply \eqref{eq:macroscopic stability-4}. 
\begin{lemma}\label{lemma:crossing_1}
 Let $F_v(t)= \{v\in\mathcal V_t\}$ be the 
	 event that label $v$ has not coalesced by time $t$. Then for any $\gamma>0$, 
\begin{align}
 \label{prob_crossing_2}
		\Prob\Bigg(F_v(t); 
  C^v_t>\gamma N\Bigg)<  \max\left[
  e^{-CN}
  ,\exp\left\{
  -C\frac{\theta(N)^{m^*}}{N^{m^*-1}}
  \right\}
  \right]
\end{align}
for some constant $C=C(\gamma)>0$,  where $C^v_t:=L^v_t+R^v_t$. 
\end{lemma}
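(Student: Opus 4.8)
The plan is to reduce the event $\{F_v(t);\,C^v_t>\gamma N\}$ to the \emph{survival} of the single blue label $v$ through a large number of coalescence opportunities, and to quantify each opportunity by the short–time transverse analysis of Lemma \ref{lemma_coalescence}. The first step is to convert crossings into contacts. Since $C^v_t=L^v_t+R^v_t$ and the indicators in \eqref{def_crossings_0}--\eqref{def_crossings} only compare the two times $0$ and $t$, for each red label $w$ the first factors $\{Y^w_0(0)<X^v_0(0)\}$ and $\{Y^w_0(0)\geq X^v_0(0)\}$ are complementary, so $w$ contributes at most $1$ to $C^v_t$; hence $\{C^v_t>\gamma N\}$ forces at least $\gamma N$ \emph{distinct} red labels to cross $v$. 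Moreover, if $w$ crosses $v$, then because the horizontal coordinates $Y^w_s(0)$ and $X^v_s(0)$ are integer valued and move by unit steps (transverse jumps leave them unchanged, and a.s. only one Harris clock fires at a time), the difference $Y^w_s(0)-X^v_s(0)$ can pass from negative to nonnegative only by hitting $0$. Thus there is a first time $\sigma_w\in(0,t]$ at which $w$ and $v$ occupy the same horizontal column, necessarily on distinct lanes by the exclusion/coalescence rule. This same–column instant is the contact I exploit.

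The heart of the matter is the uniform single–contact estimate of Lemma \ref{lemma_coalescence}: starting from \emph{any} configuration in which $v$ and a red share a column, the coalescing maneuver competes with the horizontal escape of the two particles. Since this maneuver requires the two particles, initially at graph–distance at most $n^*$ (see \eqref{connection}), to be brought to $q$–adjacent lanes and then execute the coalescing jump before either escapes horizontally, it consumes in the worst case $m^*$ transverse jumps, where $m^*$ is given by \eqref{number_steps}; and each transverse clock fires at a rate smaller than a horizontal one by the factor $\theta(N)/N$. Consequently Lemma \ref{lemma_coalescence} yields that $v$ coalesces during the contact with probability at least $\delta_N$, of order $\min\{(\theta(N)/N)^{m^*},\,1\}$, uniformly in the lane configuration at $\sigma_w$. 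This is precisely where the diameter $n^*$, and hence condition \eqref{assumption_relax}, enters.

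To combine these over the contacts, note that on $F_v(t)$ the label $v$ never coalesces, so in each of the $\geq\gamma N$ contact windows the probability–$\geq\delta_N$ coalescence event failed. Ordering the first–contact times $\sigma_w$ of the crossing reds and conditioning successively at these stopping times, the strong Markov property together with the uniform lower bound $\delta_N$ gives a product estimate; temporal overlaps only increase the chance of coalescence (more partners in the column), so the worst case is that of disjoint windows, along which the survival probability is at most $(1-\delta_N)^{\lceil\gamma N\rceil}\le \exp(-\gamma N\delta_N)$. Since $N\delta_N$ is of order $\min\{\theta(N)^{m^*}/N^{m^*-1},\,N\}$, this is bounded by $\max[\,e^{-CN},\,\exp(-C\theta(N)^{m^*}/N^{m^*-1})\,]$ for a suitable $C=C(\gamma)>0$, which is the claim.

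I expect the main obstacle to be the single–contact estimate of Lemma \ref{lemma_coalescence}: namely the combinatorial identification of $m^*$ as the worst–case number of admissible transverse steps leading to coalescence, together with the accompanying uniform short–time lower bound $\delta_N\gtrsim\min\{(\theta(N)/N)^{m^*},1\}$. A secondary technical point is the clean implementation of the successive–conditioning step when several reds share $v$'s column simultaneously, which I would handle either by retaining a disjoint subfamily of windows or by observing that simultaneity can only help coalescence.
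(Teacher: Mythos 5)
Your proposal is correct and follows essentially the same route as the paper: the paper formalizes your ``contact windows'' as the stopping times $T_i\le S_i$ (a red enters $v$'s column; the next horizontal clock touching that column rings), observes that at most $n-1$ crossings occur per window so that $C^v_t>\gamma N$ forces at least $\gamma N/(n-1)$ windows, and applies Lemma \ref{lemma_coalescence} in each window exactly as you describe to get the per-window coalescence probability $\tfrac{p^*}{n-1}\bigl(\tfrac{q^*}{q^*+N/\theta(N)}\bigr)^{m^*}$ before taking the product via the strong Markov property. The only details you gloss over are handled by this window structure: the disjointness issue you flag is resolved by the ``at most $n-1$ crossings per window'' count, and there is an extra constant factor $1/(n-1)$ because, when a coalescence occurs in the column, the random label redistribution must select $v$ as the coalesced label.
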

Observe that the right-hand side of 
\eqref{prob_crossing_2}
vanishes if and only if \eqref{assumption_relax} holds.
\begin{corollary}\label{lemma:crossing_2}
Under assumptions of Lemma \ref{lemma:crossing_1},
we have
\be\label{tsh_2}
\lim_{N\to+\infty}\Prob\left(
\sup_{v\in\mathcal V,\,t>0} {\bf 1}_{\mathcal V_t}(v)C^v_t>\gamma N
\right)=0.
\ee
\end{corollary}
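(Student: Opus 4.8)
The plan is to deduce the uniform bound \eqref{tsh_2} from the single-label estimate \eqref{prob_crossing_2} of Lemma \ref{lemma:crossing_1}. The central difficulty is that a naive union bound over $v\in\mathcal V$ would cost a factor $|\mathcal V|=O(N)$, which is not affordable: under the weak form of \eqref{assumption_relax} (for instance $n=2$, where by Remark \ref{rk_relax} it only requires $\theta(N)\to+\infty$) the right-hand side of \eqref{prob_crossing_2} need not be $o(1/N)$. The whole point is therefore to replace the $O(N)$ blue labels by a number of ``test'' labels that stays \emph{bounded} as $N\to\infty$.

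First I would reduce the supremum over $t$. Since the right-hand side of \eqref{prob_crossing_2} does not depend on $t$, it suffices to control, for each fixed $v$, $\Prob(\sup_{t>0}{\bf 1}_{\mathcal V_t}(v)C^v_t>\gamma N)$. To this end I dominate ${\bf 1}_{\mathcal V_t}(v)C^v_t$ by the non-decreasing process counting every time a red label crosses the worldline of $v$ while $v$ is still uncoalesced; its supremum over $t$ is then a monotone limit, and the estimate \eqref{prob_crossing_2}, being uniform in $t$, passes to this limit (after, if needed, restricting to a dense countable set of times and using right-continuity of the jump process). Finite propagation (Proposition \ref{prop:prop}) keeps the positions of the labels in $\mathcal V$, and of the red labels that can reach them, in a window of size $O(N)$ on the relevant time scale, which is used in the next step.

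The key step is the supremum over $v$, where I would exploit the spatial coherence of crossings encoded in \eqref{def_crossings_0}--\eqref{def_crossings}. Because the labels keep their spatial order \eqref{ordered_labels}, a red label that crosses one blue label necessarily crosses every blue label whose time-$0$ and time-$t$ positions are sandwiched between those of that red label. Concretely, for two consecutive blue labels $v<v'$ alive at time $t$, a red label contributes to $C^v_t$ but not to $C^{v'}_t$ (or conversely) only if it starts in the site-interval between $X^v_0(0)$ and $X^{v'}_0(0)$ or ends in the interval between $X^v_t(0)$ and $X^{v'}_t(0)$; hence
\[
\bigl|C^{v'}_t-C^v_t\bigr|\le n\Bigl[\bigl(X^{v'}_t(0)-X^v_t(0)\bigr)+\bigl(X^{v'}_0(0)-X^v_0(0)\bigr)\Bigr].
\]
Summing these increments telescopes the positions, so that $v\mapsto C^v_t$ varies by at most $\tfrac{\gamma}{2}N$ across any family of labels whose time-$0$ and time-$t$ positions each lie in a window of width $\tfrac{\gamma}{4n}N$. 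Partitioning the $O(N)$-wide position range at both times into such windows produces $O(\gamma^{-2})$ blocks; choosing one representative label per nonempty block, the event $\{\sup_{v\in\mathcal V}{\bf 1}_{\mathcal V_t}(v)C^v_t>\gamma N\}$ forces some representative to satisfy $C^v_t>\tfrac{\gamma}{2}N$. A union bound over the $O(\gamma^{-2})$ representatives, combined with the time-reduction above and \eqref{prob_crossing_2} with $\gamma/2$ in place of $\gamma$, then gives
\[
\Prob\Bigl(\sup_{v\in\mathcal V,\,t>0}{\bf 1}_{\mathcal V_t}(v)C^v_t>\gamma N\Bigr)\le C(\gamma)\,\max\Bigl[e^{-CN},\exp\bigl(-C\theta(N)^{m^*}/N^{m^*-1}\bigr)\Bigr],
\]
which tends to $0$ precisely under \eqref{assumption_relax}, now with no diverging prefactor.

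The step I expect to be the main obstacle is making this coherence/blocking argument rigorous uniformly in $t$: the blocks depend on the random, time-dependent label positions, so the reduction to $O(\gamma^{-2})$ representatives must be coordinated with the supremum over $t$ — either by freezing the combinatorial block structure on a discretized time grid and controlling the increments between grid times, or by running the monotone domination block by block. Checking that the representatives can be chosen measurably and that the deterministic interpolation bound survives coalescences (the indicator ${\bf 1}_{\mathcal V_t}(v)$ and the label-exchange rules) is the delicate bookkeeping; once it is in place, the only probabilistic input is Lemma \ref{lemma:crossing_1}.
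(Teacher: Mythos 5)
Your overall strategy is the same as the paper's: the naive union bound over the $O(N)$ labels in $\mathcal V$ is too expensive, so one must reduce the supremum to a family of test labels whose cardinality is bounded uniformly in $N$, at the price of an additive error $o(N)$ in the crossing counts. Your pairwise coherence inequality is also essentially correct: for $v<v'$, a red label contributing to $L^{v'}_t$ but not to $L^v_t$ must start in the site interval between $X^v_0(0)$ and $X^{v'}_0(0)$, and one contributing to $L^v_t$ but not to $L^{v'}_t$ must end in the interval between $X^v_t(0)$ and $X^{v'}_t(0)$, which gives your bound. The problem is what you do with it. Because your two-sided bound involves the \emph{time-$t$} positions, your blocks are pairs (time-$0$ window, time-$t$ window), and the assignment of labels to blocks is random and varies with $t$. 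The ``representative'' of a block is therefore a random, time-dependent label, and Lemma \ref{lemma:crossing_1} only bounds $\Prob(F_v(t);\,C^v_t>\gamma N)$ for a \emph{fixed} label $v$; you cannot apply it to a label selected by inspecting the evolution. Discretizing time does not repair this: even at a fixed grid time the membership of labels in time-$t$ windows is random, and replacing the representative by all labels that could conceivably represent a block reinstates the $O(N)$ union bound you were trying to avoid. Labels starting in the same time-$0$ window can spread to macroscopically distant positions, so you cannot drop the time-$t$ windows either. This is precisely the step you flag as ``the main obstacle,'' and as written it is a genuine gap, not bookkeeping.

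The paper closes exactly this hole by making the test set depend only on the \emph{initial} configuration and by using \emph{one-sided} comparisons that never reference time-$t$ positions. Concretely, it chooses test labels $v_1<\dots<v_m$ so that consecutive ones are separated by at least $N\varepsilon$ red particles at time $0$; since there are at most $2aNn$ red particles in the relevant region, $m\leq 2an/\varepsilon$ is bounded in $N$. Then for $v$ between $v_k$ and $v_{k+1}$ one shows $L^v_t\leq L^{v_k}_t+N\varepsilon+n$ and $R^v_t\leq R^{v_{k+1}}_t+N\varepsilon+n$: e.g.\ a red particle crossing $v$ to the right but not $v_k$ would have to satisfy $Y^w_t(0)<X^{v_k}_t(0)\leq X^v_t(0)$, which is impossible, so it must have \emph{started} between $z_k$ and $X^v_0(0)$, and there are at most $N\varepsilon+n$ such particles by construction. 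The asymmetry (compare $L^v$ to the test label on the left, $R^v$ to the one on the right) is what eliminates the time-$t$ terms that force your random blocking. If you want to salvage your argument, replace your symmetric windows by this initial-count-based subdivision and these one-sided inequalities; the rest of your outline (union bound over the $O(1/\varepsilon)$ deterministic test labels, then $\varepsilon\to 0$) then goes through.
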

\begin{remark}\label{rk:blue_red}
Lemma \ref{lemma:crossing_1} and Corollary \ref{lemma:crossing_2} 
equally hold for red particles, since exchanging the roles of 
$\eta$ and $\xi$ exchanges the colours of blue and red particles.
\end{remark}
We first conclude the
proof of Proposition \ref{prop:macrostab}, then prove 
Lemma \ref{lemma:crossing_1} and Corollary \ref{lemma:crossing_2}. 
	\begin{proof}[Proof of Proposition \ref{prop:macrostab}]
{\em \noindent Notational remark.} In the sequel,  $\phi^N_t$
  will sometimes be denoted simply by $\phi_t$. Similarly, 
  dependence on $N$ will sometimes be implicit for other quantities such 
  as $\eta_0$, $\xi_0$, $X_t^k$, $Y_t^l$, 
   $\mathcal V_t$ defined above.\\ \\
 It is enough to show that for  $\gamma>0$, 
	\begin{align}\label{tsh}
	\lim_{N\to+\infty}\mathbb{P}\Bigg(\sup_{z\in \Z}
	\sum_{u\in \Z:\,u\geq z}\phi^N_t(u)
	-\sup_{z\in \Z}\sum_{u\in \Z:\,u\geq z}\phi^N_0(u)>\gamma N\Bigg)=0.
	\end{align}
 Then, \eqref{eq:macroscopic stability-1} follows by exchanging 
 the roles of $(\eta^N_t)_{t\geq 0}$ and $(\xi^N_t)_{t\geq 0}$. \\ \\
Notice that, if there is initially no blue particle in $[-aN,aN]$, 
then $\eta_0\leq\xi_0$, the suprema in \eqref{tsh} are achieved for 
any sufficiently large $z$, and  are equal to $0$, thereby implying 
the conclusion of the proposition. Hence, we will now assume that 
there is initially at least one blue particle in $[-aN;aN]$.  \\ \\
  Remark that there exists a label $v\in\mathcal V_t$ such that
\be\label{max_label}
\sup_{z\in \Z}\sum_{u\in \Z:\,u\geq z}\phi_t(u)
=\sum_{u\in\Z:\,u\geq X_t^v(0)}\phi_t(u).
\ee
It follows that
\[
\sup_{z\in \Z}\sum_{u\in \Z:\,u\geq z}\phi_t(u)
-\sup_{z\in \Z}\sum_{u\in \Z:\,u\geq z}\phi_0(u)\leq 
\sup_{v\in \mathcal V}
{\bf 1}_{{\mathcal V}_t}(v)\Delta^v_t
\]
where, for $v\in\mathcal V$, we set
\begin{eqnarray*}
\Delta^v_t  &   :=  &  
\sum_{u\in \Z:\,u\geq X_t^v(0)}\phi_t(u)- 
\sum_{u\in \Z:\,u\geq X^v_0(0)}\phi_0(u).
\end{eqnarray*}
Define
\begin{eqnarray}
\nonumber
B^v_t & : = &  {\bf 1}_{
\{v\in\mathcal V_t\}
}\left\{
\sum_{x\in V:\,x(0)\geq X_t^v(0)}(\eta^N_t(x)-\xi^N_t(x))^+\right.\\
& - & \left.\sum_{x\in V:\,x(0)\geq X_0^v(0)}
(\eta^N_0(x)-\xi^N_0(x))^+\right\}.\label{def_crossings_2}
\end{eqnarray}
In words, $B^v_t$ is the number of blue particles that crossed $v$ 
to the right minus the number that crossed $v$ to the left. 
It follows from definitions \eqref{def_crossings_0}--\eqref{def_crossings} 
and \eqref{def_crossings_2} that 
\be\label{decomp_delta}
\Delta_t^v=R_t^v-L_t^v+B_t^v.
\ee
Moreover, 
\begin{eqnarray}
\nonumber
B^v_t   & = &  (v_{ t}^*-V_t+1)-(v_{ 0}^*-V_0+1)\\
& =& (v-V_t)-(v-V_0)+(v^*_t-v^*_0)\nonumber\\
& \leq & n-1
\label{crossing_bound}
\end{eqnarray}
where  $v_{t}^*:=\max\mathcal V_t$  denotes the highest of all blue labels 
 at time $t$,  and $V_t$ the lowest among blue labels lying at time $t$ 
 at the same location as $v$. In the first equality in \eqref{crossing_bound}, 
 each of the quantities in parenthesis is equal to the corresponding sum in 
 the definition \eqref{def_crossings} of $B^v_t$, measuring the number of 
 blue particles to the right of $v$ (including $v$ itself), hence the equality. 
 The first two terms on the second line of \eqref{crossing_bound} are each 
 positive by definition, and they are bounded from above by $n-1$ 
 due to \eqref{ordered_labels}.
The last term on the second line is nonpositive because blue labels cannot 
be created,
hence the final inequality.   Finally, \eqref{tsh} follows from 
Corollary \ref{lemma:crossing_2}, \eqref{decomp_delta} and \eqref{crossing_bound}.
\end{proof}
\begin{proof}[Proof of Lemma \ref{lemma:crossing_1}]
For each  blue label $v$, define the stopping times
	\begin{align}
		S_0^v=0 \leq T_1^v \leq S_1^v \leq T_2^v ... \label{stoppings}
	\end{align}
 (hereafter denoted simply by $T_i,S_i$), 
	by 
	\begin{eqnarray}
		T_i&=&\inf\left\{t\geq S_{i-1}:\text{there is a red particle at } 
		(X^v_t(0),j)\mbox{ for some }j\neq X_t^v(1)\right.\nonumber\\
		& &\left.\mbox{ or }  v 
		\mbox{ has coalesced by time }t\right\}\label{stopping_t}\\
		S_i&=&\inf\left\{t\ > T_{i}:\,t\in\bigcup_{(x,y)\in V_i}\mathcal N_{(x,y)}
  \, 
  \mbox{ or }v\mbox{ has coalesced by time }t\right\}
  ,\quad\mbox{where}\nonumber\\
  V_i & := & \left\{
 (z,j),(z',j)\in V^2:\,j\in W,\, 
 z\neq z',\, \{z,z'\}\subset\{X^v_{T_i}(0),X^v_{T_i}(0)\pm 1\}
  \right\} .
  \label{stopping_s}
	\end{eqnarray}
	In words, the first part of the event defining $T_i$ says that 
	at time $T_i$, a red label finds itself at the same spatial position 
	as $X_t^v$ in $\Z$ but on another lane (this only makes sense as long 
	as $v$ has not coalesced, hence the second part of the event).  The 
first part of the 
 event defining $S_i$ says that a Poisson clock rings for one of the $2n$ 
 horizontal edges connecting the position of $X^v_.$  to a neighbouring 
 position on one of the lanes. 
	Let $(\mathcal{F}_t)_{t\geq 0}$ be the filtration generated by 
	 $(\eta^N_s,\xi^N_s)_{0\leq s \leq t}$.  
 Note that $T_i=S_{i-1}$ is possible 
 only if and only if the first event in $S_{i-1}$ 
had brought a red particle  or left an existing one at the same location as $v$, or 
$v$ was already coalesced at time $T_i$, whereas $S_i=T_i$ is possible 
if and only if $v$ was already coalesced at time $T_i$. In particular, 
the sequence \eqref{stoppings} becomes stationary as soon as $v$ coalesces. \\ \\ 
An important fact is that
on the time interval $(S_i,S_{i+1}]$, at most $(n-1)$ red/blue 
crossings may occur, i.e.
\be\label{note_crossings}
C_{S_{i+1}}-C_{S_i}=L_{S_{i+1}}-L_{S_i}+R_{S_{i+1}}-R_{S_i}\leq n-1.
\ee
Indeed on $(S_i,T_{i+1})$ there is no red particle 
at the same spatial location as $v$, hence no crossing may occur. 
One crossing  possibly occurs at time $T_{i+1}$ if the red particle was 
on the left of $v$ at $T_{i+1}^-$. On the time interval $(T_{i+1},S_{i+1})$,  
$v$ cannot move and no red particle can move from or to $z=X_{T_{i+1}^-}$, 
so no crossing may occur. Finally at time $S_{i+1}$ a red particle may move 
from or to $z$, which generates one crossing; or $v$ may jump, which may 
generate up to $(n-1)$ crossings.  
 We now show  that  
  on $F_v(t)$, 
	\begin{align}\label{lb}
	\mathbb{P}(
	\text{label $v$ coalesces in $(T_i,S_{i}]$}|\mathcal{F}_{T_i})
	\geq\frac{p^*}{n-1}\left(\frac{q^{*}}{q^{*}
	+\frac{N}{\theta(N)}}\right)^{m^*}
	\end{align}
	 where $m^*$ is given by \eqref{number_steps} and  $p^*,q^*>0$  
	  are independent of $i$, $v$ and $N$. 
 This is where condition \eqref{assumption_relax} is involved 
 through Lemma \ref{lemma_coalescence} below.
First note that on the time interval $[T_i,S_i)$, in the coupled process 
$(\eta_t,\xi_t)_{t\geq 0}$, $v$ does not move and the vertical layer 
$\{X^v_{T_i}\}\times W$ is isolated. By strong Markov property, conditioned on 
 $\mathcal F_{T_i}$,  this layer evolves following  the simple exclusion 
 process on  $\{X^v_{T_i}\}\times W$ 
 with kernel  $q(.,.)$ accelerated by a factor $\theta(N)$, 
 with initial condition given by the restriction of $(\eta_{T_i},\xi_{T_i})$. 
 Up to time $S_i$, it can be coupled to an exclusion process on the 
 same layer (ignoring the rest of space). Let us denote by 
 $U_i=U^*_i/\theta(N)$ the first coalescence time of a blue and red particle 
 in this process (where $U^*_i$ is the first coalescence time for the 
 non accelerated process).  If $U_i$ is smaller than the exponential first 
 times of all Poisson processes involved in $S_i$, the coalescence occurs 
 before $S_i$, unless $v$ has coalesced before $S_i$.
Further (conditioned on $\mathcal F_{T_i})$, these exponential times are 
independent of $U_i$, because the latter depends only on vertical 
Poisson processes on our layer. The total rate of these Poisson processes 
involved in $S_i$ is 
\[
\lambda^*:=\sum_{i\in W}(d_i+l_i)
\]
and thus the first of the corresponding exponential times is an exponential 
random variable $W_i=W^*_i/\lambda^* $, where $W^*_i\sim\mathcal E(1)$. 
For simplicity and without loss of generality we may assume $\lambda^*=1$.
Thus, 
the probability that our coalescence occurs before $S_i$ satisfies 
\be\label{bound_gamma}
\Prob\left(
U^*_i<\frac{\theta(N)}{N}W^*_i
\right)=\Exp\left(
e^{
-\frac{N}{\theta(N)}U^*_i
}\right)\geq p^*\left(\frac{q^*}{q^*+\frac{N}{\theta(N)}}\right)^{m^*}
\ee 
where the equality follows from conditioning on $U^*_i$, and the inequality 
from Lemma \ref{lemma_coalescence}, where $q^*$ is defined (the random 
variable $U^*_i$ in \eqref{bound_gamma} has the same law as the random 
variable $T^*$ in Lemma \ref{lemma_coalescence}).
The proof of \eqref{lb} is concluded by observing that there is a probability 
at least $1/(n-1)$ that the coalesced label selected after the vertical 
redistribution of labels defined above is eventually $v$. \\ \\
Moreover, we have
\be\label{bound_gamma_2}
\frac{q^*}{q^*+\frac{N}{\theta(N)}}\geq \min\left(
a,b\frac{\theta(N)}{N}
\right)
\ee
for constants $a,b$ independent of $i,v,N$. 
	 Using strong Markov property and \eqref{note_crossings}--\eqref{bound_gamma_2}, 
	 we conclude that for every $\gamma>0$ ,  there are constants $A,C$ 
	  independent of $i$, $v$ and $N$, such that
\begin{eqnarray}
 \nonumber
		\mathbb{P}\Bigg(F_v(t); 
   C^v_t>\gamma N\Bigg)& < &
  \left[
  1-\frac{p^*}{n-1}\left(\frac{q^*}{q^*+\frac{N}{\theta(N)}}\right)^{m^*}
  \right]^{\frac{\gamma N}{ n-1}}\\
  & < &  \max \left[
  e^{-CN},\exp\left\{
  -C\frac{\theta(N)^{m^*}}{N^{m^*-1}}
  \right\}
  \right]
\label{prob_crossing}
\end{eqnarray}
for some constant $C>0$,  where we used the inequality $1-x\leq e^{-x}$. 
\end{proof}
\begin{proof}[Proof of Corollary \ref{lemma:crossing_2}]
First observe that a  union bound from Lemma \ref{lemma:crossing_1} 
over the $O(N)$ relevant labels $v$ (as done in \cite[Lemma 3.1]{BMM}) 
 leads to a slightly worse condition than \eqref{assumption_relax} 
 to get a vanishing bound, namely
\be\label{assumption_relax_worse}
\lim_{N\to+\infty}\frac{\theta(N)}{N^{1-\frac{1}{m^*}}(\log N)^{\frac{1}{m^*}}}
=+\infty.
\ee
Note that when $m^*=1$, \eqref{assumption_relax} reduces to 
$\theta(N)\to+\infty$, whereas \eqref{assumption_relax_worse} 
introduces a superfluous growth condition. \\ \\
 To prove \eqref{tsh_2}, we will show that  with negligible error, 
  we can restrict the supremum  to a set of labels  whose size is bounded 
  uniformly with respect to $N$. To this end,
we prove below that for every $\varepsilon>0$, there exists a finite subset 
$\mathcal V'=\mathcal V'(\eta_0,\xi_0,\mathcal V)$ such that
\begin{eqnarray}\label{finite_max}
\vert\mathcal V'\vert\leq\frac{2a n}{\varepsilon}, &&
\sup_{v\in \mathcal V}
{\bf 1}_{{\mathcal V}_t}(v)L^v_t\leq
2(N\varepsilon+n)+\sup_{v\in \mathcal V'}
{\bf 1}_{{\mathcal V}_t}(v)L^v_t\\
&&
\sup_{v\in \mathcal V}
{\bf 1}_{{\mathcal V}_t}(v)R^v_t\leq
2(N\varepsilon+n)+\sup_{v\in \mathcal V'}
{\bf 1}_{{\mathcal V}_t}(v)R^v_t.\label{finite_max_2}
\end{eqnarray}
The limit \eqref{tsh_2} then follows from 
Lemma \ref{lemma:crossing_1}, \eqref{finite_max}--\eqref{finite_max_2} 
and a union bound,  lastly letting $\varepsilon\to 0$. 
In order to prove \eqref{finite_max}--\eqref{finite_max_2}, we define 
a subdivision of $[-aN;aN]\cap\Z$, denoted 
\[
\mathcal E=\mathcal E(\eta_0,\xi_0)
=\{z_k:\,k=1,\ldots,m\},\quad m\leq\frac{2a n}{\varepsilon},
\]
as follows:
For $k\geq 1$,
we denote by $v_k$ the label of  a  blue particle initially at $z_k$, i.e.
\[
X^{v_k}_0(0)=z_k,\quad k=1,\ldots, m.
\]
In words, $z_1$ is the spatial location of the leftmost blue particle 
in $[-aN,aN]$, and $v_{k+1}$ is the first blue particle to the right of 
$v_k$ such that there are at least $N\varepsilon$ red particles on $(z_k,z_{k+1}]$.
For the first value of $k$ such that  the set in the second minimum is empty, 
we set $m=k$ if $v_k$ is the rightmost blue particle, otherwise we set $m=k+1$ and
 $z_{k+1}$ to be the position of the 
 next blue particle on its right.
Note that the minimum defining $z_1$ always exists since  there is 
initially at least one blue particle, thus $m\geq 2$. 
Let $v\in\mathcal V\setminus\{v_k:\,k=1,\ldots,m\}$. Then we have 
\[
z_k< X^v_0(0)< z_{k+1}
\]
for a unique $k=k(v)\in\{1,\ldots,m\}$. Now, let $v\in\mathcal V\cap(z_k;z_{k+1})$ 
for $k=1,\ldots,m-1$. Observe that a red particle that crossed $v$ from right 
to left in the time interval $(0;t]$ and did not lie initially in $[z_k;z_{k+1}]$
must have crossed $v_{k+1}$ in this time interval. Similarly, a red particle 
that crossed $v_{k+1}$ from left to right and did not start in $[z_k;z_{k+1}]$ 
must have crossed $v$. Note also that by definition of $z_k$, there are initially 
at most $N\varepsilon+n$ red particles in $[z_k;z_{k+1}]$. It follows from this 
and a similar comparison with $v_k$ that 
\begin{eqnarray*}
R^v_t  \leq  R^{v_{k+1}}_t+N\varepsilon+n, &
L^v_t\geq L^{v_{k+1}}_t-N\varepsilon-n\\
R^v_t\geq R^{v_{k}}_t-N\varepsilon-n, &
L^v_t\leq L^{v_{k}}_t+N\varepsilon+n.
\end{eqnarray*}
Thus \eqref{finite_max}--\eqref{finite_max_2} hold for $\mathcal V'=\mathcal E$.
\end{proof}
 We conclude by the following lemma required to derive \eqref{lb} above. 
 Recall the definitions of discrepancies, coloured particles and coalescence 
 after Proposition \ref{prop:macrostab}.
\begin{lemma}\label{lemma_coalescence}
Consider the simple exclusion process on $W=\{0,\ldots,n-1\}$ with weakly 
irreducible jump kernel $q(.,.)$. Let $(\zeta^1_t,\zeta^2_t)$ be a coupling 
of two such processes via a common Harris system. Assume $\zeta^1_0$ and 
$\zeta^2_0$ have at least two opposite discrepancies, and denote by $T^*$ 
the first coalescence time of two opposite discrepancies.
Then, under condition \eqref{assumption_relax}, there exist $p^*>0$,  
and $q^*=q^*(n,q(.,.))>0$ (not depending on the initial configurations), 
such that, for every $\theta>0$ and $m^*$ given by \eqref{number_steps},
\be\label{eq:coalescence}
\Exp\left(
e^{-\theta T^*}
\right)\geq p^*\left(\frac{q^*}{q^*+\theta}\right)^{m^*}.
\ee
\end{lemma}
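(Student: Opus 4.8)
The plan is to reduce the claim to a purely combinatorial reachability statement about the coupled exclusion process on the finite layer $W$, and then to extract the Laplace bound from a one-line race against an independent exponential clock. First I would record the identity $\Exp(e^{-\theta T^*})=\Prob(T^*<W_\theta)$, where $W_\theta$ is an exponential variable of parameter $\theta$, independent of $(\zeta^1_t,\zeta^2_t)$, so that $\Prob(W_\theta>T^*\mid T^*)=e^{-\theta T^*}$. It thus suffices to bound from below the probability that a coalescence occurs before the independent killing time $W_\theta$. I set $q^*:=\min\{q(i,j):q(i,j)>0\}$ and $\Lambda_0:=\sum_{i,j\in W}q(i,j)$, both positive and finite and depending only on $(n,q(.,.))$.

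The core of the argument is the following combinatorial claim: \emph{from any coupled configuration on $W$ carrying at least one blue and one red discrepancy, one can force a coalescence within at most $m^*$ firings of changing Harris clocks}, with $m^*$ as in \eqref{number_steps}. Granting this, I would run an adaptive scheme: at each not-yet-coalesced configuration $c$, designate a ``good'' edge $e(c)$, namely the first edge of a shortest forcing sequence, whose rate is $\geq q^*$ and whose firing lowers the remaining forcing length by one. Demanding at each stage that $e(c)$ be the next \emph{changing} event and that it precede $W_\theta$ has conditional probability $q_{e(c)}/(\Lambda(c)+\theta)\geq q^*/(\Lambda_0+\theta)$ (non-changing clocks are simply ignored); by the strong Markov property and memorylessness of $W_\theta$, after at most $m^*$ stages
\[
\Prob(T^*<W_\theta)\geq\left(\frac{q^*}{\Lambda_0+\theta}\right)^{m^*}\geq\left(\frac{q^*}{\Lambda_0}\right)^{m^*}\left(\frac{q^*}{q^*+\theta}\right)^{m^*},
\]
the last step using $\Lambda_0\geq q^*$. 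This is \eqref{eq:coalescence} with $p^*:=(q^*/\Lambda_0)^{m^*}>0$, which depends only on $(n,q(.,.))$.

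To prove the combinatorial claim I would track a single blue-red pair, say blue at $i$ and red at $j$ at unoriented distance $d_q(i,j)\leq n^*$, fix a shortest path between them through edges of positive rate in at least one direction, and march the two discrepancies toward each other until they become adjacent and coalesce; note $T^*$ is in any case no larger than the coalescence time of this designated pair. A short case analysis of the basic coupling shows that a red at $j$ advances to a neighbour $j'$ in one firing (or coalesces, if $j'$ is blue) whenever either the forward edge $q(j,j')>0$ with $j'$ a hole or blue, or the backward edge $q(j',j)>0$ with $j'$ black or blue; the symmetric dichotomy holds for blue. The only genuinely obstructed situation is a one-directional edge whose orientation is incompatible with the occupation of a blocking black particle, which I would resolve by first relocating that obstacle off the path.

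The hard part will be exactly this combinatorial claim, and inside it the bookkeeping of obstacle relocations: since $q(.,.)$ is only weakly irreducible, edges may be one-way, so a blocking black particle cannot always be pushed aside in the needed direction and must be routed around at a cost governed by the diameter. Arranging these detours so that the \emph{total} firing count never exceeds $m^*=\lfloor n^*/2\rfloor(n^*-1-\lfloor n^*/2\rfloor)+n^*$ is the delicate point — the additive $n^*$ accounting for the direct approach of the two discrepancies, and the quadratic term bounding the overhead of clearing obstructions when the pair is driven to meet near the midpoint of their connecting path. Verifying reachability in every configuration (so that a good edge always exists) and establishing this count uniformly over initial data is precisely where the exponent $m^*$, and hence condition \eqref{assumption_relax}, originates.
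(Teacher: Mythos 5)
Your probabilistic reduction is correct and is essentially a repackaging of the paper's argument: where you race a designated sequence of ``good'' clock rings against an independent exponential $W_\theta$ via the identity $\Exp(e^{-\theta T^*})=\Prob(T^*<W_\theta)$, the paper conditions on the discrete-time skeleton of the coupled chain, observes that the inter-jump times are independent exponentials with rates bounded below by a constant $q^*$, and applies the moment generating function of a $\Gamma(m^*,q^*)$ variable on the event that the discrete coalescence time is at most $m^*$, whose probability $p^*$ is bounded below uniformly in the initial configuration. Both routes give a bound of the form \eqref{eq:coalescence} (with different but equally admissible constants $p^*,q^*$, which is immaterial since the lemma only asserts their existence), and your stage-by-stage estimate $q_{e(c)}/(\Lambda(c)+\theta)\ge q^*/(\Lambda_0+\theta)$ combined with $\Lambda_0\ge q^*$ is sound.

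The genuine gap is that you state but do not prove the combinatorial claim that from any coupled configuration carrying two opposite discrepancies a coalescence can be forced in at most $m^*$ changing events. That claim is the entire substance of the lemma: the exponent $m^*$ in \eqref{eq:coalescence}, and hence the scaling condition \eqref{assumption_relax}, comes from nowhere else, and you yourself flag the obstacle-relocation bookkeeping as the delicate point without carrying it out. The paper isolates this as Lemma \ref{lemma:number_steps} and proves it by exactly the strategy you sketch: fix a path $(x_0=i,\ldots,x_D=j)$ with $D\le n^*$ connecting a blue to a red discrepancy along $q(.,.)$ or its reverse; if $p$ interior sites of the path carry black particles, first shunt them one by one to the far end of the path, each relocation costing at most $n^*-1-p$ steps for a total of $p(n^*-1-p)$; then advance the blue discrepancy along the path (exchanging with black particles one step at a time where the tail is still occupied) in at most $n^*$ further steps until it meets the first red discrepancy it encounters. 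Maximizing $p(n^*-1-p)+n^*$ over $p$ at $p=\lfloor n^*/2\rfloor$ yields \eqref{number_steps}. Until this count is verified uniformly over configurations --- including the check that every step of the relocation is admissible for at least one of the two coupled marginals, which is what makes each step a changing event of rate at least $q^*$ and guarantees your ``good edge'' always exists --- your proof is incomplete.
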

\begin{example}\label{example_coalescence}
When  $n=2$, $m^*$ and $q^*$ are explicit, as 
a coalescence occurs after the first jump.  
 Indeed, looking into the proof below shows that 
  the statement holds with $m^*=1$ and $q^*=q(0;1)+q(1;0)$.
More generally under \eqref{onestep}, we have $m^*=1$ and 
 $q^*$ given by \eqref{onestep}.
\end{example}
\begin{proof}[Proof of Lemma \ref{lemma_coalescence}]
By Lemma \ref{lemma:number_steps} below, there exists 
$j\leq m^*(n,q(.,.))$ and a sequence $(x_0,\ldots,x_{j})\in W^{j+1}$, 
such that  for $k=0,\ldots,j-1$ and $i\in\{1;2\}$, 
\begin{eqnarray}
\zeta^{i,0}& := & \zeta^i_0\label{initial_steps}\\
\zeta^{i,k+1}& := & (\zeta^{i,k})^{x_k,x_{k+1}}
\label{coalescence_steps}
\end{eqnarray}
\be\label{admissible}
q(x_k,x_{k+1})\max\left\{
\zeta^{i,k}(x_k)[1-\zeta^{i,k}(x_{k+1})],\,
\quad i\in\{1;2\}
\right\} >0
\ee
and that the jump from $x_{j-1}$ to $x_{j}$ for 
$(\zeta^{1,j-1},\zeta^{2,j-1})$
generates a coalescence, i.e.
\begin{eqnarray}\nonumber
&&\left\vert
\zeta^{1,j}(x_{j-1})-\zeta^{2,j}(x_{j-1})
\right\vert+
\left\vert
\zeta^{1,j}(x_{j})-\zeta^{2, j}(x_{j})
\right\vert\\
& < &
\left\vert
\zeta^{1,j-1}(x_{j-1})-\zeta^{2,j-1}(x_{j-1})
\right\vert+\left\vert
\zeta^{1,j-1}(x_{j})-\zeta^{2,j-1}(x_{j})
\right\vert .
\label{new_coalescence}
\end{eqnarray}
Let  $(T_k)_{k\geq 0}$  denote 
the successive jump times  of the Markov process 
$(\zeta^1_t,\zeta^2_t)_{t\geq 0}$, and 
$(\widetilde{\zeta}^1_k,\widetilde{\zeta}^2_k):=(\xi^1_{T_k},\xi^2_{T_k})$ the
discrete-time skeleton of this process .
Let   
\be\label{discrete_coalescence}
T^s:=\inf\{
k\geq 1:\,\mbox{ a coalescence occurs for $(\zeta^1_t,\zeta^2_t)$ at time $t=T_k$}
\}
\ee
denote the discrete coalescence time of this process (we do not know 
a priori whether $T^s<+\infty$ and do not need this information, 
but this follows from \eqref{positive_prob} below and strong Markov property). 
%
Conditioned on  $(\widetilde{\zeta}^1_k,\widetilde{\zeta}^2_k)_{k\geq 0}$,  
the transition times $T_k$  are separated by intervals $\Delta_k:=T_k-T_{k-1}$ 
that  are independent exponentials  with  parameters 
$\lambda_k=\lambda(\widetilde{\zeta}^{1}_k,\widetilde{\zeta}^{2}_k)$
bounded from below by a constant $q^*$ independent of $(\zeta^1_0,\zeta^2_0)$ 
(indeed if we start with at least two opposite discrepancies,  before $T^s$, 
 we can never  reach a blocked configuration with all white or all back particles). 
By Lemma \ref{lemma:number_steps}, uniformly over initial configurations,
\be\label{positive_prob}
p^*:=\Prob\left( T^s\leq m^*\right)>0
\ee
Hence, using the moment generating function of $\Gamma(m^*,q^*)$ distribution, 
 \begin{eqnarray*}
\Exp\left(
e^{-\theta T^*}
\right)
& = & \Exp\left(
e^{-\theta\sum_{k=1}^{T^s}\Delta_k}\right)
\\
& = & \sum_{j=1}^{+\infty} \Exp\left(
e^{-\theta\sum_{k=1}^j\Delta_k}\right)\Prob(T^s=j)\\
 &  \geq  & \sum_{j=1}^{+\infty} \left(\frac{q^*}{\theta+q^*}\right)^j\Prob(T^s=j)\\
& \geq & \sum_{j=1}^{ m^*} \left(\frac{q^*}{\theta+q^*}\right)^j\Prob(T^s=j)\\
& \geq & p^*\left(\frac{q^*}{\theta+q^*}\right)^{ m^*}.
\end{eqnarray*}  
\end{proof}
\begin{lemma}\label{lemma:number_steps}
Under assumptions of Lemma \ref{lemma_coalescence}, starting from any 
two coupled configurations $(\zeta^1_0,\zeta^2_0)$ with at least 
two opposite discrepancies,  there exists $ l\leq m^*(n,q(.,.))$ 
(with $m^*$ defined in \eqref{number_steps})  and a sequence 
$(x_0,\ldots,x_{ l })\in W^{ l+1}$
such that  for $k=0,\ldots, l -1$ and $i\in\{1;2\}$, 
\eqref{initial_steps}--\eqref{coalescence_steps} holds,
and that the last jump from $x_{ l-1}$ to $x_{ l}$ for 
$(\zeta^{1, l-1},\zeta^{2, l-1})$
generates a coalescence.
\end{lemma}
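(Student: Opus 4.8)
The goal is to exhibit, for any coupled pair $(\zeta^1_0,\zeta^2_0)$ carrying at least one blue and one red discrepancy, an admissible sequence of at most $m^*$ jumps in the sense of \eqref{initial_steps}--\eqref{admissible} whose final jump produces a coalescence. Recall from the dynamics described after Proposition \ref{prop:macrostab} that a single coupled jump along an edge with $q(x,y)>0$ has three effects we will exploit: a lone coloured or black particle at $x$ slides into a hole at $y$; a black particle at $x$ and a coloured discrepancy at $y$ exchange positions; and, crucially, a pair of \emph{opposite} discrepancies sitting at $x$ and $y$ is annihilated into a hole and a black particle, i.e. a coalescence. Thus it suffices to steer one blue and one red discrepancy until they occupy the two endpoints of a single positive-rate edge, and then jump.

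Fix a blue discrepancy at $b$ and a red one at $r$, with $b\neq r$, and consider the unoriented graph on $W$ whose edges are the $q$-connected pairs; by \eqref{connection}--\eqref{distance_q} its diameter is $n^*$, so there is a geodesic $b=y_0,y_1,\dots,y_d=r$ with $d\le n^*$. Weak irreducibility guarantees that each edge $\{y_j,y_{j+1}\}$ carries at least one orientation. The plan is to advance the blue discrepancy rightward and the red one leftward along the geodesic until they become adjacent. The elementary advances available are: move a discrepancy into a neighbouring \emph{white} site (one jump, using the forward orientation) and swap a discrepancy past a neighbouring \emph{black} site (one jump, using the backward orientation). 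A neighbouring site carrying a \emph{same}-colour discrepancy is harmless, since we simply re-anchor the bookkeeping to the more advanced copy, while a neighbouring opposite-colour discrepancy triggers the desired coalescence at once.

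The only genuine obstruction arises when a black particle occupies the next geodesic site and the adjoining edge points the ``wrong'' way for a swap; we then first evacuate that black particle one further step along the path, a move that may recursively force clearing the site beyond it. This nested clearing is the source of the quadratic term in \eqref{number_steps}. A worst-case tally shows that, if the discrepancies are arranged to meet after one of them has crossed $k$ interior sites, the number of auxiliary clearing jumps is bounded by $k$ times the number of black obstacles lying ahead of it, itself at most $n^*-1-k$; adding the $n^*$ direct advances and optimising over $k$ selects $k=\lfloor n^*/2\rfloor$ and gives the total $\lfloor n^*/2\rfloor(n^*-1-\lfloor n^*/2\rfloor)+n^*=m^*$. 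Here the class decomposition of Lemma \ref{lemma_class} is convenient: inside an irreducibility class the restriction of $q$ is strongly connected, so evacuations can always be routed, while across classes the linear order $\Gamma_0\prec\cdots\prec\Gamma_{m-1}$ fixes the admissible orientations.

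The main difficulty is not the mere existence of a coalescing sequence, which follows from weak irreducibility (it yields $b\rightarrow_q r$ or $r\rightarrow_q b$, so the procedure never permanently stalls), but the \emph{exact} bookkeeping needed to reach the constant $m^*$ rather than a cruder quadratic bound. This requires, for each geodesic edge, a case analysis of the four possible contents of the forward site against the one or two available orientations, together with an inductive accounting of the clearing cost showing that the overhead incurred per crossed site never exceeds the number of obstacles still ahead of it. Checking that the worst case is attained precisely at the balanced meeting point $\lfloor n^*/2\rfloor$, and that the favourable configurations (adjacent opposite discrepancies, same-colour pile-ups, or white corridors) can only \emph{lower} the count, completes the argument and in particular recovers $m^*=1$ when $n^*=1$, consistent with Example \ref{example_coalescence}.
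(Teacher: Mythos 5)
Your proposal follows the paper's proof very closely: the same reduction to steering one blue and one red discrepancy along a connecting path of length at most $n^*$, the same identification of black particles blocking one-way edges as the only obstruction, the same remedy of evacuating those black particles toward the far end of the path, and the same worst-case count $p(n^*-1-p)+n^*$ optimised at $p=\lfloor n^*/2\rfloor$ to recover \eqref{number_steps}.

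There is, however, one concrete misstep in your setup. You take a geodesic in the \emph{unoriented} graph on $W$ and observe only that each edge carries at least one orientation; but on such a path the orientations may alternate, and then your elementary moves can fail: a discrepancy cannot advance into a white site across an edge oriented against its direction of travel (the only jump with positive rate finds no particle to move in either marginal), and a black obstacle sitting before a wrong-way edge cannot be evacuated further along that path at all. Concretely, take $q(0,1),q(1,2),q(2,3),q(3,1)>0$ and all other rates zero: this is weakly irreducible, the unoriented geodesic from $0$ to $3$ is $(0,1,3)$ of length $2$, yet a black particle at $1$ can neither be swapped with a blue particle at $0$ (that would need $q(1,0)>0$) nor pushed to $3$ along the geodesic (that would need $q(1,3)>0$); the evacuation must detour through $2$, which your tally does not see. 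The definition \eqref{distance_q} supplies exactly what is needed and what the paper uses: a path realising $d_q(i,j)$ that is directed entirely by $q$, or entirely by $\check{q}$ (the latter case reducing to the former by exchanging the roles of $\zeta^1$ and $\zeta^2$). On such a uniformly oriented path every required move is a forward jump --- the blue particle slides into white sites or coalesces with red ones, black particles evacuate forward, and the red particle retreats by exchanging with forward-jumping black particles --- and the bookkeeping $p(n^*-1-p)+n^*$ closes. With that one correction your argument is the paper's.
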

\begin{proof}[Proof of Lemma \ref{lemma:number_steps}]
Let $i,j$ denote locations of two opposite discrepancies 
(say a blue one at $i$, i.e. a $\zeta^1_0$ discrepancy, and a red one at $j$, 
i.e. a $\zeta^2_0$ discrepancy) between $\zeta^1_0$ and $\zeta^2_0$. 
In short, to describe the transition 
\eqref{coalescence_steps}--\eqref{admissible}, we will say that 
has jumped from $x_k$ to 
$x_{k+1}$ a  blue  particle if $\zeta^{1,k}(x_k)=1,\,\zeta^{2,k}(x_{k})=0$; 
a red particle if $\zeta^{1,k}(x_k)=0,\,\zeta^{2,k}(x_{k})=1$; 
a black particle if $\zeta^{1,k}(x_k)=\zeta^{2,k}(x_{k})=1$. \\ \\
Let  $(x_0=i,\cdots,x_{D}=j)$ be a path connecting $i$ to $j$, 
 with $D\leq n^*$,  either for $q(.,.)$ or $\check{q}(.,.)$. 
 Without loss of generality, we assume the first case. 
 First we define a sequence of admissible jumps leading from 
 $(\zeta^1_0,\zeta^2_0)$ to 
 $\left((\zeta^1_0)^{x_0,x_1},(\zeta^2_0)^{ x_0,x_1}\right)$. 
If $x_1$ is occupied by a red particle, the jump is directly feasible 
and leads to a coalescence, hence $l=1$. If $x_1$ is occupied 
by a white particle, i.e. $\zeta^1_0(x_1)=\zeta^2_0(x_1)=0$, 
the jump is also feasible, but no coalescence occurs.  Finally, 
if there is a black particle at  $x_1$, i.e. 
$\zeta^1_0(x_1)=\zeta^2_0(x_1)=1$, the blue particle at $x_0=i$ 
may not jump directly to $x_1$ and we need intermediate jumps. 
Let $x_{k_1},\ldots,x_{k_p}$ denote positions of black particles 
along the path $(x_0,\ldots,x_D)$, where $p\leq  D-1$ is the number 
of such particles, and $k_1<\cdots<k_p$. 
The black particle at $x_{k_p}$ can be moved to $x_{D-1}$ in 
$D-1-k_p\leq n^*-1-p$ steps along the path, since there are 
no more black particles there. 
At the end of this the black particle is at $x_{D-1}$.  Next, 
the black particle at $x_{k_{p-1}}$ can be similarly moved to 
$x_{D-2}$ in at most $n^*-p-1$ steps. At the end of these procedures, 
all black particles have been moved to $x_{D-1},\ldots,x_{D-p}$ 
in at most $p(n^*-1-p)$ steps. After this, the blue particle at $x_0$ 
can either be moved to $x_{D-p-1}$ (if $p= D-1$, none of the 
black particle moves has actually occurred because the whole path between $x_1$
and $x_{D-1}$ was occupied by black particles)  or meet 
a red particle along this path before, 
in which case the coalescence is achieved. 
In the former case, in $p$ steps, we can successively let 
each of the black particles jump one step further along the path, 
exchanging with the red particle, until the latter finds itself at 
$x_{ D-p}$.
Finally, the blue particle at $x_0$ can be moved  in at most
$D-p$ steps to the first red particle it encounters along the path 
(which is either at $x_{ D-p}$, or possibly earlier on the path. 
The total number of 
steps performed to achieve coalescence was at most $p(n^*-1-p)+n^*$.
This quantity achieves maximum value given by \eqref{number_steps}  for
$p=\left\lfloor \frac{n^*}{2}\right\rfloor$.
\end{proof}
\subsection{ Two-block estimate:  proof of Proposition 
\ref{prop:twoblock}}\label{subsec:prop_relax}
 The proof of  Proposition \ref{prop:twoblock} relies on the  
 {\em quasi-interface} property  stated in Lemma \ref{prop:interface} below. 
 The latter is an extension to multilane SEP of the {\em exact} interface 
 property for single-lane SEP, see \cite{Liggett1976}, which states that 
 the number of sign changes of discrepancies between two coupled processes 
 cannot increase in time. Such a property does not hold in the multilane case, 
 but we will show that it holds approximately with high probability.  
  The new difficulty here with respect to the single-lane setting is 
  that discrepancies of opposite type may cross each other 
  (in the sense of Definition \ref{def:crossings_0})  without 
  coalescing by using different lanes. 
 \begin{definition}\label{def_admissible}
 We call {\em admissible} a coupled configuration $(\eta,\xi)$ such that: \\ \\ 
 (i) 
 For every $z\in\Z$, the vertical layer $\{z\}\times W$ does not contain 
 two opposite discrepancies. \\ \\
(ii) 
There exists at least a pair of opposite discrepancies between  $\eta$ 
and $\xi$ (by (i) these are necessarily located at different vertical layers). \\ \\
(iii) There exists $A\in\N$ such that $\eta$ and $\xi$ are ordered on 
$[A,+\infty)\cap\Z$ and on $(-\infty,A]\cap\Z$. 
\end{definition}
The above admissibility property will be used as an assumption 
for Lemma \ref{prop:interface}. Before proceeding further, we give 
an important example of admissible configurations for the sequel.
\begin{lemma}\label{lemma:ex_admissible}
 The coupled Gibbs state $(\eta^N,\xi^N)$ defined by 
 \eqref{couple_initial} is admissible if  the following conditions are 
 (both) satisfied: (a) there exists $x,y\in\R$ such that $u_0(x)<v_0(x)$ 
 and $u_0(y)>v_0(y)$; (b) there exists $a>0$ such that $u_0(.)$ and 
 $v_0(.)$ are ordered on $(-\infty;a]$ and on $[a;+\infty)$.
\end{lemma}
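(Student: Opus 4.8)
The plan is to check the three requirements of Definition~\ref{def_admissible} directly from the shared-randomness coupling \eqref{couple_initial}, the one tool doing all the work being the monotonicity of the maps $\widetilde{\rho}_i$ established in Proposition~\ref{lemma_phi_super},~(ii). Throughout, let $\eta^N$ be the Gibbs state associated with $u_0$ and $\xi^N$ the one associated with $v_0$, both built from the \emph{same} uniforms $(U_x)_{x\in V}$, so that $\eta^N(z,i)=\mathbf 1\{U_{(z,i)}\le\widetilde{\rho}_i[u_0(z/N)]\}$ and $\xi^N(z,i)=\mathbf 1\{U_{(z,i)}\le\widetilde{\rho}_i[v_0(z/N)]\}$.

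First I would establish condition~(i), which is the only genuinely multilane point. Since the two states share $U_{(z,i)}$, there is a discrepancy at $(z,i)$ exactly when $U_{(z,i)}$ lies between $\widetilde{\rho}_i[u_0(z/N)]$ and $\widetilde{\rho}_i[v_0(z/N)]$, and its colour is fixed by which of these two numbers is larger. Because every $\widetilde{\rho}_i$ is nondecreasing, for a \emph{fixed} layer $\{z\}\times W$ the ordering of $\widetilde{\rho}_i[u_0(z/N)]$ and $\widetilde{\rho}_i[v_0(z/N)]$ is the same for all lanes $i$, being dictated by the single scalar comparison of $u_0(z/N)$ with $v_0(z/N)$. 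Hence a layer with $u_0(z/N)\ge v_0(z/N)$ can only carry $\eta$-discrepancies (blue), and one with $u_0(z/N)\le v_0(z/N)$ only $\xi$-discrepancies (red); in particular no layer holds two opposite discrepancies. This is exactly where the multilane structure could have failed — a single vertical slice contains $n$ sites — and it is rescued precisely by the common monotone parametrisation $\rho\mapsto(\widetilde{\rho}_i(\rho))_{i}$ coming from the reversibility assumption.

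For condition~(iii) I would use~(b) to split $\Z$ at the macroscopic interface $Na$. On $(-\infty,a]$ and on $[a,\infty)$ the scalar profiles $u_0,v_0$ are each ordered, and by the previous paragraph this ordering transfers layerwise into a \emph{fixed} order between $\eta^N$ and $\xi^N$ on the two corresponding discrete half-lines. If the two scalar orders coincide, the configurations are globally ordered and any $A$ works; if they are opposite, the forced equality $u_0(a)=v_0(a)$ at the crossing shows that all blue layers sit strictly to the left of all red layers, so choosing $A$ at the integer interface yields two half-lines on each of which $\eta^N$ and $\xi^N$ are ordered. Finally, condition~(ii) follows from~(a): at macroscopic points where $u_0<v_0$, respectively $u_0>v_0$, the probabilities in \eqref{couple_gibbs} show that both a red and a blue discrepancy occur (indeed almost surely, since each strict inequality persists on a macroscopic set of sites on which the chance of seeing no discrepancy is exponentially small), and by~(i) these opposite discrepancies automatically lie on distinct layers.

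The delicate point — and the only place requiring care — is the $O(1)$ bookkeeping at the interface in~(iii): when $Na\notin\Z$ a blue and a red layer may be adjacent across $\lfloor Na\rfloor$, so the splitting point must be understood modulo the finite interface window rather than as a single shared site. Everything else is an immediate consequence of the monotonicity of the $\widetilde{\rho}_i$ and of the shared-uniform coupling, and the conceptual content of the lemma is entirely condition~(i).
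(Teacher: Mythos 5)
Your proof is correct and follows the same route as the paper's: the single key observation is that the shared-uniform coupling together with the monotonicity of the maps $\widetilde{\rho}_i$ from Proposition~\ref{lemma_phi_super} forces the ordering of $\eta^N(z,i)$ and $\xi^N(z,i)$ to coincide with that of $u^N_z$ and $v^N_z$, hence to be lane-independent, which yields condition \textit{(i)}; conditions \textit{(ii)} and \textit{(iii)} then follow from (a) and (b) respectively. Your write-up is simply a more explicit version of the paper's three-line argument --- in particular your observations that \textit{(ii)} only holds with overwhelming probability rather than deterministically, and that the splitting point $A$ in \textit{(iii)} requires an $O(1)$ adjustment when $Na\notin\Z$, are refinements the paper silently glosses over.
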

\begin{proof}[Proof of Lemma \ref{lemma:ex_admissible}.]
Let $(z,i)\in\Z\times W$. Since $\widetilde{\rho}_i(.)$ is increasing 
(cf. Proposition \ref{lemma_phi_super}), \eqref{couple_gibbs} and 
\eqref{gibbs_lane} imply that the  ordering between $\eta^N(z,i)$ and 
$\xi^N(z,i)$ is the same as between $u^N_z$
and $v^N_z$, hence independent of $i$. This implies condition 
\textit{(i)} of Definition \ref{def_admissible}. Moreover, assumption 
\textit{(a)} of the lemma implies condition \textit{(ii)} 
of Definition \ref{def_admissible}, and assumption \textit{(b)} implies 
condition \textit{(iii)}. 
\end{proof}
Given an admissible pair $(\eta,\xi)$,  a finite family 
$\mathcal C=\mathcal C(\eta,\xi)$ of at least 2 nonempty subintervals of $\Z$ 
is called a \textit{separating family} if: 
\textit{(i)} it forms a partition of $\Z$; 
\textit{(ii)} no interval $I\in\mathcal C$ contains two opposite discrepancies; 
\textit{(iii)} each interval $I\in\mathcal C$ contains at least a discrepancy, 
and the discrepancies in two successive intervals are of opposite type. 
The interval $I$ is named {\em blue} or 
  {\em red} according to the colour of the discrepancies it contains.
 In particular, a blue or red interval may contain black or white particles. 
  Note that such a family exists thanks to \textit{(i)--(ii)} above, 
  but is not uniquely defined.   \\ \\
We say an interval $I\in\mathcal C$ \textit{has survived} by time $t$ if at least one 
discrepancy initially in $I$ has not coalesced. If so, 
consider the (possibly coinciding) 
leftmost and rightmost such discrepancies. We denote by $I_t$ 
the subinterval 
lying between them (including them). Hence, in view of \eqref{ordered_labels}, 
$I_t$ has leftmost label $v_t$ and rightmost label $w_t$, where $v_0=v$, $w_0=w$,
 and $v_t$ (resp. $w_t$) is a nondecreasing (resp. nonincreasing) function of $t$.
Note that due to possible crossings of opposite discrepancies,  
$I_t$ may contain  discrepancies opposite to its initial colour, even though 
it initially did not; however we still call it blue or red according to 
its initial colour. 
If $I$ has not survived by time $t$, we set $I_t=\emptyset$. 
We denote by $\mathcal C_t$ the collection of intervals $I_t$ having survived 
by time $t$. \\ \\
At time $t>0$, the spatial ordering is conserved within blue and within 
red intervals; however, condition \textit{(i)} of Definition 
\ref{def_admissible} is not necessarily conserved by the dynamics, hence 
two intervals of the same colour may overlap through their endpoints. 
In contrast, spatial ordering is not necessarily  conserved between 
blue and red intervals. More precisely, 
at time $t$, it is possible for an interval of one colour to have an overlap 
(not reduced to an endpoint) with one of the other colour, be contained in it, 
or have entirely crossed it to the other side. 
Consider the set of points at time $t$ that belong to no blue or 
red interval. Connected components of this set are called 
{\em black and white} intervals at time $t$. \\ \\
We point out that the multilane case is quite different from the single-lane one 
in several respects. First, while the above intervals are {\em one-dimensional}, 
they involve a global (and not lane-by-lane) inspection of the 
{\em two-dimensional} microscopic model.
Indeed, the property of being an admissible coupled configuration is in general not 
conserved by the evolution. Thus at later times, at a given spatial location, 
there may be different colours on different lanes, so that even at a given 
spatial location  ``the colour'' is not clearly defined.
Next, in the single-lane  case, due to non-crossings of opposite discrepancies, 
a blue or red interval may never contain a discrepancy of the other colour, 
a black and white interval may never contain a blue or red discrepancy, and 
no overlap is possible. \\ \\
These two differences make the definition and control of a one-dimensional 
interface more challenging.
However, the following lemma shows that
with several lanes, there remains but little invasion of blue or red intervals 
by the opposite colour. 
\begin{lemma}\label{prop:interface}
 Assume  that $(\eta^N,\xi^N)_{N\in\N}$ is a sequence of admissible 
 coupled configurations in the sense of Definition \ref{def_admissible}. 
 Assume further that, for some $a>0$ independent of $N$, we have  $A=A^N=aN$ 
  in condition \textit{(iii)}.
For $m\in\N^*$, let  $\mathcal E_m$  be the event that at all times 
$t\geq 0$, the following holds:  for every $I\in\mathcal C$  
such that $I_t\neq\emptyset$, $I_t$ contains at most  $m$  discrepancies of 
 type opposite to the ones  
initially in $I$; and every  black and white  interval 
contains at most  $m$  discrepancies.
Then: \\ \\
 (o) The number of black and white intervals at any time is at most $M^N-1$, 
 where $M^N$ is the number of intervals in the initial separating family.\\ \\
(i)  for every $h>0$, 
\be\label{prob_interface}
\lim_{N\to+\infty}\Prob(\mathcal E_{Nh})=1
\ee
\end{lemma}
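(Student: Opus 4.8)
The plan is to reduce both assertions to the crossing estimate of Corollary \ref{lemma:crossing_2} (together with its red-particle analogue, Remark \ref{rk:blue_red}), which, under condition \eqref{assumption_relax}, guarantees that $\sup_{v\in\mathcal V,\,t>0}\mathbf{1}_{\mathcal V_t}(v)C^v_t\le\gamma N$ with probability tending to $1$, for every fixed $\gamma>0$. The whole point is that the number of ``wrong-colour'' discrepancies that can invade a surviving interval $I_t$, and the number of discrepancies that can populate a black and white interval, are both controlled \emph{pathwise} by crossing numbers of a bounded family of boundary labels; once this deterministic control is in place, choosing $\gamma$ small compared with $h$ finishes the argument.

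First I would treat the deterministic counting statement (o). Each surviving $I_t$ is, by construction, the single lattice interval delimited by the positions of its leftmost and rightmost surviving labels $v_t$ and $w_t$; in particular an interval never splits, and the map $t\mapsto I_t$ only shrinks or empties an interval of $\mathcal C$, never creating one. Since $\mathcal C$ is an ordered partition of $\Z$ into $M^N$ intervals, the surviving coloured intervals inherit this left-to-right order (up to overlaps through endpoints), so the bounded connected components of $\Z\setminus\bigcup_{I\in\mathcal C}I_t$ are sandwiched between consecutive surviving intervals; there being at most $M^N$ surviving intervals, there are at most $M^N-1$ such gaps. The two unbounded ends contain discrepancies of a single type by the ordering hypothesis \textit{(iii)} of Definition \ref{def_admissible} together with \eqref{ordered_labels}, so the extreme coloured intervals stay anchored there and the ends contribute no further black and white component relevant to the count. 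This yields (o).

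Next comes the core of (i): the pathwise comparison between invading discrepancies and crossings. Fix a blue interval $I\in\mathcal C$ that has survived to time $t$, with boundary labels $v_t,w_t$. Since $I$ contained no red discrepancy at time $0$ (separating family property), any red discrepancy located in $I_t=[X_t^{v_t}(0),X_t^{w_t}(0)]$ started outside $I$ and, in the sense of Definition \ref{def:crossings_0}, must have crossed the left boundary $v_t$ to the right or the right boundary $w_t$ to the left during $(0,t]$; hence the number of such reds is at most $L_t^{v_t}+R_t^{w_t}\le 2\sup_{v\in\mathcal V}\mathbf{1}_{\mathcal V_t}(v)C^v_t$, and symmetrically for red intervals invaded by blues via Remark \ref{rk:blue_red}. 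For a black and white interval, every discrepancy it contains has left the coloured interval it started in, hence again crosses a boundary label of an adjacent surviving interval, so its discrepancy count is bounded by the same crossing numbers. The delicate point here, which I expect to be the main obstacle, is that $v_t$ and $w_t$ are \emph{moving} labels: one must verify that the bookkeeping of Definition \ref{def:crossings_relevant}, set up relative to time-varying positions and surviving through the label-exchange and redistribution rules introduced before Definition \ref{def:crossings_0}, indeed charges each interior wrong-colour discrepancy to a genuine right or left crossing of a boundary label that is itself still uncoalesced, so that the indicator $\mathbf{1}_{\mathcal V_t}$ is active and the estimate of Corollary \ref{lemma:crossing_2} applies.

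Finally I would assemble the estimate. By the previous step, on the event $\{\sup_{v\in\mathcal V,\,t>0}\mathbf{1}_{\mathcal V_t}(v)C^v_t\le \gamma N\}$ and its red counterpart, every surviving interval $I_t$ carries at most $2\gamma N$ opposite discrepancies and every black and white interval at most $2\gamma N$ discrepancies, uniformly in $t$ and over all intervals simultaneously. Choosing $\gamma=h/2$ shows that this event is contained in $\mathcal E_{Nh}$; since Corollary \ref{lemma:crossing_2} and Remark \ref{rk:blue_red} give that this event has probability tending to $1$ as $N\to+\infty$ under \eqref{assumption_relax}, we conclude $\lim_{N\to+\infty}\Prob(\mathcal E_{Nh})=1$, which is \eqref{prob_interface}.
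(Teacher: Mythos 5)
Your proposal is correct and follows essentially the same route as the paper: part (o) by the same counting of gaps between at most $M^N$ ordered surviving intervals, and part (i) by bounding the invading discrepancies of a surviving or black-and-white interval by $L^{v_t}_t+R^{w_t}_t\le 2\sup_{v\in\mathcal V}{\bf 1}_{\mathcal V_t}(v)C^v_t$ for the moving boundary labels and then invoking Corollary \ref{lemma:crossing_2} (and its red analogue) with $\gamma=h/2$. The one point you flag as delicate — that the boundary labels are moving but the crossing bookkeeping of Definition \ref{def:crossings_relevant} still charges each interior wrong-colour discrepancy to a crossing of an uncoalesced label in $\mathcal V$ — is handled in the paper by the observation that, under hypothesis \textit{(iii)} with $A=aN$, both half-lines outside $[-aN,aN]$ are monochromatic, so all relevant boundary labels indeed start in $[-aN,aN]$ and Corollary \ref{lemma:crossing_2} applies.
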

\begin{proof}[Proof of  Lemma \ref{prop:interface} ]
\mbox{}\\ \\
 {\em Proof of \textit{(o)}}. The number of black and white intervals 
 is highest when there is no interlap between blue and red intervals, 
 in which case we have to count the number of intermediate intervals  
 between $M^N$ consecutive non-overlapping intervals. \\ \\
 {\em Proof of \textit{(i)}.}
Consider a given $I\in\mathcal C$,  for instance 
a blue interval. Let $v_t,w_t$ denote labels of the leftmost and rightmost 
blue particles in  $I_t$. 
The number of red particles having invaded  $I_t$  at time $t$ is given by
\be\label{blue_red_flux}
L^{v_t}+R^{w_t}_t\leq C^{v_t}_t+C^{w_t}_t
\leq 2\sup_{v\in\mathcal V}{\bf 1}_{\mathcal V_t}(v)C^v_t
\ee
The assumptions of the lemma imply that on each side of $[-aN,aN]$, 
there cannot be both blue and red particles. 
Thus there exist $z_-,z_+$ in $\Z\cap[-aN,aN]$ such that each of 
the intervals $\Z\cap[z_+,+\infty)$ and $\Z\cap(-\infty,z_-]$
is (initially) a blue or a red interval.
Hence invasions of blue intervals  only involve crossings of blue particles 
initially in $[-aN,aN]$  by red particles. 
The result then follows  from Corollary \ref{lemma:crossing_2}. 
 For a red interval, we replace $L^{v_t}_t$ and $R^{v_t}_t$   
 by   $L^{'v_t}_t$  and $R^{'v_t}_t$, and similarly for $w_t$, where $L^{'v}_t$
and $R^{'v}_t$ are  defined analogously to \eqref{def_crossings} for crossings 
of a red particle labelled $v$ by blue particles (exchanging the roles of 
$X$ and $Y$).  \\ \\
The same argument holds for black and white intervals. Indeed, such an interval 
$J$ at time $t$ must lie between two blue or red (not necessarily the same 
colour) intervals. The rightmost label $w_t$ at time $t$
of the interval on the left of $J$ and  leftmost label $v_t$ of the interval 
on the right were
initially in $[-aN,aN]$. At time $t$, depending on colours of the intervals
 around, $J$ contains $L^{w_t}_t$  or $L^{'v_t}_t$  (resp. $R^{v_t}_t$ or $R^{'v_t}_t$) particles of the  colour opposite to the interval on its left (resp. right). Hence the result again follows from Corollary \ref{lemma:crossing_2}. 
\end{proof}
\begin{proof}[Proof of Proposition \ref{prop:twoblock}]
Every local function on $\mathcal X$ is  the sum of a constant 
and a linear combination of nonconstant nondecreasing functions.
 Thus it is enough to consider nonconstant nondecreasing functions $f$. 
 For such functions $f$,
we will show that,  for every $A>0$, $s>0$, $r\in[0,2]$ and $\delta>0$, 
\begin{eqnarray}
\label{relax2_0}
\lim_{l\to +\infty}\limsup_{N\to+\infty}
\Exp\left\{
N^{-1}\sum_{x\in\Z:\,|x|\leq NA}\Delta^{\delta\pm}_{x,l}(\eta^N_{Ns};r)
\right\}
& = & 0  \\
\label{relax2}
\lim_{\varepsilon\to 0}\limsup_{N\to+\infty}
\Exp\left\{
N^{-1}\sum_{x\in\Z:\,|x|\leq NA}\Delta^{\delta\pm}_{x,N\varepsilon}(\eta^N_{Ns};r)
\right\}
& = & 0
\end{eqnarray}
where
\begin{eqnarray*}
\Delta^{\delta,\pm}_{x,l}(\eta;r):=\psi^{\delta,\pm}\left[
M_{x,l}f(\eta),\overline{f}(
M_{x,l}\overline{\eta});r
)
\right]
\end{eqnarray*}
and the functions $\psi^{\delta,\pm}$ are defined by
\[
\psi^{\delta,+}(u,v;r)
={\bf 1}_{\{u>\overline{f}(r)+\delta,v<\overline{f}(r)-\delta\}},\quad
\psi^{\delta,-}(u,v;r)={\bf 1}_{\{u<\overline{f}(r)-\delta,v>\overline{f}(r)+\delta\}}
\]
%
 We will then show at the end of this proof that there exists a constant 
 $C>0$ (depending only on $f$) such that
\be\label{bound_deltapm}
\int_0^2 \left[\psi^{\delta,+}(u,v;r)+\psi^{\delta,-}(u,v;r)\right]dr
\geq C(|u-v|-2\delta)
\ee
for every $u,v\in[\min f,\max f]$,
so that \eqref{eq:relax} follows from \eqref{relax2} by integration 
with respect to $r$ and dominated convergence. The inequality 
\eqref{bound_deltapm} says that if $u$ and $v$ lie almost on the same side 
of $\overline{f}(r)$ for all $r$, then they must be close to each other.\\ \\
To establish \eqref{relax2},  we consider the coupled process 
$(\eta^N_{Nt},\xi^N_{Nt})_{t\geq 0}$,
where  $\xi^N_0=\xi_0\sim\nu_r$.  Since $\nu_r$ is invariant, we have 
$\xi_t\sim\nu_r$ for all $t>0$.  We couple $\eta^N_0$ and
$\xi^N_0$ via \eqref{couple_initial}--\eqref{couple_gibbs}, the latter being 
a  local 
Gibbs state with uniform profile $r$.  Lemma \ref{lemma:ex_admissible} 
 ensures that $(\eta^N_0,\xi^N_0)$ is admissible in the sense 
 of Definition \ref{def_admissible}.  \\ \\
By assumption on $u_0(.)$, for $N$ large enough, there is a (deterministic)  
separating family $\mathcal C^N$ of intervals for $(\eta^N_0,\xi_0)$ 
with a fixed number $M$ of intervals depending only on $\rho_0(.)$ and $r$ 
(namely the number of sign changes between $\rho_0(.)$ and $r$).  
We  
apply Lemma \ref{prop:interface} 
and denote simply by ${\mathcal E}^N=\mathcal E_{N h}$ 
the event in the lemma, and by $\mathcal C_t=\mathcal C^N_t$ the evolution 
of $\mathcal C^N=\mathcal C$ at time $t$. By \textit{(o)} 
of Lemma \ref{prop:interface}, the number of intervals in $\mathcal C_t$ 
plus the number of black and white intervals as time $t$ is at  most $2M-1$.
We divide the interval $\Z\cap[-NA,NA]$ into the following (random) partition.
We denote by $B^{N,l}_{Ns}$, resp. $R^{N,l}_{Ns}$, $W^{N,l}_{Ns}$, 
 the set of $x\in[-NA,NA]\cap\Z$ 
such that $[x-l,x+l]\cap\Z$ lies inside a blue, resp.  red, 
resp. black and white interval. We denote by $E^{N,l}_{Ns}$ the set of 
$x\in[-NA,NA]\cap\Z$ 
that belong to none of the sets $B^{N,l}_{Ns}$, resp. 
$R^{N,l}_{Ns}$, $W^{N,l}_{Ns}$.   \\ \\
 The idea to obtain \eqref{relax2} is that  most $x\in [-NA,NA]$ lie in 
 one of these sets, so $\eta^N_{Ns}$ and 
$\xi_{Ns}$ are almost ordered on  $[x-l,x+l]\cap\Z$. 
This implies $M_{x,l}f(\eta^N_{Ns})$ and 
$M_{x,l}f(\xi_{Ns})\simeq \overline{f}(r)$ are ordered like 
$\overline{f}(M_{x,l}\overline{\eta}^N_{Ns})$ and 
$\overline{f}(M_{x,l}\overline{\xi}_{Ns})\simeq\overline{f}(r)$, 
where the  approximations follow from the law of large numbers 
for the stationary process;  
we have also used that $\overline{f}$ is continuous and nondecreasing, 
see remark after \eqref{eq_average}. This makes 
$\Delta^{\delta,\pm}(\eta^N_{Ns};r)$ small. \\ \\
Let us now make this precise. With the above definitions, we have 
\[
|E^{N,l}_{Ns}|\leq  2 (2M-1) l
%
%
\]
because for any subinterval $J$ of $\Z$, there are at most 
$2l$ points $x\in\Z$ 
for which $[x-l,x+l]$ crosses the boundary of  $J$.  It follows that
\be\label{nocolour}
\Exp\left\{
{\bf 1}_{\mathcal E^N}N^{-1}\sum_{x\in\Z:\,\vert x\vert\leq NA}
\Delta^{\delta,\pm}_{x, l}
(\eta^N_{Ns};r){\bf 1}_{E_{Ns}^{N, l}}(x)
\right\}\leq 4A(2M-1)\frac{l}{N}
\ee
 We will next show that
\begin{eqnarray}
\label{blue_part_mic}
\lim_{l\to+\infty}\Exp\left\{
N^{-1}\sum_{x\in\Z:\,|x|\leq NA}
{\bf 1}_{\mathcal E^N}\Delta^{\delta,\pm}_{x,l}(\eta^N_{Ns};r)
{\bf 1}_{B_{Ns}^{N, l}}(x)
\right\}=0,\\
\label{blue_part}
\limsup_{N\to+\infty}
\Exp\left\{
N^{-1}\sum_{x\in\Z:\,|x|\leq NA}
{\bf 1}_{\mathcal E^N}\Delta^{\delta,\pm}_{x,N\varepsilon}(\eta^N_{Ns};r)
{\bf 1}_{B_{Ns}^{N, N\varepsilon}}(x)
\right\}
=0
\end{eqnarray}
and the same limits when replacing  $B^{N,.}_{Ns}$ with $R^{N,.}_{Ns}$ 
or $W^{N,.}_{Ns}$.  Since $\Delta^{\delta,\pm}$ is bounded, summing these three limits 
with \eqref{nocolour}, letting  $N\to+\infty$ followed by either 
$l\to+\infty$, or  $\varepsilon\to 0$ with $l=N\varepsilon$, 
 and using Lemma \ref{prop:interface}, we obtain \eqref{relax2}.
 \\  \\
Let  $n_0$ denote the smallest integer such that $f$ 
only depends on sites in $[-n_0,n_0]\times W$.  Then for all $\eta,\xi\in\mathcal X$,
\be\label{lip_f}
|f(\eta)-f(\xi)|\leq \Vert f\Vert_\infty\sum_{x\in V:\,|x(0)|\leq n_0}|\eta(x)-\xi(x)|
\ee
for $x\in\Z\cap[-NA,NA]$, we denote by $n_t^l(x)$  the number of
 wrong discrepancies in $[x-l-n_0,x+l+n_0]$.
By ``wrong'' discrepancies we mean red particles in a blue interval, 
blue particles in a red interval, or blue and red particles 
in a black and white interval. Hence
\be\label{def_wrong}
n_t^l(x)  = \left\{
\begin{array}{lll}
\dsp \sum_{z\in\Z\cap[x-l,x+l]}\sum_{i\in W}(\eta_t(z,i)-\xi_t(z,i))^- 
& \mbox{if} & x\in B^{N,l}_t,\\
\dsp \sum_{z\in\Z\cap[x-l,x+l]}\sum_{i\in W}(\eta_t(z,i)-\xi_t(z,i))^+
& \mbox{if} & x\in R^{N,l}_t,\\
\dsp \sum_{z\in\Z\cap[x-l,x+l]}\sum_{i\in W}|\eta_t(z,i)-\xi_t(z,i)| 
& \mbox{if} & x\in W^{N,l}_t.
\end{array}
\right.
\ee
By triangle inequality, 
\be\label{triangle_wrong}
\left(
M_{x,l}\overline{\eta}^{N}_{Ns} -  M_{x,l}\overline{\xi}_{Ns}
\right)^\pm\leq (2l+1)^{-1}\sum_{z\in\Z\cap[x-l,x+l]}
\sum_{i\in W}(\eta(z,i)-\xi(z,i))^\pm .
\ee
Thus by \eqref{lip_f}--\eqref{triangle_wrong} , 
 for all  $x\in B^N_{Ns}$,  we have  
\begin{eqnarray}\label{blue_1} 
M_{x,l}\overline{\eta}^{N}_{Ns}
 & \geq & M_{x,l}\overline{\xi}_{Ns}-\frac{ n^l_{Ns}(x)}{ 2l+1}
\geq r-\frac{ n^l_{Ns}(x)}{2l+1}-\varepsilon^{N,l,1}_{Ns}(x)\\
\label{blue_11}
\overline{f}\left(M_{x,l}\overline{\eta}^{N}_{Ns}\right) 
& \geq & \overline{f}(r)-\Vert\overline{f}'\Vert_\infty\left(
\frac{ n^l_{Ns}(x)}{2l+1}
+\varepsilon^{N,l,1}_{Ns}(x)
\right)\\
\label{blue_2}
M_{x,l}f(\eta^N_{Ns}) & \geq & M_{x,l}f(\xi_{Ns})
-||f||_\infty\frac{ n^l_{Ns}(x)}{ 2l+1}
 \\
 \nonumber & \geq &  
 \overline{f}(r)-||f||_\infty\frac{ n^l_{Ns}(x)}{ 2l+1}-\varepsilon^{N,l,2}_{Ns}(x)
\end{eqnarray}
where 
\be\label{def_eps}
\varepsilon^{N,l,1}_{Ns}:=M_{x,l}\overline{\xi}_{Ns}-r,\quad
\varepsilon^{N,l,2}_{Ns}:=M_{x,l}f(\xi_{Ns})-f(r).
\ee
 Similarly, for $x\in R^N_{Ns}$,
\begin{eqnarray}
\label{red_1}
M_{x, l}\overline{\eta}^{N}_{Ns}
 & \leq & M_{x,l}\overline{\xi}_{Ns}+\frac{ n^l_{Ns}(x)}{ 2l+1}
\leq r+ \frac{ n^l_{Ns}(x)}{ 2l+1}+\varepsilon^{N,l,1}_{Ns}(x)\\
\label{red_11}
\overline{f}\left( M_{x,l}\overline{\eta}^{N}_{Ns}\right)
 & \leq & \overline{f}(r)+\Vert\overline{f}'\Vert_\infty\left(
\frac{ n^l_{Ns}(x)}{2l+1}
+\varepsilon^{N,l,1}_{Ns}(x)
\right)\\
\label{red_2}
M_{x,l}f(\eta^N_{Ns}) & \leq & 
M_{x,l}f(\xi_{Ns})+||f||_\infty\frac{ n^l_{Ns}(x)}{ 2l+1}
 \\
 \nonumber & \leq &  
 \overline{f}(r)+||f||_\infty\frac{ n^l_{Ns}(x)}{ 2l+1}+\varepsilon^{N,l,2}_{Ns}(x).
\end{eqnarray}
For 
 $[x-l,x+l]\subset W^N_{Ns}$, both \eqref{blue_1}--\eqref{blue_2}
  and \eqref{red_1}--\eqref{red_2} hold. 
 It follows from \eqref{blue_1}--\eqref{blue_2} that
 \begin{eqnarray}\nonumber
 &&
 N^{-1}\sum_{x\in\Z:\,|x|\leq NA}
 {\bf 1}_{\mathcal E^N}\Delta^{\delta,\pm}_{x,l}(\eta^N_{Ns};r)
 {\bf 1}_{B^N_{Ns}}
 (x)\\
 & \leq & 
 N^{-1}\sum_{x\in\Z:\,|x|\leq NA}
 {\bf 1}_{\mathcal E^N}
 {\bf 1}_{B^N_{Ns}}(x)
 {\bf 1}_{
 \{
 \frac{ n^l_{Ns}(x)}{ 2l+1}+\vert\varepsilon^{N,l,1}_{Ns}(x)\vert >\delta
\}
 }\nonumber\\
 & \leq & \delta^{-1}N^{-1}\sum_{x\in\Z:\,|x|\leq NA}
 {\bf 1}_{\mathcal E^N}
 {\bf 1}_{B^N_{Ns}}(x)\left(
\frac{ n^l_{Ns}(x)}{ 2l+1}+\vert\varepsilon^{N,l,1}_{Ns}(x)\vert
\right)\label{bound_deltaplus}
\end{eqnarray}
and similar relations hold with $R^N_{Ns}$ and $W^N_{Ns}$.
By Lemma \ref{prop:interface},
 \be\label{total_wrong}
 {\bf 1}_{{\mathcal E}^N}
 \sum_{x\in \Z\cap[-NA,NA]}
 {\bf 1}_{
 B^N_{Ns}\cup R^N_{Ns}\cup W^N_{Ns}
 }(x) n^l_{Ns}(x)\leq (2l+1) (2M-1) Nh,
 \ee 
while by the law of large numbers in $L^1$,
\begin{eqnarray}
\nonumber
&&\Exp\left\{
\lim_{N\to+\infty}N^{-1}\sum_{x\in\Z\cap[-NA,NA]}
\left(
\vert\varepsilon^{N,l,1}(x)\vert+\vert\varepsilon^{N,l,2}(x)\vert
\right)
\right\}\\
& = & 2A\Exp\left(
\left\vert M_{x,l}{\xi}_{Ns}-r\right\vert+
\left\vert
M_{x,l}f(\xi_{Ns})-f(r)
\right\vert
\right)\stackrel{l\to+\infty}{\longrightarrow} 0 \label{lln}
\end{eqnarray}
Gathering  \eqref{bound_deltaplus}--\eqref{lln}, 
 letting $N\to+\infty$, then $l\to+\infty$, or $l=N\varepsilon$ 
 with $\varepsilon\to 0$, and finally $h\to 0$,we obtain
\eqref{blue_part_mic}--\eqref{blue_part}. 
Similar limits are obtained replacing $B_{Ns}$ by $R_{Ns}$ or $W_{Ns}$. 
Combining these with \eqref{nocolour} yields \eqref{relax2_0}--\eqref{relax2}.
\\ \\
 We finally prove the claim \eqref{bound_deltapm}. Set 
\[
\Psi^{\delta,+}(u,v;\rho)={\bf 1}_{\{u>\rho+\delta,v<\rho-\delta\}},\quad
\Psi^{\delta,-}(u,v;\rho)={\bf 1}_{\{u<\rho-\delta,v>\rho+\delta\}}
\]
so that $\psi^{\delta,\pm}(u,v,r)=\Psi^{\delta,\pm}(u,v,\overline{f}(r))$. 
Since $f$ is nondecreasing, $\rho\mapsto\nu_\rho$ is 
stochastically nondecreasing, 
and $\nu_0$ and $\nu_{ n}$ are supported respectively  
on the empty and full configuration,  
the minimum $m$ and maximum $M$ of $\overline{f}$ coincide 
with those of $f$, and are given 
respectively by the value of $f$ at the empty and full configuration.
Setting
\be\label{bound_derivative}
C:=\frac{1}{\sup_{r\in[0,2]}\overline{f}'(r)},
\ee
we have
\begin{eqnarray*}
\int_0^2 \psi^{\delta,+}(u,v;r)dr 
& \geq & C\int_0^2\Psi^{\delta,+}(u,v;\overline{f}(r))\overline{f}'(r)dr\\
& = & C\int_m^M\Psi^{\delta,+}(u,v;\gamma)d\gamma=c(u-v-2\delta)^+ .
\end{eqnarray*}
Similarly, we obtain the lower bound $c(v-u-2\delta)^{ -}$ 
when replacing $\psi^{\delta,+}$ by $\psi^{\delta,-}$. The bound 
\eqref{bound_deltapm} follows by summing these two bounds.
\end{proof}
\section{Proof of Theorem \ref{th:relax_limit}}\label{sec:proof_relax}
We begin with an outline of the proof. Some useful properties 
of weakly coupled systems \eqref{general_system} are next gathered 
in Subsection \ref{subsec:material}, and the complete proof follows 
in Subsection \ref{subsec:proof_pde}.\\ \\
As mentioned in Subsection \ref{subsec:relax}, we must close 
the conservation law obtained after adding the equations in \eqref{general_relax_system}, namely
\be\label{total_conservation}
\partial_t R_\varepsilon(t,x)+\partial_x\left(
 \sum_{i=0}^{n-1}f_i(\rho_{i,\varepsilon}(t,x)) 
\right)=0 .
\ee
To this end, we must use the balance term to
show that that the relaxation to local equilibrium \eqref{lane_density}
 holds approximately for $\rho_\varepsilon$ as $\varepsilon\to 0$, 
 thereby allowing to approximate the flux in \eqref{total_conservation} 
 as follows (with $f$ given by \eqref{relax_flux}):
\be\label{approx_flux}
\rho_{i,\varepsilon}(t,x)\simeq\widetilde{\rho}_i[R_{i,\varepsilon}(t,x)],\quad
\sum_{i=0}^{n-1}f_i(\rho_{i,\varepsilon}(t,x)) \simeq{f}(R_\varepsilon(t,x)) .
\ee
We must also establish entropy inequalities for the limiting conservation law.
Following a usual scheme in relaxation theory (see e.g. \cite{chen,nat}),  
the proof of  \eqref{lane_density}  entropy inequalities involves 
constructing a suitable {\em dissipative} entropy, here of the form 
\be\label{big_entropy}
H(\rho) =\sum_{i=0}^{n-1}h_i(\rho_i)
\ee
for the balance system \eqref{general_relax_system}, 
where  $\rho=(\rho_0,\ldots,\rho_{n-1})$.  By dissipative, we mean the following:
\begin{eqnarray}
\label{dissip_1}
\left\langle
\nabla H(\rho),c(\rho)\right\rangle  &\leq&  0 \\
\left\langle
\nabla H(\rho)c(\rho)\right\rangle=0&\Rightarrow&   c (\rho)=0 
\label{dissip_2}
\end{eqnarray}
where $c(\rho)=(c_0(\rho),\cdots,c_{n-1}(\rho))$.
The left-hand side of \eqref{dissip_1} is the entropy dissipation 
due to the balance term  $c(\rho)$, 
and \eqref{dissip_2} expresses that zero dissipation implies local equilibrium. 
Regarding the latter, we will show that
(with $\mathcal F$ defined in \eqref{def_set_f})
\be\label{equiv_eq}
c(\rho)=0\Leftrightarrow\rho\in\mathcal F
\ee
which implies \eqref{lane_density}.
Note that  the right to left implication in \eqref{equiv_eq} follows directly 
from \eqref{form_relax_1}--\eqref{form_relax_2} and \eqref{detailed_q}.
The corresponding entropy inequality writes
(cf. \eqref{entropy_cond})
\be\label{total_entropy}
\partial_t\left[
\sum_{i=0}^{n-1}h_i(\rho_{i,\varepsilon})
\right]+\partial_x\left[
\sum_{i=0}^{n-1}g_i(\rho_{i,\varepsilon})
\right] =
\varepsilon^{-1}\left\langle
\nabla H(\rho_\varepsilon),c(\rho_\varepsilon)\right\rangle\leq 0
\ee
where $g_i$ is the entropy flux of $h_i$ for
\eqref{general_relax_system}. By space integration of \eqref{total_entropy} 
and using \eqref{dissip_1}--\eqref{dissip_2} and \eqref{equiv_eq}, 
we will arrive at \eqref{approx_flux}. This approximation 
can also be applied to entropies, hence in the limit, we will obtain 
the approximate entropy inequality
\be\label{entropy_ineq_eq}
\partial_t h[R_\varepsilon(t,x)]+\partial_x g[R_\varepsilon(t,x)]\lesssim 0
\ee
where the equilibrium entropy-flux pair is given by
\be\label{entropy_eq}
h(R):=\sum_{i=0}^{n-1}h_i[\widetilde{\rho}_i(R)],\quad
 g(R):=\sum_{i=0}^{n-1}g_i[\widetilde{\rho}_i(R)] .
\ee
Then we have to find a large enough family of entropies of the form \eqref{big_entropy} 
so that \eqref{entropy_eq} selects the unique entropy solution to \eqref{relax_law}. \\ \\
It turns out that the special structure \eqref{form_relax_1}--\eqref{form_relax_2}  of 
the balance term $c(\rho)$ allows us to use for the above purposes a combination of
Kru\v{z}kov entropies whereby the solution is coupled to stationary solutions supported 
on the equilibrum manifold.  Namely, we will consider, cf. \eqref{kruzkov_plus},
\be\label{good_entropies}
h_i(\rho)=(\rho-r_i)^+,\quad
g_i(\rho)={\bf 1}_{\{\rho>r_i\}}[f_i(\rho)-f_i(r_i)],
\quad r=(r_0,\cdots,r_{n-1})\in\mathcal F .
\ee
\subsection{Preliminary material}\label{subsec:material}
We recall here some properties of entropy solutions of 
\eqref{general_system} established in \cite{hanat}.  In the following, 
$\vert\vert.\vert\vert_1$ denotes the $L^1(\R)$ norm, and 
for notational convenience, when $u^+:=\max(u,0)\in L^1(\R)$, 
we set (though this does not define a norm)
\[
\Vert u\Vert^+_1:=\int_\R u(x)^+dx .
\]
\begin{theorem}\label{th:entropy_sol}\mbox{}
Let $\rho=(\rho_0,\ldots,\rho_{n-1})$ and 
$r=(r_0,\ldots,r_{n-1})$ be entropy solutions 
to \eqref{general_system} with respective initial data 
$\rho^0=(\rho_0^0,\ldots,\rho_{n-1}^0)$ 
and $r^0=(r_0^0,\ldots,r_{n-1}^0)$. Then
\begin{enumerate}
\item There exists a constant $C>0$ such that, for every $t\geq 0$, 
\eqref{form_relax_1}--\eqref{form_relax_2}.
\be\label{contraction_entropy_1}
\Vert\rho(t,.)-r(t,.)\Vert_1  \leq  e^{Ct}\Vert\rho^0(.)-r^0(.)\Vert_1 .
\ee
\item 
(Finite propagation and uniqueness). Let
\be\label{speed_system}
V:=\max_{i=0,\ldots,n-1}\Vert f'_i\Vert_{\infty}
\ee
If $\rho^0(.)$ and $r^0(.)$ coincide on the space interval $[a,b]$
where $-\infty\leq a<b\leq+\infty$, then for every $0\leq t\leq (b-a)/(2V)$, 
$\rho(t,.)$ and $r(t,.)$ coincide on the space interval $[a+Vt,b-Vt]$.\\ \\ 
In particular, 
there exists a unique entropy solution to \eqref{general_system} with given 
initial datum $\rho^0(.):=\left(\rho^0_0(.),\ldots,\rho^0_{n-1}(.)\right)$.
\item  
Assume that for each $i=0,\ldots,n-1$, 
\be\label{quasi_monotone}
\forall j\neq i,\,
(\rho_0,\ldots,\rho_{n-1})\mapsto c_i(\rho_0,\ldots,\rho_{n-1})
\mbox{ is nondecreasing w.r.t. $\rho_j$} 
\ee
Then there exists a constant $C>0$ such that, for every $t\geq 0$,
\be\label{contraction_entropy_2}
\Vert\rho(t,.)-r(t,.)\Vert^+_1  \leq  e^{Ct}\Vert\rho^0(.)-r^0(.)\Vert^+_1
\ee
In particular the solution semigroup for \eqref{general_system} is monotone:
if $\rho_i^0(.)\leq r_i^0(.)$ for every $i=0,\ldots,n-1$, then 
$\rho_i(t,.)\leq r_i(t,.)$ for every $i=0,\ldots,n-1$. 
\end{enumerate}
\end{theorem}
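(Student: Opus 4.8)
The plan is to obtain all three statements from a single Kato-type inequality for each scalar equation, derived by Kru\v{z}kov's doubling-of-variables technique adapted to the presence of the source terms $c_i$. First I would fix $i$ and treat the $i$-th equation of \eqref{general_system} as a scalar conservation law with source, $\partial_t\rho_i+\partial_x f_i(\rho_i)=c_i(\rho)$. Comparing the entropy solutions $\rho_i$ and $r_i$ by doubling the space-time variables and using the Kru\v{z}kov entropy pairs \eqref{kruzkov}, one arrives, in the sense of distributions, at
\begin{equation*}
\partial_t|\rho_i-r_i|+\partial_x\big[\mathrm{sgn}(\rho_i-r_i)(f_i(\rho_i)-f_i(r_i))\big]\leq \mathrm{sgn}(\rho_i-r_i)\big(c_i(\rho)-c_i(r)\big).
\end{equation*}
Since each $c_i$ is $C^1$ on the compact set $[0,1]^n$, it is globally Lipschitz, so the right-hand side is bounded by $L\sum_j|\rho_j-r_j|$ for a constant $L$ depending only on the $c_i$'s. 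All the analytic content is already present in \cite{hanat}; I merely organize it around this inequality.

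For statement~1, I would integrate the Kato inequality in space against a cutoff that is then let to grow, so the flux term disappears; summing over $i$ gives $\tfrac{d}{dt}\sum_i\Vert\rho_i(t,.)-r_i(t,.)\Vert_1\leq nL\sum_i\Vert\rho_i(t,.)-r_i(t,.)\Vert_1$, and Gr\"onwall's lemma yields \eqref{contraction_entropy_1} with $C=nL$. For statement~2, I would instead test the same inequality against a backward light cone of slope $V$ from \eqref{speed_system}; because every flux derivative is bounded by $V$, the lateral flux contributions on the cone have a favorable sign, and the integral of $\sum_i|\rho_i-r_i|$ over the shrinking spatial sections obeys a Gr\"onwall inequality in $t$. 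This forces $\rho$ and $r$ to agree on $[a+Vt,b-Vt]$ whenever their data agree on $[a,b]$, and uniqueness is the special case $[a,b]=\R$.

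The third statement is the delicate point, and it is precisely where hypothesis \eqref{quasi_monotone} enters. Here I would use the one-sided Kru\v{z}kov entropies \eqref{kruzkov_plus} to get, for each $i$,
\begin{equation*}
\partial_t(\rho_i-r_i)^++\partial_x\big[\mathbf{1}_{\{\rho_i>r_i\}}(f_i(\rho_i)-f_i(r_i))\big]\leq \mathbf{1}_{\{\rho_i>r_i\}}\big(c_i(\rho)-c_i(r)\big).
\end{equation*}
The main obstacle is to bound the right-hand side by $C\sum_j(\rho_j-r_j)^+$ rather than merely by $C\sum_j|\rho_j-r_j|$, since only the former closes on the positive parts. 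On $\{\rho_i>r_i\}$ I would insert the intermediate point $\hat\rho$ with $\hat\rho_i=r_i$ and $\hat\rho_j=\rho_j$ for $j\neq i$, writing $c_i(\rho)-c_i(r)=[c_i(\rho)-c_i(\hat\rho)]+[c_i(\hat\rho)-c_i(r)]$. The diagonal increment varies only the $i$-th coordinate, so it is bounded by $L(\rho_i-r_i)^+$ on this set. The off-diagonal increment varies only the coordinates $j\neq i$, and by quasi-monotonicity each $c_i$ is nondecreasing in those $\rho_j$; hence the indices with $\rho_j<r_j$ contribute nonpositively and can be discarded, leaving the bound $L\sum_{j\neq i}(\rho_j-r_j)^+$. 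Summing over $i$, integrating in space, and applying Gr\"onwall then yields \eqref{contraction_entropy_2}; the monotonicity of the semigroup is the special case $\rho^0\leq r^0$, for which the right-hand side vanishes at $t=0$ and therefore for all $t$.
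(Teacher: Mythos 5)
Your proposal is sound, but note that the paper does not prove Theorem \ref{th:entropy_sol} at all: Subsection \ref{subsec:material} explicitly recalls these properties as established in \cite{hanat}, so there is no in-paper proof to compare against. Your reconstruction is the standard one for weakly coupled systems and is essentially the argument of the cited reference: Kru\v{z}kov doubling componentwise yields the Kato inequality with the zeroth-order source passing through, the Lipschitz bound on $c_i$ plus Gr\"onwall gives \eqref{contraction_entropy_1}, the light-cone test function gives finite propagation, and for item 3 your splitting through the intermediate point $\hat\rho$ (diagonal increment controlled by $L(\rho_i-r_i)^+$ on $\{\rho_i>r_i\}$, off-diagonal increments discarded or bounded by $(\rho_j-r_j)^+$ via \eqref{quasi_monotone}) is exactly the right way to close the estimate on positive parts. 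The only caveats are routine: the $L^1$ statements are vacuous unless the initial discrepancy is integrable (otherwise one localizes with the cone), and the derivation of the Kato inequality from Definition \ref{def:relax_entropy} uses the equivalence of $C^1$ and Kru\v{z}kov entropies recorded in Remark \ref{remark_kru}.
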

In particular, looking at \eqref{form_relax_1}--\eqref{form_relax_2}, we have
\begin{lemma}\label{lemma:relax}
Assumption \eqref{quasi_monotone} is satisfied by the relaxation terms 
\eqref{form_relax_1}--\eqref{form_relax_2}.
\end{lemma}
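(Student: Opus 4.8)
The plan is to verify the quasi-monotonicity condition \eqref{quasi_monotone} by directly computing $\partial c_i/\partial\rho_j$ for each pair $i\neq j$ and checking its sign. First I would substitute \eqref{form_relax_2} into \eqref{form_relax_1} to write each reaction term explicitly as
\[
c_i(\rho)=\sum_{k=0}^{n-1}\left[q(k,i)\rho_k(1-\rho_i)-q(i,k)\rho_i(1-\rho_k)\right],
\]
noting that the diagonal summand $k=i$ vanishes identically. Fixing $i$ and some $j\neq i$, the variable $\rho_j$ appears only in the summand indexed by $k=j$, namely $q(j,i)\rho_j(1-\rho_i)-q(i,j)\rho_i(1-\rho_j)$, whereas every other summand is constant in $\rho_j$. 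Differentiating in $\rho_j$ therefore yields
\[
\frac{\partial c_i}{\partial\rho_j}(\rho)=q(j,i)(1-\rho_i)+q(i,j)\rho_i.
\]

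The remaining step is simply to observe that this derivative is nonnegative on all of $[0,1]^n$: the transverse rates $q(i,j)$ and $q(j,i)$ are nonnegative by Definition \ref{def:kernel}, while both $\rho_i$ and $1-\rho_i$ are nonnegative because $\rho\in[0,1]^n$. Hence $c_i$ is nondecreasing with respect to $\rho_j$ for every $j\neq i$, which is exactly \eqref{quasi_monotone}, and Lemma \ref{lemma:relax} follows.

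I do not anticipate any genuine obstacle, since the computation is elementary; the only point deserving a moment's care is the sign bookkeeping. The variable $\rho_j$ enters the $k=j$ term both through the gain contribution $q(j,i)\rho_j(1-\rho_i)$ and, via the loss contribution $-q(i,j)\rho_i(1-\rho_j)$, with derivative $+q(i,j)\rho_i$; the two effects reinforce rather than cancel. This reinforcement is precisely the structural feature of the exclusion-type balance terms that renders the monotone comparison principle of Theorem \ref{th:entropy_sol}, item 3, applicable to \eqref{general_relax_system}.
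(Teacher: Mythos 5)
Your computation is correct and is exactly the verification the paper leaves implicit: it states the lemma as immediate "looking at \eqref{form_relax_1}--\eqref{form_relax_2}", and your explicit derivative $\partial c_i/\partial\rho_j=q(j,i)(1-\rho_i)+q(i,j)\rho_i\geq 0$ (nonnegative since $q\geq 0$ and $\rho\in[0,1]^n$) is precisely why. No gaps.
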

The following corollary will be used afterwards.
\begin{corollary}\label{cor:entropy_sol}
Let $\rho=(\rho_0,\ldots,\rho_{n-1})$ and $r=(r_0,\ldots,r_{n-1})$ 
be the entropy solutions to \eqref{general_system} with initial datum 
$\rho^0=(\rho_0^0,\ldots,\rho_{n-1}^0)$. 
Assume $\rho^0(.)$ has locally bounded space-time variation. 
Then $\rho(.,.)$ has locally bounded space-time variation, and for every $t>0$,
 $\rho(t,.)$  has locally bounded space variation.
Besides, for every $a,b\in\R$ with $a<b$,
and every $t>0$,
\be\label{space_var}
{\rm TV}_{[a;b]}[\rho(t,.)]\leq
e^{Ct}{\rm TV}_{[a-Vt;b+Vt]}[\rho^0(.)]
\ee
and
\be\label{space_time_var}
{\rm TV}_{[0;t]\times [a;b]}[\rho(.,.)]\leq
(1+V)C^{-1}(e^{Ct}-1){\rm TV}_{[a-Vt;b+Vt]}[\rho^0(.)]+
\max_{i=0,\ldots,n-1}
\Vert c_i\Vert_{\infty}.
t
\ee
\end{corollary}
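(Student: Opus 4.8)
The plan is to derive both estimates from the $L^1$-contraction and finite-propagation properties of Theorem \ref{th:entropy_sol}, exploiting the spatial homogeneity of \eqref{general_system} (the fluxes $f_i$ and reaction terms $c_i$ do not depend on $x$). The starting observation is that if $\rho(\cdot,\cdot)$ is the entropy solution with datum $\rho^0(\cdot)$, then for $h>0$ the spatial translate $(t,x)\mapsto\rho(t,x+h)$ is the entropy solution with datum $\rho^0(\cdot+h)$. I would then compare a solution with its translate and use the difference-quotient characterization of total variation, $\mathrm{TV}_{[a;b]}[u]=\lim_{h\to 0^+}h^{-1}\int_a^b|u(x+h)-u(x)|\,dx$.

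First I would establish the space estimate \eqref{space_var}. To localize the global contraction \eqref{contraction_entropy_1} to $[a;b]$, I would combine it with finite propagation (item 2 of Theorem \ref{th:entropy_sol}): modify $\rho^0$ and $\rho^0(\cdot+h)$ outside $[a-Vt;b+Vt]$ so as to make them globally of finite variation (for instance by extending them by constants), apply the global $L^1$ contraction to the modified solutions, and invoke finite propagation to ensure that this modification does not affect the values on $[a;b]$ at time $t$. This yields $\int_a^b|\rho(t,x+h)-\rho(t,x)|\,dx\le e^{Ct}\int_{a-Vt}^{b+Vt}|\rho^0(x+h)-\rho^0(x)|\,dx$, and dividing by $h$ and letting $h\to 0^+$ produces \eqref{space_var}; in particular $\rho(t,\cdot)$ has locally bounded space variation whenever $\rho^0$ does.

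Next, for the space-time estimate \eqref{space_time_var}, I would split the space-time variation measure $|D\rho|$ into its space and time parts. The space part is controlled by integrating \eqref{space_var} over $s\in[0;t]$, bounding the growing intervals $[a-Vs;b+Vs]$ by $[a-Vt;b+Vt]$ and computing $\int_0^t e^{Cs}\,ds=C^{-1}(e^{Ct}-1)$. For the time part I would use the equation itself, writing $\partial_t\rho_i=-\partial_x f_i(\rho_i)+c_i(\rho)$ in the sense of measures: since $\Vert f_i'\Vert_\infty\le V$, the contribution of the conservative term is bounded by $V$ times the space variation just estimated, while the reaction term contributes at most $\max_i\Vert c_i\Vert_\infty$ times the measure of the region, giving the last term of \eqref{space_time_var}. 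Summing the space and time parts produces the factor $(1+V)C^{-1}(e^{Ct}-1)$ in front of $\mathrm{TV}[\rho^0]$, and finiteness of the resulting bound yields the locally bounded space-time variation of $\rho$.

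The main obstacle I anticipate is the careful localization of the contraction estimate: the global bound \eqref{contraction_entropy_1} requires globally integrable differences, whereas $\rho^0$ is only of locally bounded variation, so the finite-propagation truncation argument together with the passage $h\to 0^+$ (ensuring the difference quotients converge to the localized variation on $[a;b]$ while the right-hand side is read on the enlarged interval $[a-Vt;b+Vt]$) must be handled with care. A secondary point requiring attention is the bookkeeping of constants in the time-variation estimate, in particular isolating the $V$-dependent factor coming from $\Vert f_i'\Vert_\infty\le V$ and reconciling the precise normalization of the source contribution $\max_i\Vert c_i\Vert_\infty\,t$.
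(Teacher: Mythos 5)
Your proposal is correct and follows essentially the same route as the paper: localize the $L^1$ contraction via finite propagation, compare the solution with its spatial translate and use the difference-quotient characterization of total variation to get \eqref{space_var}, then integrate in time and use the equation $\partial_t\rho_i=-\partial_x f_i(\rho_i)+c_i(\rho)$ to bound the time part of the variation, with the factor $V$ coming from $\Vert f_i'\Vert_\infty$. The only cosmetic difference is that the paper phrases the space-variation-in-time step by testing the difference quotients against a test function to identify $\partial_x\rho$ as a locally bounded measure, which amounts to the same computation $\int_0^t e^{Cs}\,ds=C^{-1}(e^{Ct}-1)$ that you perform directly.
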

\begin{proof}[Proof of Corollary \ref{cor:entropy_sol}]
\mbox{}\\ \\
Applying \eqref{contraction_entropy_1} to $\rho^0(.)$ and $\rho^0(.+\varepsilon)$ yields 
\begin{eqnarray}\nonumber
\int_a^b\vert
\rho(t,x+\varepsilon)-\rho(t,x)
\vert dx & \leq  & e^{Ct}\int_{a-Vt}^{b+Vt}\vert
\rho^0(x+\varepsilon)-\rho^0(t,x)
\vert dx\\ \label{apply_yields}
& \leq & e^{Ct}\varepsilon{\rm TV}_{[a-Vt;b+Vt]}[\rho^0(.)].
\end{eqnarray}
Thus for a test function  $\varphi\in C^\infty_K((0;t)\times (a;b))$, 
\begin{eqnarray}\nonumber
 &&\lim_{\varepsilon\to 0}
 \left\vert\varepsilon^{-1}\iint_{ (0;+\infty)\times\R}\varphi(s,x)
 [\rho(s,x+\varepsilon)-\rho(s,x)]dxds\right\vert\\ \label{test}
 & =  & \left\vert
 \langle \partial_x\rho,\varphi\rangle
 \right\vert
 \leq C^{-1}(e^{Ct}-1) \Vert\varphi \Vert_\infty{\rm TV}_{[a-Vt;b+Vt]}[\rho^0(.)]
\end{eqnarray}
 where the bracket denotes integrating 
the distributional derivative $\partial_x\rho$ against test function $\varphi$. 
Hence, $\partial_x\rho$ is a locally bounded measure, and 
\be\label{space_derivative}
\iint_{[a;b]\times[0;t]}
\vert
\partial_x\rho(s,x)
\vert (ds,dx)
\leq C^{-1}(e^{Ct}-1)
{\rm TV}_{[a-Vt;b+Vt]}[\rho^0(.)] .
\ee
By \eqref{general_system} and \eqref{speed_system}, $\partial_s\rho(s,x)$ 
is also a locally finite measure, and
\[
\vert
\partial_t\rho(s,x)
\vert\leq\vert\partial_x\rho(s,x)\vert+c_i[\rho(s,x)] .
\]
This yields \eqref{space_time_var}. On the other hand, 
\eqref{space_var} follows from \eqref{apply_yields},
since
\[
{\rm TV}_{[a;b]}[\rho(t,.)]=\sup_{\varepsilon>0}
\int_a^b\vert\rho(t,x+\varepsilon)-\rho(t,x)\vert dx .
\]
\end{proof}
\subsection{Proof of Theorem \ref{th:relax_limit}}\label{subsec:proof_pde}
 The proof is decomposed into four  steps. \\ \\
{\em Step one:  entropy inequality and relaxation.} 
Let $r=(r_i)_{\{i=0,\ldots,n-1\}}\in\mathcal F$
We write entropy inequality \eqref{entropy_cond} for the relaxation system 
\eqref{general_relax_system} with the Kru\v{z}kov entropy-flux pair 
$(h_{r_i+},g_{r_i+})$, 
 cf.  \eqref{good_entropies}, against a nonnegative test function 
 $\varphi\in C^0_K((0;+\infty)\times\R)$, and sum over $i=0,\ldots,n-1$ 
 to obtain \eqref{total_entropy}:
\begin{eqnarray}
\nonumber
&&\sum_{i=0}^{n-1}\int_0^{+\infty}\int_\R 
\left\{
\indic_{\{\rho_{i,\varepsilon}(t,x)>r_i\}}\left[
\rho_{i,\varepsilon}(t,x)-r_i
\right]\varphi'_t(t,x)
\right.\\
&+ &
\left.
\indic_{\{\rho_{i,\varepsilon}(t,x)>r_i\}}\left[
f_i(\rho_{i,\varepsilon}(t,x))-f_i(r_i)
\right]\varphi'_x(t,x)
\right\}dx\,dt\\
\nonumber
& \geq &  - \varepsilon^{-1}\int_0^{+\infty}\varphi(t,x)
\int_\R \sum_{i=0}^{n-1}\indic_{\{\rho_{i,\varepsilon}(t,x)>r_i\}}
\sum_{j=0}^{n-1}\left[
c_{ ji}( \rho_\varepsilon(t,x))-c_{ ij}(\rho_\varepsilon(t,x))
\right]dx\,dt
\label{entropy_epsilon}
\end{eqnarray}
where $c_{ij}$ is given by \eqref{form_relax_2}. 
The right-hand side can be rewritten 
\begin{eqnarray}\nonumber
&& - \frac{\varepsilon^{-1}}{2}\int_0^{+\infty}\int_\R \left\{
\sum_{i=0}^{n-1}\sum_{j=0}^{n-1}
\left[
\indic_{\{\rho_{i,\varepsilon}(t,x)>r_i\}}
-
\indic_{\{\rho_{j,\varepsilon}(t,x)>r_j\}}
\right]\right.\\ \nonumber
&&\qquad\qquad\qquad\qquad\Bigl.\times\left[
c_{ji}(\rho_\varepsilon(t,x))-c_{ij}(\rho_\varepsilon(t,x))
\right]
\Bigr\} \varphi(t,x) dx\,dt\\
& = &  - \frac{\varepsilon^{-1}}{2}\int_0^{+\infty}\int_\R F\left(
 \rho_\varepsilon(t,x);r
\right)\varphi(t,x) dxdt
\label{rhs}
\end{eqnarray}
where we set, for $\rho,r\in[0;1]^n$, 
\begin{eqnarray}\label{rhs_2}
F\left(
\rho;r
\right) & := & \sum_{i=0}^{n-1}\sum_{j=0}^{n-1}F_{i,j}(\rho;r)
\end{eqnarray}
with
\be\label{def_fij}
F_{i,j}(\rho;r):=\left[
\indic_{\{\rho_i>r_i\}}
-
\indic_{\{\rho_j>r_j\}}
\right]
\left[
c_{ji}(\rho)-c_{ij}(\rho)
\right]\\
\ee
The following lemma establishes the dissipation-equilibrium properties 
\eqref{dissip_1}--\eqref{dissip_2} and \eqref{equiv_eq}. 
\begin{lemma}\label{lem:relax}
For $\rho\in[0;1]^n$, let
\begin{eqnarray}\label{rhs_3}
\widetilde{F}(\rho) & := & \int_0^n F(\rho;\psi^{-1}(k))dk
 \end{eqnarray}
with (cf. Proposition \ref{lemma_phi_super})
\be\label{with_cf}
\psi^{-1}(k)=(\widetilde{\rho}_i(k))_{i=0,\ldots,n-1}\in\mathcal F .
\ee
Then $\widetilde{F}(\rho) \leq  0$ 
for every $\rho\in[0;1]^n$, and $\widetilde{F}(\rho)=0$ 
if and only if $\rho\in\mathcal F$.
\end{lemma}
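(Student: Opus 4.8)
The plan is to exploit the antisymmetry of the reaction term to collapse $\widetilde F(\rho)$ into a sum of pairwise contributions, and then to control the sign of each pair by comparing the odds $\rho_i/(1-\rho_i)$ with the reversible weights $\lambda^\alpha_i$. Write $a_{ij}(\rho):=c_{ji}(\rho)-c_{ij}(\rho)=q(j,i)\rho_j(1-\rho_i)-q(i,j)\rho_i(1-\rho_j)$, which depends only on $\rho$ (not on $r$); by the definition \eqref{detailed_q} of $\mathcal F$ one has $\rho\in\mathcal F$ if and only if $a_{ij}(\rho)=0$ for all $i,j$, so the equality statement amounts to showing $\widetilde F(\rho)=0\Leftrightarrow a_{ij}(\rho)=0\ \forall i,j$. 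First I would observe that $F_{i,j}=F_{j,i}$, since exchanging $i$ and $j$ in \eqref{def_fij} flips the sign of both brackets. Hence, setting $r_i=\widetilde\rho_i(k)$ and
\[
K_i(\rho_i):=\int_0^n\indic_{\{\widetilde\rho_i(k)<\rho_i\}}\,dk=\left|\{k\in[0,n]:\widetilde\rho_i(k)<\rho_i\}\right|,
\]
and pulling the $k$-integral through the $k$-independent coefficient $a_{ij}(\rho)$, I obtain
\[
\widetilde F(\rho)=\sum_{i,j}a_{ij}(\rho)\left[K_i(\rho_i)-K_j(\rho_j)\right]=2\sum_{i<j}a_{ij}(\rho)\left[K_i(\rho_i)-K_j(\rho_j)\right]
\]
(equivalently $\widetilde F(\rho)=2\sum_i c_i(\rho)K_i(\rho_i)$, using $\sum_j a_{ij}=c_i$). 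Thus everything reduces to the \emph{pairwise claim}: for every $i,j$, $a_{ij}(\rho)\,[K_i(\rho_i)-K_j(\rho_j)]\le 0$, with equality only when $a_{ij}(\rho)=0$.

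To prove the pairwise claim I would use that, by Proposition \ref{lemma_phi_super} and Lemma \ref{lemma:complete}, each $\widetilde\rho_i$ is nondecreasing, so $K_i(\rho_i)$ is its (left-continuous) generalized inverse; in particular $K_i$ is nondecreasing in $\rho_i$, $K_i(0)=0$, $K_i(1)=n$, and for $i\in\mathcal C_\alpha$ with $\rho_i\in(0,1)$ one has $K_i(\rho_i)\in(N_\alpha,N_{\alpha-1})$ with $\widetilde\rho_i(K_i(\rho_i))=\rho_i$. The boundary cases $\rho_i\in\{0,1\}$ (for any $j$) follow directly from monotonicity: if $\rho_i=0$ then $a_{ij}(\rho)=q(j,i)\rho_j\ge 0$ while $K_i=0\le K_j$, and if $\rho_i=1$ then $a_{ij}(\rho)=-q(i,j)(1-\rho_j)\le 0$ while $K_i=n\ge K_j$; in both the product is $\le 0$ and vanishes only if $a_{ij}(\rho)=0$. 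For $\rho_i,\rho_j\in(0,1)$ with $i,j$ in the same class $\mathcal C_\alpha$, I would use the common ramp parametrization: writing $\rho_i/(1-\rho_i)=c_i\lambda^\alpha_i$ with $c_i=c(K_i(\rho_i))$ and $c(\cdot)$ strictly increasing (Lemma \ref{lemma:complete}), reversibility $q(j,i)\lambda^\alpha_j=q(i,j)\lambda^\alpha_i$ gives $\mathrm{sign}\,a_{ij}(\rho)=\mathrm{sign}(c_j-c_i)=\mathrm{sign}(K_j-K_i)$, so again $a_{ij}(\rho)[K_i-K_j]\le 0$ with equality iff $a_{ij}(\rho)=0$.

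The genuinely structural case is $\rho_i,\rho_j\in(0,1)$ with $i\in\mathcal C_\alpha$, $j\in\mathcal C_\beta$ in distinct classes joined by an edge. Here I would invoke Lemma \ref{lemma_class}: the class labeling is the total order induced by reachability, so $q(i,j)>0$ forces $\alpha<\beta$ and $q(j,i)=0$, whence $a_{ij}(\rho)=-q(i,j)\rho_i(1-\rho_j)<0$; on the other hand the ramp intervals are disjoint and ordered, and $\alpha<\beta$ gives $N_\alpha\ge N_{\beta-1}$, so $K_j(\rho_j)<N_{\beta-1}\le N_\alpha<K_i(\rho_i)$, i.e.\ $K_i-K_j>0$ and the product is strictly negative. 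This is the only place where the \emph{ordering} of classes (rather than reversibility within a class) enters. I expect the main obstacle to be precisely the bookkeeping here: reconciling the conventions of $N_\alpha$ and of the filling construction \eqref{construction} so that ``forward edge'' and ``fills at larger $k$'' are matched consistently, and checking that the residual mixed cases (e.g.\ $\rho_i=1$, $\rho_j\in(0,1)$ across an edge) also yield strictly negative terms. Once the pairwise claim is established, summing gives $\widetilde F(\rho)\le 0$; and since every nonzero contribution is strictly negative while every vanishing contribution forces $a_{ij}(\rho)=0$, we conclude $\widetilde F(\rho)=0$ exactly when $a_{ij}(\rho)=0$ for all $i,j$, that is, when $\rho\in\mathcal F$.
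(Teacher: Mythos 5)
Your reduction is correct: since $a_{ij}(\rho):=c_{ji}(\rho)-c_{ij}(\rho)$ does not depend on $k$, integrating \eqref{def_fij} over $k$ does give $\widetilde F(\rho)=2\sum_{i<j}a_{ij}(\rho)\bigl[K_i(\rho_i)-K_j(\rho_j)\bigr]$ with $K_i(\rho_i)=|\{k:\widetilde\rho_i(k)<\rho_i\}|$, and the continuity and monotonicity of $\widetilde\rho_i$ give $\widetilde\rho_i(K_i(\rho_i))=\rho_i$, so that $K_i(\rho_i)=K_j(\rho_j)$ forces $(\rho_i,\rho_j)$ to satisfy \eqref{detailed_q}; this is essentially the paper's own argument for the equality case (the unique crossing point $k_0$). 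However, your boundary case contains a concrete error: $K_i(1)$ is \emph{not} $n$ in general. For $i\in\mathcal C_\alpha$ one has $\widetilde\rho_i\equiv 1$ on $[N_{\alpha-1},n]$ (Lemma \ref{lemma:complete}), so $K_i(1)=N_{\alpha-1}$, which is $<n$ unless $\alpha=0$. Consequently, when $\rho_i=1$ and $j\in\mathcal C_\beta$ with $\beta<\alpha$ and $\rho_j\in(0,1)$, you get $K_j(\rho_j)>N_\beta\geq N_{\alpha-1}=K_i(1)$, i.e.\ $K_i-K_j<0$, and the sign of the product is rescued only because the class ordering of Lemma \ref{lemma_class} forces $q(i,j)=0$, hence $a_{ij}(\rho)=-q(i,j)(1-\rho_j)=0$. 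So this case does not ``follow directly from monotonicity'' of $K$; it needs the same ordering argument you reserve for the cross-class ramp case. With that patch (and it is available within your framework), the pairwise claim and hence the lemma do go through.

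It is worth contrasting this with the paper's route, which is shorter precisely because it avoids all class bookkeeping in the inequality part. The paper proves the \emph{pointwise} bound $F_{i,j}(\rho;r)\le 0$ for every fixed $r\in\mathcal F$: if the two indicators in \eqref{def_fij} differ, say $\rho_i>r_i$ and $\rho_j\le r_j$, then $q(j,i)\rho_j(1-\rho_i)\le q(j,i)r_j(1-r_i)$ and $q(i,j)\rho_i(1-\rho_j)\ge q(i,j)r_i(1-r_j)$, so $a_{ij}(\rho)\le a_{ij}(r)=0$ by \eqref{detailed_q}, while the bracket equals $+1$. This uses only elementary monotonicity in each coordinate and the defining relation of $\mathcal F$ — no reversible weights, no $N_\alpha$, no reachability order — and the nonpositivity of $\widetilde F$ then follows by integration. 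Your approach buys a cleaner algebraic identity ($\widetilde F=2\sum_i c_i(\rho)K_i(\rho_i)$, a genuine dissipation functional) at the cost of a case analysis that is more fragile than you present it; the paper's buys brevity at the cost of proving nonpositivity only term by term in $r$ rather than exhibiting the integrated structure.
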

Next proposition establishes relaxation \eqref{approx_flux} to the equilibrium manifold.
\begin{proposition}\label{cor:relax}
For every $i=0,\ldots,n-1$ and every $T,R>0$,
\be\label{eq:cor:relax}
\lim_{\varepsilon\to 0}\int_0^T\int_{-A}^A\left\vert
\rho_{i,\varepsilon}(t,x)-\widetilde{\rho}_i\left[
R_\varepsilon(t,x)
\right]
\right\vert dxdt=0 .
\ee
\end{proposition}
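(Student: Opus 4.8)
The plan is to convert the entropy dissipation produced by the stiff balance term into an $L^1$ control of the distance of $\rho_\varepsilon$ to the equilibrium manifold $\mathcal F$, and then use continuity of the parametrization $\psi^{-1}$ to replace that distance by the quantity $|\rho_{i,\varepsilon}-\widetilde{\rho}_i[R_\varepsilon]|$.

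First I would exploit the summed entropy inequality. For a fixed $r=(r_0,\ldots,r_{n-1})\in\mathcal F$, apply \eqref{entropy_cond} with the Kru\v{z}kov pairs \eqref{good_entropies} against a nonnegative $\varphi\in C^0_K((0,+\infty)\times\R)$ and sum over $i$; as in \eqref{entropy_epsilon}--\eqref{rhs}, the balance term symmetrizes into $-\tfrac12\varepsilon^{-1}\iint F(\rho_\varepsilon;r)\varphi$. Since the entropies $h_{r_i+}$ and fluxes $g_{i,r_i+}$ are bounded uniformly in $r$, the left-hand side is bounded by a constant $C_\varphi$ independent of $\varepsilon$ and $r$, giving
\[
\frac{\varepsilon^{-1}}{2}\int_0^{+\infty}\int_\R F(\rho_\varepsilon(t,x);r)\varphi(t,x)\,dx\,dt \ge -C_\varphi .
\]
I then integrate this over $r=\psi^{-1}(k)$, $k\in[0,n]$; by Fubini the left-hand side becomes $\tfrac12\varepsilon^{-1}\iint\widetilde F(\rho_\varepsilon)\varphi$ with $\widetilde F$ as in \eqref{rhs_3}. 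By Lemma \ref{lem:relax}, $\widetilde F\le 0$, whence
\[
\int_0^{+\infty}\int_\R |\widetilde F(\rho_\varepsilon(t,x))|\,\varphi(t,x)\,dx\,dt \le 2n\,C_\varphi\,\varepsilon \longrightarrow 0 \quad (\varepsilon\to 0).
\]
Taking $\varphi\ge\indic_{[\tau,T]\times[-A,A]}$ (and letting $\tau\to0$ at the end, the contribution near $t=0$ being $O(\tau)$), this yields $\widetilde F(\rho_\varepsilon)\to 0$ in $L^1([0,T]\times[-A,A])$.

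Next I would upgrade this to convergence towards $\mathcal F$. Introduce the functional $G(\rho):=\sum_i|\rho_i-\widetilde{\rho}_i(\psi(\rho))|$ on the compact set $[0,1]^n$; since $\psi,\psi^{-1},\widetilde{\rho}_i$ are continuous (Proposition \ref{lemma_phi_super}), $G$ is continuous, and $G(\rho)=0$ iff $\rho\in\mathcal F$, i.e. iff $\widetilde F(\rho)=0$. The crux is a coercivity estimate: for each $\delta>0$ there is $\kappa(\delta)>0$ with $G(\rho)\ge\delta\Rightarrow|\widetilde F(\rho)|\ge\kappa(\delta)$. Granting it, I split $[0,T]\times[-A,A]$ according to whether $G(\rho_\varepsilon)\le\delta$ or not; on the bad set $\mathrm{Leb}\{G(\rho_\varepsilon)>\delta\}\le\kappa(\delta)^{-1}\iint|\widetilde F(\rho_\varepsilon)|\to0$, so that $\iint G(\rho_\varepsilon)\le\delta\,|[0,T]\times[-A,A]|+\|G\|_\infty\,\mathrm{Leb}\{G(\rho_\varepsilon)>\delta\}$. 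Letting $\varepsilon\to0$ then $\delta\to0$ gives $\iint G(\rho_\varepsilon)\to0$, which is exactly \eqref{eq:cor:relax} summed over $i$, hence the claim for each $i$.

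The main obstacle is the coercivity estimate, a quantitative form of Lemma \ref{lem:relax}. By compactness it reduces to showing $\widetilde F$ cannot tend to $0$ along a sequence $\rho^{(k)}\to\rho^*\notin\mathcal F$, which holds once $\widetilde F$ is upper semicontinuous. The delicate point is at boundary configurations with some $\rho^*_i=0$ and $i$ in a non-recurrent class (i.e. $N_\alpha>0$), where $\widetilde F$ jumps because $k\mapsto\indic_{\{\rho_i>\widetilde{\rho}_i(k)\}}$ changes mass on the flat part $\{\widetilde{\rho}_i\equiv0\}$. Writing $\widetilde F(\rho)=\sum_{i,j}\bigl(c_{ji}(\rho)-c_{ij}(\rho)\bigr)\bigl(\Phi_i(\rho_i)-\Phi_j(\rho_j)\bigr)$ with $\Phi_i(\rho_i)=\mathrm{Leb}\{k:\widetilde{\rho}_i(k)<\rho_i\}$, I would check that the strict-inequality convention makes $\widetilde F$ upper semicontinuous and that both one-sided limits at such boundary points stay strictly negative off $\mathcal F$, so that the compactness argument closes.
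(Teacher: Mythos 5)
Your first half is the paper's argument verbatim: the same summed Kru\v{z}kov inequality \eqref{entropy_epsilon}, the same symmetrization \eqref{rhs} of the balance term into $F(\rho_\varepsilon;r)\le 0$, and the same averaging over $r=\psi^{-1}(k)$ to obtain $\iint|\widetilde F(\rho_\varepsilon)|\to 0$ (the paper gets \eqref{dissip_vanish} for each fixed $r$ and then integrates by dominated convergence to reach \eqref{dissip_vanish_2}; you integrate first by Fubini --- immaterial). Where you genuinely diverge is the upgrade to \eqref{eq:cor:relax}: the paper introduces the Young measure $M_\varepsilon$ of \eqref{def-young}, extracts a weak limit $M$, shows $M$ is supported on $\mathcal F$, and integrates the continuous function $H_i$ against $M$; you prove a quantitative coercivity bound $G(\rho)\ge\delta\Rightarrow|\widetilde F(\rho)|\ge\kappa(\delta)$ by compactness and conclude by Chebyshev. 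The two are the same compactness argument in different clothing; yours avoids Young measures at the price of making the contradiction step explicit, and that step is exactly where you are right to worry. Indeed you have put your finger on a point the paper glosses over: when $q(\cdot,\cdot)$ has several irreducibility classes, $\widetilde F=2\sum_i\Phi_i(\rho_i)c_i(\rho)$ with $\Phi_i(u)=\mathrm{Leb}\{k:\widetilde\rho_i(k)<u\}$ is \emph{not} continuous, because $\Phi_i$ jumps by $N_{\alpha(i)}$ at $u=0$ while $c_i(\rho)=\sum_j q(j,i)\rho_j$ can be positive there; the paper's assertion that ``$\widetilde F$ is continuous'' only holds in the irreducible case. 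But be warned that your proposed fix via upper semicontinuity also fails: at such a point the jump term $2N_{\alpha(i)}c_i(\rho^*)$ is \emph{positive}, so $\limsup_{\rho\to\rho^*}\widetilde F(\rho)>\widetilde F(\rho^*)$ and $\widetilde F$ is not u.s.c. What is true, and suffices for both your coercivity lemma and the paper's Young-measure step, is the weaker sequential statement you mention at the very end: if $\rho^{(k)}\to\rho^*$ and $\widetilde F(\rho^{(k)})\to 0$, then $\rho^*\in\mathcal F$. This follows from the pairwise decomposition $\widetilde F=2\sum_{i<j}[\Phi_i(\rho_i)-\Phi_j(\rho_j)][c_{ji}(\rho)-c_{ij}(\rho)]$ with every summand nonpositive: each summand must tend to $0$; along a subsequence $\Phi_i(\rho_i^{(k)})\to\phi_i^*$, and one checks that $\phi_i^*=\phi_j^*$ forces either $\rho_i^*=\rho_j^*=0$ (so $c_{ij}(\rho^*)=c_{ji}(\rho^*)=0$) or $\rho_i^*=\widetilde\rho_i(\kappa)$, $\rho_j^*=\widetilde\rho_j(\kappa)$ for a common $\kappa$ (so \eqref{detailed_q} holds for the pair by Proposition \ref{lemma_phi_super}); in all remaining cases $c_{ji}(\rho^*)=c_{ij}(\rho^*)$ directly. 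So your route closes, and with that pairwise analysis written out it is actually more careful than the published one.
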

\begin{proof}[Proof of Lemma \ref{lem:relax}]
\mbox{}\\ 
{\em Step one.} 
We show that, for all $(\rho,r)\in[0;1]^n\times\mathcal F$ and 
$(i,j)\in\{0,\ldots,n-1\}^2$,
\be\label{fij}
F_{i,j}(\rho;r)  \leq  0 .
\ee
If $\rho_i>r_i$ and $\rho_j>r_j$, the result follows from \eqref{def_fij}. Otherwise, 
one of 
\eqref{either_that}--\eqref{or_that} below holds:
\begin{eqnarray}\label{either_that}
\rho_{i}>r_i,\quad\rho_{j}\leq r_j,\\
\label{or_that}
\rho_{i}\leq r_i,\quad\rho_{j}> r_j.
\end{eqnarray}
We consider \eqref{either_that}, \eqref{or_that} being similar.
In view of \eqref{form_relax_2} and \eqref{either_that}, 
\begin{eqnarray}
&&
 \nonumber\sum_{j=0}^{n-1}\left\{
q(j,i)\rho_{j}[1-\rho_{i}]-q(i,j)\rho_{i}[1-\rho_{j}]
\right\}\\
& \leq & q(j,i)r_j(1-r_i)-q(i,j)r_i(1-r_j)  = 0\label{integrand}
\end{eqnarray}
where the last  equality  follows from the fact that, 
by \eqref{def_ri} and Lemma \ref{lemma_phi_super}, 
$r=(r_i)_{i\in W}\in\mathcal F$, thus satisfies \eqref{detailed_q}. \\ \\
{\em Step two.} Assume  $\widetilde{F}(\rho)=0$. 
In view of \eqref{fij}, this implies 
\be\label{thenby}
F_{i,j}(\rho;\psi^{-1}(k))=0
\ee
for a.e. $k\in[0;n]$ and every $i,j\in\{0,\ldots,n-1\}$. 
 We argue by contradiction that $\rho\in\mathcal F$. Assume the contrary; 
 then there exist $i,j\in\{0,\ldots,n-1\}$ such that \eqref{detailed_q} fails.  
Then by \eqref{thenby} and \eqref{def_fij},
for a.e. $k\in[0;n]$, one of \eqref{either_2}--\eqref{or_2} below holds:
\begin{eqnarray}
\label{either_2}
\rho_i>\widetilde{\rho}_i(k),\quad
\rho_j>\widetilde{\rho}_j(k)\\
\rho_i\leq\widetilde{\rho}_i(k),\quad
\rho_j\leq\widetilde{\rho}_j(k)
\label{or_2}
\end{eqnarray}
Since (cf. Proposition \ref{lemma_phi_super}) $\widetilde{\rho}_i(.)$
is nondecreasing and continuous, there exists a unique  $k_0$  such that
\eqref{either_2} holds for $k<k_0$, \eqref{or_2} holds for $k>k_0$, and
\be\label{equalities} 
\rho_i=\widetilde{\rho}_i(k_0),\quad
\rho_j=\widetilde{\rho}_j(k_0).
\ee
 But \eqref{with_cf}  implies that $\rho_i$ and $\rho_j$ given by 
 \eqref{equalities} satisfy 
 \eqref{detailed_q}, whence the contradiction. 
\end{proof}
\begin{proof}[Proof of Proposition \ref{cor:relax}]
Since the test function in \eqref{entropy_epsilon} is arbitrary, 
using \eqref{rhs}, we have
\be\label{dissip_vanish}
\lim_{\varepsilon\to 0}\int_0^T\int_{-A}^A
F\left[
\rho_\varepsilon(t,x);r
\right]dxdt=0
\ee
for every $r\in\mathcal F$. It follows by dominated convergence that
\be\label{dissip_vanish_2}
\lim_{\varepsilon\to 0}\int_0^T\int_{-A}^A 
\widetilde{F}\left[
\rho_\varepsilon(t,x)
\right]dxdt=0 .
\ee
Define the non-negative measure  (called {\em Young measure}) 
on $[0;+\infty)\times\R\times\R^n$,
\be\label{def-young}
M_\varepsilon(dt,dx,d\rho):=
 {\bf 1}_{[0,T]}(t){\bf 1}_{[-A,A]}(x)  
 \delta_{(t,x, \rho_\varepsilon(t,x))}(d\rho)dxdt .
\ee
Since the family of measures $M_\varepsilon$ has compact support contained in 
$[0;T]\times[-A;A]\times[0;1]^n$ and constant mass  $2AT$, 
 it is tight with respect 
to weak convergence. Up to taking subsequences we may assume 
without loss of generality
that it converges to a limiting measure $M(dt,dx,d\rho)$.
Since the integral in \eqref{dissip_vanish_2} can be written
\[
\int_0^T\int_{-A}^A 
\widetilde{F}\left[
\rho_\varepsilon(t,x)
\right]dxdt=\int\widetilde{F}(\rho) M_\varepsilon(dt,dx,d\rho)
\stackrel{\varepsilon\to 0}{\to}\int \widetilde{F}(\rho) M(dt,dx,d\rho)
\]
and  $\widetilde{F}$  is continuous, it follows from \eqref{dissip_vanish_2} 
and Lemma \ref{lem:relax} that $M$ is supported on $\mathcal F$. On the other hand, 
the integral on the left-hand side of \eqref{eq:cor:relax} can be written 
as the integral
 with respect to $M_\varepsilon(dt,dx,d\rho)$ of the function
\[
H_i(t,x,\rho):= \left\vert
\rho_i-\widetilde{\rho}_i\left(
\sum_{j=0}^{n-1}\rho_j
\right)
\right\vert .
\]
Thus \eqref{eq:cor:relax} converges to the integral of $H_i$ with respect 
to $M(dt,dx,d\rho)$. This integral is $0$ because, by definition of 
$\widetilde{\rho}_i(.)$, 
$H_i$ vanishes on $\mathcal F$.
\end{proof}
The next step of the proof establishes the approximate entropy condition 
\eqref{entropy_ineq_eq}, its limit,  and shows that the family of Kru\v{z}kov 
entropies we considered generates all Kru\v{z}kov entropies for the limiting equations. \\ \\
{\em Step two: limiting entropy inequality.}
Let $c\in[0;+\infty)$, and set 
\be\label{def_ri}
r_i:=\widetilde{\rho}_i(c) .
\ee
In view of Corollary \ref{cor:relax}, since $h_{r_i+}$ and $g_{r_i+}$ 
are uniformly Lipschitz,  
on the left-hand side of \eqref{entropy_epsilon}, we can replace 
$\rho_\varepsilon(t,x)$ with $\widetilde{\rho}_i[R_\varepsilon(t,x)]$ 
with vanishing error 
as $\varepsilon\to 0$. Since $\widetilde{\rho}_i$ is an increasing function, 
the indicator function on the left-hand side can be replaced with
$\indic_{\{R_\varepsilon(t,x)>c\}}$ with vanishing error as $\varepsilon\to 0$.
Thus, collecting the left-hand side and the vanishing replacement error, we obtain
\begin{eqnarray}\label{collect}
&&\liminf_{\varepsilon\to 0} 
\int_0^{+\infty}\int_\R \indic_{\{R_\varepsilon(t,x)>c\}}\sum_{i=0}^{n-1}
\left\{
\left[
\widetilde{\rho}_i[R_\varepsilon(t,x)]-r_i
\right]\varphi'_t(t,x)
\right.\\
&+&
\left.
\left[
f_i(\widetilde{\rho}_i[R_\varepsilon(t,x)])-f_i(r_i)
\right]\varphi'_x(t,x)
\right\}dx\,dt\geq 0.\nonumber
\end{eqnarray}
By \eqref{relax_flux} and definition (cf. Lemma \ref{lemma_phi_super}) of 
$\widetilde{\rho}_i$ we have, for every $\rho\in[0;1]$,
\begin{eqnarray*}
\sum_{i=0}^{n-1} \widetilde{\rho}_i(\rho) & = \rho\\ 
\sum_{i=0}^{n-1} f_i\left(\widetilde{\rho}_i(\rho)\right) & = f(\rho)
\end{eqnarray*}
Thus, recalling \eqref{def_ri} we arrive at
\begin{eqnarray}\label{limit_entropy_cond}
&&\liminf_{\varepsilon\to 0} \int_0^{+\infty}\int_\R 
\left\{
\left[
R_\varepsilon(t,x)-c
\right]^+\varphi'_t(t,x)
\right.\\
&+&
\left.
\indic_{\{R_\varepsilon(t,x)>c\}}
\left[
f(R_\varepsilon(t,x))-f(c)
\right]\varphi'_x(t,x)
\right\}dx\,dt\geq 0.\nonumber
\end{eqnarray}
{\em Step three: passing to the limit.}
Here and in step  four below we make the temporary assumption 
that the initial datum $\rho^0(.)$ has locally bounded space variation. 
By \eqref{space_time_var}, the family $(\rho_\varepsilon(.,.))_{\varepsilon>0}$ 
has uniformly bounded space-time variation on any space-time rectangle.
 Thus is relatively compact in $L^1_{\rm loc}((0;+\infty)\times \R)^n$. 
This implies that the family $(R_\varepsilon)_{\varepsilon>0}$ 
is relatively compact in 
$L^1_{\rm loc}((0;+\infty)\times \R)$. 
Let $\rho(.,.)$ be any subsequential limit of this family as $\varepsilon\to 0$. 
Then \eqref{limit_entropy_cond} implies
\begin{eqnarray}\label{limit_entropy_cond_2}
&&\int_0^{+\infty}\int_\R 
\left\{
\left[
\rho(t,x)-c
\right]^+\varphi'_t(t,x)
\right.\\
&+&
\left.
\indic_{\{\rho(t,x)>c\}}
\left[
f(\rho(t,x))-f(c)
\right]\varphi'_x(t,x)
\right\}dx\,dt\geq 0.\nonumber
\end{eqnarray}
Therefore, $\rho(.,.)$ is an entropy solution to \eqref{relax_law} 
with flux function \eqref{relax_flux}.
 To ensure uniqueness of this subsequential  limit and hence 
 convergence of the whole sequence,
  we are left to prove the initial condition \eqref{initial_cond}. \\ \\
{\em Step four: initial condition.}
By step two, for every $a<b$ in $\R$,
\be\label{bysteptwo}
\lim_{\varepsilon\to 0}\frac{1}{t}\int_0^t
\int_a^b R_\varepsilon(s,x)dx\,ds=
\frac{1}{t}\int_0^t\int_a^b\rho(s,x)dx\,ds
\ee
where the limit is meant along the sequence of 
$\varepsilon$'s producing the limit point $\rho(.,.)$. 
By  \eqref{general_relax_system},
\be\label{sum_relax}
\partial_t R_\varepsilon(t,x)
+\sum_{i=0}^{n-1}\partial_x[ f_i(\rho_{i,\varepsilon}(t,x))]
=\sum_{i=0}^{n-1}
\varepsilon^{-1} c_i(\rho_\varepsilon(t,x)).
\ee
Note that 
\[\sum_{i=0}^{n-1}c_i(\rho)=0\]
for every $\rho\in[0;1]^n$. Thus for every $a<b$ in $\R$,
\begin{eqnarray}
\nonumber
\frac{d}{dt}\int_a^b R_\varepsilon(t,x)dx 
& = & \sum_{i=0}^{n-1}
\left\{f[\rho_{i,\varepsilon}(t,a^\pm)]-f[\rho_{i,\varepsilon}(t,b^\pm)]\right\}.
\label{thusforevery}
\end{eqnarray}
Note that the limits $\rho_\varepsilon(t,x^\pm)$ are always defined 
since $\rho_\varepsilon$
has bounded space variation by Corollary \ref{cor:entropy_sol}. 
Besides, \eqref{general_relax_system} 
implies that for every $x\in\R$,   
$\rho_{i,\varepsilon}(t,x^+)=\rho_{i,\varepsilon}(t,x^-)$ 
for a.e. $t>0$.  Integrating \eqref{thusforevery} in time 
and using the initial condition 
\eqref{initial_cond} for $\rho_\varepsilon(.,.)$ yields 
\[
\int_a^b R_\varepsilon(t,x)dx=\sum_{i=0}^{n-1}\int_a^b\rho^0_i(x)dx+O_1[(b-a)t]
\]
where $\vert O_1[(b-a)t]\vert\leq C(b-a)t$ for a constant $C>0$ 
independent of $\varepsilon$, $a$, $b$ and $t$. This and \eqref{bysteptwo} imply
\[
\frac{1}{t}\int_0^t
\int_a^b R_\varepsilon(s,x)dx\,ds=\int_a^b\sum_{i=0}^{n-1}\rho^0_i(x)dx+O_2[(b-a)t].
\]
Letting $\varepsilon\to 0$, using \eqref{bysteptwo}, and then letting 
$t\to 0$, we obtain
\be\label{weobtain}
\lim_{t\to 0}\frac{1}{t}\int_0^t\int_a^b \rho(s,x)dx\,ds=\int_a^b\sum_{i=0}^{n-1}\rho^0_i(x).
\ee
By \eqref{space_time_var}, $R_\varepsilon(.,.)$
satisfies a local variation bound independent of $\varepsilon$, 
thus $\rho(.,.)$ has locally bounded variation with  the same bound. Thus the limit
\[
\lim_{t\to 0}\rho(t,x)=:\rho(0^+,x)
\]
exists in $L^1_{\rm loc}(\R)$. It follows from \eqref{weobtain} that
\be\label{initial_relax_2}
\lim_{t\to 0^+}\rho(t,x)=\sum_{i=0}^{n-1}\rho^0_i(x)=R^0(x)
\ee
in $L^1_{\rm loc}(\R)$, where $R^0(.)$ is defined by \eqref{initial_relax}. 
The limit in \eqref{initial_relax_2} is the initial condition 
for entropy solutions 
of \eqref{relax_law}. Since the entropy solution to 
\eqref{relax_law}--\eqref{initial_relax} 
is unique, this ensures uniqueness of a subsequential limit for $R_\varepsilon$ 
as $\varepsilon\to 0$, hence convergence to this unique entropy solution. \\ \\
{\em Step  five: general initial condition.}
Assume now $\rho^0\in L^\infty((0;+\infty);[0;1])^n$. Then we can find a sequence 
$\left(\rho^{0,k}\right)_{k\in\N}$ such that $\rho^{0,k}$ has locally bounded 
space variation, and 
$\rho^{0,k}\to\rho^0$ in $L^1_{\rm loc}(\R)$. Let $\rho_{\varepsilon}^k$ 
denote the entropy solution
 to \eqref{general_relax_system} with initial datum $\rho^{0,k}$,  and
\[
R_{\varepsilon}^k(t,x):=\sum_{i=0}^{n-1}\rho_{i,\varepsilon}^k(t,x).
\]
Then, for $a<b$ in $\R$ and $T>0$, we have
\begin{eqnarray}
\nonumber
\int_0^T\int_a^b\vert R_\varepsilon(t,x)-\rho(t,x)\vert dx\,dt
 & \leq & \int_0^T\int_a^b\vert R_\varepsilon(t,x)-R_\varepsilon^k(t,x)\vert dx\,dt\\
\nonumber
& + & \int_0^T\int_a^b\vert R_\varepsilon^k(t,x)-\rho^k(t,x)\vert dx\,dt\\
\label{decomp_error}
& + & \int_0^T\int_a^b\vert \rho^k(t,x)-\rho(t,x)\vert dx\,dt .
\end{eqnarray}
Note that 
\be\label{decomp-distance}
\int_a^b\vert R^\varepsilon(t,x)-R^{\varepsilon,k}(t,x)\vert dx
\leq\sum_{i=0}^{n-1}\int_a^b\vert
 \rho_{i,\varepsilon}(t,x)-\rho^k_{i,\varepsilon}(t,x)
\vert dx .
\ee 
Thus, by Theorem \ref{th:entropy_sol}, the first term 
on the right-hand side of \eqref{decomp_error}
is bounded above by
\be\label{bound_1}
\sum_{i=0}^{n-1}\int_{a-Vt}^{b+Vt}\vert
\rho^0_i(x)-\rho^{0,k}_i(x)
\vert dx .
\ee
By specializing Theorem \ref{th:entropy_sol} to scalar conservation 
law \eqref{relax_law}, 
a similar bound holds for the third term on the right-hand side 
of \eqref{decomp_error}. Finally, 
for fixed $k\in\N$,  by the previous steps, the second term 
vanishes as $\varepsilon\to 0$. Thus, 
letting first $\varepsilon\to 0$ and then  $k\to+\infty$ 
in \eqref{decomp_error}, we obtain that 
the left-hand side vanishes as $\varepsilon\to 0$. 
 \section{Proof of many-lane limit: \thmref{approximating_G}}\label{sec:proof_limit}
\begin{proof}[Proof of \thmref{approximating_G}]
It is equivalent to show that for every $v\in\R$ 
and every sequence $v_n\to v$ as $n\to+\infty$, 
\be\label{equivalent_approx}
\widehat{u}(v-,1)\wedge\widehat{u}(v+,1)\leq \liminf_{n\to+\infty}\widehat{u}^n(v_n,1)
\leq \limsup_{n\to+\infty}\widehat{u}^n(v_n,1)\leq \widehat{u}(v-,1)\vee\widehat{u}(v+,1) .
\ee 
We argue by contradiction.  Let $\varepsilon>0$ and
assume there exists a subsequence, still denoted by $(v_n)$, such that 
\be\label{contra}
d[\widehat{u}^n(v_n,1),\widehat{U}(v,1)]>\varepsilon .
\ee
Assume first  $\alpha\geq\beta$.
Let 
\[\rho_{n,i}:=i/n, \quad \rho^{n,i}=(i+1/2)/n\]
so that
\[
\widehat{G}^n(\rho_{n,i})=0,\quad \widehat{G}^n(\rho^{n,i})=F(\rho_{n,i}) .
\]
Recall that $\rho^{n,i}$ and $\rho_{n,i}$ respectively achieve 
the maximum and minimum of $\widehat{G}^n$ over $[\rho_{n,i},\rho_{n,i}+n^{-1}]$.
Let $u_n:=\lfloor n\widehat{u}^n(v_n,1)\rfloor$ and  $i_n:=\lfloor nu\rfloor$, so that
\be\label{sandwich_u}
\rho_{n,i}\leq u_n \leq \rho_{n,i}+\frac{1}{n}.
\ee
Then, using \eqref{sandwich_u},
\begin{eqnarray}
\nonumber
v_n u_n-\widehat{G}^n(u_n) & \geq & v_n u_n-\widehat{G}^n(\rho^{n,i_n})\\
%
%
& \geq  &  v_n \rho_{n,i_n}-F(\rho_{n,i_n})-\frac{1}{n}.\label{ineq_contra}
\end{eqnarray}
Since any minimizer of \eqref{dual_G} for $G=F$ lies in $\widehat{U}(v,1)$,
under \eqref{contra}, the last line of \eqref{ineq_contra} remains bounded 
away above  $F^*_{\alpha,\beta}(v)$. 
On the other hand,
taking $i'_n=\lfloor n\widehat{u}(v+,1)\rfloor$ and $u'_n=\rho^{n,i'_n}$, we have
\begin{eqnarray}\nonumber
v_n u'_n-\widehat{G}^n(u'_n)& = & v_n u'_n-F(\rho_{n,i'_n})\\
& \leq & v_n \rho_{i,n}-F(\rho_{n,i'_n})+\frac{1}{n}\label{ineq_contra_2}
\end{eqnarray}
and the last line of \eqref{ineq_contra_2} converges to 
$vu-F(u)=F^*_{\alpha,\beta}(v)$ for $u=\widehat{u}(v\pm,1)$. This contradicts the fact
that the l.s.h. of \eqref{ineq_contra} achieves the min
imum in \eqref{dual_G} for $G=\widehat{G}^n$.\\ \\
We now consider $\alpha\leq\beta$. By Proposition \ref{lemma:riemann}, 
$\widehat{u}(.,1)$ is the entropy solution
of the Riemann problem with initial datum \eqref{eq:init-riemann} 
for \eqref{burgers_H} with $F$ replaced by the null function $F_0$. 
Setting $u'_n=\rho_{n,i'_n}$, 
%
we have
\begin{eqnarray}
v_n u_n-\widehat{G}^n(u_n) & \leq  &  v_n u_n-F_0(u_n)\label{ineq_contra_3}\\
v_n u'_n-\widehat{G}^n(u'_n) & = & v_n u'_n-F_0(u'_n).\label{ineq_contra_4}
\end{eqnarray}
The r.h.s. of \eqref{ineq_contra_4} converges to 
$vu-F_0(u)=(F_0)^*_{\alpha,\beta}(v)$ for $u=\widehat{u}(v\pm,1)$.
This with \eqref{contra} contradicts the fact
that the l.h.s. of \eqref{ineq_contra_3} achieves the minimum 
in \eqref{dual_G} for $G=\widehat{G}^n$.
\end{proof}
\begin{appendix}
\section{Phase transitions in Theorem \ref{thm:phase_trans} }\label{app:graph}
We present below figures illustrating the different phases of the flux function 
stated in Theorem \ref{thm:phase_trans}.
Figures 1 to 3 show the phase diagram of inflexion points with regions 
\textit{2, (i)} to \textit{2, (v)} of the theorem in the $(d;r)$ plane. 
Figures 3 to 9 illustrate the flux function parametrized by $r$ for a sample 
value of $d$ in each of these regions. Figures 4-5,  6-8, 9, 10, 11  
correspond respectively to \textit{(i), (ii), (iii), (iv), (v)}; 
while Figure 12 corresponds to the rescaled  limit $d=+\infty$, 
cf. \eqref{lim_G}. \\ \\
While the theorem is stated for $(d,r)\in[1;+\infty)^2$ and $d\geq 1$, by the 
symmetry property \eqref{sym}, all graphics can be extended to 
$(d,r)\in[0;+\infty)^2$. The transformation $d\to 1-d$  induces a symmetry 
with respect to $d=0.5$ in phase diagrams and around $\rho=1$ in fluxes. 
The transformation $r\to r^{-1}$ induces a nonlinear ''symmetry'' with respect 
to $r=1$ on  phase diagrams, and along parametrized curves in Figures 4-12, 
it induces a symmetry around $\rho=1$; in addition, the evolution of curves 
with respect to $r$ must be seen ''upside down''. \\ \\
We interpret  $d$ as an asymmetry parameter and $r$ as  a 
coexistence/segregation  parameter. The latter measures the 
separation/interplay between lanes (the larger $r\geq 1$ the stronger the 
separation and the weaker the interplay). When $r\to+\infty$, particles can only 
move to lane $0$, and only particles initially on lane $1$ use this lane until 
they eventually move to $0$ when possible.
Thus when the local density is less than $1$, only the ASEP flux on lane 
$0$ contributes, while a density above $1$ only activates the ASEP flux on 
lane $1$. When $r\to 0$, lanes $1$ and $0$ are inverted. 
%
As mentioned above, we interpret the number of inflexion points of $G(\rho)$ 
between $0$ and $2$, as marking phase transitions in the $(d,r)$ plane. Indeed, 
as explained below, this number reflects the degree of separation of lanes (the 
more inflexion points the more separation). We also view them as marking 
phase transitions in $\rho$ for given $(d,r)$. When two inflexion points are present, 
the interval between them is a coexistence zone where both lanes contribute to 
the global flux, as particles begin to move significantly from lane $0$ to lane 
$1$ because of jamming. For $r\geq 1$, on the left of this interval, the low 
density zone is ''dominated'' by lane $0$ in a sense explained below, while the 
high density zone to the right is dominated by lane $1$, but to a lesser extent, 
since the drift on lane $1$ is smaller.\\ \\
Assume for instance the flux has two inflexion points, as in Figure 9  below the 
pink layer (that is $r$ large enough). Between the two inflexion points, as 
the density on lane $0$ gets higher, many particles depart from lane $0$ to lane 
$1$, where the density is still low; the flux on lane $1$ is thus more favourable 
to these particles, which begins to compensate for the decay of the flux due to 
strong exclusion on lane $0$, hence the transition from concave to convex. On 
the right of the second inflexion point, lane $0$ is fairly jammed and no 
longer significantly contributes to the global flux. The density on lane 1 gets 
high enough for particle on lane $1$ to feel the exclusion effet there, hence 
the transition from convex to concave. 
Between the pink and yellow layers (that is intermediate values of $r$), the rate 
of particles moving to lane $1$ is sufficient to trigger coexistence but not 
to trigger the high density/lane 1 phase. Above the pink layer (that is $r$ 
close enough to $1$), there is never sufficient jamming on lane $0$ to 
significantly reduce its contribution, and too few particles on lane $1$ to 
start compensating the decay on lane $1$ at high density.
When $r=1$, there is complete coexistence as no lane is favoured. This results 
on Figures 3-11 in a rescaled TASEP flux where the maximum fluxes of the two lanes 
add up, cf. \eqref{ex:sym}.
When $r\to+\infty$, the separation on Figures 3-12 is complete
and there is a sharp transition from ASEP flux on lane $0$ to ASEP flux on lane 
$1$, cf. \eqref{ex:periodic}. \\ \\
The above arguments suggest that the number of inflexion points should be a 
monotone function of $r$. Surprisingly, Theorem \ref{thm:phase_trans} proves this 
to be not always true (though ''most'' of the time). Figures 2 and 3 indeed 
exhibit a narrow region of the $(d,r)$ plane, that we call the 
''anomalous zone'', where the number of inflexion points evolves from $0$ to $2$, 
$1$ and eventually $2$ as $r$ increases. This region, which corresponds to 
case \textit{2, (ii)} in Theorem \ref{thm:phase_trans}, is almost invisible on 
the global phase diagram of Figure 1 and only revealed by zooming as in 
Figures 2,3. It is also hard to detect on the graph of $G$ (cf. Figure 6), because 
the transitions    occur on a very narrow density interval where the flux is 
moreover almost linear. A plot of the second derivative of $G$ helps identify 
this region clearly (Figures 7,8). \\ \\
When $d>1$, lane $0$ has a positive drift and lane $0$ a negative one (Figure 11). 
The fully segregated phase $r=+\infty$ has a positive TASEP flux followed by 
a negative one. The progressive separation as $r$ grows does not exhibit more 
than one inflexion point, cf. Figures 1 and 11, which we can view as the 
transition point between low density/lane 0 and high density/lane 1 phases. 
The special case $d\to+\infty$ (Figure 12) of two lanes with opposite drifts 
equal in absolute value, cf. \eqref{lim_G}, yields fluxes symmetric with respect 
to $(1;0)$. When $r=1$, we obtain a null flux. There is an immediate transition 
at $r=1+=\lim_{d\to+\infty}r_3(d)$ (cf.Figure 1) towards a segregated behaviour 
where all fluxes have an inflexion point at $\rho=1$. In the borderline case 
$d=1$ (Figure 10), one of the lanes has $0$ drift, and the corresponding TASEP 
flux in the fully segregated phase is flat. \newpage
%
%
%
\begin{center}
\begin{minipage}{\textwidth}
\includegraphics[width=\textwidth, height=6cm]
{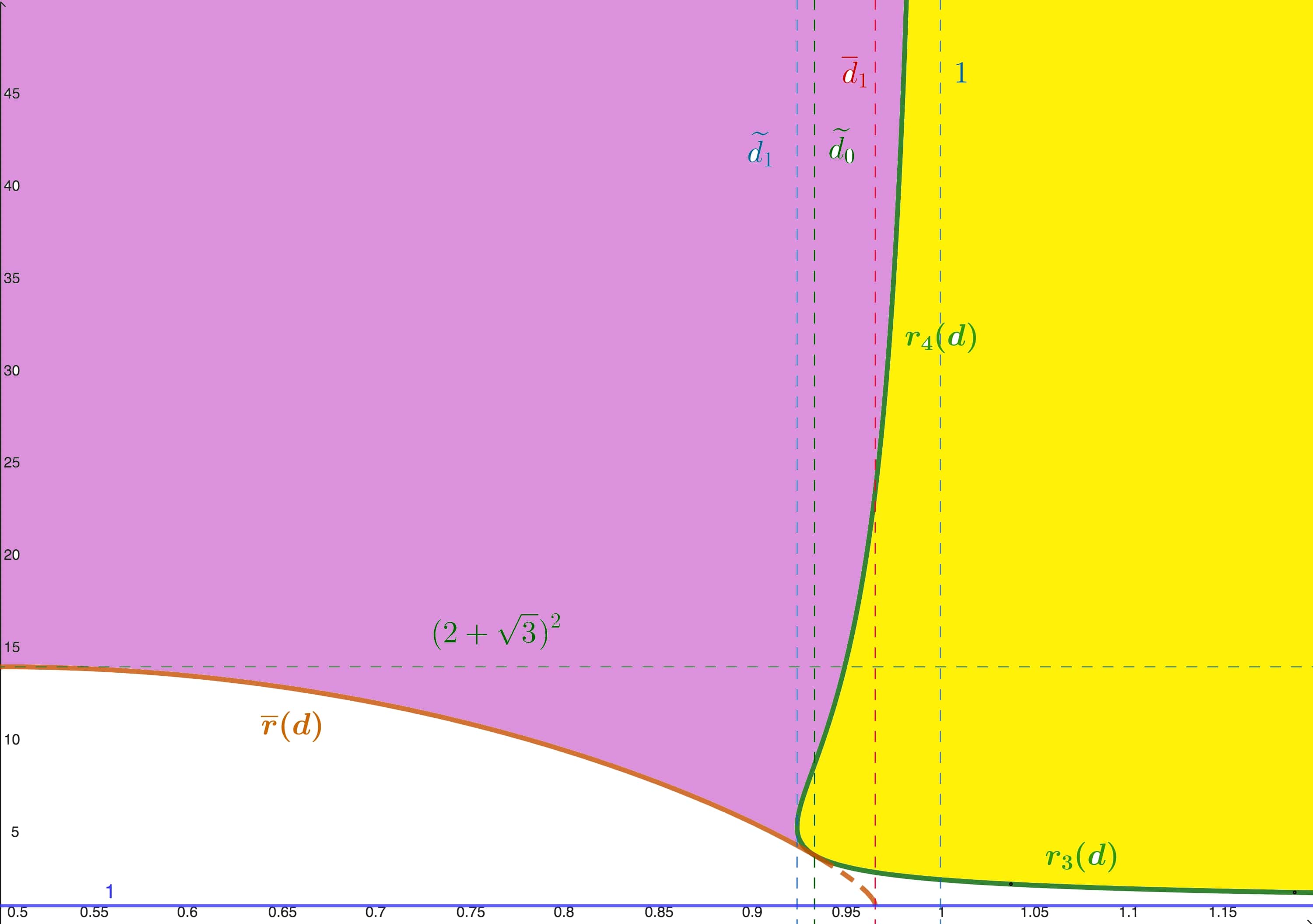}
\captionof{figure}{
\small{\bf Phase diagram 1:} $\tilde{d}_1$, $\tilde{d}_0$, $\bar{d}_1$ 
are defined in \eqref{critical_d}. The rightmost dashed line is $d=1$.  
In the white region, the flux function is strictly concave; 
in the yellow (resp. pink) region, it has one (resp. two) inflexion points. 
Intervals $(-\infty;\tilde{d}_1$), $(\tilde{d}_1;\tilde{d}_0)$,
$(\tilde{d}_0;1)$ and $(1;+\infty)$ respectively correspond to cases 
\textit{(i), (ii), (iii)} and \textit{(iv)--(v)} in Theorem 
\ref{thm:phase_trans}. Curves $r_3(d)$ and $r_4(d)$ are the lower and upper 
branches of the solution $r$ of $g(r)=0$ (green curve), cf. \eqref{ineq:G2-2}, 
with  $r_4(d)\to+\infty$ as $d\to 1-$. Curve $\bar{r}_1(d)$ is defined 
by \eqref{def:bar-d1}--\eqref{ineq:G2-rho_0-2}. The blue line at the bottom is $r=1$. 
}
\label{phase-global}
\end{minipage}
\end{center}
\vspace{0.5cm}
%
\begin{center}
\begin{minipage}{\textwidth}
\includegraphics[width=\textwidth, height=7cm]
{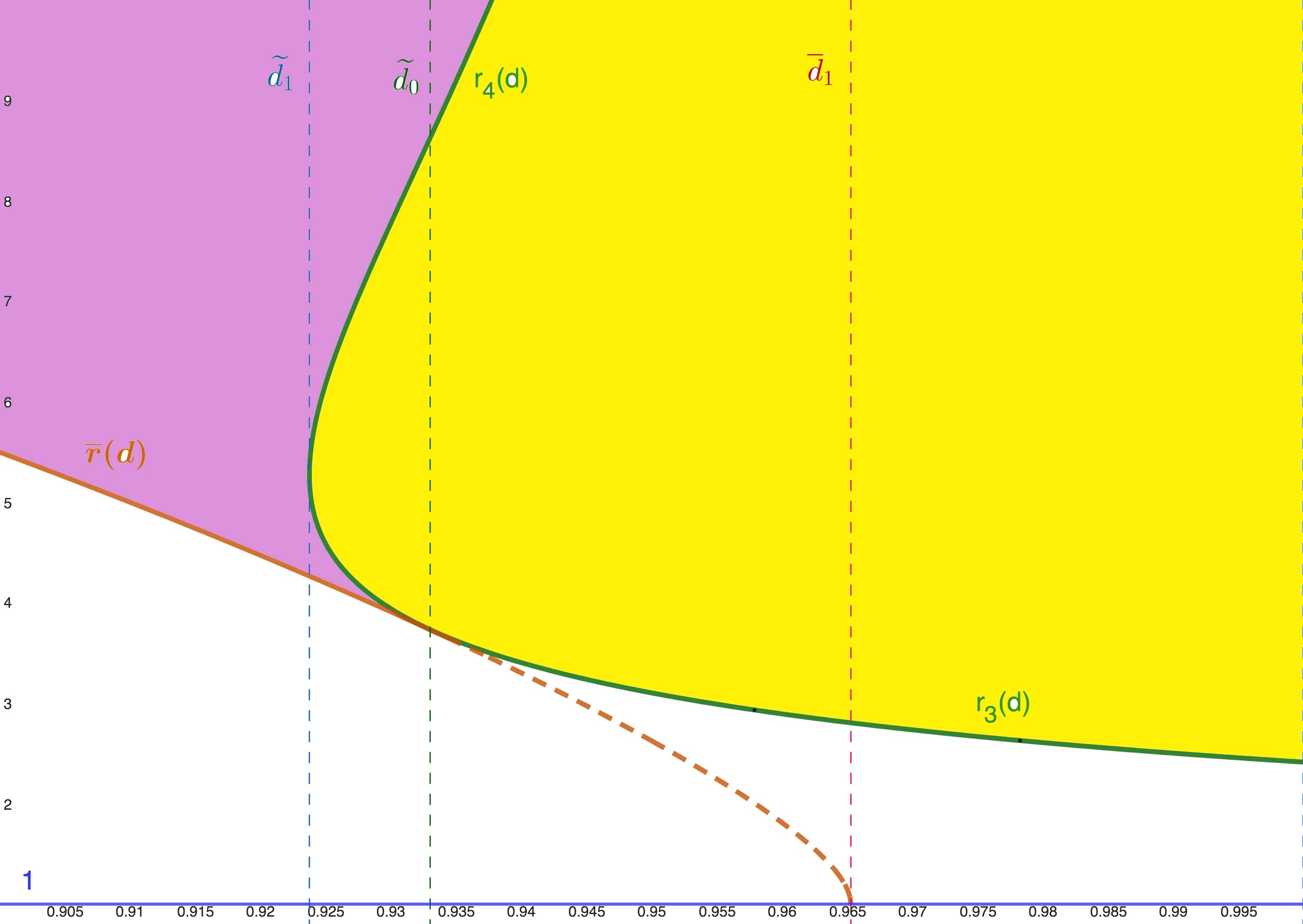}
\captionof{figure}{
\small{\bf Phase diagram 2:}  zooming around the intersection of 
$r_4(d)$ with $\bar{r}(d)$ reveals the anomalous region between 
$d=\tilde{d}_1$ and $d=\tilde{d}_0$.
}
\label{phase2}
\end{minipage}
\end{center}
%
\begin{center}
\begin{minipage}{\textwidth}
\includegraphics[width=\textwidth, height=8.5cm]
{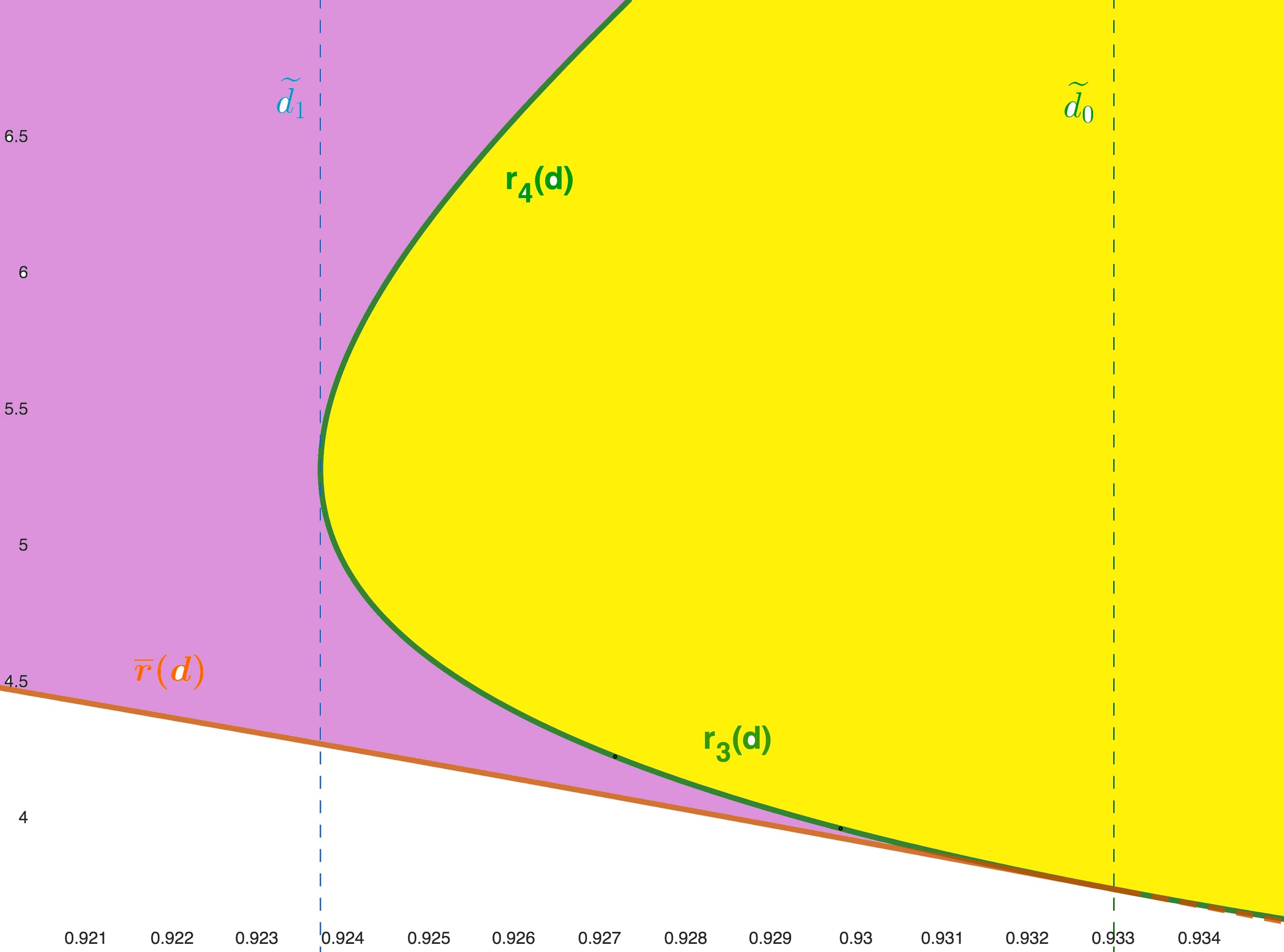}
\captionof{figure}{
\small{\bf Phase diagram 3:}  zooming around the anomalous region.
}
\label{phase3}
\end{minipage}
\end{center}
%
\begin{center}
\begin{minipage}{\textwidth}
\includegraphics[width=\textwidth,  height=8cm]
{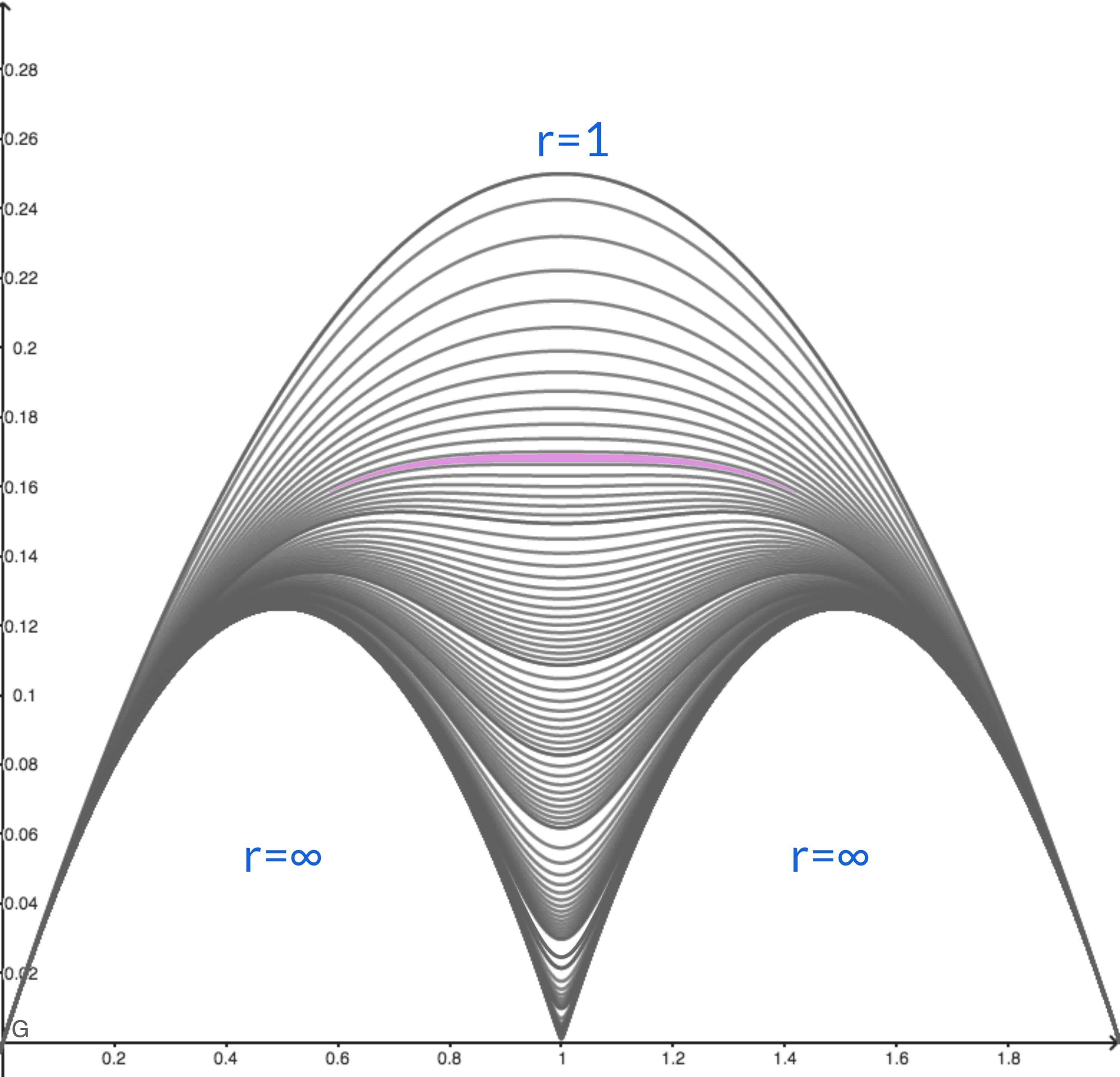}
\captionof{figure}{
\small{\bf Flux function 1:}  $d=0.5$, $\gamma_0=\gamma_1=0.5$, symmetric graph. 
Top concave curve: $r=1$; bottom double-bump curve $r=+\infty$. Unique 
phase transition from $0$ to $2$ inflexion points occurs at 
$r=\bar{r}_1(0.5)=(2+\sqrt{3})^2\simeq 14$ (cf. fig 1) inside the pink layer.
}
\label{phase1}
\end{minipage}
\end{center}
%
\begin{center}
\begin{minipage}{\textwidth}
\includegraphics[width=\textwidth, height=7.5cm]
{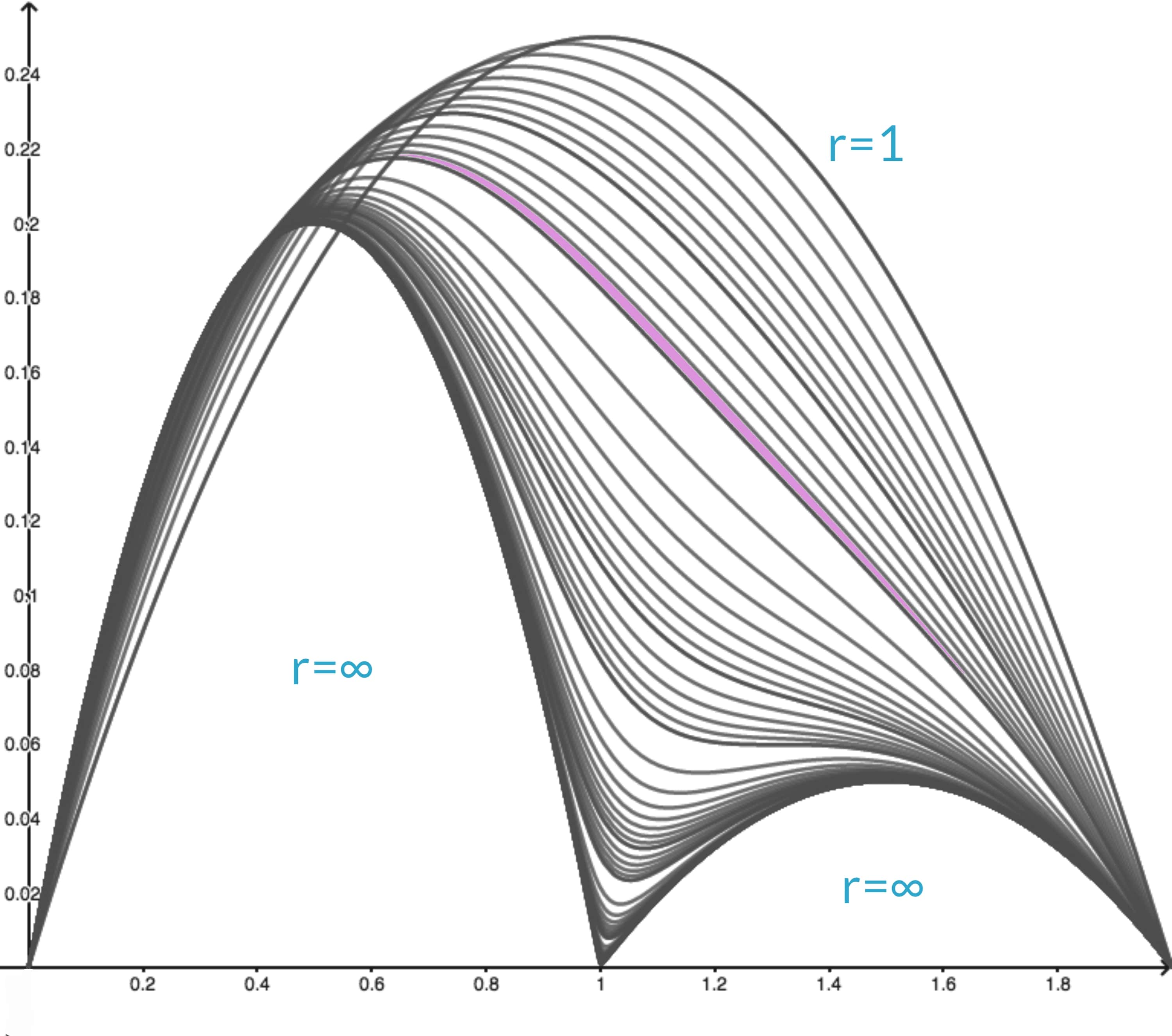}
\captionof{figure}{
\small{\bf Flux function 2:}  $d=0.8$, $\gamma_0=0.8$, $\gamma_1=0.2$.  
Top concave curve: $r=1$; bottom double-bump curve $r=+\infty$. Unique 
phase transition from $0$ to $2$ inflexion points occurs at 
$r=\bar{r}_1(0.8)\simeq 9.4$ (pink layer).
}
\label{phase2bis}
\end{minipage}
\end{center}
%
%
\begin{center}
\begin{minipage}{\textwidth}
\includegraphics[width=\textwidth, height=7.5cm]
{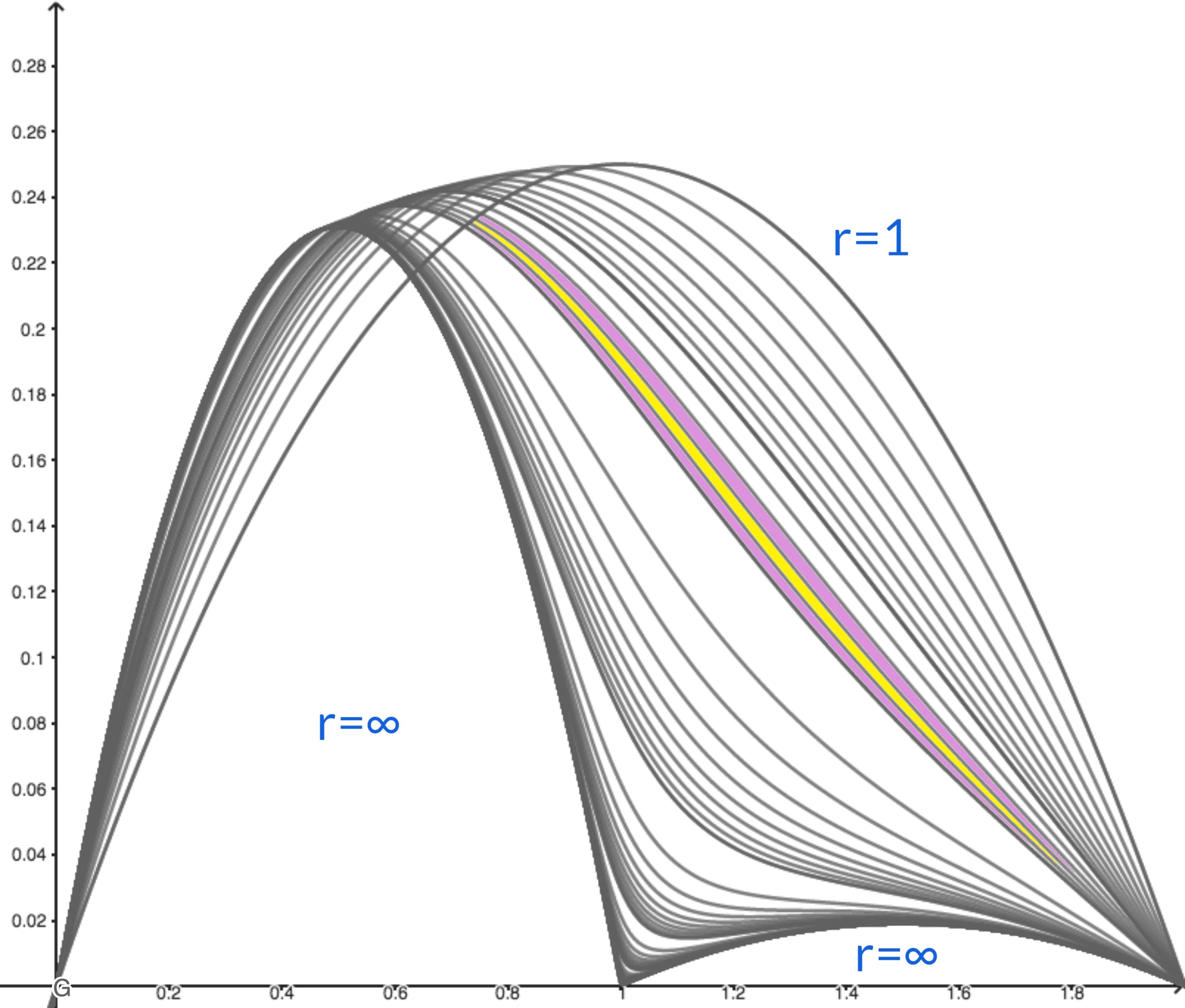}
\captionof{figure}{
\small{\bf Flux function 3a:}  ''Anomalous'' zone in phase diagrams 2 and 3.
 Here $\tilde{d}_1<d=0.924<\tilde{d}_0$, $\gamma_0=0.924$, $\gamma_1=0.076$.  
 Top curve: $r=1$; bottom curve $r=+\infty$. Three transitions occur: 
 $\bar{r}_1(d)\simeq 4,25$ ($0$ to $2$ inflexion points, upper pink layer), 
 $r_3(d)\simeq 4,9$ ($2$ to $1$, yellow layer), $r_4(d)\simeq 5,7$ ($1$ to $2$, 
 lower pink layer). The anomalous interval is $(\bar{r}_1(d);r_3(d))$. 
%
}
\label{phase-anomalous}
\end{minipage}
\end{center}
%
%
\begin{center}
\begin{minipage}{\textwidth}
\includegraphics[width=\textwidth, height=7cm]
{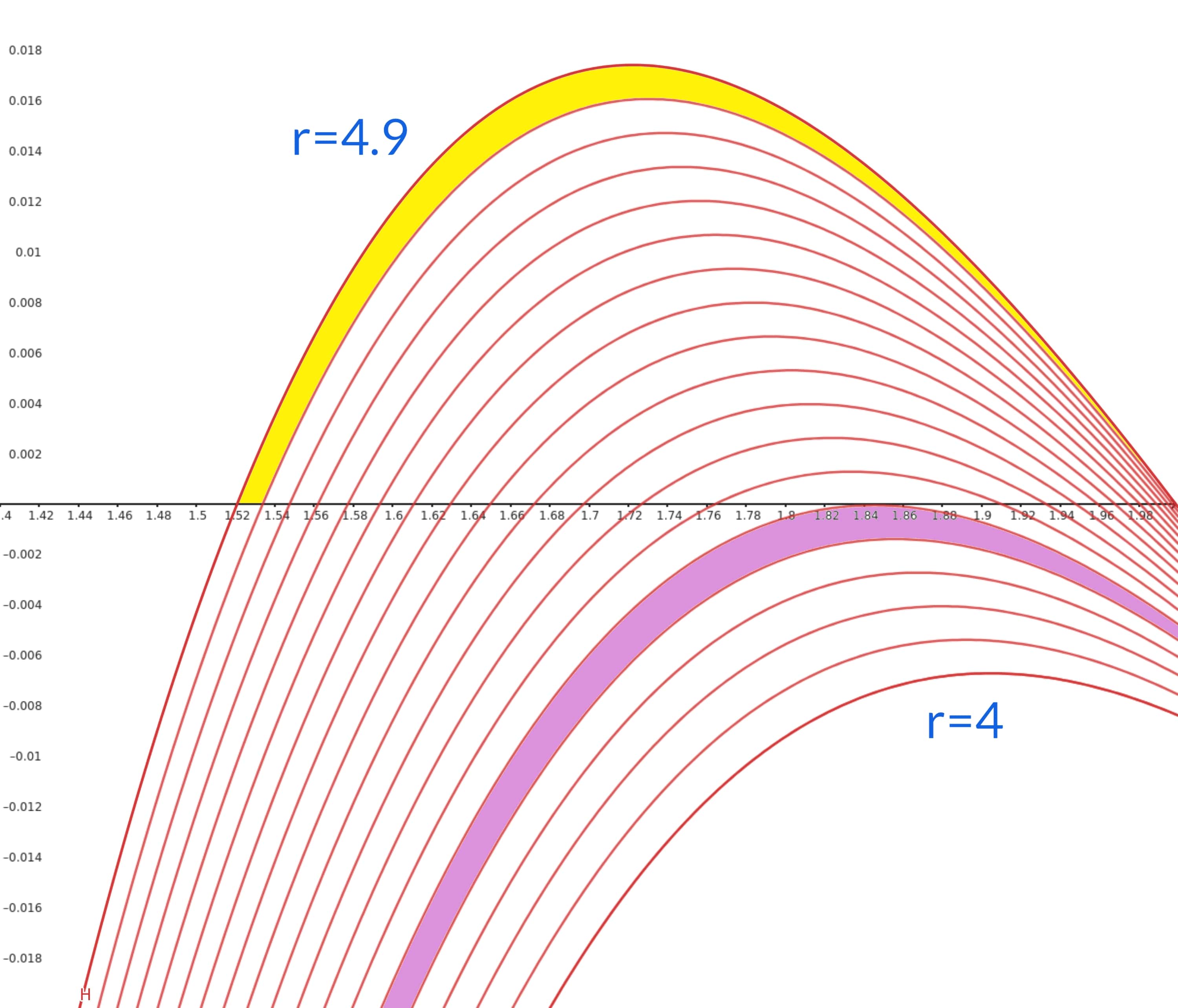}
\captionof{figure}{
{\bf Flux function 3b:}  ''Anomalous'' zone continued with plot of $G''(\rho)$ 
for $r=4$ (bottom curve) to $r=4.9\simeq r_3(d)$ (top curve) spaced by 
$\Delta r=0.05$. Again, $\tilde{d}_1<d=0.924<\tilde{d}_0$, $\gamma_0=0.924$, 
$\gamma_1=0.076$. The transition at $\bar{r}_1(d)\simeq 4,25$ ($0$ to $2$ 
inflexion points) occurs on t top of the pink layer. The transition  
$r_3(d)\simeq 4,9$ ($2$ to $1$) occurs on the curve on  top of the yellow 
layer. Intersections with $x$-axis indicate positions of inflexion points. 
}
\label{phase-anomalous1}
\end{minipage}
\end{center}

\begin{center}
\begin{minipage}{\textwidth}
\includegraphics[width=\textwidth, height=7cm]
{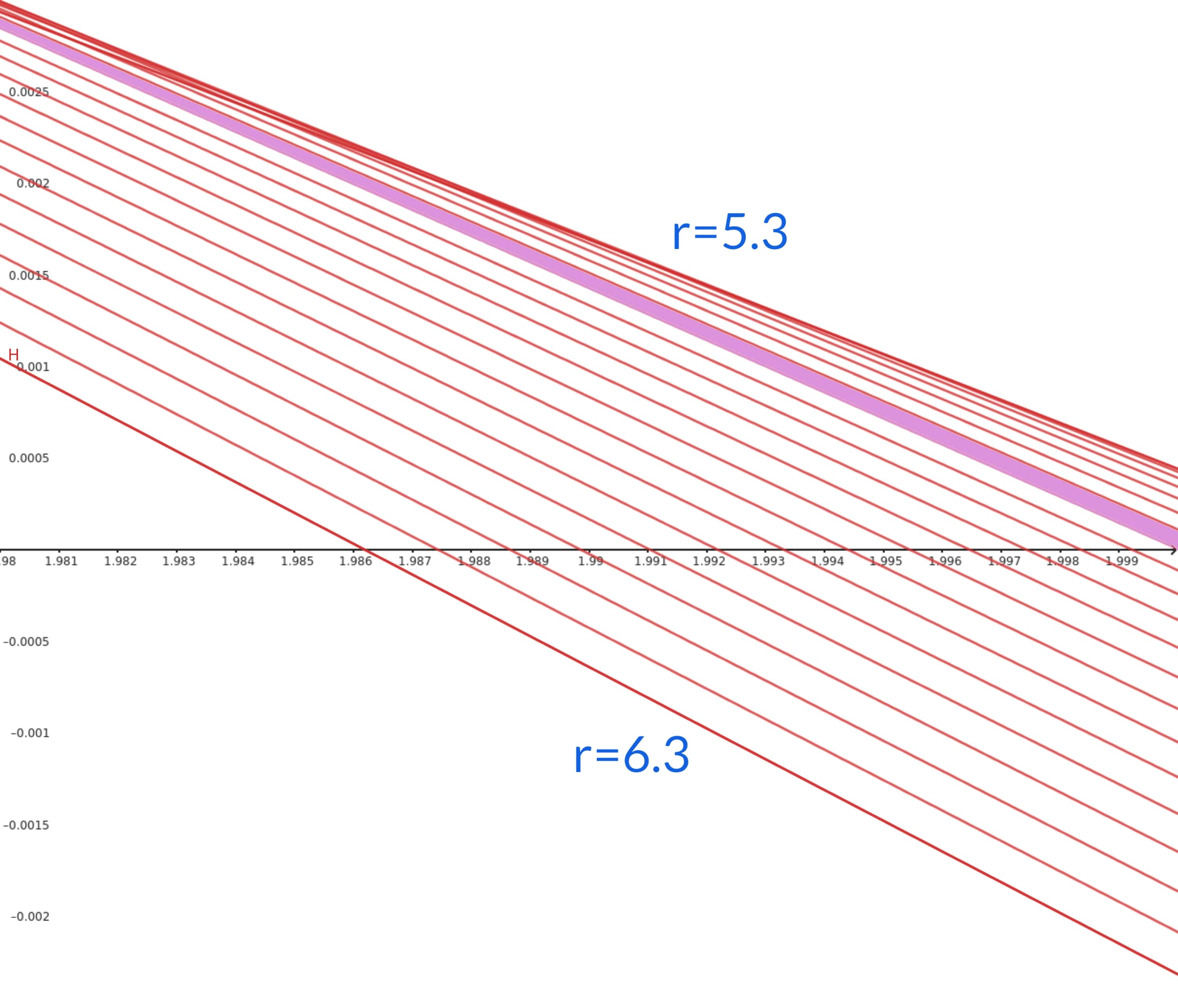}
\captionof{figure}{
\small{\bf Flux function 3c:}  Anomalous zone continued. Zoom of $G''(\rho)$ 
around its second inflexion point  for $r=5.3$ (top curve) to $r=6.3$ (bottom 
curve) spaced by $\Delta r=0.05$. The transition at $r_4(d)\simeq 5.65$ ($1$ to 
$2$ inflexion points) occurs on at the bottom of the pink layer.
}
\label{phase-anomalous2}
\end{minipage}
\end{center}
%
\begin{center}
\begin{minipage}{\textwidth}
\includegraphics[width=\textwidth, height=8cm]
{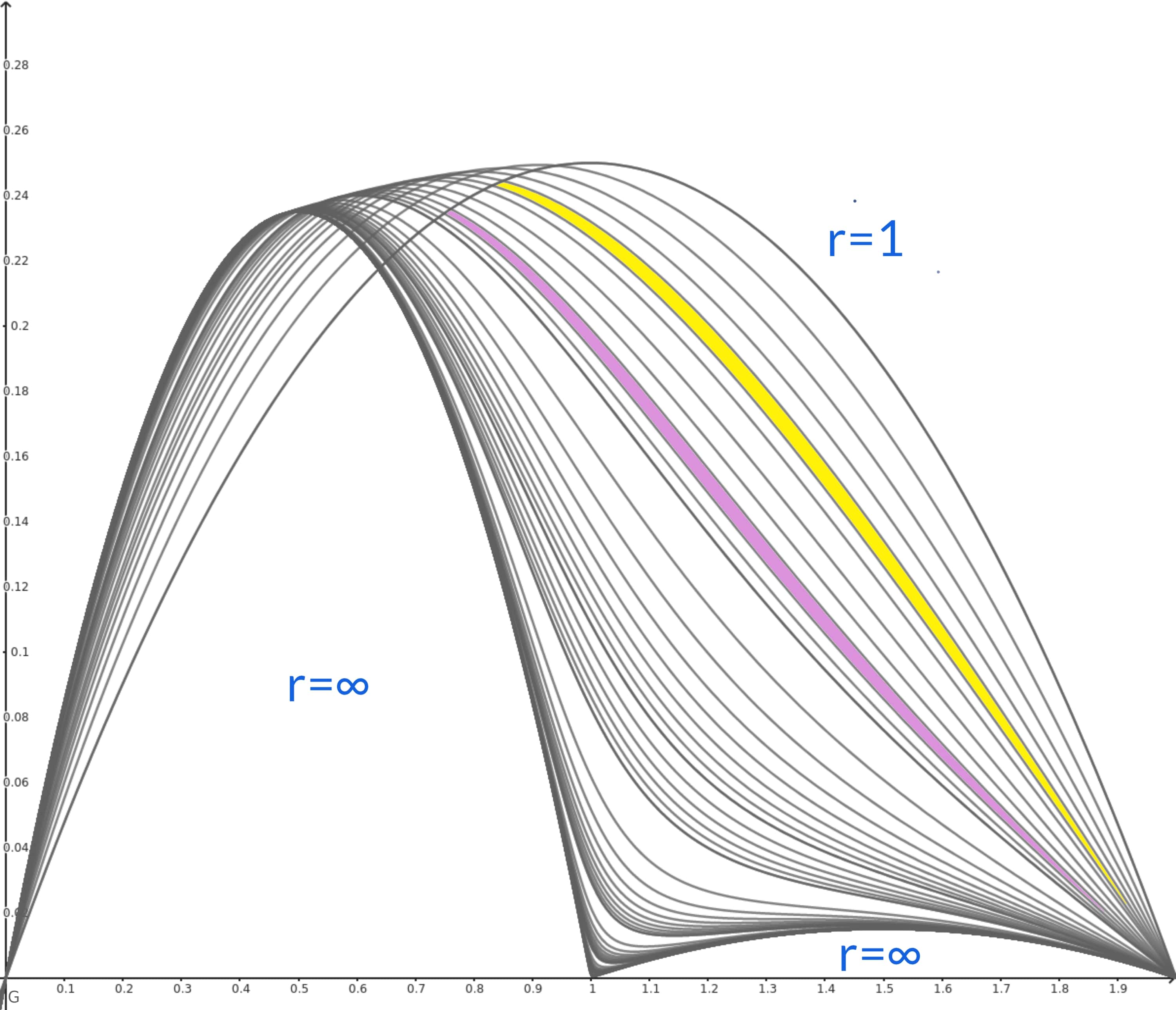}
\captionof{figure}{
\small{\bf Flux function 4:}  $\tilde{d}_0<d=0.94=\gamma_0$, $\gamma_1=0.06$. 
Top concave curve: $r=1$; bottom double-bump curve $r=+\infty$. The transitions 
occur at  $r_3(d)\simeq 3.3$ ($0$ to $1$ inflexion point, yellow) 
and $r_4(d)\simeq 7.6$ ($1$ to $2$ inflexion points, pink).
and $r=10$.
}
\label{phase4}
\end{minipage}
\end{center}
%
\begin{center}
\begin{minipage}{\textwidth}
\includegraphics[width=\textwidth, height=8cm]
{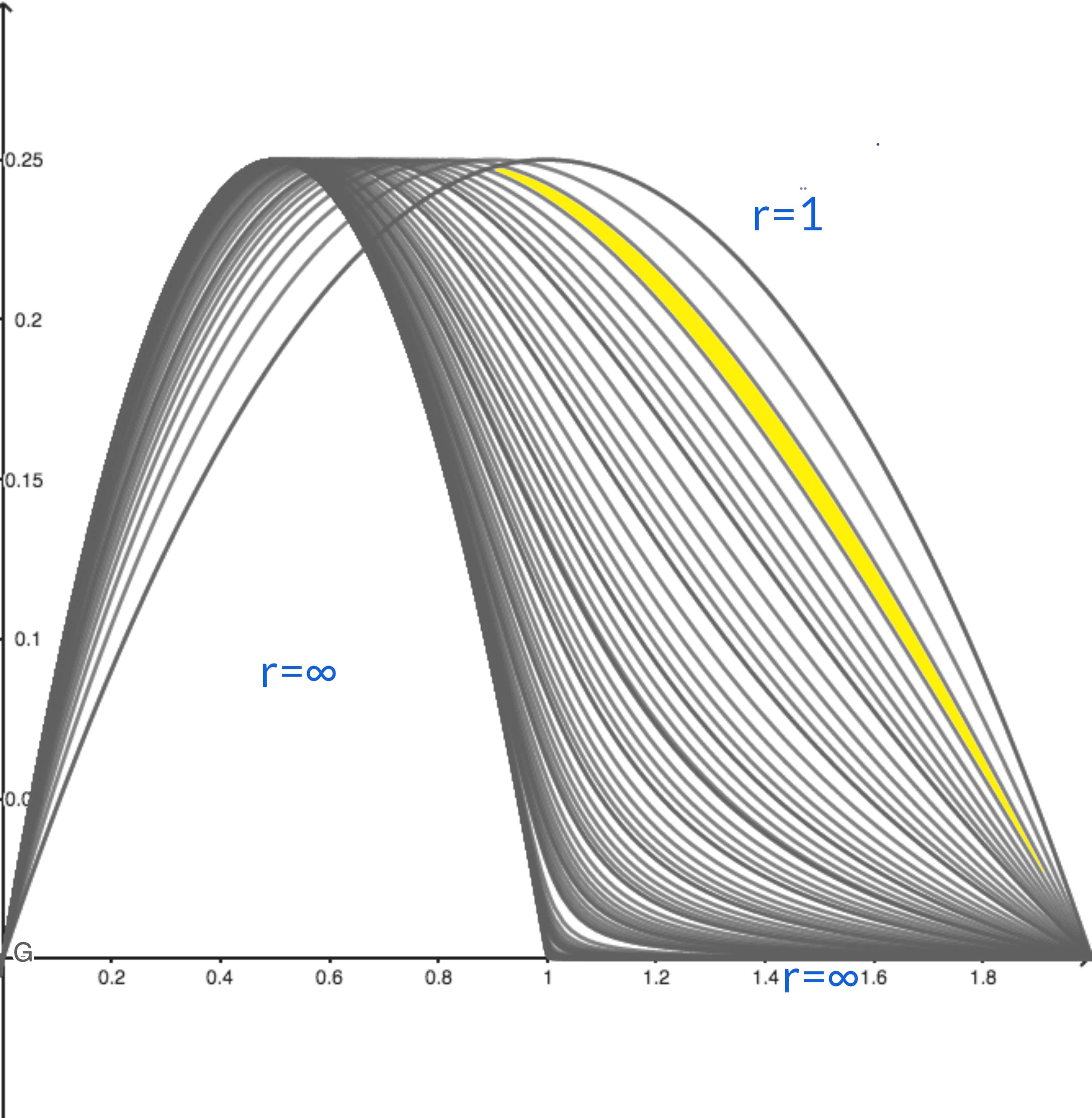}
\captionof{figure}{
\small{\bf Flux function 5:}  
$d=1=\gamma_0$, $\gamma_1=1-d=0$. For $r=1$: concave symmetric curve; 
$r=+\infty$: bump-flat curve.  The transition ($0$ to $1$ inflexion point) occurs 
at $r_3(d)\simeq 2.4$ (yellow).
}
\label{phase5}
\end{minipage}
\end{center}
%
\begin{center}
\begin{minipage}{\textwidth}
\includegraphics[width=\textwidth, height=7.5cm]
{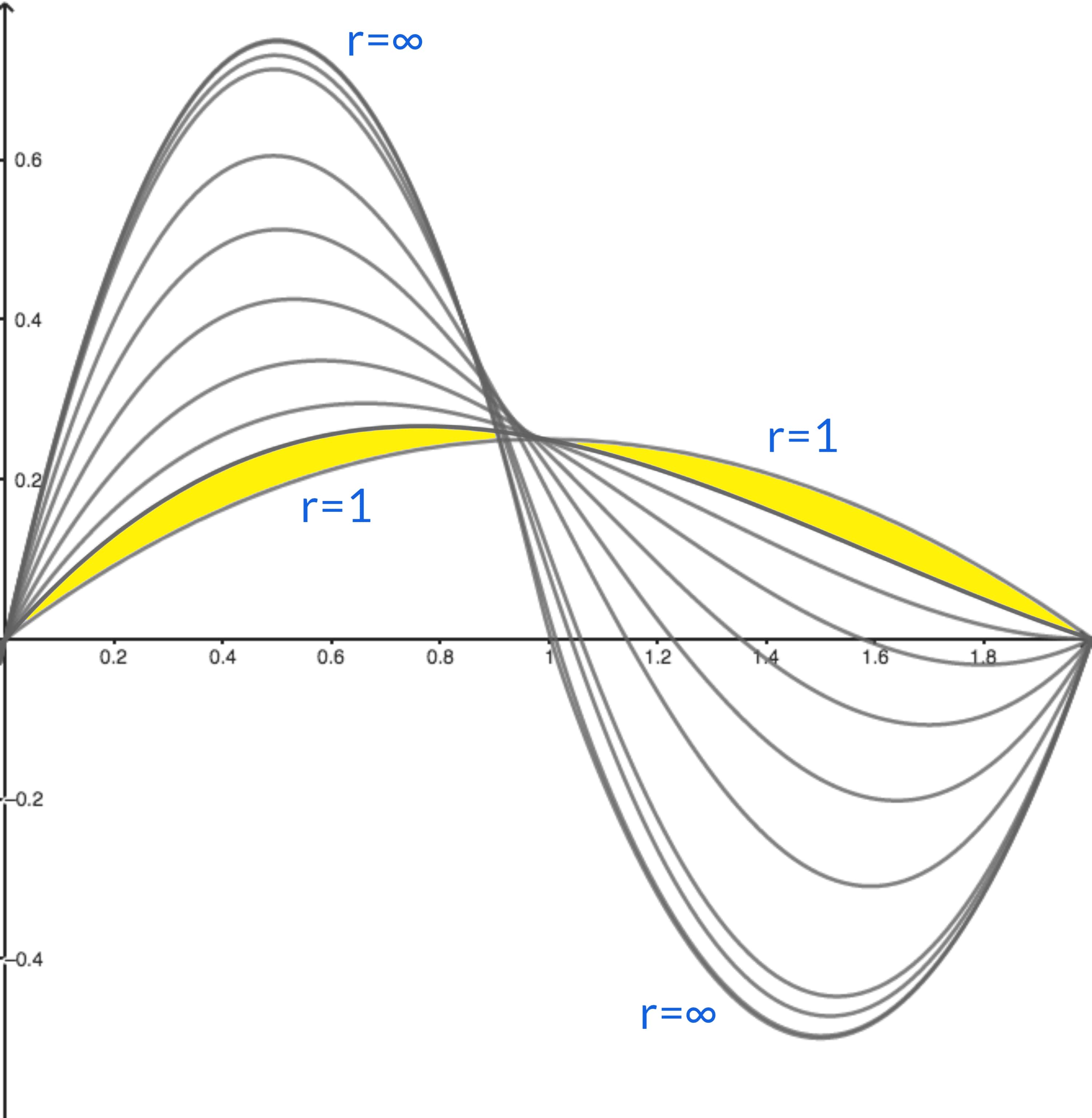}
\captionof{figure}{
\small{\bf Flux function 6:}  $d=3=\gamma_0$, $\gamma_1=1-d=-2$. Outer curves 
are  $r=1$ (concave curve) and
$r=+\infty$ (bump-well curve). The transition between $0$ and $1$ inflexion 
point occurs at $r_3(d) \simeq 1.14$, inside the yellow layer between curves 
$r=1$ (concave curve) and $r=1.25$ (concave-convex curve). 
}
\label{phase6}
\end{minipage}
\end{center}
%
\begin{center}
\begin{minipage}{\textwidth}
\includegraphics[width=\textwidth, height=7.5cm]
{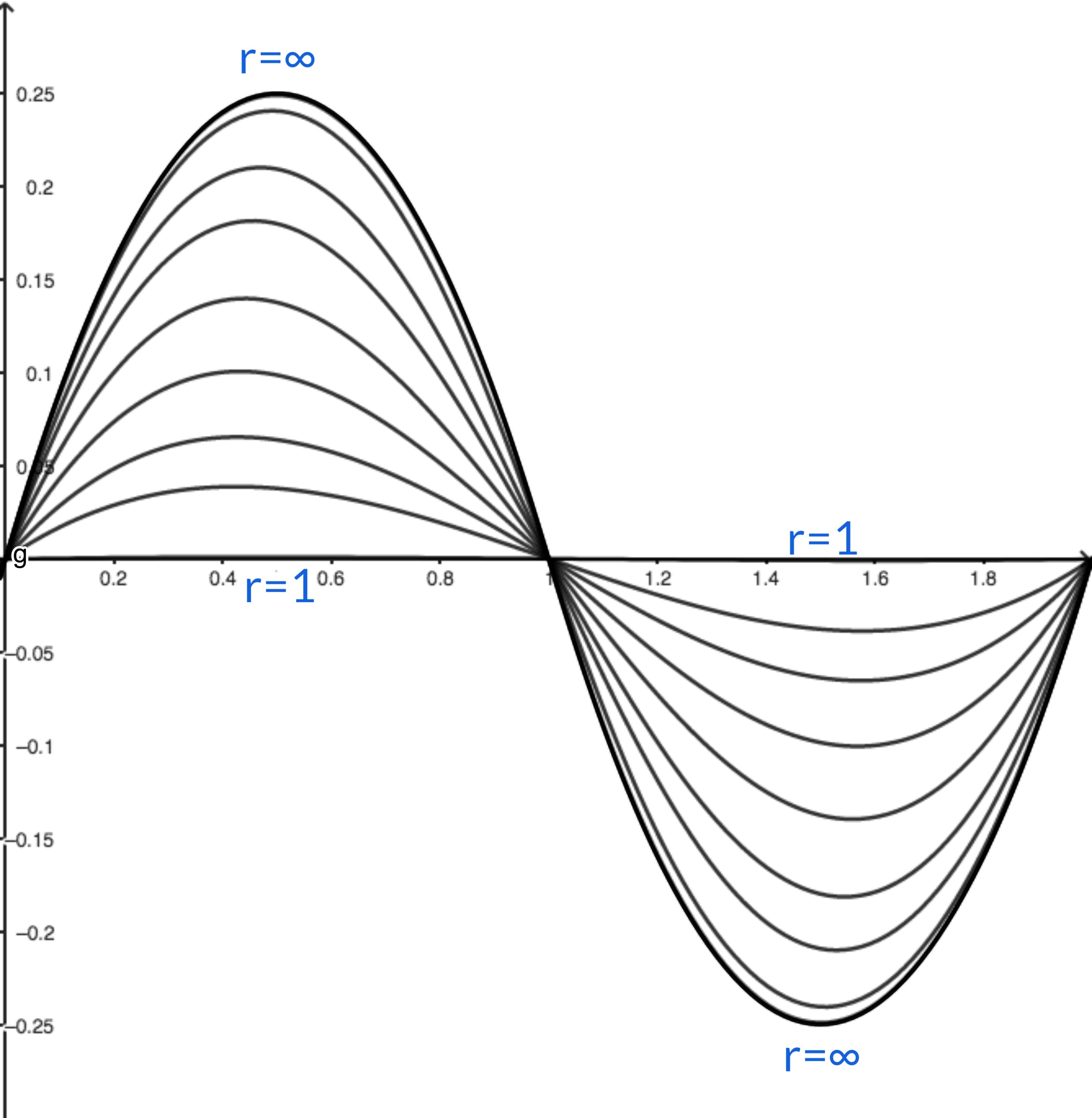}
\captionof{figure}{
\small{\bf Flux function 7:}  
$d_0=\infty$, $\gamma_1=1-d=-\infty$, plot of $d^{-1}G(\rho)=G_{1,-1}(\rho)$ 
(cf. \eqref{lim_G}); $r=1$  (identically $0$ flux) to $r=+\infty$ (in bold). 
Here $r_3(\infty)=1$, there is an instant transition from $0$ curve to $1$ 
inflexion point. All curves are symmetric.
}
\label{phase7}
\end{minipage}
\end{center}
%
%
\end{appendix}
\newpage
\noindent 
{\bf Acknowledgements.} 
This work  has been conducted within the FP2M federation 
(CNRS FR 2036) and was partially supported by laboratoire MAP5,
grants ANR-15-CE40-0020-02 and ANR-14-CE25-0011 (for C.B. and E.S.),
LabEx CARMIN (ANR-10-LABX-59-01).
G.A. was supported by the Israel Science Foundation grant \# 957/20.
O.B. was supported by EPSRC's EP/R021449/1 Standard Grant, and he was
partly funded by the Deutsche Forschungsgemeinschaft 
(DFG, German Research Foundation) under Germany's Excellence Strategy - GZ 2047/1,
projekt-id 390685813.
Part of this work was done during the stay of C.B, O.B. and E.S. at the
Institut Henri Poincar\'e (UMS 5208 CNRS-Sorbonne Universit\'e) -
Centre Emile Borel for
the trimester ``Stochastic Dynamics Out of Equilibrium''.
The authors thank these institutions for hospitality and support.
C.B., O.B. and E.S. thank
Universit\'{e} Paris Cit\'e for hospitality, as well as Villa Finaly
(where they attended the conference ``Equilibrium and 
Non-equilibrium Statistical Mechanics'').
\bibliographystyle{plain}
\bibliography{MLT2}
\end{document}